\documentclass[11.5pt,twoside, a4paper, english, reqno]{amsart} 
\usepackage{amsaddr}
\usepackage{a4wide}
\usepackage{amscd}
\usepackage{amssymb}
\usepackage{amsthm}
\usepackage{graphicx}
\usepackage{amsmath,calrsfs}
\usepackage{latexsym}
\usepackage{enumitem,dsfont}

\usepackage{upref}
\usepackage[colorlinks]{hyperref}
\usepackage{color,graphics}
\usepackage{upref,hyperref,color}

\usepackage[usenames,dvipsnames]{xcolor}
\usepackage{pdfsync}
\usepackage{ulem,cancel,pgf}

\usepackage{frcursive}


\setlength{\topmargin}{-.5cm}
\setlength{\textheight}{24cm}
\setlength{\evensidemargin}{0cm}
\setlength{\oddsidemargin}{0cm}
\setlength{\textwidth}{16cm}
\theoremstyle{plain}
\newtheorem{theorem}{Theorem}[section]
\theoremstyle{plain}
\newtheorem{lemma}[theorem]{Lemma}

\newtheorem{corollary}[theorem]{Corollary}

\theoremstyle{definition}
\newtheorem{definition}{Definition}[section]

\newtheorem{remark}{Remark}[section]
\newtheorem{claim}{Claim}[section]

\newtheorem*{maintheorem*}{Main Theorem}
\newtheorem*{maincorollary*}{Main Corollary}


\DeclareFontFamily{U}{BOONDOX-calo}{\skewchar\font=50 }
\DeclareFontShape{U}{BOONDOX-calo}{m}{n}{
	<-> s*[1.05] BOONDOX-r-calo}{}
\DeclareFontShape{U}{BOONDOX-calo}{b}{n}{
	<-> s*[1.05] BOONDOX-b-calo}{}
\DeclareMathAlphabet{\mathcalb}{U}{BOONDOX-calo}{m}{n}
\SetMathAlphabet{\mathcalb}{bold}{U}{BOONDOX-calo}{b}{n}
\DeclareMathAlphabet{\mathbcalb}{U}{BOONDOX-calo}{b}{n}



\numberwithin{equation}{section} \allowdisplaybreaks

\title[Uniqueness Results]
{Uniqueness Results for Mixed Local and Nonlocal Equations with Singular Nonlinearities and Source Terms}
\date{\today}




\author[Abdelhamid Gouasmia]{Abdelhamid Gouasmia}
\address[Abdelhamid Gouasmia]{
Department of Mathematics, Faculty of Sciences And Technology,\\
Mohamed Cherif Messaadia University,\\
P.O.Box 1553, Souk Ahras 41000, Algeria.\\[4pt]
Laboratoire d'equations aux d\'{e}riv\'{e}es partielles non lin\'{e}aires et histoire des math\'{e}matiques,\\
Ecole Normale Sup\'{e}rieure,\\
B.P. 92, Vieux Kouba, 16050 Algiers, Algeria.}
\email[Abdelhamid Gouasmia]{gouasmia.abdelhamid@gmail.com}

\usepackage{comment}
\begin{document}
	
\begin{abstract}
	This paper considers a local and non-local problem characterized by singular nonlinearity and a source term. Specifically, we focus on the following problem:
	\begin{equation}\label{A}\tag{P}
	-\Delta_{p} u + (-\Delta)^{s}_{q} u = f(x) u^{-\alpha} + g(x) u^{\beta}, \quad u > 0 \quad \text{in } \Omega; \quad u = 0, \quad \text{in } \mathbb{R}^{N} \setminus \Omega,
	\end{equation}
	where \( \Omega \subset \mathbb{R}^N \) is an open bounded domain with a \( C^{2} \) boundary \( \partial \Omega \), and \( N > p \). We assume that \( 0 < s < 1 \) and \( 1 < p, q < \infty \), with the conditions \( q = p \) or \( q < p \), corresponding to the homogeneous and non-homogeneous cases, respectively. The parameters satisfy \( 0 < \beta < q - 1 \) and \( \alpha > 0 \). The function \( f \) is non-zero and belongs to a suitable Lebesgue space \( L^{r}(\Omega) \) for some \( r \in [1, \infty] \), or satisfies a growth condition involving negative powers of the distance function \( d(\cdot) \) near the boundary \( \partial \Omega \). Additionally, \( g \) is a nonnegative function within appropriate Lebesgue spaces. The primary objectives of this paper are twofold. First, we establish the uniqueness of infinite energy solutions to problem \eqref{A} by introducing a novel comparison principle under certain conditions. Second, we derive several existence results for weak solutions in various senses, accompanied by regularity results for problem \eqref{A}. Furthermore, we present a non-existence result when the function \( f(x) \sim d^{-\delta}(x) \) and \( x \) is near the boundary, under the condition \( \delta \geq p \). Our approach leverages the Picone identities on one hand and the interaction between the local and non-local terms on the other hand.
\end{abstract}

\maketitle
\textbf{Keywords:}	singular nonlinearity, mixed local and nonlocal operators, uniqueness results, existence results
\\[2mm]
\hspace*{0.45cm}\textbf{MSC:} 35A01,  35B65,  35J75, 35M12.  \\

\setcounter{tocdepth}{1}
\tableofcontents
\section{Introduction}
In their seminal work in the 1960s, Fulks and Maybee \cite{fulks1960singular} introduced a class of singular problems related to the steady-state temperature distribution in an electrically conducting medium. This is described by the following governing equation for \( T > 0 \):
\begin{equation}\label{Iequ1}
	\mathbf{c} u_{t}(x, t) - \kappa \Delta u(x, t) = \dfrac{E^{2}(x, t)}{\sigma(u(x, t))} \quad \text{in } \Omega \times (0, T),
\end{equation}
where \( \Omega \subset \mathbb{R}^{3} \) represents the electrically conducting medium, \( u(x, t) \) denotes the steady-state temperature distribution at time \( t \) and position \( x \in \Omega \), \( \sigma(u(x, t)) \) denotes the resistivity of the material, and \( E(x, t) \) represents the local voltage drop in \( \Omega \) as a function of position and time. The parameters \( \mathbf{c} \) and \( \kappa \) correspond to the specific heat capacity of the conductor and the thermal conductivity of \( \Omega \), respectively. As a result, the rate of heat generation at any point \( x \) in the medium and at time \( t \) is given by \( E^{2}(x, t) \sigma(u(x, t))^{-1} \), implying that the temperature distribution in the conducting medium satisfies equation \eqref{Iequ1}.\\[4pt]
It is also noteworthy that singular problems have applications in various contexts, including pseudo-plastic fluids, Chandrasekhar equations in radiative transfer, non-Newtonian fluid flows in porous media, and heterogeneous catalysts. Moreover, these problems are relevant in a wide range of fields, encompassing real-world models in gaseous dynamics within astrophysics, relativistic mechanics, nuclear physics, and the study of chemical reactions. Additionally, they relate to phenomena such as glacial advance, the transportation of coal slurries along conveyor belts, and numerous other geophysical and industrial applications, as discussed in \cite{diaz1987elliptic, fowler1930emden, nachman1980nonlinear, schowalter1960application}, among others. For more insights into the derivation of specific models and their applications, we refer readers to \cite{oliva2024singular}, which provides detailed discussions, including the Blasius model and others.\\[4pt]
On the other hand, the investigation of singular equations has increasingly attracted attention as a notable mathematical challenge. The first systematic treatment of these issues was presented in \cite{crandall1977dirichlet, stuart1976}. Specifically, \cite{crandall1977dirichlet} analyzed the singular boundary value problem defined as follows:
\begin{equation*}
-\Delta u = \textbf{F}(x, u), \quad u = 0 \quad \text{on } \partial\Omega,
\end{equation*}
where \( \textbf{F}(x, s) \) exhibits singular behavior at \( s = 0 \). The authors established results regarding the existence and uniqueness of classical solutions by employing the classical method of sub- and supersolutions applied to a non-singular approximating problem. Earlier, \cite{fulks1960singular} addressed this topic and similarly established existence and uniqueness results for problem \eqref{Iequ1}. Subsequently, a substantial body of research has focused on exploring various types of these problems. For an overview of the developments and key results, we refer to \cite{ref08, barrios2015semilinear, boccardo2010semilinear, ref51, canino2017moving, ref01, carmona2023regularizing, diaz1987elliptic, durastanti2022comparison, giacomoni2021sobolev, giacomoni2018positive,  giacomoni2012singular, oliva2024singular, ref57, zhang2004existence} and the references therein, which provide comprehensive insights into the study of both the \( p \)-Laplace operator, defined for \( p > 1 \) as  $ \Delta_{p} u = \text{div} \left( \left|\nabla u \right|^{p-2} \nabla u \right), $ and the fractional \( q \)-Laplacian, which for a fixed parameter \( s \in (0, 1) \) is defined by
\begin{align*}
( -\Delta ) ^{s}_{q} u(x) &:= 2\, \textbf{P.V.}\int_{\mathbb{R}^N}\frac{|u(x) - u(y) |^{q-2} ( u(x) - u(y)) }{| x-y|^{N + s q} }\,dy,\\[4pt]
&= 2 \lim_{\epsilon \to 0} \int_{\left\lbrace y \in \mathbb{R}^{N}\, : \, | y -x| \geq \epsilon\right\rbrace}\frac{|u(x) - u(y) |^{q-2} ( u(x) - u(y)) }{| x-y|^{N + s q} }\,dy,
\end{align*}
where \( q > 1 \). These operators incorporate the nonlinear term \( \textbf{F}(x, u) \), which encompasses three types of nonlinearities: a purely singular nonlinearity \( \textbf{F}(x, u) = \textbf{f}_{1}(x) u^{-\alpha} \), a singular nonlinearity with a source term \( \textbf{F}(x, u) = \textbf{f}_{1}(x) u^{-\alpha} + \textbf{f}_{2}(x, u) \), and a singular nonlinearity with an absorption term \( \textbf{F}(x, u) = \textbf{f}_{1}(x) u^{-\alpha} - \textbf{f}_{2}(x, u) \), where \( \alpha > 0 \) and \( \textbf{f}_{1} \) belongs to an appropriate Lebesgue space or/and behaves like \( \text{dist}^{-\delta}(x, \partial\Omega) \) with \( \delta > 0 \). It is assumed that the function \( \textbf{f}_{2} \) is a suitable non-negative continuous function.\\[4pt]
Recently, problems involving \textbf{mixed} local and non-local operators (referring to operators combining the $p$-Laplace operator and the fractional \( q \)-Laplacian, as defined above) have garnered significant attention due to their wide-ranging applications in real-world phenomena, particularly in physical processes arising from mixed \textbf{dispersal strategies}. The term \textbf{dispersal} generally refers to the movement of a biological population, with density denoted by \( u \), competing for resources within a given environment \( \Omega \). This movement can occur through various mechanisms, including both local and non-local dispersal. In \cite{kao2012evolution}, the authors investigate the influence of mixed dispersal on species invasion and the evolution of dispersal strategies in spatially periodic yet temporally constant environments. Furthermore, the study in \cite{dipierro2022non} introduces a model describing the diffusion of a biological population within an ecological niche, subject to both local and non-local dispersal. For additional insights and applications, see \cite{chen2008evolution} and \cite{dipierro2021description}. Motivated by these considerations, the analysis of elliptic problems involving mixed operators, particularly those containing singular terms, has become a focal point of research. In the singular case, we start from the work of \cite{garain2022mixed}, where the authors establish the existence and uniqueness of a weak solution for the following mixed local and nonlocal $p$-Laplace equation:
\begin{equation}\label{P5}
-\Delta_{p} u + (-\Delta)^{s}_{p} u = f(x) u^{-\alpha}, \quad u > 0 \quad \text{in}\, \Omega; \quad u = 0, \quad \text{in}\, \mathbb{R}^{N} \setminus \Omega.
\end{equation}
In fact, the existence of a distributional solution is established through an approximation approach, which relies on the regularity of the datum $f$ and the singular exponent $\alpha$, as outlined below:
\begin{itemize}
	\item[$\bullet$] If $0 < \alpha < 1$ and $f \in L^{\left( \frac{p^{*}}{1 - \alpha}\right)'}(\Omega)$, then $u \in W^{1, p}_{0}(\Omega)$.
	\item[$\bullet$] If $\alpha = 1$ and $f \in L^{1}(\Omega)$, then $u \in W^{1, p}_{0}(\Omega)$.
	\item[$\bullet$] If $\alpha > 1$ and $f \in L^{1}(\Omega)$, then $u \in W^{1, p}_{\text{loc}}(\Omega)$, and $u^{\frac{\alpha + p -1}{p}} \in W^{1, p}_{0}(\Omega)$.
\end{itemize}
In \cite{ref48}, the authors enhance the findings presented in \cite{garain2022mixed}. These enhancements include properties related to existence and non-existence, Sobolev regularity results of both power and exponential types, and the boundary behavior of the weak solution. This advancement is explored through the interplay between the summability of the datum and the power exponent in singular nonlinearities for problem \eqref{P5}, specifically with \( p = 2 \), addressing two cases for the function \( f \) (Lebesgue weights and singular weights) (see also \cite{bhowmick2024existence, huang2023lazer} for related issues). Additionally, uniqueness results for the class of singular weights are established in \cite[Theorem 2.1]{gouasmia2024singular}. The technique employed in the proof of the uniqueness result presented in this paper, as well as in \cite{garain2022mixed}, traces back to \cite{canino2016uniqueness} within the context of the local semilinear case. This method has been extensively utilized in various studies to establish uniqueness results in scenarios where the solution belongs to \( W^{s, p}_{\text{loc}}(\Omega) \), with \( p > 1 \) and \( s \in \left( 0, 1 \right] \). For further reading on this topic, we refer to \cite{ref08, arora2022large, ref01, ref65}, although this is not an exhaustive list. It is important to note that the selection of test functions in this method does not ensure the validity of the approach when addressing problems involving singular nonlinearity with a \textbf{source} term.

For this reason, and given the limited results available regarding local and non-local operator equations involving singular nonlinearity with a source term together, this paper investigates the following problem:
\begin{equation}\label{P2}\tag{P}
-\Delta_{p} u + (-\Delta)^{s}_{q} u = f(x) u^{-\alpha} + g(x) u^{\beta}, \quad u > 0 \quad \text{in } \Omega; \quad u = 0, \quad \text{in } \mathbb{R}^{N} \setminus \Omega,
\end{equation}
where \( \Omega \subset \mathbb{R}^N \), with \( N > p \) and a boundary \( \partial \Omega \) that is \( C^{2} \). Assume that \( 0 < s < 1 \), \( 1 < p, q < \infty \), such that \( q = p \) or \( q < p \), corresponding to the homogeneous and non-homogeneous cases, respectively; \( 0 < \beta < q - 1 \); \( \alpha > 0 \); and that \( g \) is an appropriate non-negative measurable function. Regarding the function \( f: \Omega \to \mathbb{R}^{+} \), we consider the following cases:
\begin{itemize}
	\item[$ \textbf{(F1)} $] \( f \) is not identically zero and belongs to a suitable Lebesgue space \( L^{r}(\Omega) \) for some \( r \in \left[ 1, \infty \right] \).
	\item[$ \textbf{(F2)} $] \( f \) satisfies the following growth condition: for any \( x \in \Omega \)  
	\begin{equation}\label{equ24} 
	\textbf{C}_{1} d(x)^{-\delta} \leq f(x) \leq \textbf{C}_{2} d(x)^{-\delta},
	\end{equation}
	for some \( \delta \in \left[ 0, p \right) \), with \( \textbf{C}_{1} \) and \( \textbf{C}_{2} \) being positive constants, where \( d(x) := \text{dist}(x, \partial \Omega) \). 
\end{itemize}
\textbf{The main purpose of our paper is two-part:}\\
$ \bullet $ Firstly, we address the uniqueness of infinite energy solutions to problem \eqref{P2} (if such solutions exist), which arises from a new comparison principle established in this paper under certain conditions (see Theorem \ref{Theorem1} below). This result is achieved by refining the method in \cite{canino2016uniqueness} and employing Picone's identities to tackle the difficulties arising from the local and non-local terms in our problem, along with selecting suitable test functions (see \cite{brezis1986remarks, durastanti2022comparison} for the case of local operators and finite energy solutions, i.e., \( u \in W^{1, p}_{0}(\Omega) \)), while also benefiting from the condition on \( \beta \) mentioned earlier. Furthermore, this proof can be extended to more general settings, as follows:
\begin{equation*}
-\Delta_{p} u + (-\Delta)^{s}_{q} u = h(x, u), \quad u > 0 \quad \text{in } \Omega; \quad u = 0 \quad \text{in } \mathbb{R}^{N} \setminus \Omega,
\end{equation*}
where \( h : \Omega \times \mathbb{R}^{+} \to \mathbb{R}^{+} \) is a Carathéodory function, and we assume that for a.e. \( x \in \Omega \), the map \( s \mapsto \frac{h(x, s)}{s^{q-1}} \) is non-increasing on \( \mathbb{R}^{+} \setminus \{0\} \). This is further explored in \textbf{Section 2}, in the context of finite energy solutions.\\[4pt]
$ \bullet  $ Secondly, we establish several existence results for weak solutions in various senses, alongside regularity results for problem \eqref{P2}. Our primary focus is on the class of weight functions \( f \) that satisfy the conditions \textbf{(F1)} and \textbf{(F2)}, in addition to functions \( g \) that meet certain regularity criteria. Furthermore, we present a non-existence result specifically for the class \textbf{(F2)} when \( \delta > p \). To demonstrate the existence results, we employ a classical regularization approach, as the energy functional associated with problem \eqref{P2} is not \( C^{1} \) due to the presence of a singular term. Consequently, minimax results from critical point theory become inapplicable. In this context, we draw upon results obtained in the paper \cite{antonini2023global}, which addresses regularity results and includes Hopf's type lemma for positive supersolutions, involving mixed local and non-local operators. This combination of operators is leveraged in certain estimates. Additionally, we generate a uniqueness result in the case where \( g = 0 \), which is not addressed in the initial part of our study.
\subsection{Notations and basics} 
First, we establish some notations that will be used throughout this paper:\\
For \( t \in \mathbb{R} \), we denote \( [t]^{p-1} := |t|^{p-2} t \) for \( p > 1 \), and we define \( t^{\pm} = \max\{\pm t, 0\} \). Additionally, for a fixed \( k > 0 \), we introduce the truncation function \( \mathbf{T}_{k} \) as follows: 
\[
\mathbf{T}_{k}(s) := \min\{s, k\} \quad \text{for} \quad s \geq 0, \quad \text{and} \quad \mathbf{T}_{k}(s) := - \mathbf{T}_{k}(-s) \quad \text{for} \quad s < 0.
\]
For \( r > 1 \), we denote by \( r' = \frac{r}{r-1} \) the conjugate exponent of \( r \). The constants \( C \) and \( C_{i} \) will represent general constants that may vary from line to line or even within the same line. If \( C \) depends on the parameters \( N, p, \ldots \), we write \( C = C(N, p, \ldots) \). Moreover, \( \textbf{B}_{1}(x) \) denotes the ball of radius \( 1 \) centered at the point \( x \).\\[4pt]
Next, we employ definitions and properties associated with specific function spaces. Before that, we assume that \( \Omega \subset \mathbb{R}^{N} \) (with \( N > p \)) is a bounded domain with \( C^{2} \) boundary \( \partial\Omega \). Now, we consider a measurable function \( u : \mathbb{R}^{N} \to \mathbb{R} \) and adopt the following:\\
Let \( p \in [1, +\infty[ \). The norm in the Lebesgue space \( L^{p}(\Omega) \) is defined as follows:
\[
\left\| u \right\|_{L^{p}(\Omega)} := \left( \int_{\Omega} \left| u \right|^{p} \, dx \right)^{\frac{1}{p}}.
\]
The space \( L^{p}_{\text{loc}}(\Omega) \) denotes the space of locally \( L^{p} \)-integrable functions, while \( L^{\infty}_{c}(\Omega) \) denotes the space of \( L^{\infty} \)-functions with compact support in \( \Omega \). \\
We recall that the Sobolev space \( W^{1, p}(\mathbb{R}^{N}) \) is defined as
\[
W^{1, p}(\mathbb{R}^{N}) := \left\{ u \in L^{p}(\mathbb{R}^{N}) \quad \vert \quad \nabla u \in L^{p}(\mathbb{R}^{N}) \right\},
\]
equipped with the norm \( \left\| u \right\|^{p}_{W^{1, p}(\mathbb{R}^{N})} = \left\| u \right\|^{p}_{L^{p}(\mathbb{R}^{N})} + \left\| \nabla u \right\|^{p}_{L^{p}(\mathbb{R}^{N})} \). The space \( W_{0}^{1, p}(\Omega) \) is defined as the set of functions under the norm 
\[
\left\| u \right\|_{W_{0}^{1, p}(\Omega)} := \left\| \nabla u \right\|_{L^{p}(\Omega)},
\]
given by
\[
W_{0}^{1, p}(\Omega) := \left\{ u \in W^{1, p}(\mathbb{R}^{N}) \quad \vert \quad u = 0 \, \text{ a.e. in } \, \mathbb{R}^{N} \setminus \Omega \right\}.
\]
Now, we have the following Sobolev embedding:
\begin{theorem}[{\cite[Theorem 3]{evans2022partial}}] \label{thm0}
	Let \( p \geq 1 \) with \( N > p \). Then, there exists a positive constant \( C = C (N, p, \Omega) \) such that for any measurable function \( u \in W^{1, p}_{0}(\Omega) \), we have
	\[
	\| u \| _{L^{p^{*}}(\Omega)} \leq C \| u \| _{W^{1, p}_{0}(\Omega)},
	\]
	where \( p^{*} \) is the Sobolev critical exponent, defined as \( \frac{Np}{N - p} \).
\end{theorem}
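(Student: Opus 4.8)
The plan is to prove the classical Gagliardo--Nirenberg--Sobolev inequality in the form $\|u\|_{L^{p^*}(\mathbb{R}^N)} \le C\|\nabla u\|_{L^p(\mathbb{R}^N)}$ and then transfer it to $W^{1,p}_0(\Omega)$. First I would reduce to smooth, compactly supported functions. By the very definition of $W^{1,p}_0(\Omega)$ used above, $C_c^\infty(\Omega)$ is dense in it, and any $u\in W^{1,p}_0(\Omega)$ extends by zero to an element of $W^{1,p}(\mathbb{R}^N)$ with $\|\nabla u\|_{L^p(\Omega)}=\|\nabla u\|_{L^p(\mathbb{R}^N)}$. So it suffices to prove the inequality for $u\in C_c^\infty(\mathbb{R}^N)$; for general $u$ one takes $u_k\in C_c^\infty(\Omega)$ with $u_k\to u$ in $W^{1,p}_0(\Omega)$ (hence a subsequence converging a.e.) and passes to the limit using Fatou's lemma on the left-hand side and norm convergence of $\nabla u_k$ on the right.

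Second, I would treat the base case $p=1$, where $1^\ast=\tfrac{N}{N-1}$. For $u\in C_c^\infty(\mathbb{R}^N)$ and each index $i$ one has $|u(x)|\le \int_{\mathbb{R}}|\partial_{x_i}u|\,dt$, so that $|u(x)|^{N/(N-1)}\le \prod_{i=1}^N\big(\int_{\mathbb{R}}|\partial_{x_i}u|\,dt\big)^{1/(N-1)}$. Integrating this successively in $x_1,\dots,x_N$ and applying, at each of the $N$ stages, the generalized Hölder inequality with $N-1$ conjugate exponents each equal to $N-1$, one arrives at $\|u\|_{L^{N/(N-1)}(\mathbb{R}^N)}\le \prod_{i=1}^N\|\partial_{x_i}u\|_{L^1(\mathbb{R}^N)}^{1/N}\le C(N)\,\|\nabla u\|_{L^1(\mathbb{R}^N)}$. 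This iterated integration is the main obstacle: the bookkeeping of which variables have already been integrated out, and of ensuring each factor is ultimately raised to the power $1/N$ and integrated against all remaining variables, is the delicate point; the rest of the argument is routine once this is in place.

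Third, I would bootstrap to $1<p<N$ by applying the $p=1$ estimate to $v:=|u|^\gamma$ with $\gamma:=\tfrac{p(N-1)}{N-p}>1$, chosen so that $\gamma\cdot\tfrac{N}{N-1}=p^\ast$ and $(\gamma-1)p'=p^\ast$. Since $v\in W^{1,1}(\mathbb{R}^N)$ with $|\nabla v|=\gamma|u|^{\gamma-1}|\nabla u|$, the base case and Hölder's inequality with exponents $p'$ and $p$ give $\|u\|_{L^{p^\ast}(\mathbb{R}^N)}^{\gamma}=\|v\|_{L^{N/(N-1)}(\mathbb{R}^N)}\le C\gamma\int_{\mathbb{R}^N}|u|^{\gamma-1}|\nabla u|\le C\gamma\,\|u\|_{L^{p^\ast}(\mathbb{R}^N)}^{\gamma-1}\,\|\nabla u\|_{L^p(\mathbb{R}^N)}$. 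Dividing by the finite quantity $\|u\|_{L^{p^\ast}(\mathbb{R}^N)}^{\gamma-1}$ yields $\|u\|_{L^{p^\ast}(\mathbb{R}^N)}\le C(N,p)\,\|\nabla u\|_{L^p(\mathbb{R}^N)}$. Combining this with the density/extension step of the first paragraph, and recalling $\|u\|_{W^{1,p}_0(\Omega)}=\|\nabla u\|_{L^p(\Omega)}$, gives the stated estimate with $C=C(N,p,\Omega)$. (Alternatively, since $\Omega$ is bounded, one may simply invoke the cited reference; the above is the self-contained route.)
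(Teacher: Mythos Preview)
Your proof is correct and follows the classical Gagliardo--Nirenberg--Sobolev argument (as in Evans' book, the cited reference). Note that the paper does not give its own proof of this statement at all: the theorem is simply quoted from \cite{evans2022partial} as a known result, so your self-contained argument goes well beyond what the paper provides.
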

\begin{remark}\label{remark0}
	In light of Theorem \ref{thm0}, it is easy to see that \( \|\cdot\| _{W^{1, p}(\mathbb{R}^N)} \) and \( \|\cdot\| _{W_0^{1, p}(\Omega)} \) are equivalent norms on \( W_0^{1, p}(\Omega) \). According to the results in \cite{evans2022partial}, we have that \( W^{1,p}_0(\Omega) \) is continuously embedded in \( L^r(\Omega) \) when \( 1 \leq r \leq p^{*} \), and compactly for \( 1 \leq r < p^{*} \).
\end{remark}
\noindent  Let \( 0 < s < 1 \) and \( q > 1 \). The fractional Sobolev space \( W^{s, q}(\mathbb{R}^{N}) \) is defined by:
\[
W^{s, q}(\mathbb{R}^{N}) :=\left\lbrace u\in L^{q}(\mathbb{R}^{N}),\quad [u]^{q}_{s, q} := \displaystyle \iint_{\mathbb{R}^{2N}} \dfrac{\vert u(x)-u(y)\vert ^{q}}{\vert x-y\vert^{N+s q} }dxdy < \infty \right\rbrace,
\]
and it is endowed with the norm \( \Vert u\Vert _{W^{s, q}(\mathbb{R}^{N})}^{q} := \Vert u\Vert^{q}  _{L^{q}(\mathbb{R}^{N})} + [u]^{q}_{s, q} \). The space \( W_{0}^{s, p}(\Omega) \) is the set of functions defined as:
\[
W_{0}^{s, q}(\Omega) :=\left\lbrace u\in  W^{s, q}\left( \mathbb{R}^{N}\right) \quad \vert \quad u=0 \quad \text{a.e. in}\quad \mathbb{R}^{N}\setminus\Omega \right\rbrace.
\]
The associated Banach norm in the space \( W_{0}^{s, q}(\Omega) \) is given by the Gagliardo semi-norm:
\[
\Vert u\Vert _{W_{0}^{s, q}(\Omega )}:=[u]_{s, q}.
\]
However, the space \( W_{0}^{s, q}(\Omega) \) can be treated as the closure of \( C^{\infty}_{c} (\Omega) \) with respect to the norm \( [u]_{s, q} \) if \( \partial \Omega \) is smooth enough, where
\[
C^{\infty}_{c} (\Omega) := \left\lbrace \varphi\, : \, \mathbb{R}^{N} \to \mathbb{R}\,:\, \varphi \in C^{\infty}(\mathbb{R}^{N}) \quad \text{and} \quad \text{supp}(\varphi) \Subset \Omega \right\rbrace.
\]
In the following, we recall the fractional Poincaré inequality:
\begin{theorem}[{\cite[Theorem 6.5]{ref03}}] \label{thm3}
	Let \( s \in (0,1) \), \( q \geq 1 \) with \( N > sq \). Then, there exists a positive constant \( C = C (N, q, s, \Omega) \) such that, for any measurable and compactly supported \( u : \mathbb{R}^N \to \mathbb{R} \) function, we have
	\[
	\| u\| _{L^{q^{*}_{s}}(\mathbb{R}^N)}^{q} \leq C \iint_{\mathbb{R}^{2N}}  \dfrac{\vert u(x)-u(y)\vert ^{q}}{\vert x-y\vert^{N+s q} }dxdy,
	\]
	where \( q^{*}_{s} \) is the fractional Sobolev critical exponent, defined as \( \frac{Nq}{N - sq} \).
\end{theorem}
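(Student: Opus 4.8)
The plan is to establish the inequality first for $u \in C_{c}^{\infty}(\mathbb{R}^{N})$ and then pass to a general measurable, compactly supported $u$ by approximation. For such a general $u$, if the right-hand side is infinite there is nothing to prove; otherwise the finiteness of the Gagliardo seminorm together with the compactness of $\supp u$ forces $u \in L^{q}(\mathbb{R}^{N})$ (apply Jensen's inequality on a ball containing $\supp u$), hence $u \in W^{s,q}(\mathbb{R}^{N})$. One then mollifies, $u_{\varepsilon} := u * \rho_{\varepsilon} \in C_{c}^{\infty}(\mathbb{R}^{N})$, notes that the Gagliardo seminorm does not increase under convolution (by Minkowski's integral inequality and translation invariance), so $[u_{\varepsilon}]_{s,q} \leq [u]_{s,q}$, extracts an a.e.\ convergent subsequence $u_{\varepsilon} \to u$ from the $L^{q}$-convergence, and concludes by Fatou's lemma. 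Thus the entire difficulty lies in the smooth, compactly supported case, where in particular $\| u \|_{L^{q^{*}_{s}}(\mathbb{R}^{N})} < \infty$.

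The core of the argument is a pointwise lower bound for the ``single-point energy''
\begin{equation*}
G(x) := \int_{\mathbb{R}^{N}} \frac{|u(x) - u(y)|^{q}}{|x - y|^{N + sq}}\, dy , \qquad x \in \mathbb{R}^{N} .
\end{equation*}
Fix $x$ with $u(x) \neq 0$, put $m := |u(x)|$ and $E := \{ y \in \mathbb{R}^{N} : |u(y)| \geq m/2 \}$; since $u$ has compact support, $|E| < \infty$. Whenever $|u(y)| < m/2$ we have $|u(x) - u(y)| \geq m/2$ by the triangle inequality, so $G(x) \geq (m/2)^{q} \int_{E^{c}} |x - y|^{-N - sq}\, dy$. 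A rearrangement (bathtub) inequality shows that, among all measurable sets of measure $|E|$, the integral $\int_{E^{c}} |x - y|^{-N - sq}\, dy$ is minimized by the ball $B_{\rho}(x)$ with $\omega_{N} \rho^{N} = |E|$ (here $\omega_{N}$ is the measure of the unit ball), and an elementary computation gives $\int_{B_{\rho}(x)^{c}} |x - y|^{-N - sq}\, dy = \tfrac{N \omega_{N}}{sq}\, \rho^{-sq} = c_{N,s,q}\, |E|^{-sq/N}$. Combining the two, for a.e.\ $x$,
\begin{equation*}
|u(x)|^{q} \;\leq\; C_{0}\, G(x)\, \big| \{ \, |u| \geq |u(x)|/2 \, \} \big|^{sq/N} .
\end{equation*}

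To conclude, multiply this inequality by $|u(x)|^{q^{*}_{s} - q}$, use the identity $q^{*}_{s} - q = \frac{sq\, q^{*}_{s}}{N}$ (equivalent to the definition of $q^{*}_{s}$) and the Chebyshev bound $|\{ |u| \geq |u(x)|/2 \}| \leq 2^{q^{*}_{s}}\, |u(x)|^{-q^{*}_{s}}\, \| u \|_{L^{q^{*}_{s}}}^{q^{*}_{s}}$ to obtain $|u(x)|^{q^{*}_{s}} \leq C_{1}\, G(x)\, \| u \|_{L^{q^{*}_{s}}}^{sq\, q^{*}_{s}/N}$ for a.e.\ $x$. Integrating in $x$ and observing that $\int_{\mathbb{R}^{N}} G(x)\, dx = \iint_{\mathbb{R}^{2N}} \frac{|u(x) - u(y)|^{q}}{|x - y|^{N + sq}}\, dx\, dy$, one arrives at $\| u \|_{L^{q^{*}_{s}}}^{q^{*}_{s}} \leq C_{1}\, \| u \|_{L^{q^{*}_{s}}}^{sq\, q^{*}_{s}/N}\, [u]_{s,q}^{q}$; since $0 < \| u \|_{L^{q^{*}_{s}}} < \infty$ and $q^{*}_{s} - \frac{sq\, q^{*}_{s}}{N} = q$, dividing yields precisely $\| u \|_{L^{q^{*}_{s}}(\mathbb{R}^{N})}^{q} \leq C\, [u]_{s,q}^{q}$, as claimed.

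The step I expect to be the main obstacle is the pointwise estimate, and within it the rearrangement inequality: one must verify that the ball of the prescribed measure really is the extremal configuration so that the sharp exponent $|E|^{-sq/N}$ appears — it is this exponent that makes the final algebra close. The remaining manipulations are routine once one keeps careful track of the powers of $\| u \|_{L^{q^{*}_{s}}}$, which cancel thanks to the relation $q^{*}_{s} = Nq/(N - sq)$. A secondary technical point is the approximation argument, which requires knowing a priori that $u \in L^{q}$. (For the special case $q = 2$ one could alternatively argue via Plancherel, since $[u]_{s,2}^{2} \simeq \| (-\Delta)^{s/2} u \|_{L^{2}}^{2}$, by writing $u$ as a Riesz potential of $(-\Delta)^{s/2} u$ and invoking the Hardy--Littlewood--Sobolev inequality; but this route does not extend to $q \neq 2$.)
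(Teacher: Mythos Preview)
The paper does not supply its own proof of this theorem: it is quoted verbatim as \cite[Theorem 6.5]{ref03} (the ``Hitchhiker's guide to the fractional Sobolev spaces'') and used as a black box throughout. So there is no in-paper argument to compare against.

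That said, your proof is correct and is essentially the argument of the cited reference. The pointwise lower bound on $G(x)$ via the level set $\{|u|\ge |u(x)|/2\}$, the bathtub/rearrangement step producing the exponent $|E|^{-sq/N}$, the Chebyshev substitution, and the algebraic closure using $q_s^{*}-q = sq\,q_s^{*}/N$ are exactly the ingredients in \cite{ref03}; your approximation step (mollification plus $[u_\varepsilon]_{s,q}\le[u]_{s,q}$ and Fatou) is also standard. One cosmetic remark: the constant you obtain depends only on $N,s,q$, not on $\Omega$; the paper's statement lists $\Omega$ among the parameters only because it is applied to functions in $W_0^{s,q}(\Omega)$, but the inequality itself is domain-free.
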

\begin{remark}\label{remark1}
	In light of Theorem \ref{thm3}, it is easy to see that \( \|\cdot\| _{W^{s, q}(\mathbb{R}^N)} \) and \( \|\cdot\| _{W_0^{s, q}(\Omega )} \) are equivalent norms on \( W_0^{s, q}(\Omega) \). According to the results in \cite{ref03}, we have that \( W^{s,q}_0(\Omega) \) is continuously embedded in \( L^r(\Omega) \) when \( 1 \leq r \leq q^{*}_{s} \), and compactly for \( 1 \leq r < q^{*}_{s} \).
\end{remark}
\noindent Now, we consider the space \( \mathcal{W}^{1, p}(\Omega) \), defined as
\[
\mathcal{W}^{1, p}(\Omega) = \left\{ u \in W^{1,p}(\mathbb{R}^{N}) \, : \, u|_{\Omega} \in W^{1, p}_{0}(\Omega), \, u = 0 \, \text{ a.e. in } \mathbb{R}^{N} \setminus \Omega \right\}.
\]
Using \cite[Proposition 9.18]{brezis2011functional}, we can identify \( \mathcal{W}^{1, p}(\Omega) \) with the space \( W^{1, p}_{0}(\Omega) \) if \( \Omega \) admits a \( C^{1} -\)boundary.\\
On the other hand, we denote by \( d(\cdot) \) the distance function up to the boundary \( \partial\Omega \). That means
\[
d(x)  := \text{dist}(x, \partial\Omega) = \inf_{y \in \partial\Omega}\left| x - y\right|.
\]
As stated in \cite{ref08}, we smoothly extend the distance function $ d(\cdot) $ in \( \Omega^{\text{c}} = \mathbb{R}^{N} \setminus \Omega \), for \( \rho > 0 \), as follows:
\begin{align*}
d_{e}(x) :=
\begin{cases}
d(x)  & \text{ if } x \in \Omega,\\
- d(x)  & \text{ if } x \in (\Omega^{\text{c}})_{\rho},\\
- \rho & \text{ otherwise}.
\end{cases}
\end{align*}
Here, \( (\Omega^{\text{c}})_{\rho} = \left\{ x \in \Omega^{\text{c}} : \text{dist}(x, \partial\Omega) < \rho \right\} \). Hence, for \( \gamma, \rho > 0 \) and \( \kappa \geq 0 \), we define:
\begin{align*}
\underline{w}_{\rho}(x) =
\begin{cases}
(d_{e}(x) + \kappa^{\frac{1}{\gamma}})^{\gamma}_{+} - \kappa & \text{ if } x \in \Omega \cup (\Omega^{\text{c}})_{\rho},\\
- \kappa & \text{ otherwise}.
\end{cases}
\end{align*}
\begin{theorem}[{\cite[Theorem 3.3]{ref08}}]\label{theorem2} 
	There exist \( \kappa_{*}, \varrho_{*} > 0 \) such that for all \( \kappa \in \left[0, \kappa_{*}\right) \) and \( \gamma \in (0, s) \), there exists a positive constant \( M \) such that for all \( \varrho \in (0, \varrho_{*}) \):
	\[
	(- \Delta)^{s}_{q}\underline{w}_{\rho}  \leq M (d(x) + \kappa^{\frac{1}{\gamma}})^{\gamma (q-1) - qs} \quad \text{ weakly in } \Omega_{\varrho},
	\]
	where \( \Omega_{\varrho} = \left\{ x \in \Omega : d(x) < \varrho \right\}. \) Further, for all \( \kappa > 0 \) and \( \gamma \in (0, s) \), \( \underline{w}_{\rho} \in W^{s, q}(\Omega_{\varrho}) \).
\end{theorem}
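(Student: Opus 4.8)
The plan is to read the statement as a two‑sided barrier estimate for the regularized distance power $\underline{w}_{\rho}$ and to prove it pointwise first. Since $\partial\Omega\in C^{2}$, the distance $d(\cdot)$ is of class $C^{2}$ in a fixed one‑sided neighborhood of $\partial\Omega$; taking $\varrho_{*}$ smaller than the width of that neighborhood makes $\underline{w}_{\rho}=(d+\kappa^{1/\gamma})^{\gamma}-\kappa$ of class $C^{2}$ on $\Omega_{\varrho_{*}}$ (there $d+\kappa^{1/\gamma}>0$). As $\underline{w}_{\rho}$ is bounded on $\mathbb{R}^{N}$ for $\kappa,\rho$ in the stated ranges, the principal value defining $(-\Delta)^{s}_{q}\underline{w}_{\rho}(x_{0})$ converges at every $x_{0}\in\Omega_{\varrho}$, and for $0\le\varphi\in C^{\infty}_{c}(\Omega_{\varrho})$ a Fubini argument on $\mathrm{supp}\,\varphi$ identifies the weak action $\iint\frac{[\underline{w}_{\rho}(x)-\underline{w}_{\rho}(y)]^{q-1}(\varphi(x)-\varphi(y))}{|x-y|^{N+sq}}\,dx\,dy$ with $\int(-\Delta)^{s}_{q}\underline{w}_{\rho}\,\varphi\,dx$. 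Hence it suffices to prove $(-\Delta)^{s}_{q}\underline{w}_{\rho}(x_{0})\le M(d(x_{0})+\kappa^{1/\gamma})^{\gamma(q-1)-qs}$ for all $x_{0}\in\Omega_{\varrho}$.

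Fix such an $x_{0}$, let $\bar{x}\in\partial\Omega$ be its nearest boundary point (unique for $\varrho_{*}$ small), $\nu$ the inner unit normal at $\bar{x}$, and set $a:=\kappa^{1/\gamma}$, $b:=d(x_{0})+a$. Introduce the flat comparison barrier $w_{H}(x):=(\langle x-\bar{x},\nu\rangle+a)_{+}^{\gamma}-a^{\gamma}$ and its rescaling $\psi(z):=(\langle z,\nu\rangle+a/b)_{+}^{\gamma}$, so that $w_{H}(\bar{x}+bz)+a^{\gamma}=b^{\gamma}\psi(z)$. Since $\psi$ depends only on $\langle z,\nu\rangle$, integrating out the tangential variables and using $\langle z_{0},\nu\rangle+a/b=(d(x_{0})+a)/b=1$ at the rescaled base point $z_{0}=(x_{0}-\bar{x})/b$ reduces the operator to the one‑dimensional constant $(-\Delta)^{s}_{q}\psi(z_{0})=c_{N,s,q}\,C_{1}$ with $C_{1}:=2\,\mathrm{P.V.}\int_{\mathbb{R}}\frac{[1-\zeta_{+}^{\gamma}]^{q-1}}{|1-\zeta|^{1+sq}}\,d\zeta$; this $C_{1}$ is finite — the principal value converges at $\zeta=1$ by antisymmetry of the leading singularity, the integral converges absolutely as $\zeta\to+\infty$ because $\gamma(q-1)<qs$ (which follows from $\gamma<s<\tfrac{sq}{q-1}$), and as $\zeta\to-\infty$ because the integrand is $O(|\zeta|^{-1-sq})$. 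By the $(q-1)$‑homogeneity of $(-\Delta)^{s}_{q}$ and the dilation rule $(-\Delta)^{s}_{q}\bigl[u(\tfrac{\cdot-\bar{x}}{b})\bigr](\bar{x}+b\,\cdot)=b^{-sq}(-\Delta)^{s}_{q}u$, this gives $(-\Delta)^{s}_{q}w_{H}(x_{0})=c_{N,s,q}C_{1}\,b^{\gamma(q-1)-qs}$, and, by the same scaling, the near‑field piece $2\int_{B_{b/2}(x_{0})}\frac{[w_{H}(x_{0})-w_{H}(y)]^{q-1}}{|x_{0}-y|^{N+sq}}\,dy$ equals a finite constant times $b^{\gamma(q-1)-qs}$.

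Now decompose $(-\Delta)^{s}_{q}\underline{w}_{\rho}(x_{0})$ into the near field $B_{b/2}(x_{0})$ and its complement. On $B_{b/2}(x_{0})$ I compare with $w_{H}$: the $C^{2}$ expansion $d(x)-\langle x-\bar{x},\nu\rangle=O(|x-\bar{x}|^{2})$ yields $|\underline{w}_{\rho}-w_{H}|\lesssim b^{\gamma+1}$, $|\nabla(\underline{w}_{\rho}-w_{H})|\lesssim b^{\gamma}$ and $\|D^{2}(\underline{w}_{\rho}-w_{H})\|\lesssim b^{\gamma-1}$ there, and, combined with the elementary bounds $|[a]^{q-1}-[b]^{q-1}|\lesssim|a-b|(|a|+|b|)^{q-2}$ ($q\ge2$) and $\lesssim|a-b|^{q-1}$ ($1<q<2$) applied after antisymmetrizing the singular integral in $y\mapsto2x_{0}-y$, this shows the difference of the two near‑field integrals is $O(b^{\gamma(q-1)-qs+\theta})$ for some $\theta>0$, hence negligible once $\varrho_{*}$ is small; so the near‑field part of $(-\Delta)^{s}_{q}\underline{w}_{\rho}(x_{0})$ is $\le Cb^{\gamma(q-1)-qs}$. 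On the complement I discard the part of the integrand with $\underline{w}_{\rho}(y)\ge\underline{w}_{\rho}(x_{0})$ (it is nonpositive, so only helps); the remaining positive part is supported where $d_{e}(y)<d(x_{0})$, there $0\le\underline{w}_{\rho}(x_{0})-\underline{w}_{\rho}(y)\le b^{\gamma}$, and splitting $\{d_{e}(y)<d(x_{0})\}$ into $\Omega^{c}$ and the slab $\{y\in\Omega:d(y)<d(x_{0})\}$ and summing over dyadic annuli $|y-x_{0}|\sim2^{k}b$ — using $|\{d<d(x_{0})\}\cap B_{R}(x_{0})|\lesssim\min(R^{N-1},1)\,d(x_{0})$ — bounds this by $C\bigl(b^{-sq}+d(x_{0})\,b^{-1-sq}\bigr)b^{\gamma(q-1)}\le C'b^{\gamma(q-1)-qs}$, using $d(x_{0})\le b$. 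Summing the pieces gives the claim, with $M=M(N,p,q,s,\gamma,\Omega)$; the constants $\kappa_{*},\varrho_{*}$ are fixed so the linearization holds on $\Omega_{\varrho_{*}}$ and the $\theta$‑gain absorbs the lower‑order terms. The main obstacle is this error analysis: the flat comparison must be kept only in the near field — a global replacement would leave an unabsorbable $O(b^{-sq})$ tail — and the singular integrals of $t\mapsto[t]^{q-1}$ for $1<q<2$ require the antisymmetrization just described.

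Finally, for the membership assertion: when $\kappa>0$ the map $t\mapsto(t+\kappa^{1/\gamma})^{\gamma}$ is Lipschitz on $[0,\infty)$ and $d(\cdot)$ is $1$‑Lipschitz, so $\underline{w}_{\rho}$ is bounded and Lipschitz on the bounded set $\Omega_{\varrho}$; hence its Gagliardo seminorm $[\underline{w}_{\rho}]_{s,q}$ over $\Omega_{\varrho}\times\Omega_{\varrho}$ is finite and $\underline{w}_{\rho}\in L^{q}(\Omega_{\varrho})$, i.e. $\underline{w}_{\rho}\in W^{s,q}(\Omega_{\varrho})$.
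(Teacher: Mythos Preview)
The paper does not prove this statement: Theorem~\ref{theorem2} is quoted directly from an external reference (cited there as \cite[Theorem~3.3]{ref08}) and used as a black box, notably in the proof of Lemma~\ref{lemma2} to build the lower barrier $\underline{u}_{n}$. There is therefore no ``paper's own proof'' to compare your proposal against.

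That said, your sketch follows the standard strategy by which such barrier estimates are established in the literature on the fractional $q$-Laplacian: flatten the boundary at the nearest point, rescale to unit distance so that the principal term becomes the explicit one-dimensional integral $C_{1}$ (finite precisely because $\gamma(q-1)<sq$), and then control the curvature correction by a near-field/far-field split with dyadic annuli for the tail. The outline is sound. Two places would need more work in a full proof: (i) the near-field error bound $O(b^{\gamma(q-1)-qs+\theta})$ requires a careful second-order Taylor comparison of $(d+\kappa^{1/\gamma})^{\gamma}$ with the flat profile, and the antisymmetrization you invoke for $1<q<2$ must be carried through the nonlinear map $t\mapsto[t]^{q-1}$ rather than stated; (ii) uniformity of the constants in $\kappa\in[0,\kappa_{*})$ and in $\rho$ needs to be tracked, since the statement asserts a single $M$ working for all such $\kappa$ and all $\varrho<\varrho_{*}$. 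The $W^{s,q}(\Omega_{\varrho})$ membership via the Lipschitz property of $t\mapsto(t+\kappa^{1/\gamma})^{\gamma}$ for $\kappa>0$ is correct.
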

\noindent We will utilize the following lemmas in the upcoming discussions, starting with some useful inequalities:
\begin{lemma}[{\cite{ref06}}] \label{lemma1} 
	For \( p > 1 \) and for all \( \xi, \xi' \in \mathbb{R}^{N} \) with \( \left| \xi \right| + \left| \xi' \right| > 0 \), we have:
	\begin{align*}
	\left(\left| \xi \right|^{p-2} \xi - \left| \xi' \right|^{p-2} \xi'\right) \cdot \left(\xi - \xi'\right) &\geq C(p) \left( \left| \xi \right| + \left| \xi' \right| \right)^{p - 2} \left| \xi - \xi' \right|^{2},  \\[4pt]
	\left| \left| \xi \right|^{p-2} \xi - \left| \xi' \right|^{p-2} \xi' \right|  &\leq C(p) \left( \left| \xi\right| + \left| \xi' \right| \right)^{p - 2} \left| \xi - \xi' \right|,  \\[4pt]
	\left(\left| \xi \right|^{p-2} \xi - \left| \xi' \right|^{p-2}\xi'\right) \cdot \left(\xi  - \xi'\right) &\geq C(p) \left|\xi - \xi' \right|^{p}, \quad \text{ if } p \geq 2.
	\end{align*}
\end{lemma}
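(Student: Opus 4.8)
The plan is to treat Lemma \ref{lemma1} as the classical collection of Simon‑type inequalities for the $p$-Laplacian and to derive all three from the convexity of $\Phi(\xi):=\frac{1}{p}|\xi|^{p}$. The structural facts I would use are that $\Phi\in C^{1}(\mathbb{R}^{N})$ with $\nabla\Phi(\xi)=|\xi|^{p-2}\xi$, and that on $\{\xi\neq 0\}$
\[
D^{2}\Phi(\xi)=|\xi|^{p-2}\Big(\mathrm{Id}+(p-2)\,\frac{\xi\otimes\xi}{|\xi|^{2}}\Big),
\]
a symmetric matrix with eigenvalues $|\xi|^{p-2}$ (multiplicity $N-1$) and $(p-1)|\xi|^{p-2}$; hence $\min\{1,p-1\}\,|\xi|^{p-2}|v|^{2}\le D^{2}\Phi(\xi)v\cdot v$ and $|D^{2}\Phi(\xi)v|\le\max\{1,p-1\}\,|\xi|^{p-2}|v|$ for every $v\in\mathbb{R}^{N}$. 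First I would record, for $\xi_{t}:=\xi'+t(\xi-\xi')$, the two integral identities
\[
\big(|\xi|^{p-2}\xi-|\xi'|^{p-2}\xi'\big)\cdot(\xi-\xi')=\int_{0}^{1}D^{2}\Phi(\xi_{t})(\xi-\xi')\cdot(\xi-\xi')\,dt
\]
and
\[
|\xi|^{p-2}\xi-|\xi'|^{p-2}\xi'=\int_{0}^{1}D^{2}\Phi(\xi_{t})(\xi-\xi')\,dt ,
\]
which are legitimate because $t\mapsto\nabla\Phi(\xi_{t})$ is absolutely continuous on $[0,1]$: the segment hits $0$ for at most one $t$ (using $|\xi|+|\xi'|>0$) and $|\xi_{t}|^{p-2}$ is integrable there since $p-2>-1$.

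Next I would run the case analysis in $p$. When $1<p<2$, the lower eigenvalue bound makes the first integrand at least $(p-1)|\xi_{t}|^{p-2}|\xi-\xi'|^{2}$, and since $|\xi_{t}|\le|\xi|+|\xi'|$ with $p-2<0$ one has $|\xi_{t}|^{p-2}\ge(|\xi|+|\xi'|)^{p-2}$; integrating over $t\in[0,1]$ yields the first inequality. When $p\ge2$ the cross term $(p-2)|\xi_{t}|^{p-4}(\xi_{t}\cdot(\xi-\xi'))^{2}$ is nonnegative, so the integrand is at least $|\xi_{t}|^{p-2}|\xi-\xi'|^{2}$, and the first and third inequalities reduce to the elementary bounds $\int_{0}^{1}|\xi_{t}|^{p-2}\,dt\ge c(p)(|\xi|+|\xi'|)^{p-2}$ and $\int_{0}^{1}|\xi_{t}|^{p-2}\,dt\ge c(p)|\xi-\xi'|^{p-2}$; these I would get from convexity of $t\mapsto|\xi_{t}|$ by a case distinction on whether $|\xi-\xi'|$ is comparable to $|\xi|+|\xi'|$, bounding $|\xi_{t}|$ below on a subinterval of definite length via the endpoint values. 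For the second inequality I would use the second identity together with $|D^{2}\Phi(\xi_{t})(\xi-\xi')|\le\max\{1,p-1\}|\xi_{t}|^{p-2}|\xi-\xi'|$ and the upper bound $\int_{0}^{1}|\xi_{t}|^{p-2}\,dt\le C(p)(|\xi|+|\xi'|)^{p-2}$: for $p\ge2$ this is immediate from $|\xi_{t}|\le|\xi|+|\xi'|$, while for $1<p<2$ it follows by writing $|\xi_{t}|^{2}=|\xi-\xi'|^{2}(t-t_{0})^{2}+m^{2}$, with $m$ the distance from the origin to the segment and $t_{0}$ the closest parameter, and splitting the regimes $m\gtrsim|\xi|+|\xi'|$ and $m\ll|\xi|+|\xi'|$, the latter forcing $|\xi-\xi'|\gtrsim|\xi|+|\xi'|$.

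A softer alternative that avoids the integral bookkeeping is homogeneity and compactness: each inequality is positively homogeneous of degree $p$ in $(\xi,\xi')$, so it suffices to estimate the relevant quotient on the compact set $\{|\xi|+|\xi'|=1\}$, where it is continuous off the diagonal $\{\xi=\xi'\}$ and, by a second‑order Taylor expansion of $\nabla\Phi$ around any $\xi_{0}\neq 0$ together with the uniform ellipticity of $D^{2}\Phi$ near $\xi_{0}$, stays between two positive constants near the diagonal as well. In either approach the one genuinely delicate point is the singular behaviour of $|\xi|^{p-2}\xi$ at the origin when $1<p<2$ — controlling $\int_{0}^{1}|\xi_{t}|^{p-2}\,dt$ when the segment passes close to $0$ — together with the nearly antipodal configuration when $p\ge2$; I expect this elementary case bookkeeping, not any conceptual difficulty, to be where the real work lies. (Alternatively one simply cites \cite{ref06}.)
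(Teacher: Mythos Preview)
The paper does not prove this lemma at all: it is stated with a citation to \cite{ref06} and used as a black box throughout. Your proposal is the standard Simon-type derivation via the Hessian of $\Phi(\xi)=\tfrac{1}{p}|\xi|^{p}$ and the integral representation along the segment $\xi_{t}=\xi'+t(\xi-\xi')$, with the usual case split $1<p<2$ versus $p\ge 2$; this is correct and is essentially the argument one finds in the reference. Your homogeneity-and-compactness alternative is also valid and is a clean way to sidestep the bookkeeping near the origin. In short, you are supplying a full proof where the paper is content to cite one; nothing is missing.
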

\noindent Additionally, we require the following lemma, which was proved in \cite{Stampacchia1965}:
\begin{lemma}\label{Lemma5}
	Let \( \Psi : \mathbb{R}^{+} \to  \mathbb{R}^{+} \) be a non-increasing function such that 
	\[
	\Psi(h) \leq \frac{C}{(h - k)^{\eta}} \Psi(k)^{\delta}, \quad \text{ for all } h > k > 1,
	\]
	where \( C > 0 \), \( \delta > 1 \), and \( \eta > 0 \). Then \( \Psi(d) = 0 \), where \( d^{\eta} = C \Psi(0)^{\delta - 1} 2^{\frac{\delta \eta}{\delta - 1}} \).
\end{lemma}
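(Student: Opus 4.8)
The plan is to run the classical De Giorgi--Stampacchia dyadic iteration on the level parameters. Fix $d$ as prescribed, namely $d^{\eta} = C\,\Psi(0)^{\delta-1}\,2^{\delta\eta/(\delta-1)}$, and introduce the increasing sequence
\[
k_{n} := d\bigl(1 - 2^{-n}\bigr), \qquad n \geq 0,
\]
so that $k_{0} = 0$, $k_{n} \uparrow d$, and $k_{n+1} - k_{n} = d\,2^{-(n+1)}$. (If one reads the hypothesis with the literal restriction $k > 1$, it suffices to translate the whole construction, replacing the base point $0$ by any fixed point $\geq 1$; the scheme is identical and only the reference value $\Psi(0)$ changes accordingly.)

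First I would feed the hypothesis the pair $h = k_{n+1}$, $k = k_{n}$, which yields the recursion
\[
\Psi(k_{n+1}) \;\leq\; \frac{C}{(k_{n+1}-k_{n})^{\eta}}\,\Psi(k_{n})^{\delta} \;=\; \frac{C\,2^{(n+1)\eta}}{d^{\eta}}\,\Psi(k_{n})^{\delta}.
\]
Next I would establish, by induction on $n$, the geometric decay
\[
\Psi(k_{n}) \;\leq\; \Psi(0)\,2^{-\mu n}, \qquad \mu := \frac{\eta}{\delta-1} > 0 .
\]
The case $n = 0$ is trivial. For the step, substituting the bound on $\Psi(k_{n})$ into the recursion and asking that the outcome be $\leq \Psi(0)\,2^{-\mu(n+1)}$ reduces, after collecting powers of $2$, to the $n$-independent inequality $d^{\eta} \geq C\,\Psi(0)^{\delta-1}\,2^{\eta+\mu}$; the $n$-dependent exponents cancel precisely because $\mu(\delta-1) = \eta$, and since $\eta + \mu = \eta\delta/(\delta-1)$ this is exactly the defining relation for $d$ (satisfied with equality). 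Hence the induction closes.

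Finally, since $\delta > 1$ gives $\mu > 0$, the bound $\Psi(k_{n}) \leq \Psi(0)\,2^{-\mu n}$ forces $\Psi(k_{n}) \to 0$; using that $\Psi$ is non-increasing and $k_{n} \leq d$, we get $0 \leq \Psi(d) \leq \Psi(k_{n}) \to 0$, whence $\Psi(d) = 0$. The only delicate point is the exponent bookkeeping in the inductive step — verifying that the stated value of $d$ is exactly the threshold that makes the iteration self-improving — together with the harmless relocation of the base point to respect the domain constraint in the hypothesis; beyond this there is no analytic difficulty.
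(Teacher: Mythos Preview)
Your argument is correct and is precisely the classical Stampacchia iteration; the paper does not supply its own proof of this lemma but merely cites \cite{Stampacchia1965}, so there is nothing to compare beyond noting that you have reproduced the standard proof from that reference. Your parenthetical remark about relocating the base point to accommodate the constraint $k>1$ in the hypothesis is appropriate, since the lemma as stated uses $\Psi(0)$ in the formula for $d$ while only assuming the recursive inequality for $k>1$; in the paper's application (proof of Theorem~\ref{Theorem4}) the levels start at $k\geq 1$, so no issue arises there.
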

\noindent By the Discrete Picone inequality \cite[Proposition 4.1]{brasco2014convexity}, \cite{giacomoni2021existence} proved the following weak comparison principle:
\begin{lemma}\label{Lemma2}
	Let \( 1 < q < \infty \). Then, for \( 1 < r \leq q \) and for any \( u, v \), two Lebesgue measurable and positive functions in \( \Omega \), we have:
{\small 	\begin{equation*}
	\begin{aligned}
	&\left[ u(x) - u(y) \right]^{q-1} \left( \frac{u(x)^{r} - v(x)^{r}}{u(x)^{r-1}} - \frac{u(y)^{r} - v(y)^{r}}{u(y)^{r-1}} \right) + \left[ v(x) - v(y) \right]^{q-1} \left( \frac{v(x)^{r} - u(x)^{r}}{v(x)^{r-1}} - \frac{v(y)^{r} - u(y)^{r}}{v(y)^{r-1}} \right) \geq 0,
	\end{aligned}
	\end{equation*}}
for a.e. $ x, y \in \Omega. $
\end{lemma}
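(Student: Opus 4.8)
\emph{Approach.} The claim is a pointwise algebraic inequality, so I would fix a pair $(x,y)\in\Omega\times\Omega$ for which all four numbers $u(x),u(y),v(x),v(y)$ are finite and strictly positive — which is the case for a.e. $(x,y)$, as $u,v>0$ are measurable — and set $a:=u(x)$, $b:=u(y)$, $c:=v(x)$, $e:=v(y)$, so $a,b,c,e>0$. The plan is in three steps: (i) rewrite the left-hand side as a Picone-type expression; (ii) apply the discrete Picone inequality of \cite[Proposition~4.1]{brasco2014convexity} twice, once for $(u,v)$ and once with the roles of $u$ and $v$ interchanged; (iii) conclude with an elementary Young inequality.

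For (i) I would use the identity $\dfrac{a^{r}-c^{r}}{a^{r-1}}=a-\dfrac{c^{r}}{a^{r-1}}$ and its three analogues, together with $[t]^{q-1}t=|t|^{q}$, to check by a direct computation that the left-hand side of the asserted inequality equals
\[
|a-b|^{q}+|c-e|^{q}-[a-b]^{q-1}\!\left(\frac{c^{r}}{a^{r-1}}-\frac{e^{r}}{b^{r-1}}\right)-[c-e]^{q-1}\!\left(\frac{a^{r}}{c^{r-1}}-\frac{b^{r}}{e^{r-1}}\right).
\]
Thus it suffices to show that the sum of the last two products does not exceed $|a-b|^{q}+|c-e|^{q}$.

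For (ii) I would invoke the discrete Picone inequality in the form valid for $1<r\le q$: for all $s_{1},s_{2}>0$ and $t_{1},t_{2}\ge 0$,
\[
[s_{1}-s_{2}]^{q-1}\!\left(\frac{t_{1}^{r}}{s_{1}^{r-1}}-\frac{t_{2}^{r}}{s_{2}^{r-1}}\right)\le |s_{1}-s_{2}|^{q-r}\,|t_{1}-t_{2}|^{r}
\]
(if only the case $r=q$ is available in the cited reference, the range $r<q$ follows from it by factoring out $|s_{1}-s_{2}|^{q-r}$ and using positive $q$-homogeneity, the degenerate case $s_{1}=s_{2}$ being immediate). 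Applying this once with $(s_{1},s_{2},t_{1},t_{2})=(a,b,c,e)$ and once with $(s_{1},s_{2},t_{1},t_{2})=(c,e,a,b)$ bounds the quantity in (i) above by $|a-b|^{q-r}|c-e|^{r}+|c-e|^{q-r}|a-b|^{r}$. For (iii) it then remains to note that
\[
X^{q-r}Y^{r}+X^{r}Y^{q-r}\le X^{q}+Y^{q}\qquad\text{for all }X,Y\ge 0,
\]
which is Young's inequality with the conjugate exponents $\tfrac{q}{q-r}$ and $\tfrac{q}{r}$ (and is trivial when $q=r$); taking $X=|a-b|$ and $Y=|c-e|$ finishes the argument.

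\emph{Main difficulty.} The only step that is not pure bookkeeping is the application of the discrete Picone inequality in (ii): one must use precisely the exponent splitting $q=(q-r)+r$ on its right-hand side and take care of the degenerate configurations ($a=b$, $c=e$, or a vanishing numerator), but each of these is covered either directly by the cited statement or by the homogeneity reduction indicated above; everything else is routine algebra.
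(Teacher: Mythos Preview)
Your proof is correct and follows essentially the approach the paper points to: the paper does not give its own proof of this lemma but cites \cite{giacomoni2021existence}, noting that it rests on the discrete Picone inequality \cite[Proposition~4.1]{brasco2014convexity}, and your three-step scheme (rewrite via $\tfrac{a^{r}-c^{r}}{a^{r-1}}=a-\tfrac{c^{r}}{a^{r-1}}$, apply discrete Picone twice, close with Young) is exactly the structure the paper spells out in detail for the local analogue, Lemma~\ref{Lemma3}. Your handling of the range $1<r<q$ via the factorisation $[s_{1}-s_{2}]^{q-1}=|s_{1}-s_{2}|^{q-r}[s_{1}-s_{2}]^{r-1}$ together with Young's inequality is precisely what is needed and mirrors the paper's treatment of the case $r<p$ there.
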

\noindent Similarly, we can obtain the same result in the local case, as outlined below. For the reader's convenience, we provide some details of the proof.
\begin{lemma}\label{Lemma3}
	Let \( 1 < p < \infty \). Then, for \( 1 < r \leq p \) and for any \( u, v \) two positive differentiable functions in \( \Omega \), we have:
	\begin{equation}\label{locequ0} 
	\left[\nabla u\right] ^{p-1} \nabla\left( \frac{u^{r} - v^{r}}{u^{r-1}} \right) +  \left[\nabla v\right]^{p-1} \nabla\left( \frac{v^{r} - u^{r}}{v^{r-1}} \right) \geq 0, 
	\end{equation}
for all  $ x \in \Omega. $
\end{lemma}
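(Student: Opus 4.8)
The plan is to prove \eqref{locequ0} pointwise, reducing it to an algebraic inequality in the four numbers $\nabla u(x)$, $\nabla v(x)$, $u(x)>0$, $v(x)>0$. First I would compute the gradient of the Picone test quantity: writing $\frac{u^{r}-v^{r}}{u^{r-1}} = u - v^{r}u^{1-r}$ and differentiating,
\[
\nabla\!\left(\frac{u^{r}-v^{r}}{u^{r-1}}\right) = \left(1 + (r-1)\Big(\tfrac{v}{u}\Big)^{r}\right)\nabla u - r\Big(\tfrac{v}{u}\Big)^{r-1}\nabla v ,
\]
and symmetrically for the $v$-term. Substituting into the left-hand side of \eqref{locequ0}, collecting terms, and setting $t := v(x)/u(x) > 0$, it becomes
\[
\Big(1+(r-1)t^{r}\Big)|\nabla u|^{p} + \Big(1+(r-1)t^{-r}\Big)|\nabla v|^{p} - r\,t^{r-1}|\nabla u|^{p-2}\nabla u\cdot\nabla v - r\,t^{1-r}|\nabla v|^{p-2}\nabla v\cdot\nabla u .
\]

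Next I would bound the two mixed terms from below using Cauchy--Schwarz, $|\nabla u|^{p-2}\nabla u\cdot\nabla v \leq |\nabla u|^{p-1}|\nabla v|$ and likewise for the other one; since the coefficients $r t^{r-1}$ and $r t^{1-r}$ are positive this is legitimate, and it reduces everything to showing, with $a=|\nabla u|\geq 0$, $b=|\nabla v|\geq 0$,
\[
F(a,b,t) := \Big(1+(r-1)t^{r}\Big)a^{p} + \Big(1+(r-1)t^{-r}\Big)b^{p} - r\,t^{r-1}a^{p-1}b - r\,t^{1-r}a\,b^{p-1} \;\geq\; 0
\]
for all $a,b\geq 0$, $t>0$ and $1<r\leq p$. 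The cases $a=0$ or $b=0$ are immediate, so one may assume $a,b>0$.

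To prove $F\geq 0$ I would use a tuned weighted Young inequality: for all $\lambda,\mu>0$,
\[
a^{p-1}b \leq \tfrac{p-1}{p}\lambda\,a^{p} + \tfrac{1}{p}\lambda^{1-p}b^{p}, \qquad a\,b^{p-1} \leq \tfrac{1}{p}\mu^{1-p}a^{p} + \tfrac{p-1}{p}\mu\,b^{p}.
\]
The key choice is $\lambda = t^{\frac{r-1}{p-1}}$ and $\mu = t^{-\frac{r-1}{p-1}}$, for which $t^{r-1}\lambda^{1-p} = t^{1-r}\mu^{1-p} = 1$; after substitution $F(a,b,t)$ is bounded below by $\big(1+(r-1)t^{r} - \tfrac{(p-1)r}{p}t^{\theta} - \tfrac{r}{p}\big)a^{p}$ plus the $t\mapsto 1/t$ analogue of this coefficient times $b^{p}$, where $\theta := \frac{(r-1)p}{p-1}$. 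It then suffices to prove $h(t) := p(r-1)t^{r} - (p-1)r\,t^{\theta} + (p-r) \geq 0$ for $t>0$. Since $1<r\leq p$ one has $0<\theta\leq r$, and a direct computation gives $h(1)=0$ and $h'(t) = pr(r-1)\,t^{\theta-1}\big(t^{r-\theta}-1\big)$, which is negative on $(0,1)$ and positive on $(1,\infty)$ (and $h\equiv 0$ when $r=p$); hence $t=1$ is the global minimum and $h\geq 0$. The $b^{p}$-coefficient is the same after $t\mapsto 1/t$, so combining the three steps proves \eqref{locequ0}.

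I expect the main obstacle to be the scalar inequality $F\geq 0$: the mixed monomials $a^{p-1}b$ and $a\,b^{p-1}$ cannot be absorbed by the ``diagonal'' terms $t^{r}a^{p}$ and $t^{-r}b^{p}$ alone, and a naive Young splitting with constant parameters blows up as $t\to 0$ or $t\to\infty$. The role of the choice $\lambda=t^{(r-1)/(p-1)}$ is precisely to collapse one of the two $b^{p}$-contributions to the constant $\tfrac{r}{p}$, leaving a one-variable function $h$ whose tangency $h(1)=h'(1)=0$, together with $\theta\leq r$ (that is, exactly the hypothesis $r\leq p$), forces nonnegativity; for $r=p$ the inequality degenerates to an equality along $\nabla v=(v/u)\nabla u$, recovering the classical Picone identity for the $p$-Laplacian.
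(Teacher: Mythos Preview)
Your proof is correct. Your direct computation of the gradients, the Cauchy--Schwarz reduction to the scalar inequality $F(a,b,t)\geq 0$, and the tuned Young inequality with $\lambda=t^{(r-1)/(p-1)}$ all check out; the one-variable analysis of $h(t)=p(r-1)t^{r}-(p-1)r\,t^{\theta}+(p-r)$ is accurate, with $r-\theta=(p-r)/(p-1)\geq 0$ giving the monotonicity structure you claim.

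The paper takes a shorter but less self-contained route. It invokes the pointwise Picone inequality of Brasco--Franzina \cite[Proposition~2.9]{brasco2014convexity},
\[
[\nabla u]^{p-1}\,\nabla\!\left(\frac{v^{r}}{u^{r-1}}\right)\leq |\nabla v|^{r}|\nabla u|^{p-r},
\]
as a black box; for $r=p$ this immediately gives $[\nabla u]^{p-1}\nabla\big(\tfrac{u^{p}-v^{p}}{u^{p-1}}\big)\geq |\nabla u|^{p}-|\nabla v|^{p}$, while for $1<r<p$ one applies the standard (unweighted) Young inequality $|\nabla v|^{r}|\nabla u|^{p-r}\leq \tfrac{r}{p}|\nabla v|^{p}+\tfrac{p-r}{p}|\nabla u|^{p}$ to obtain $[\nabla u]^{p-1}\nabla\big(\tfrac{u^{r}-v^{r}}{u^{r-1}}\big)\geq \tfrac{r}{p}(|\nabla u|^{p}-|\nabla v|^{p})$; in either case adding the $u\leftrightarrow v$ symmetric inequality finishes. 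In effect your Cauchy--Schwarz plus parameter-tuned Young argument reproves the Brasco--Franzina Picone inequality inside the argument, whereas the paper outsources that step. What you gain is a fully elementary, reference-free proof that handles $r=p$ and $r<p$ uniformly; what the paper gains is brevity.
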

\begin{proof}
Let \( u \) and \( v \) be two differentiable functions satisfying \( u, v > 0 \) in \( \Omega \) and \( 1 < r \leq p \). Now, by applying \cite[Proposition 2.9]{brasco2014convexity}, we obtain
	\begin{equation} \label{pic}
	\left[ \nabla u \right]^{p-1} \nabla\left( \frac{v^r}{u^{r-1}} \right) \leq \left| \nabla v \right|^r \left| \nabla u \right|^{p - r}.
	\end{equation}
	Let us start with the case \( r = p \). By applying the above inequality, we obtain
	\begin{equation} \label{Le1}
	\left[ \nabla u \right]^{p-1} \nabla \left( \frac{u^{p} - v^{p}}{u^{p-1}} \right) \geq \left| \nabla u \right|^{p} - \left| \nabla v \right|^{p}.
	\end{equation}
	By exchanging the roles of \( u \) and \( v \), we obtain
	\begin{equation}\label{Le2}
	\left[ \nabla v \right]^{p-1} \nabla \left( \frac{v^{p} - u^{p}}{v^{p-1}} \right) \geq \left| \nabla v \right|^{p} - \left| \nabla u \right|^{p}.
	\end{equation}	
	Combining \eqref{Le1} and \eqref{Le2}, we get
	\begin{align*}
	\left[ \nabla u \right]^{p-1} \nabla \left( \frac{u^{p} - v^{p}}{u^{p-1}} \right) + \left[ \nabla v \right]^{p-1} \nabla \left( \frac{v^{p} - u^{p}}{v^{p-1}} \right) \geq 0,
	\end{align*}
	which concludes the proof of \eqref{locequ0} for \( r = p \). Now we deal with the case \( 1 < r < p \). By using Young's inequality, \eqref{pic} implies
	\begin{equation*} 
	\left[ \nabla u \right] ^{p-1} \nabla \left( \frac{u^{r} - v^{r}}{u^{r-1}} \right) \geq \frac{r}{p} \left( \left| \nabla u \right|^{p} - \left| \nabla v \right|^{p} \right).
	\end{equation*}
	Reversing the roles of \( u \) and \( v \), we have
	\begin{equation*} 
	\left[ \nabla v \right] ^{p-1} \nabla \left( \frac{v^{r} - u^{r}}{v^{r-1}} \right) \geq \frac{r}{p} \left( \left| \nabla v \right|^{p} - \left| \nabla u \right|^{p} \right).
	\end{equation*}
	Adding the above inequalities gives the desired result.
\end{proof}
\noindent  Now, we have the following result:
\begin{lemma}\label{Lemma1}
	For any \( s \in (0, 1) \), \( 1 < p < \infty \), and \( 1 < q \leq p \), there exists a constant \( \textbf{C}_{1} = \textbf{C}_{1}(N, p, q, s, \Omega) \) such that
	\[
	\left\| u \right\|_{W^{s, q}_{0}(\Omega)} \leq \textbf{C}_{1} \left\| u \right\|_{W^{1, p}_{0}(\Omega)} \quad \text{for every } u \in \mathcal{W}^{1, p}(\Omega).
	\]
	Moreover, there exists a constant \( \textbf{C}_{2} = \textbf{C}_{2}(N, p, q, s, \Omega) > 0 \) such that
	\[
	\left\| u \right\|_{W^{s, q}(\Omega)} \leq \textbf{C}_{2} \left\| u \right\|_{W^{1, p}(\Omega)} \quad \text{for every } u \in \mathcal{W}^{1, p}(\Omega).
	\]
\end{lemma}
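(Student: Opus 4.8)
The plan is to establish the first estimate for \( u \in C_{c}^{\infty}(\Omega) \) by a direct computation on the Gagliardo integral, and then pass to an arbitrary \( u \in \mathcal{W}^{1,p}(\Omega) \) by density, using that \( \mathcal{W}^{1,p}(\Omega) \) is identified with \( W^{1,p}_{0}(\Omega) \) (the boundary being \( C^{2} \)) and that \( C_{c}^{\infty}(\Omega) \) is dense in \( W^{1,p}_{0}(\Omega) \). Throughout, an element of \( \mathcal{W}^{1,p}(\Omega) \) is viewed as a function on \( \mathbb{R}^{N} \) that vanishes, together with its gradient, a.e. outside \( \Omega \); in particular \( \left\| u \right\|_{W^{s,q}_{0}(\Omega)}^{q} = [u]_{s,q}^{q} = \iint_{\mathbb{R}^{2N}} |u(x)-u(y)|^{q}\,|x-y|^{-N-sq}\, dx\, dy \), and the point is to control this full \( \mathbb{R}^{2N} \) integral.

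Fix \( u \in C_{c}^{\infty}(\Omega) \) and split \( [u]_{s,q}^{q} = I_{1} + I_{2} \), where \( I_{1} \) is the contribution of \( \{ |x-y| < 1 \} \) and \( I_{2} \) that of \( \{ |x-y| \geq 1 \} \). For the far part, using \( |u(x)-u(y)|^{q} \leq 2^{q-1}(|u(x)|^{q} + |u(y)|^{q}) \), the fact that \( u \) is supported in \( \Omega \), and \( \int_{\{ |z| \geq 1 \}} |z|^{-N-sq}\, dz < \infty \), one gets \( I_{2} \leq C(N,s,q)\,\left\| u \right\|_{L^{q}(\Omega)}^{q} \). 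For the near-diagonal part I would use the elementary bound
\[
|u(x) - u(y)| \leq |x-y| \int_{0}^{1} |\nabla u((1-t)x + ty)|\, dt ,
\]
substitute \( z = y - x \), interchange the order of integration, and apply Jensen's inequality in the variable \( t \) together with the translation invariance of Lebesgue measure, which yields
\[
I_{1} \leq \left\| \nabla u \right\|_{L^{q}(\mathbb{R}^{N})}^{q} \int_{\{ |z| < 1 \}} |z|^{q(1-s)-N}\, dz = C(N,q,s)\,\left\| \nabla u \right\|_{L^{q}(\Omega)}^{q},
\]
the last integral being finite precisely because \( q(1-s) > 0 \). Since \( \Omega \) is bounded and \( q \leq p \), Hölder's inequality gives \( \left\| \nabla u \right\|_{L^{q}(\Omega)} \leq |\Omega|^{\frac{1}{q}-\frac{1}{p}} \left\| \nabla u \right\|_{L^{p}(\Omega)} \), and Theorem \ref{thm0} together with the boundedness of \( \Omega \) gives the Poincaré inequality \( \left\| u \right\|_{L^{q}(\Omega)} \leq C \left\| \nabla u \right\|_{L^{p}(\Omega)} \); combining the three estimates produces \( [u]_{s,q} \leq \textbf{C}_{1} \left\| \nabla u \right\|_{L^{p}(\Omega)} = \textbf{C}_{1} \left\| u \right\|_{W^{1,p}_{0}(\Omega)} \) for every \( u \in C_{c}^{\infty}(\Omega) \).

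To reach a general \( u \in \mathcal{W}^{1,p}(\Omega) \), I would take \( u_{n} \in C_{c}^{\infty}(\Omega) \) with \( u_{n} \to u \) in \( W^{1,p}_{0}(\Omega) \); by Poincaré \( u_{n} \to u \) also in \( L^{p}(\mathbb{R}^{N}) \), so a subsequence converges to \( u \) a.e. on \( \mathbb{R}^{N} \), and Fatou's lemma applied to \( |u_{n}(x)-u_{n}(y)|^{q}|x-y|^{-N-sq} \) on \( \mathbb{R}^{2N} \) gives \( [u]_{s,q} \leq \liminf_{n} [u_{n}]_{s,q} \leq \textbf{C}_{1} \left\| u \right\|_{W^{1,p}_{0}(\Omega)} \), which is the first inequality. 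The second inequality then costs nothing more: writing \( \left\| u \right\|_{W^{s,q}(\Omega)}^{q} = \left\| u \right\|_{L^{q}(\Omega)}^{q} + \iint_{\Omega \times \Omega} |u(x)-u(y)|^{q}|x-y|^{-N-sq}\, dx\, dy \leq \left\| u \right\|_{L^{q}(\Omega)}^{q} + [u]_{s,q}^{q} \), bounding \( \left\| u \right\|_{L^{q}(\Omega)} \) by \( \left\| \nabla u \right\|_{L^{p}(\Omega)} \) via Hölder and Poincaré as above, and using \( \left\| u \right\|_{W^{1,p}_{0}(\Omega)} = \left\| \nabla u \right\|_{L^{p}(\Omega)} \leq \left\| u \right\|_{W^{1,p}(\Omega)} \), we obtain \( \left\| u \right\|_{W^{s,q}(\Omega)} \leq \textbf{C}_{2} \left\| u \right\|_{W^{1,p}(\Omega)} \).

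No step here is deep. The only points that need care are keeping the Gagliardo seminorm over all of \( \mathbb{R}^{2N} \) — rather than just over \( \Omega \times \Omega \) — under control, which is exactly why the near-diagonal estimate is routed through the globally defined gradient \( \nabla u \in L^{p}(\mathbb{R}^{N}) \) instead of through a fractional Hardy inequality near \( \partial\Omega \); and the justification of the limit passage, for which Fatou's lemma is the natural tool. The hypotheses that are genuinely used are \( s < 1 \) (finiteness of \( \int_{\{ |z|<1 \}} |z|^{q(1-s)-N}\, dz \)), \( q \leq p \) together with \( |\Omega| < \infty \) (so that Hölder upgrades \( L^{q} \)-gradient control to \( L^{p} \)-gradient control), and \( N > p \) together with boundedness of \( \Omega \) (so that Theorem \ref{thm0} yields Poincaré).
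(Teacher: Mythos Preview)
Your proof is correct and follows essentially the same approach as the paper: both control the Gagliardo seminorm by splitting into a near-diagonal part (handled via the mean value inequality and a change of variables, yielding control by \(\|\nabla u\|_{L^{q}}\)) and a far part (handled by the trivial bound \(|u(x)-u(y)|^{q}\leq 2^{q-1}(|u(x)|^{q}+|u(y)|^{q})\), yielding control by \(\|u\|_{L^{q}}\)), then upgrade \(L^{q}\) to \(L^{p}\) via H\"older and Poincar\'e. The only organizational difference is that the paper first decomposes \(\mathbb{R}^{2N}\) as \(\Omega\times\Omega\) plus \(2(\Omega\times\Omega^{c})\) and splits near/far within each piece, whereas you do a single global near/far split and add a density-plus-Fatou step at the end; neither choice is essential.
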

\begin{proof}
	Since the case \( p = q \) is addressed in \cite[Lemma 2.1]{buccheri2022system}, we will focus on the case where \( 1 < q < p \). Initially, we observe that for all \( u \in \mathcal{W}^{1, p}(\Omega) \):
	\[
	\iint_{\mathbb{R}^{2N}} \dfrac{\vert u(x)-u(y)\vert^{q}}{\vert x-y\vert^{N+ s q}} \, dx \, dy = \underbrace{\iint_{\Omega \times \Omega} \dfrac{\vert u(x)-u(y)\vert^{q}}{\vert x-y\vert^{N+ s q}} \, dx \, dy}_{\textbf{I}_{1}} + 2 \underbrace{\iint_{\Omega \times \Omega^{c}} \dfrac{\vert u(x)-u(y)\vert^{q}}{\vert x-y\vert^{N+ s q}} \, dx \, dy}_{\textbf{I}_{2}}.
	\]
\textbf{Estimate of \( \textbf{I}_{1} \).} Following the approach in the proof of \cite[Proposition 2.2]{ref03}, we apply the variable change \( z = y - x \) and utilize Hölder's inequality, the Poincaré inequality (Theorem \ref{thm0}), and the convexity of \( \tau \mapsto \tau^p \) to obtain:
	\begin{equation*}
	\begin{aligned}
	\textbf{I}_{1} &= \iint_{\Omega \times (\Omega \cap \textbf{B}_{1}(x))} \dfrac{\vert u(x)-u(y)\vert^{q}}{\vert x-y\vert^{N+ s q}} \, dx \, dy + \iint_{\Omega \times (\Omega \cap \textbf{B}_{1}(x)^{c})} \dfrac{\vert u(x)-u(y)\vert^{q}}{\vert x-y\vert^{N+ s q}} \, dx \, dy \\[4pt]
	& \leq \iint_{\Omega \times \textbf{B}_{1}(0)} \dfrac{\vert u(x)-u(x + z)\vert^{q}}{\vert z\vert^{N+ s q}} \, dxdz + 2^{q} \iint_{\Omega \times (\Omega \cap \textbf{B}_{1}(x)^{c})} \dfrac{\left| u(x) \right|^{q}}{\vert x-y\vert^{N+ s q}} \, dx \, dy \\[4pt]
	& \leq \iint_{\Omega \times \textbf{B}_{1}(0) \times [0, 1]} \dfrac{\left| \nabla u(x + t z)\right|^{q}}{\vert z\vert^{N+ s q - q}} \, dx \, dz \, dt + 2^{q} C(N, p, q, \Omega) \left\| u\right\|_{L^{p}(\Omega)}^{q} \\[4pt]
	& \leq C(N, p, q, \Omega) \left\| \nabla u\right\|_{L^{p}(\Omega)}^{q}.
	\end{aligned}
	\end{equation*}	
	\textbf{Estimate of \( \textbf{I}_{2} \).} This can be estimated in exactly the same manner as before, leading to:
	\[
	\textbf{I}_{2} \leq C(N, p, q, \Omega) \left\| \nabla u\right\|_{L^{p}(\Omega)}^{q}.
	\]	
Thus, we conclude the proof of the assertion. For the second part of this lemma, we use \cite[Proposition 2.2]{ref03} and the H\"{o}lder inequality to obtain the desired result.
\end{proof}
\noindent  Finally, we present a rigorous result on the weak comparison principle:
\begin{lemma}\label{Lemma4}
	For any \( s \in (0, 1) \), \( 1 < p < \infty \), and \( 1 < q \leq p \), let \( u, v \in W^{1, p}(\Omega) \) satisfy \( u \leq v \) in \( \mathbb{R}^{N} \setminus \Omega \). Then, for all \( \varphi \in W^{1, p}_{0}(\Omega) \) with \( \varphi \geq 0 \) in \( \Omega \), the following holds:
	\begin{equation*}
	\begin{aligned}
	&\int_{\Omega} \left[ \nabla u \right]^{p-1} \nabla \varphi \, dx + \iint_{\mathbb{R}^{2N}} \frac{\left[ u(x) - u(y) \right]^{q-1} \left( \varphi(x) - \varphi(y) \right)}{|x - y|^{N + sq}} \, dx \, dy \\[4pt]
	& \qquad \leq \int_{\Omega} \left[ \nabla v \right]^{p-1} \nabla \varphi \, dx + \iint_{\mathbb{R}^{2N}} \frac{\left[ v(x) - v(y) \right]^{q-1} \left( \varphi(x) - \varphi(y) \right)}{|x - y|^{N + sq}} \, dx \, dy.
	\end{aligned}
	\end{equation*}
	Then, it follows that \( u \leq v \) in \( \Omega \).
\end{lemma}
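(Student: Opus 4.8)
The plan is to insert the test function $\varphi := (u-v)^{+}$ into the assumed inequality and to extract from it that $(u-v)^{+}$ has vanishing gradient, hence vanishes. First I would verify admissibility: since $u \le v$ in $\mathbb{R}^{N}\setminus\Omega$, the function $(u-v)^{+}$ vanishes a.e. outside $\Omega$, and since $t\mapsto t^{+}$ is Lipschitz while $u,v \in W^{1,p}$, we get $\varphi \in W^{1,p}_{0}(\Omega)$ with $\varphi \ge 0$; moreover $W^{1,p}_{0}(\Omega)$ embeds into $W^{s,q}_{0}(\Omega)$ by Lemma \ref{Lemma1}, so all four terms appearing below are absolutely convergent and the hypothesis may legitimately be applied to $\varphi$.

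With this choice, moving everything to one side, the hypothesis reads
\[
\underbrace{\int_{\Omega} \left( [\nabla u]^{p-1} - [\nabla v]^{p-1} \right) \cdot \nabla (u-v)^{+}\, dx}_{=:\,\mathcal{L}} \;+\; \underbrace{\iint_{\mathbb{R}^{2N}} \frac{\left( [u(x)-u(y)]^{q-1} - [v(x)-v(y)]^{q-1} \right)\left( \varphi(x) - \varphi(y) \right)}{|x-y|^{N+sq}}\, dx\, dy}_{=:\,\mathcal{N}} \;\le\; 0 .
\]
I would then show $\mathcal{L}\ge 0$ and $\mathcal{N}\ge 0$ separately. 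For $\mathcal{L}$: since $\nabla(u-v)^{+} = (\nabla u - \nabla v)\mathbf{1}_{\{u>v\}}$ a.e., the integrand equals $\big([\nabla u]^{p-1}-[\nabla v]^{p-1}\big)\cdot(\nabla u - \nabla v)$ on $\{u>v\}$ and $0$ elsewhere, and the first inequality of Lemma \ref{lemma1} gives $\big([\nabla u]^{p-1}-[\nabla v]^{p-1}\big)\cdot(\nabla u-\nabla v) \ge C(p)\,(|\nabla u|+|\nabla v|)^{p-2}|\nabla u-\nabla v|^{2}\ge 0$. For $\mathcal{N}$: I claim the integrand is nonnegative for a.e. $(x,y)$. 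Writing $w=u-v$, $A=u(x)-u(y)$, $B=v(x)-v(y)$ (so $A-B=w(x)-w(y)$ and $\varphi(x)-\varphi(y)=w^{+}(x)-w^{+}(y)$), a short case distinction on the signs of $w(x)$ and $w(y)$ does it: if $w(x),w(y)\ge 0$ then $\varphi(x)-\varphi(y)=A-B$ and the product is $\big([A]^{q-1}-[B]^{q-1}\big)(A-B)\ge 0$ by monotonicity of $t\mapsto[t]^{q-1}$; if $w(x),w(y)<0$ then $\varphi(x)-\varphi(y)=0$; and if, say, $w(x)\ge 0>w(y)$, then $\varphi(x)-\varphi(y)=w(x)\ge 0$ while $A-B=w(x)-w(y)>w(x)\ge 0$ forces $[A]^{q-1}\ge[B]^{q-1}$, so again the product is $\ge 0$ (the case $w(y)\ge 0>w(x)$ is symmetric). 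Hence $\mathcal{N}\ge 0$.

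Combining, $0\ge \mathcal{L}+\mathcal{N}\ge \mathcal{L}\ge 0$, so $\mathcal{L}=0$; together with the quantitative lower bound from Lemma \ref{lemma1} this forces $\nabla u=\nabla v$ a.e. on $\{u>v\}$, i.e. $\nabla(u-v)^{+}=0$ a.e. in $\Omega$. Since $(u-v)^{+}\in W^{1,p}_{0}(\Omega)$ has vanishing gradient, the Sobolev--Poincar\'e inequality of Theorem \ref{thm0} (cf. Remark \ref{remark0}) yields $\|(u-v)^{+}\|_{L^{p^{*}}(\Omega)}\le C\,\|\nabla(u-v)^{+}\|_{L^{p}(\Omega)}=0$, hence $(u-v)^{+}\equiv 0$ and $u\le v$ in $\Omega$. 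The scheme is the classical monotonicity-plus-Poincar\'e argument; the only point requiring a little care is the pointwise sign analysis of the nonlocal integrand (and, depending on the exact function-space conventions of the statement, checking that $(u-v)^{+}$ genuinely lies both in $W^{1,p}_{0}(\Omega)$ and in the fractional space so that $\mathcal{N}$ converges absolutely).
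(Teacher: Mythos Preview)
Your argument is correct and is exactly the standard approach: test with $(u-v)^{+}$, use the strict monotonicity of $\xi\mapsto[\xi]^{p-1}$ for the local part and a pointwise sign check for the nonlocal part, then conclude via Poincar\'e. The paper does not spell out a proof but simply refers to \cite[Lemma 2.6]{da2020limiting}, whose argument is precisely the one you have written.
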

\begin{proof}
	The proof follows along the same lines as \cite[Lemma 2.6]{da2020limiting}.
\end{proof}
\subsection{Statements of main results} 
In the first part of this work, we present a new comparison principle that holds independent interest. Prior to this, we introduce the following definitions of weak sub-solutions, super-solutions, and solutions for problem \eqref{P2}.
\begin{definition}\label{definition1}
	We say that a function \( u \in W^{1, p}_{\text{loc}}(\Omega) \) is a \textbf{weak super-solution} to \eqref{P2} if the following conditions are satisfied:
	\begin{enumerate}
		\item[(i)]  There exists a constant \( \theta \geq 1 \) such that \( u^{\theta} \in W^{1, p}_{0}(\Omega) \).
		\item[(ii)] For any $ K \Subset  \Omega, $ there exists a constant $ C(K) > 0 $ such that
		$ u \geq C(K) \text{ in }\, K.$
		\item[(iii)] For all $ \varphi \in W^{1, p}_{0}(\Omega)\cap L^{\infty}_{c}(\Omega),$ with $  \varphi \geq 0 ,$ we have
		{\small 	\begin{equation*}
			\begin{gathered}
			\begin{aligned}
			\int_{\Omega} \left[ \nabla u\right] ^{p-1} \nabla \varphi dx &+  \displaystyle \displaystyle\iint_{\mathbb{R}^{2N}} \dfrac{\left[ u(x) - u(y) \right] ^{q-1} \left(\varphi (x)- \varphi(y)\right)}{\vert x-y\vert ^{N+ s q}}dxdy\geq {\displaystyle\int_{\Omega}} \left( f(x) u^{-\alpha} + g(x) u^{\beta}\right) \varphi dx.
			\end{aligned}
			\end{gathered}
			\end{equation*}}
	\end{enumerate}
	If \( u \) satisfies the reversed inequality, it is referred to as a \textbf{weak sub-solution} of \eqref{P2}. Moreover, a function \( u \) that simultaneously satisfies the conditions of both a weak sub-solution and a weak super-solution of \eqref{P2} is defined as a \textbf{weak solution} to \eqref{P2}.
\end{definition}
\begin{remark}
	Lemma \ref{Lemma1} ensures that Definition \ref{definition1} is well defined.
\end{remark}
\begin{remark}\label{remark2}
An important observation regarding Definition \ref{definition1} is that the solution $u$ generally does not belong to the space $W^{1, p}_{0}(\Omega)$. Furthermore, it should be emphasized that no trace operator exists in $W^{1, p}_{\text{loc}}(\Omega)$. For this reason, we adopt the following definition to interpret the Dirichlet condition in a generalized sense (see \cite[Definition 1.2]{canino2016existence}): \textbf{"}We say that $u \leq 0$ on $\partial \Omega$ if $u = 0$ in $\mathbb{R}^{N}\setminus \Omega$ and $(u - \epsilon)^{+} \in W^{1, p}_{0}(\Omega)$ for every $\epsilon > 0$. Moreover, we declare $u = 0$ on $\partial \Omega$ if $u \geq 0$ and $u \leq 0$ on $\partial \Omega$. \textbf{"} 

\noindent It is crucial to note that Definition \ref{definition1} - \textbf{(i)} ensures that the solution satisfies the conditions of this definition. In particular, since $u^{\theta} \in W^{1, p}_{0}(\Omega)$ for $\theta \geq 1$, there exists a sequence of non-negative functions $\varphi_{n} \in C^{\infty}_{c}(\Omega)$ such that $\varphi_{n} \to u^{\theta}$ in $W^{1, p}_{0}(\Omega)$. Defining $\psi_{n} := (\varphi_{n}^{\frac{1}{\theta}} - \epsilon)^{+}$, we obtain
\begin{align*}
\left\| \psi_{n} \right\|^{p}_{W^{1, p}_{0}(\Omega)} = \int_{\left\lbrace \varphi_{n}^{\frac{1}{\theta}} > \epsilon \right\rbrace} \left| \nabla \varphi_{n}^{\frac{1}{\theta}} \right|^{p} dx < \epsilon^{p(1-\theta)} \theta^{-p} \int_{\left\lbrace \varphi_{n}^{\frac{1}{\theta}} > \epsilon \right\rbrace} \left| \nabla \varphi_{n} \right|^{p} dx < C,
\end{align*}
where $C > 0$ is independent of $n$. Therefore, $(\psi_{n})$ is uniformly bounded in $W^{1, p}_{0}(\Omega)$. Consequently, by the reflexivity of $W^{1, p}_{0}(\Omega)$, it follows that $(u - \epsilon)^{+} \in W^{1, p}_{0}(\Omega)$ for every $\epsilon > 0$.
\end{remark}
\begin{theorem}\label{Theorem1}
Assume that \( g \in L^{\left(\frac{p^{*} }{\beta +1}\right) '}(\Omega) \). Consider the case where the class of \textbf{(F1)} holds and one of the following conditions is satisfied:
\begin{itemize}
	\item[(H1)] \( \alpha < 1 \) and \( f \in L^{\left(\frac{p^{*}}{1 - \alpha}\right)'}(\Omega) \).
	\item[(H2)] \( \alpha = 1 \) and \( f \in L^r(\Omega) \) for some \( r > 1 \).
	\item[(H3)] \( \alpha > 1 \) and \( f \in L^1(\Omega) \).
\end{itemize}
When the class of \textbf{(F2)} holds, assume that
\begin{itemize}
	\item[(H4)] \( \delta < 1 + \frac{1}{p'} \).
\end{itemize}
Let  \( \underline{u}, \overline{u} \in W^{1, p}_{\text{loc}}(\Omega) \) be weak sub-solution and super-solution of the problem \eqref{P2}, respectively, in the sense of Definition \ref{definition1}. Then \( \underline{u} \leq \overline{u} \) a.e. in \( \Omega \).
\end{theorem}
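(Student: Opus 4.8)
The plan is to establish the comparison principle by testing the (distributional) inequalities satisfied by $\underline{u}$ and $\overline{u}$ against carefully chosen functions built from the Picone-type quotients appearing in Lemmas \ref{Lemma2} and \ref{Lemma3}. Concretely, I would fix $\epsilon>0$, write $\underline{u}_\epsilon=\underline{u}+\epsilon$ and $\overline{u}_\epsilon=\overline{u}+\epsilon$, and use as test functions
\[
\varphi_1=\frac{(\underline{u}_\epsilon^{r}-\overline{u}_\epsilon^{r})^{+}}{\underline{u}_\epsilon^{r-1}},\qquad \varphi_2=\frac{(\underline{u}_\epsilon^{r}-\overline{u}_\epsilon^{r})^{+}}{\overline{u}_\epsilon^{r-1}},
\]
with a suitable exponent $r$ (I expect $r=p$ in the homogeneous case $q=p$, and $1<r\le q$ in the non-homogeneous case so that \emph{both} Picone inequalities — local with exponent $p$ and nonlocal with exponent $q$ — apply simultaneously). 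These functions are supported on the set $\{\underline{u}>\overline{u}\}$, which (after truncation) is compactly contained in $\Omega$ by property (ii) of Definition \ref{definition1}, so they are admissible test functions in $W^{1,p}_0(\Omega)\cap L^\infty_c(\Omega)$ modulo an approximation/truncation argument that I address below. Subtracting the super-solution inequality tested with $\varphi_2$ from the sub-solution inequality tested with $\varphi_1$, the left-hand side becomes, after rearranging,
\[
\int_{\Omega}\Big([\nabla\underline{u}_\epsilon]^{p-1}\nabla\varphi_1+[\nabla\overline{u}_\epsilon]^{p-1}\nabla\varphi_2\Big)+\iint_{\mathbb{R}^{2N}}\frac{[\underline{u}_\epsilon(x)-\underline{u}_\epsilon(y)]^{q-1}(\varphi_1(x)-\varphi_1(y))+[\overline{u}_\epsilon(x)-\overline{u}_\epsilon(y)]^{q-1}(\varphi_2(x)-\varphi_2(y))}{|x-y|^{N+sq}},
\]
and both the local integrand and the nonlocal double integral are $\ge 0$ by Lemmas \ref{Lemma3} and \ref{Lemma2} respectively (applied on $\{\underline{u}>\overline{u}\}$, with the quotients understood to vanish off this set).

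On the right-hand side one gets
\[
\int_{\{\underline{u}>\overline{u}\}}\Big(f(x)\underline{u}^{-\alpha}-f(x)\overline{u}^{-\alpha}\Big)\frac{(\underline{u}_\epsilon^{r}-\overline{u}_\epsilon^{r})^+ }{\cdots}+\int_{\{\underline{u}>\overline{u}\}}\Big(g(x)\underline{u}^{\beta}\frac{(\cdots)}{\underline{u}_\epsilon^{r-1}}-g(x)\overline{u}^{\beta}\frac{(\cdots)}{\overline{u}_\epsilon^{r-1}}\Big).
\]
The singular term is the easy (favourable) one: since $\alpha>0$, on $\{\underline{u}>\overline{u}\}$ we have $\underline{u}^{-\alpha}-\overline{u}^{-\alpha}\le 0$, so that whole contribution is $\le 0$. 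The source term is where the hypothesis $0<\beta<q-1$ enters decisively: on $\{\underline{u}_\epsilon>\overline{u}_\epsilon\}$ one checks that
\[
g(x)\Big(\underline{u}_\epsilon^{\beta}\,\underline{u}_\epsilon^{1-r}-\overline{u}_\epsilon^{\beta}\,\overline{u}_\epsilon^{1-r}\Big)(\underline{u}_\epsilon^{r}-\overline{u}_\epsilon^{r})^+\le 0
\]
provided $r$ is chosen with $r-1\ge\beta$, i.e. $r\ge 1+\beta$; combined with the constraint $r\le q$ from the nonlocal Picone inequality this requires $1+\beta\le q$, which is exactly $\beta\le q-1$, and the strict inequality $\beta<q-1$ gives room to pick $r\in(1+\beta,q]$ (and $r\le p$ automatically since $q\le p$). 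Thus the entire right-hand side is $\le 0$ while the left-hand side is $\ge 0$, forcing both to vanish. From the strict convexity estimate in Lemma \ref{lemma1} (first inequality), the vanishing of the local term forces $\nabla\big((\underline{u}_\epsilon^r-\overline{u}_\epsilon^r)^+\big)=0$ a.e., hence $(\underline{u}_\epsilon^r-\overline{u}_\epsilon^r)^+$ is constant on each component; since it vanishes near $\partial\Omega$ (using the generalized boundary condition of Remark \ref{remark2} and hypotheses (H1)--(H4), which guarantee $(\underline{u}-c)^+,\ (\overline{u}-c)^+\in W^{1,p}_0(\Omega)$ and hence control the behaviour of the quotient up to the boundary), it must be identically $0$, i.e. $\underline{u}_\epsilon\le\overline{u}_\epsilon$ in $\Omega$. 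Letting $\epsilon\to 0$ yields $\underline{u}\le\overline{u}$ a.e.

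The technical heart of the argument — and what I expect to be the main obstacle — is the \emph{justification that $\varphi_1,\varphi_2$ are legitimate test functions}, since $\underline{u},\overline{u}$ are only in $W^{1,p}_{\mathrm{loc}}(\Omega)$ and may have infinite energy, and near $\partial\Omega$ the quotients are delicate. I would handle this by a double truncation: replace $\underline{u}_\epsilon^r-\overline{u}_\epsilon^r$ by $\mathbf{T}_k\big((\underline{u}_\epsilon^r-\overline{u}_\epsilon^r)^+\big)$ and further multiply by a cutoff adapted to the level sets $\{\underline u>\lambda\}$, so that the resulting test function lies in $W^{1,p}_0(\Omega)\cap L^\infty_c(\Omega)$ by properties (i)--(ii) of Definition \ref{definition1}; then pass to the limit $k\to\infty$, $\lambda\to 0$ using monotone/dominated convergence, where the summability hypotheses on $f$ in (H1)--(H3) (respectively the boundary growth control $\delta<1+1/p'$ in (H4)) and on $g\in L^{(p^*/(\beta+1))'}(\Omega)$ are precisely what make the right-hand side integrals finite and the limits legitimate — these are the Sobolev-duality exponents ensuring $f\underline u^{-\alpha}\varphi$ and $g\underline u^\beta\varphi$ are integrable via Theorem \ref{thm0} and Hölder. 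For the nonlocal term one additionally needs the algebraic fact that truncation does not destroy the sign in Lemma \ref{Lemma2}; this follows because the discrete Picone inequality is applied pointwise in $(x,y)$ and is stable under the monotone rearrangement induced by truncation, together with Lemma \ref{Lemma1} giving the needed comparability $[\cdot]_{s,q}\le C\|\cdot\|_{W^{1,p}_0}$ to keep the double integrals absolutely convergent throughout the limiting process.
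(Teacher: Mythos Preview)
Your strategy---subtracting the two inequalities tested with the Picone quotients $\varphi_1,\varphi_2$---is precisely what the paper carries out in its Theorem~\ref{Theorem2}, where $\underline{u},\overline{u}\in W^{1,p}_0(\Omega)$ and Definition~\ref{definition2} allows test functions in $W^{1,p}_0(\Omega)$ without compact support. For Theorem~\ref{Theorem1}, however, there is a genuine gap in your argument.

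The difficulty is the admissibility of $\varphi_2$ in the \emph{super}-solution inequality for $\overline{u}$. Definition~\ref{definition1}(iii) requires test functions in $W^{1,p}_0(\Omega)\cap L^\infty_c(\Omega)$, i.e.\ with \emph{compact support} in $\Omega$. You assert that $\{\underline{u}>\overline{u}\}$ is compactly contained ``by property (ii) of Definition~\ref{definition1}''; but (ii) only says each of $\underline{u},\overline{u}$ is locally bounded below---it gives no information whatsoever about the set where one exceeds the other, which can reach the boundary. Likewise, $(\underline{u}-\lambda)^+\in W^{1,p}_0(\Omega)$ (Remark~\ref{remark2}) does not force $\{\underline{u}>\lambda\}\Subset\Omega$. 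Multiplying by a spatial cutoff $\eta$ would produce cross-terms with $\nabla\eta$ carrying no sign, and since $\overline{u}$ has only $W^{1,p}_{\mathrm{loc}}$ regularity you cannot pass to the limit in these terms.

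The paper resolves this by an indirect route: instead of comparing $\underline{u}$ to $\overline{u}$ directly, it introduces, for each $\epsilon>0$, a minimizer $w_0$ of a regularized functional $\mathcal{J}_\epsilon$ over the obstacle set $\mathcal{K}=\{0\le\phi\le\overline{u}\}\subset W^{1,p}_0(\Omega)$. This $w_0$ has \emph{finite energy} and, crucially, satisfies a supersolution-type inequality (Claim~2) valid for all $\psi\in W^{1,p}_0(\Omega)\cap L^\infty(\Omega)$---no compact support needed. One then proves $\underline{u}\le w_0+\epsilon$ (Claim~3) by exactly the Picone-and-truncation computation you outline, now applied to the pair $(\underline{u},w_0)$ with test functions $\Psi_m,\Phi_m$ built from $\mathbf{T}_k\big(((\underline{u}+m)^q-(w_0+m+\epsilon)^q)^+\big)$; the sub-solution side is handled by approximating $\Psi_m$ by $\min\{\Psi_m,\varphi_n^+\}$ with $\varphi_n\in C^\infty_c(\Omega)$, while the $w_0$ side uses $\Phi_m\in W^{1,p}_0(\Omega)\cap L^\infty(\Omega)$ directly. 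Since $w_0\le\overline{u}$ by construction, one concludes $\underline{u}\le\overline{u}+\epsilon$ and lets $\epsilon\to0$. The hypotheses (H1)--(H4) on $f$ are used to show $\mathcal{J}_\epsilon$ is coercive and weakly lower semicontinuous, ensuring $w_0$ exists. This obstacle-problem detour is the missing idea in your proposal.
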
 
\noindent  As a consequence of the comparison principle, we obtain the following uniqueness result:
\begin{corollary}\label{corollary}
	Assume that the hypotheses 	(H1)-(H4) in Theorem \ref{Theorem1} are satisfied. Then, the weak solution to the problem \eqref{P2} in the sense of Definition \ref{definition1}, if it exists, is unique.
\end{corollary}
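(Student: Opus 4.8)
The plan is to obtain the uniqueness statement as an immediate consequence of the comparison principle in Theorem \ref{Theorem1}, exploiting the fact that a weak solution in the sense of Definition \ref{definition1} is, by construction, simultaneously a weak sub-solution and a weak super-solution.

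First I would set up the two candidate solutions. Suppose $u_1, u_2 \in W^{1,p}_{\text{loc}}(\Omega)$ are both weak solutions of \eqref{P2} in the sense of Definition \ref{definition1}. Then each $u_i$ satisfies (i), (ii) and (iii) of that definition; in particular there are exponents $\theta_i \ge 1$ with $u_i^{\theta_i} \in W^{1,p}_0(\Omega)$, the interior lower bounds $u_i \ge C(K)$ on every $K \Subset \Omega$ hold, and each $u_i$ satisfies both the super-solution inequality and the sub-solution inequality against every nonnegative $\varphi \in W^{1,p}_0(\Omega) \cap L^\infty_c(\Omega)$. Moreover, by Remark \ref{remark2}, each $u_i$ vanishes in $\mathbb{R}^N \setminus \Omega$ and satisfies $(u_i - \epsilon)^+ \in W^{1,p}_0(\Omega)$ for every $\epsilon > 0$, so there is no incompatibility of boundary data between $u_1$ and $u_2$.

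Next I would apply Theorem \ref{Theorem1} twice, with the roles of sub- and super-solution interchanged. Since the standing hypotheses of Corollary \ref{corollary} are precisely those of Theorem \ref{Theorem1} (the integrability condition on $g$, the choice of class \textbf{(F1)} or \textbf{(F2)}, and the corresponding condition among (H1)--(H4)), the theorem is applicable. Taking $\underline{u} = u_1$ (legitimate, since $u_1$ is a weak sub-solution) and $\overline{u} = u_2$ (legitimate, since $u_2$ is a weak super-solution) gives $u_1 \le u_2$ a.e. in $\Omega$. Taking instead $\underline{u} = u_2$ and $\overline{u} = u_1$ — which is equally legitimate because $u_2$ is also a weak sub-solution and $u_1$ is also a weak super-solution — gives $u_2 \le u_1$ a.e. in $\Omega$. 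Combining the two inequalities yields $u_1 = u_2$ a.e. in $\Omega$, establishing uniqueness.

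There is no genuine obstacle in this argument: the entire content has been placed in Theorem \ref{Theorem1}, and the only point worth a line of justification is that the comparison principle may be invoked with either solution playing either role, which is immediate from the symmetric definition of a weak solution. If desired, one could add the brief remark that the same reasoning shows any weak sub-solution lies below, and any weak super-solution lies above, the (unique) weak solution, whenever the latter exists.
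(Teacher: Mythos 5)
Your argument is correct and coincides with the paper's own proof: both apply the comparison principle of Theorem \ref{Theorem1} twice, once with each solution playing the role of sub-solution against the other as super-solution, and conclude $u_1 = u_2$ a.e. in $\Omega$. No gaps.
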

\noindent We now underscore a direct consequence of the uniqueness result:
\begin{corollary}\label{corollary2}
	Assume that the conditions (H1)-(H4) of Theorem \ref{Theorem1} are satisfied, and let $u$ be the unique solution, if it exists, to problem \eqref{P2} in the sense of Definition \ref{definition1}. Suppose that the domain $\Omega$ is symmetric with respect to the hyperplane 
	$$ \mathcal{H}^{\upsilon} _{\lambda} := \left\lbrace x \cdot \upsilon = \lambda \right\rbrace, \quad \lambda \in \mathbb{R}, \quad \upsilon \in \textbf{S}^{N -1}. $$ 
	If, in addition, both $f$ and $g$ are symmetric with respect to the hyperplane $\mathcal{H}^{\upsilon} _{\lambda}$, then the solution $u$ inherits this symmetry. In particular, when $\Omega$ is a ball or an annulus centered at the origin, and $f$ and $g$ are radially symmetric, the solution $u$ is also radially symmetric.
\end{corollary}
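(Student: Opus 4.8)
\emph{Proof of Corollary \ref{corollary2} (proposal).}

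The plan is to combine the uniqueness statement of Corollary \ref{corollary} with the invariance of the operators $-\Delta_p$ and $(-\Delta)^s_q$ under affine isometries of $\mathbb{R}^N$. Denote by $R$ the orthogonal reflection with respect to the hyperplane $\mathcal{H}^{\upsilon}_{\lambda}$; it is an affine map $Rx = x - 2(x\cdot\upsilon - \lambda)\upsilon$ whose linear part $R'$ is an involutive orthogonal transformation, so that $R\circ R = \mathrm{id}$, $|\det R'| = 1$, and $|Rx - Ry| = |x-y|$ for all $x,y \in \mathbb{R}^N$. The symmetry of $\Omega$ means $R(\Omega) = \Omega$, hence also $R(\mathbb{R}^N\setminus\Omega) = \mathbb{R}^N\setminus\Omega$. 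Setting $v := u\circ R$, I would show that $v$ is again a weak solution of \eqref{P2} in the sense of Definition \ref{definition1}; then Corollary \ref{corollary} yields $u = v$ a.e.\ in $\Omega$, which is precisely the asserted symmetry of $u$.

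To verify Definition \ref{definition1} for $v$, I would first record the structural items. Since $R$ is an affine isometry preserving $\Omega$, the chain rule gives $\nabla v(x) = (R')^{\mathrm T}\nabla u(Rx)$, so $|\nabla v(x)| = |\nabla u(Rx)|$; hence $v\in W^{1,p}_{\mathrm{loc}}(\Omega)$ and $v^{\theta} = u^{\theta}\circ R \in W^{1,p}_0(\Omega)$ whenever $u^{\theta}\in W^{1,p}_0(\Omega)$, which gives (i). For (ii), any $K\Subset\Omega$ satisfies $RK\Subset\Omega$ and $v = u\circ R \ge C(RK) > 0$ on $K$, and positivity of $v$ in $\Omega$ follows. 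The main point is (iii): given a test function $\varphi\in W^{1,p}_0(\Omega)\cap L^{\infty}_c(\Omega)$ with $\varphi\ge 0$, the function $\varphi\circ R$ belongs to the same class, since $R$ preserves $\Omega$, compact supports, non-negativity, and the $W^{1,p}_0$-norm. Performing the change of variables $x\mapsto Rx$ (together with $y\mapsto Ry$ in the double integral), and using $(R')^{\mathrm T}a\cdot (R')^{\mathrm T}b = a\cdot b$ to get $|\nabla v(Rx)|^{p-2}\nabla v(Rx)\cdot\nabla\varphi(Rx) = |\nabla u(x)|^{p-2}\nabla u(x)\cdot\nabla(\varphi\circ R)(x)$, the isometry identity $|Rx-Ry| = |x-y|$, and the unit Jacobian, the weak inequality for $v$ tested against $\varphi$ becomes exactly the weak inequality for $u$ tested against $\varphi\circ R$; on the right-hand side the hypotheses $f\circ R = f$ and $g\circ R = g$ transform $\int_\Omega (f v^{-\alpha} + g v^{\beta})\varphi\,dx$ into $\int_\Omega (f u^{-\alpha} + g u^{\beta})(\varphi\circ R)\,dx$. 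Since $u$ satisfies (iii) for every admissible test function and $\varphi\mapsto\varphi\circ R$ is a bijection of the admissible class, $v$ satisfies (iii) as well. The same computation shows that $R$ carries weak sub-solutions to weak sub-solutions and super-solutions to super-solutions, which is what one would invoke if phrasing the argument through Theorem \ref{Theorem1} directly.

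By Corollary \ref{corollary}, $u = v = u\circ R$ a.e.\ in $\Omega$, i.e.\ $u$ is symmetric with respect to $\mathcal{H}^{\upsilon}_{\lambda}$. For the last assertion, observe that a ball or an annulus centered at the origin is symmetric with respect to every hyperplane through the origin, and a radially symmetric function is symmetric with respect to every such hyperplane; applying the symmetry just established to each hyperplane through the origin shows $u(x) = u(Rx)$ for all such reflections $R$. Given two points $a,b$ with $|a| = |b|$, the reflection across the perpendicular bisector of the segment $[a,b]$ — a hyperplane through the origin — swaps $a$ and $b$, so $u(a) = u(b)$; thus $u$ is constant on spheres centered at the origin, i.e.\ radial. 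The only delicate point in the whole argument is the bookkeeping in the change of variables for the nonlocal term over $\mathbb{R}^{2N}$, in particular checking that the cross contributions over $\Omega\times(\mathbb{R}^N\setminus\Omega)$ transform correctly; this is routine once one notes that $R$ is a measure-preserving affine isometry sending $\Omega$ onto itself and $\mathbb{R}^N\setminus\Omega$ onto itself. \qed
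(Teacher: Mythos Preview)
Your proposal is correct and follows essentially the same approach as the paper: define the reflected function $v=u\circ R$, verify that it is again a weak solution of \eqref{P2}, and conclude $u=v$ from the uniqueness in Corollary~\ref{corollary}. The paper merely normalizes to the $x_1$-reflection via rotation/translation invariance and omits the detailed verification of Definition~\ref{definition1} that you spell out, but the strategy is identical.
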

\noindent In the second part of this work, we investigate the existence and qualitative properties of weak solutions to \eqref{P2} for the class of weight functions \( f \) characterized by \textbf{(F1)} and \textbf{(F2)}, along with a non-existence result in the case of \textbf{(F2)}. The solutions are considered within the framework of Definition \ref{definition1}, which guarantees a uniqueness result under certain conditions (see Corollary \ref{corollary}), alongside other pertinent definitions. Specifically, we have:\\[4pt]
$ \bullet $ \textbf{For the class of weight functions satisfying} \textbf{(F1)}:
\begin{theorem}\label{Theorem3}
	Let \( 0 < \alpha < 1 \) and suppose that \( f \in L^{r}(\Omega) \) with \( 1 \leq r < r_{p, \alpha} \), where
	\begin{equation}\label{equ77}
	r_{p, \alpha} := \dfrac{pN}{N(p-1) + p + \alpha (N - p)} = \left(\frac{p^{*}}{1 - \alpha}\right)' .
	\end{equation}
	Furthermore, assume that \( g \in L^{m_{s, p, q, \alpha, \beta, r}}(\Omega) \), where
	\begin{equation}\label{equ41}
	m_{s, p, q, \alpha, \beta, r} := \dfrac{N(p(\vartheta_{r} - 1) + q)}{sqp(\vartheta_{r} - 1) + Nq - (\beta + 1)(N- sq)},
	\end{equation}
	with
	\begin{equation}\label{equ79}
	\vartheta_{r} =  \frac{r(N - sq)( \alpha + p - 1) - N(r-1)(p-q)}{p(N - rsq)}.
	\end{equation}
	Under these conditions, there exists a positive weak solution \( u \) to problem \eqref{P2} such that:
	\begin{itemize}
		\item[\textbf{(1)}] \( u \in W^{1, \varrho_{s, p, q, \alpha, r}}_{0}(\Omega) \), where
		\begin{equation}\label{equ50}
		\varrho_{s, p, q, \alpha, r} := \dfrac{N p \vartheta_{r}}{N \vartheta_{r} + (1 - \vartheta_{r})(N - p)}.
		\end{equation}
		\item[\textbf{(2)}] For every \( \omega \Subset \Omega \), there exists a constant \( C = C(\omega) > 0 \) such that \( u \geq C \) in \( \omega \).
		\item[\textbf{(3)}] For all \( \varphi \in C_{c}^{\infty}(\Omega) \), the solution satisfies the following identity:
		\begin{equation*}
		\begin{aligned}
		&\int_{\Omega} \left[ \nabla u \right] ^{p-1}   \nabla \varphi \, dx + \iint_{\mathbb{R}^{2N}} \frac{\left[ u(x) - u(y)\right]^{q-1} \left( \varphi(x) - \varphi(y) \right)}{\vert x - y \vert^{N + sq}} \, dx \, dy \\
		&= \int_{\Omega} f(x) u^{-\alpha} \varphi \, dx + \int_{\Omega} g(x) u^{\beta} \varphi \, dx.
		\end{aligned}
		\end{equation*}
	\end{itemize}
	Moreover, the solution satisfies the Sobolev regularity \( u^{\vartheta_{r}} \in W^{1, p}_{0}(\Omega) \). Additionally, \( u \) belongs to \( L^{\frac{N r (\alpha + q - 1)}{N - sqr}}(\Omega) \).
\end{theorem}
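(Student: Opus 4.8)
The plan is to argue by approximation, following the classical scheme for singular problems (compare \cite{garain2022mixed, ref48}) adapted to the mixed operator and to the source term $g(x)u^{\beta}$. For $n\in\mathbb{N}$ set $f_{n}=\mathbf{T}_{n}(f)$, $g_{n}=\mathbf{T}_{n}(g)$ and seek $u_{n}\in W^{1,p}_{0}(\Omega)$ solving $-\Delta_{p}u_{n}+(-\Delta)^{s}_{q}u_{n}=f_{n}(x)\big(u_{n}^{+}+\tfrac1n\big)^{-\alpha}+g_{n}(x)(u_{n}^{+})^{\beta}$ in $\Omega$, with $u_{n}=0$ in $\mathbb{R}^{N}\setminus\Omega$. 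Since the right-hand side is bounded, the energy functional
\begin{equation*}
J_{n}(v)=\frac1p\int_{\Omega}|\nabla v|^{p}\,dx+\frac1q[v]^{q}_{s,q}-\int_{\Omega}\frac{f_{n}}{1-\alpha}\big(v^{+}+\tfrac1n\big)^{1-\alpha}\,dx-\int_{\Omega}\frac{g_{n}}{\beta+1}(v^{+})^{\beta+1}\,dx
\end{equation*}
is $C^{1}$ on $W^{1,p}_{0}(\Omega)$, weakly lower semicontinuous and coercive --- the last point because $1-\alpha<1<p$ and $\beta+1<q\le p$. A global minimizer $u_{n}$ exists; testing with $u_{n}^{-}$ gives $u_{n}\ge0$, standard regularity gives $u_{n}\in L^{\infty}(\Omega)$, and since $f\not\equiv0$ the strong minimum principle for $-\Delta_{p}+(-\Delta)^{s}_{q}$ (see \cite{antonini2023global}) gives $u_{n}>0$ in $\Omega$.

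The first substantive point is a uniform-in-$n$ lower bound on compact subsets (property \textbf{(2)}), which is what makes the limit passage in the singular term possible. Let $\underline{u}\in W^{1,p}_{0}(\Omega)\cap L^{\infty}(\Omega)$ be the solution --- again obtained by minimization --- of $-\Delta_{p}\underline{u}+(-\Delta)^{s}_{q}\underline{u}=f_{1}(x)(\underline{u}^{+}+1)^{-\alpha}$; since this nonlinearity is bounded and \emph{nonincreasing} in the unknown, $\underline{u}>0$ in $\Omega$ by \cite{antonini2023global}. Because $g_{n}(u_{n}^{+})^{\beta}\ge0$, $f_{n}\ge f_{1}$ and $\tfrac1n\le1$, every $u_{n}$ satisfies $-\Delta_{p}u_{n}+(-\Delta)^{s}_{q}u_{n}\ge f_{1}(u_{n}+1)^{-\alpha}$ in $\Omega$, i.e.\ $u_{n}$ is a supersolution of the $\underline{u}$-problem; testing with $(\underline{u}-u_{n})^{+}\in W^{1,p}_{0}(\Omega)$, using the monotonicity of $-\Delta_{p}+(-\Delta)^{s}_{q}$ and that $f_{1}(\underline{u}+1)^{-\alpha}-f_{1}(u_{n}+1)^{-\alpha}$ is nonpositive on $\{\underline{u}>u_{n}\}$, gives $\underline{u}\le u_{n}$ in $\Omega$. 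Hence $u_{n}\ge\underline{u}\ge C(\omega)>0$ on each $\omega\Subset\Omega$, uniformly in $n$.

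The core is a family of uniform a priori estimates, and this is where I expect the main difficulty. For fixed $n$, $u_{n}\in W^{1,p}_{0}(\Omega)\cap L^{\infty}(\Omega)$ and $u_{n}>0$, so $u_{n}^{p(\vartheta_{r}-1)+1}$ --- a power which is $\ge\alpha>0$ and can be handled via truncations $\mathbf{T}_{k}(u_{n})$ --- is admissible as a test function. With it the local term reproduces a constant multiple of $A_{n}:=\int_{\Omega}|\nabla u_{n}^{\vartheta_{r}}|^{p}\,dx$, while the nonlocal term is nonnegative and, via a discrete Picone-type inequality, bounds from below a constant multiple of $B_{n}:=[\,u_{n}^{(p(\vartheta_{r}-1)+q)/q}\,]^{q}_{s,q}$. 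The parameters $r_{p,\alpha}$, $\vartheta_{r}$, $m_{s,p,q,\alpha,\beta,r}$ are tuned exactly so that
\begin{equation*}
\big(p(\vartheta_{r}-1)+1-\alpha\big)\,r'=\big(p(\vartheta_{r}-1)+1+\beta\big)\,m_{s,p,q,\alpha,\beta,r}'=\frac{N\big(p(\vartheta_{r}-1)+q\big)}{N-sq}=\frac{Nr(\alpha+q-1)}{N-sqr}=:E ,
\end{equation*}
and since the fractional embedding $W^{s,q}_{0}(\Omega)\hookrightarrow L^{q^{*}_{s}}(\Omega)$ applied to $u_{n}^{(p(\vartheta_{r}-1)+q)/q}$ yields $\|u_{n}\|_{L^{E}(\Omega)}\le C\,B_{n}^{1/(p(\vartheta_{r}-1)+q)}$, estimating the right-hand side terms $\int_{\Omega}f_{n}(u_{n}+\tfrac1n)^{-\alpha}u_{n}^{p(\vartheta_{r}-1)+1}\le\|f\|_{L^{r}}\|u_{n}\|_{L^{E}}^{p(\vartheta_{r}-1)+1-\alpha}$ and $\int_{\Omega}g_{n}u_{n}^{p(\vartheta_{r}-1)+1+\beta}\le\|g\|_{L^{m_{s,p,q,\alpha,\beta,r}}}\|u_{n}\|_{L^{E}}^{p(\vartheta_{r}-1)+1+\beta}$ by Hölder gives $A_{n}+B_{n}\le C\big(B_{n}^{a_{1}}+B_{n}^{a_{2}}\big)$ with $a_{1}=\tfrac{p(\vartheta_{r}-1)+1-\alpha}{p(\vartheta_{r}-1)+q}$ and $a_{2}=\tfrac{p(\vartheta_{r}-1)+1+\beta}{p(\vartheta_{r}-1)+q}$. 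Here $\alpha>0$ forces $a_{1}<1$ and $\beta<q-1$ forces $a_{2}<1$, so this self-improving inequality gives $A_{n}+B_{n}\le C$ uniformly in $n$; in particular $u_{n}^{\vartheta_{r}}$ is bounded in $W^{1,p}_{0}(\Omega)$ and $u_{n}$ in $L^{E}(\Omega)=L^{\frac{Nr(\alpha+q-1)}{N-sqr}}(\Omega)$. Writing $\nabla u_{n}=\vartheta_{r}^{-1}u_{n}^{1-\vartheta_{r}}\nabla u_{n}^{\vartheta_{r}}$ and applying Hölder together with $W^{1,p}_{0}(\Omega)\hookrightarrow L^{p^{*}}(\Omega)$ to $u_{n}^{\vartheta_{r}}$ --- this balance being what determines $\varrho_{s,p,q,\alpha,r}=\tfrac{Np\vartheta_{r}}{N\vartheta_{r}+(1-\vartheta_{r})(N-p)}$ --- gives a uniform bound for $u_{n}$ in $W^{1,\varrho_{s,p,q,\alpha,r}}_{0}(\Omega)$. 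Carrying out this exponent bookkeeping, and checking compatibility with the standing assumptions $1\le r<r_{p,\alpha}$, $0<\beta<q-1$, $q\le p$ (so that $N-sqr>0$, $0<\vartheta_{r}<1$ and $m_{s,p,q,\alpha,\beta,r}\ge1$), is the main technical obstacle.

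Finally I would pass to the limit. Up to a subsequence, $u_{n}\rightharpoonup u$ in $W^{1,\varrho_{s,p,q,\alpha,r}}_{0}(\Omega)$, $u_{n}^{\vartheta_{r}}\rightharpoonup u^{\vartheta_{r}}$ in $W^{1,p}_{0}(\Omega)$ (the weak limit identified via the a.e.\ convergence of $u_{n}$), $u_{n}\to u$ a.e.\ in $\mathbb{R}^{N}$ and in $L^{\sigma}(\Omega)$ for $\sigma<\vartheta_{r}p^{*}$, and the barrier gives $u\ge C(\omega)>0$ on $\omega\Subset\Omega$, which is \textbf{(2)} (so $u>0$ in $\Omega$). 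Fix $\varphi\in C^{\infty}_{c}(\Omega)$ and put $K=\supp\varphi\Subset\Omega$. Testing the $n$-th equation with $u_{n}\psi^{p}$ for a cutoff $\psi\equiv1$ near $K$, and using $u_{n}\ge C(K)>0$ on $\supp\psi$ to control the singular term there (the tail and commutator contributions of the nonlocal term being absorbed with the uniform $L^{\frac{Nr(\alpha+q-1)}{N-sqr}}$ bound), yields a uniform $W^{1,p}$ bound for $u_{n}$ on a neighbourhood of $K$; a standard truncation/monotonicity argument then gives $\nabla u_{n}\to\nabla u$ a.e.\ in $\Omega$, hence $\int_{\Omega}[\nabla u_{n}]^{p-1}\nabla\varphi\to\int_{\Omega}[\nabla u]^{p-1}\nabla\varphi$ by equi-integrability of $[\nabla u_{n}]^{p-1}$ near $K$. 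For the nonlocal term the integrand vanishes unless $x$ or $y$ lies in $K$; $u_{n}\to u$ a.e.\ in $\mathbb{R}^{N}$, and the uniform $W^{1,p}$ bound near $K$ with $u_{n}\in L^{q-1}(\Omega)$ (which holds since $\vartheta_{r}q^{*}_{s}>q-1$, using $q\le p$) give equi-integrability of the kernel-weighted integrand, so by Vitali's theorem it converges to $\iint_{\mathbb{R}^{2N}}\frac{[u(x)-u(y)]^{q-1}(\varphi(x)-\varphi(y))}{|x-y|^{N+sq}}\,dx\,dy$. On the right-hand side, $f_{n}(u_{n}+\tfrac1n)^{-\alpha}\varphi\to f u^{-\alpha}\varphi$ by dominated convergence (dominant $C(K)^{-\alpha}f|\varphi|\in L^{1}$) and $g_{n}u_{n}^{\beta}\varphi\to g u^{\beta}\varphi$ by Vitali. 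This establishes \textbf{(3)}; together with $u\in W^{1,p}_{\mathrm{loc}}(\Omega)$ (from the local estimates), $u\in W^{1,\varrho_{s,p,q,\alpha,r}}_{0}(\Omega)$, $u^{\vartheta_{r}}\in W^{1,p}_{0}(\Omega)$ and $u\in L^{\frac{Nr(\alpha+q-1)}{N-sqr}}(\Omega)$, it yields \textbf{(1)} and the remaining Sobolev and summability statements, exhibiting $u$ as the claimed weak solution.
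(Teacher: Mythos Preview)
Your proposal is correct and follows essentially the same route as the paper: construct regularized solutions $u_{n}$, obtain the uniform interior lower bound, test with $u_{n}^{p(\vartheta_{r}-1)+1}$, use the fractional Sobolev embedding on the nonlocal piece together with the exponent calibrations for $\vartheta_{r}$, $m_{s,p,q,\alpha,\beta,r}$ and $\varrho_{s,p,q,\alpha,r}$ exactly as you describe, and pass to the limit via Vitali. The only notable deviations are peripheral: the paper builds $u_{n}$ by Schauder's fixed point (via an auxiliary sublinear problem) rather than by direct minimization, gets $u_{n}\ge c_{1}d(x)$ from the monotonicity $u_{n}\ge u_{1}$ plus Hopf rather than from your barrier $\underline{u}$, regularizes the test function as $(u_{n}+\epsilon)^{p(\vartheta_{r}-1)+1}-\epsilon^{p(\vartheta_{r}-1)+1}$ (which is the clean way to justify admissibility when $p(\vartheta_{r}-1)+1<1$, whereas ``truncations $\mathbf{T}_{k}(u_{n})$'' do not address the boundary singularity), and for the limit in the local term simply combines the $W^{1,\varrho}_{0}$ bound (with $\varrho>p-1$) with the a.e.\ gradient convergence of Dal Maso--Murat \cite{dal1998almost}, avoiding your cutoff-testing step.
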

\begin{remark}
	From the specified range of \( r \) in this theorem, it follows that \( 1 < \varrho_{s, p, q, \alpha, r} < p \) and \( \vartheta_{r} < 1 \). Consequently, \eqref{P2} admits solutions with infinite energy, but not in the space \( W^{1, p}_{\text{loc}}(\Omega) \). It is also important to note that Corollary \ref{corollary} does not address the uniqueness result in this case.
\end{remark}
It is worth noting that we establish the uniqueness result specifically in the case when \( g = 0 \) for this theorem. More precisely, we have the following theorem:
\begin{theorem}\label{Theorem6}
	Assume that \( g \equiv 0 \) in problem \eqref{P2}. Let \( u_{1} \) and \( u_{2} \) be two weak solutions to problem \eqref{P2} with \( f_{1}, f_{2} \in L^{r}(\Omega) \) for \( 1 \leq r < r_{p, \alpha} \), where \( r_{p, \alpha} \) is defined as in \eqref{equ77}. Then, there exists a constant \( \textbf{C } > 0\), independent of \( u_{1} \) and \( u_{2} \), such that
	\begin{equation}\label{equ65}
	\left\| \left( (u_{1} - u_{2})^{+} \right)^{\vartheta_{r}} \right\| _{W^{1, p}_{0}(\Omega)} \leq \textbf{C }\left\| (f_{1} - f_{2})^{+} \right\|^{\frac{\vartheta_{r}}{p + \alpha - 1}} _{L^{r}(\Omega)},
	\end{equation}
where \( \vartheta_{r}  \) is defined as in \eqref{equ79}.
\end{theorem}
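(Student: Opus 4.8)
\noindent\emph{Strategy.} The plan is a quantitative ``direct comparison''. Write $w:=(u_{1}-u_{2})^{+}$, put
\[
\mu:=p\,\vartheta_{r}-p+1 ,\qquad\text{so that}\qquad \mu-1=p(\vartheta_{r}-1),
\]
and use the \emph{same} power $w^{\mu}$ as a test function in the weak formulations of $u_{1}$ (datum $f_{1}$, $g\equiv0$) and of $u_{2}$ (datum $f_{2}$, $g\equiv0$), then subtract. From \eqref{equ79} and the range $1\le r<r_{p,\alpha}$ in \eqref{equ77} one checks that $\mu>0$ and $\mu\ge\alpha$ (equivalently $\vartheta_{r}\ge\vartheta_{1}=\tfrac{p+\alpha-1}{p}$, with equality precisely when $r=1$; for $q<p$ this reduces to $\alpha+q-1>0$). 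Since the solutions only lie in $W^{1,\varrho_{s,p,q,\alpha,r}}_{0}(\Omega)$ with $\varrho_{s,p,q,\alpha,r}<p$, and the weak formulation is tested against compactly supported functions, $w^{\mu}$ must first be replaced by the admissible surrogate $\xi\big[\big((\mathbf{T}_{k}u_{1}-\mathbf{T}_{k}u_{2})^{+}+\eta\big)^{\mu}-\eta^{\mu}\big]\in W^{1,p}_{0}(\Omega)\cap L^{\infty}_{c}(\Omega)$, with $\eta>0$, $k\in\mathbb{N}$ and a cut-off $\xi\in C^{\infty}_{c}(\Omega)$, $\xi\uparrow1$; the limits $\xi\to1$, $\eta\to0$, $k\to\infty$ are taken at the end by monotone/dominated convergence and Fatou, using the interior positivity $u_{i}\ge C(K)$ on $K\Subset\Omega$ of Definition \ref{definition1} and the regularity $u_{i}^{\vartheta_{r}}\in W^{1,p}_{0}(\Omega)$ from Theorem \ref{Theorem3}.

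\medskip
\noindent On the resulting identity three terms appear. (i) The local term: since $\nabla(w^{\mu})=\mu\,w^{\mu-1}(\nabla u_{1}-\nabla u_{2})\mathbf{1}_{\{u_{1}>u_{2}\}}$, the monotonicity inequalities of Lemma \ref{lemma1} (the third one if $p\ge2$; the first one together with an extra Hölder interpolation, as in the proof of Theorem \ref{Theorem3}, if $1<p<2$) give
\[
\int_{\Omega}\big([\nabla u_{1}]^{p-1}-[\nabla u_{2}]^{p-1}\big)\!\cdot\!\nabla(w^{\mu})\,dx\;\ge\; c\!\int_{\{u_{1}>u_{2}\}}\! w^{p(\vartheta_{r}-1)}\,|\nabla u_{1}-\nabla u_{2}|^{p}\,dx\;=\;\frac{c}{\vartheta_{r}^{\,p}}\,\big\|w^{\vartheta_{r}}\big\|_{W^{1,p}_{0}(\Omega)}^{p}=:c'A.
\]
(ii) The nonlocal term is $\ge0$: $t\mapsto[t]^{q-1}$ is increasing and $w^{\mu}=h(u_{1}-u_{2})$ with $h$ nondecreasing, so the two factors of its integrand share the sign of $(u_{1}-u_{2})(x)-(u_{1}-u_{2})(y)$ (at the regularized level this holds up to a cut-off error that vanishes in the limit). (iii) The right-hand side equals $\int_{\{u_{1}>u_{2}\}}\big(f_{1}u_{1}^{-\alpha}-f_{2}u_{2}^{-\alpha}\big)w^{\mu}\,dx$; writing $f_{1}u_{1}^{-\alpha}-f_{2}u_{2}^{-\alpha}=(f_{1}-f_{2})u_{1}^{-\alpha}+f_{2}\big(u_{1}^{-\alpha}-u_{2}^{-\alpha}\big)$, the second summand is $\le0$ on $\{u_{1}>u_{2}\}$ (because $f_{2}\ge0$ and $z\mapsto z^{-\alpha}$ is decreasing), and since $u_{1}\ge w$ there the right-hand side is bounded by $\int_{\Omega}(f_{1}-f_{2})^{+}w^{\mu-\alpha}\,dx$.

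\medskip
\noindent It remains to close the estimate. If $r=1$ then $\mu-\alpha=0$, the last integral is $\le\|(f_{1}-f_{2})^{+}\|_{L^{1}(\Omega)}$, hence $A\le C\|(f_{1}-f_{2})^{+}\|_{L^{1}(\Omega)}$, which is \eqref{equ65} because $p\vartheta_{1}=p+\alpha-1$. If $r>1$, Hölder in $L^{r},L^{r'}$ and then Theorem \ref{thm0} applied to $w^{\vartheta_{r}}$ yield
\[
\int_{\Omega}(f_{1}-f_{2})^{+}w^{\mu-\alpha}\,dx\;\le\;\big\|(f_{1}-f_{2})^{+}\big\|_{L^{r}(\Omega)}\,\big\|w\big\|_{L^{(\mu-\alpha)r'}(\Omega)}^{\mu-\alpha}\;\le\;C\,\big\|(f_{1}-f_{2})^{+}\big\|_{L^{r}(\Omega)}\,A^{\frac{\mu-\alpha}{p\vartheta_{r}}},
\]
provided $(\mu-\alpha)r'\le p^{*}\vartheta_{r}$, which is exactly where the restriction $r<r_{p,\alpha}$ enters. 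Combining (i)--(iii), $c'A\le C\|(f_{1}-f_{2})^{+}\|_{L^{r}(\Omega)}A^{(\mu-\alpha)/(p\vartheta_{r})}$; since $1-\frac{\mu-\alpha}{p\vartheta_{r}}=\frac{p+\alpha-1}{p\vartheta_{r}}$, absorbing the power of $A$ and taking $p$-th roots gives precisely \eqref{equ65}, with a constant depending only on $N,p,q,s,\alpha,r$ and $\Omega$ (the a priori finiteness of $A$ being produced by the same regularized computation via Fatou).

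\medskip
\noindent The crux is the first step: $w^{\mu}$ is not an admissible test function, so the whole computation has to be carried out on the truncated/shifted/cut-off surrogate and every error term — most delicately the one created by inserting the cut-off into the nonlocal double integral — must be shown to vanish in the limit, which is where $u_{i}^{\vartheta_{r}}\in W^{1,p}_{0}(\Omega)$ and a Hardy-type inequality are needed. The remaining points (the exponent arithmetic $\mu>0$, $\mu\ge\alpha$, $(\mu-\alpha)r'\le p^{*}\vartheta_{r}$, and the interpolation fixing the local term when $1<p<2$) are routine bookkeeping.
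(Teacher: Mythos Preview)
Your core strategy is the same as the paper's: subtract the weak formulations, test with $w^{\mu}$ where $w=(u_1-u_2)^+$ and $\mu=p(\vartheta_r-1)+1$, drop the nonlocal contribution by monotonicity, bound the right-hand side by $\int (f_1-f_2)^+ w^{\mu-\alpha}$, and close with H\"older plus Sobolev applied to $w^{\vartheta_r}$. The exponent bookkeeping you indicate ($\mu\ge\alpha$, the absorption exponent $1-\tfrac{\mu-\alpha}{p\vartheta_r}=\tfrac{p+\alpha-1}{p\vartheta_r}$) matches the paper exactly.

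The route you take to make $w^{\mu}$ admissible is, however, genuinely different. The paper does \emph{not} work directly with $u_1,u_2$: it goes back to the approximating sequences $u_{1,n},u_{2,n}\in C^{1,\zeta}(\overline{\Omega})\cap W^{1,p}_0(\Omega)$ from Lemma~\ref{lemma2}, for which the weak formulation \eqref{equ45} accepts any $W^{1,p}_0(\Omega)$ test function. There one tests with $\big((u_{1,n}-u_{2,n})^+ +\epsilon\big)^{\mu}-\epsilon^{\mu}$ globally (no cut-off), lets $\epsilon\to0$ by Fatou, obtains the lower bound $\int|\nabla((u_{1,n}-u_{2,n})^+ +\mathbf{c})^{\vartheta_r}|^p$, and only then passes $n\to\infty$ (weak lower semicontinuity) and finally $\mathbf{c}\to0$ (monotone convergence). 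This completely bypasses the issue you identify as ``the crux'': there is no cut-off in the nonlocal double integral to remove, and the a~priori finiteness of $A$ is automatic at the regularized level because $u_{i,n}$ are classical.

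Your direct approach is viable but harder. Two points deserve care: (a) the weak formulation in Theorem~\ref{Theorem3} is stated only for $\varphi\in C^{\infty}_c(\Omega)$, so before inserting your surrogate you must first argue that $u_i\in W^{1,p}_{\mathrm{loc}}(\Omega)$ (from $u_i^{\vartheta_r}\in W^{1,p}_0$ and interior positivity) and extend by density to $W^{1,p}_0\cap L^{\infty}_c$; (b) the cut-off error in the nonlocal term requires a global seminorm control of $u_i$ that you only have for a power of $u_i$, not for $u_i$ itself --- this can be handled (e.g.\ via \cite[Lemma~3.5]{ref01}, as the paper does elsewhere), but it is not merely a Hardy inequality. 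The paper's approximation route buys you exactly the avoidance of (a) and (b).
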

\begin{remark}
	The inequality in \eqref{equ65} ensures the uniqueness of the weak solution to problem \eqref{P2} when \( g \equiv 0 \), as stated in Theorem \ref{Theorem3}.
\end{remark}
\begin{theorem}\label{Theorem4}
	If either of the following conditions holds:\\[4pt]
	\textbf{(i)} \( 0 < \alpha < 1 \), and \( f \in L^{r}(\Omega) \) with \( r_{p, \alpha} \leq r \leq \infty \), where \( r_{p, \alpha} \) is defined as in \eqref{equ77}. \\[4pt]
	\textbf{(ii)} \( \alpha = 1 \), and \( f \in L^{r}(\Omega) \) with \( r \in \left[1, \infty\right] \).\\[4pt]
	Moreover, suppose that \( g \in L^{\left( \frac{p^{*}}{\beta + 1} \right)'}(\Omega) \). Then, there exists a positive weak solution \( u \) to problem \eqref{P2} in the following sense:
	\begin{itemize}
		\item[\textbf{(1)}] \( u \in W^{1, p}_{0}(\Omega) \).
		\item[\textbf{(2)}] For every \( \omega \Subset \Omega \), there exists a constant \( C = C(\omega) > 0 \) such that \( u \geq C \) in \( \omega \).
		\item[\textbf{(3)}] For all \( \varphi \in C_{c}^{\infty}(\Omega) \), the solution satisfies the following:
		\begin{equation*}
		\begin{aligned}
		&\int_{\Omega} \left[ \nabla u \right]^{p-1} \nabla \varphi \, dx + \iint_{\mathbb{R}^{2N}} \frac{\left[ u(x) - u(y)\right]^{q-1} \left( \varphi(x) - \varphi(y) \right)}{\vert x - y \vert^{N + sq}} \, dx \, dy  = \int_{\Omega} f(x) u^{-\alpha} \varphi \, dx + \int_{\Omega} g(x) u^{\beta} \varphi \, dx.
		\end{aligned}
		\end{equation*}
	\end{itemize}
Moreover, if \( \frac{N}{p} < r <  \frac{p^{*}}{\beta} \) and the function \( g \) satisfies the additional regularity condition \( g \in L^{\frac{p^{*} r}{p^{*} - \beta r}}(\Omega) \), then the solution \( u \) belongs to \( L^{\infty}(\Omega) \).
\end{theorem}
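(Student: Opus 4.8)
The plan is to construct the solution as the limit of solutions of a sequence of non-singular, regularized problems, establish estimates uniform in the approximation parameter, and pass to the limit; the $L^\infty$ assertion is then obtained separately by a Stampacchia-type iteration.

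\textbf{Step 1: approximation and lower bound.} For $n\in\N$ set $f_n:=\min\{f,n\}$ and consider
\[
-\Delta_p u_n+(-\Delta)^s_q u_n=\frac{f_n(x)}{\bigl(u_n+\tfrac1n\bigr)^{\alpha}}+g(x)\,(u_n^+)^{\beta}\ \text{ in }\Omega,\qquad u_n=0\ \text{ in }\rd\setminus\Omega .
\]
Since the datum is bounded for each fixed $n$ and $0<\beta<q-1\le p-1$, the associated energy functional (with the natural primitive of the truncated singular term and the term $-\tfrac{1}{\beta+1}\int_\Omega g\,(v^+)^{\beta+1}$) is coercive on $\mathcal{W}^{1,p}(\Omega)$ — here one uses $\beta+1<p$ together with $g\in L^{(p^*/(\beta+1))'}(\Omega)$ and Theorem~\ref{thm0} to absorb the source contribution, which is $o(\|\nabla v\|_{L^p}^p)$ — and weakly lower semicontinuous, hence admits a nonnegative minimizer $u_n$, a weak solution of the approximating problem. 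By the strong minimum principle and the Hopf-type lemma for the mixed operator (\cite{antonini2023global}), $u_n>0$ in $\Omega$; and since $f_n\ge f_1$ and $(u_n+\tfrac1n)^{-\alpha}\ge(u_n+1)^{-\alpha}$ for $n\ge1$, each $u_n$ is a supersolution of the problem with non-increasing datum $f_1(w+1)^{-\alpha}$, so comparing (Lemma~\ref{Lemma4}) with a fixed positive solution of the latter yields the uniform lower bound $u_n\ge C(\omega)>0$ on every $\omega\Subset\Omega$, which is assertion (2).

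\textbf{Step 2: uniform estimate and passage to the limit.} Testing with $u_n$: in case (i) bound $f_n u_n(u_n+\tfrac1n)^{-\alpha}\le f\,u_n^{1-\alpha}$ and use H\"older with $f\in L^{r_{p,\alpha}}=L^{(p^*/(1-\alpha))'}(\Omega)$ (legitimate since $r\ge r_{p,\alpha}$ and $\Omega$ is bounded) and $W^{1,p}_0\hookrightarrow L^{p^*}$; in case (ii) bound the singular term by $\|f\|_{L^1(\Omega)}$; and control the source term by $\int_\Omega g\,u_n^{\beta+1}\le C\|\nabla u_n\|_{L^p}^{\beta+1}$. Since $1-\alpha<p$ and $\beta+1<p$, Young's inequality gives $\|\nabla u_n\|_{L^p(\Omega)}\le C$ and $[u_n]_{s,q}\le C$ uniformly in $n$. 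Hence, up to a subsequence, $u_n\rightharpoonup u$ in $W^{1,p}_0(\Omega)$, $u_n\to u$ in $L^{\rho}(\Omega)$ for $\rho<p^*$ and a.e., so $u\in W^{1,p}_0(\Omega)$ with $u\ge C(\omega)$ on compacts (assertion (1)). One then upgrades this to a.e.\ convergence of $\nabla u_n$ (equivalently, strong convergence in $W^{1,p}_{\mathrm{loc}}(\Omega)$) by a Boccardo--Murat-type argument: test the equation for $u_n$ with a truncation of $u_n-u$ localized by a cut-off, exploit the sign of the nonlocal term and the strict monotonicity inequalities of Lemma~\ref{lemma1}, and use that on each compact the singular datum is dominated by $f/C(\omega)^{\alpha}\in L^1$. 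With $\nabla u_n\to\nabla u$ a.e., for every $\varphi\in C_c^\infty(\Omega)$ one passes to the limit in the two operator terms by Vitali (equi-integrability from the $L^{p'}$, resp.\ $L^{q'}$, bounds), in the singular term by dominated convergence on $\supp\varphi\Subset\Omega$, and in the source term by Vitali, since $u_n^{\beta}$ is bounded in $L^{p^*/\beta}(\Omega)$, converges a.e., and $g\varphi\in L^{(p^*/(\beta+1))'}(\Omega)$. This yields (3).

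\textbf{Step 3: boundedness under the extra hypotheses.} Assume $\tfrac Np<r<\tfrac{p^*}{\beta}$ and $g\in L^{p^*r/(p^*-\beta r)}(\Omega)$. For $k\ge1$ test the approximating equation with $(u_n-k)^+\in W^{1,p}_0(\Omega)$ and discard the nonnegative nonlocal term; on $\{u_n>k\}$ one has $f_n(u_n+\tfrac1n)^{-\alpha}\le f$. Using the Sobolev inequality on the left and H\"older on the right — for $f$ via $r>N/p>(p^*)'$, so $r'<p^*$, and for $g$ via the chosen exponent together with the uniform $L^{p^*}$ bound on $u_n$ — one obtains, after Young's inequality,
\[
\bigl\|(u_n-k)^+\bigr\|_{L^{p^*}(\Omega)}^{\,p-1}\le C\,\bigl|\{u_n>k\}\bigr|^{\eta},
\]
with $C,\eta$ independent of $n,k$, where $\eta>\tfrac{p-1}{p^*}$ precisely because $r>N/p$ (and $r>(p^*)'$ for the $g$-term). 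Putting $\Psi(k):=|\{u_n>k\}|$, this is the decay inequality of Lemma~\ref{Lemma5} with $\delta=\tfrac{p^*\eta}{p-1}>1$, so $\|u_n\|_{L^\infty(\Omega)}\le C$ uniformly in $n$, and letting $n\to\infty$ gives $u\in L^\infty(\Omega)$.

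\textbf{Main obstacle.} I expect the delicate point to be Step~2, namely obtaining the almost-everywhere convergence of the gradients with \emph{both} a nonlocal operator and a singular lower-order term present: the uniform local lower bound from Step~1 is what tames the singularity on the supports of the test functions, the sign of the nonlocal term and Lemma~\ref{lemma1} dispose of the quasilinear part, and once this compactness is secured the remaining passages to the limit (and the Stampacchia iteration of Step~3) are routine.
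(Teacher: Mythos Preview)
Your proposal is correct and follows essentially the same route as the paper: approximate, get a uniform $W^{1,p}_0$ bound by testing with $u_n$, extract a weak limit, upgrade to a.e.\ gradient convergence, pass to the limit term by term, and run a Stampacchia iteration on $(u_n-k)^+$ for the $L^\infty$ claim.

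Two small points where the paper differs in execution. First, your lower-bound argument in Step~1 is a bit informal: Lemma~\ref{Lemma4} compares operators with \emph{fixed} right-hand sides, whereas the datum $f_1(w+1)^{-\alpha}$ depends on the unknown, so you really need the $h(x,s)/s^{q-1}$-monotone comparison principle (Theorem~\ref{Theorem2}) here; the paper instead shows directly that $(u_n)$ is increasing in $n$ (again via Theorem~\ref{Theorem2}) and applies Hopf to $u_1$, which is cleaner. Second, for the a.e.\ convergence of $\nabla u_n$ the paper does not carry out a Boccardo--Murat localization by hand but simply invokes \cite[Theorem~2.1 and Remark~2.2]{dal1998almost}, which applies because $(u_n)$ is uniformly bounded away from zero on compacts so the singular term is locally dominated; your hands-on argument would work too, but the citation is shorter. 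Everything else---the H\"older/Sobolev bookkeeping in the a~priori estimate, the Vitali/dominated-convergence passages, and the Stampacchia step leading to Lemma~\ref{Lemma5} with exponent $\frac{p^*}{p-1}\bigl(1-\tfrac1r-\tfrac1{p^*}\bigr)>1$ exactly when $r>N/p$---matches the paper.
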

\begin{remark}
	According to Theorem \ref{Theorem3}, we observe that for any $\alpha \in (0, 1)$, there is a form of \textbf{continuity} in the summability exponent $\varrho_{s, p, q, \alpha, r}$. Specifically, as $r \to r_{p, \alpha}$, where $r_{p, \alpha}$ is defined in \eqref{equ77}, we have $\vartheta_{r} \to 1$, with $\vartheta_{r}$ defined in \eqref{equ79}, and $\varrho_{s, p, q, \alpha, r} \to p$, where $\varrho_{s, p, q, \alpha, r}$ is defined in \eqref{equ50}.
\end{remark}
\begin{theorem}\label{Theorem5}
	Let \( \alpha > 1 \) and suppose that \( f \in L^{r}(\Omega) \) with \( 1 \leq r < \frac{N}{sq} \), and \( g \in L^{\left( \frac{p^{*}}{\beta + 1} \right)'}(\Omega) \). Then, there exists a positive weak solution \( u \) to problem \eqref{P2} in the following sense:
	\begin{itemize}
		\item[\textbf{(1)}] \( u \in W^{1, p}_{\text{loc}}(\Omega) \).
		\item[\textbf{(2)}] \( u^{\vartheta_{r}} \in W^{1, p}_{0}(\Omega), \) where \( \vartheta_{r} \) is defined as in \eqref{equ79}.
		\item[\textbf{(3)}] For every \( \omega \Subset \Omega \), there exists a constant \( C = C(\omega) > 0 \) such that \( u \geq C \) in \( \omega \).
		\item[\textbf{(4)}] For all \( \varphi \in C_{c}^{\infty}(\Omega) \), the solution satisfies the following:
		\begin{equation*}
		\begin{aligned}
		&\int_{\Omega} \left[ \nabla u \right]^{p-1} \nabla \varphi \, dx + \iint_{\mathbb{R}^{2N}} \frac{\left[ u(x) - u(y) \right]^{q-1} \left( \varphi(x) - \varphi(y) \right)}{\vert x - y \vert^{N + sq}} \, dx \, dy = \int_{\Omega} f(x) u^{-\alpha} \varphi \, dx + \int_{\Omega} g(x) u^{\beta} \varphi \, dx.
		\end{aligned}
		\end{equation*}
	\end{itemize}
	Additionally, \( u \) belongs to \( L^{\frac{N r (\alpha + q - 1)}{N - sqr}}(\Omega) \).
\end{theorem}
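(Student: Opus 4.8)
The plan is to obtain $u$ as a limit of solutions of a regularized family, following the approximation scheme already employed for \eqref{P5} in \cite{garain2022mixed,ref48}, the novelty being the uniform bounds that record the summability $r$ of $f$ and the presence of the subcritical source $g\,u^{\beta}$. First, for $n\in\mathbb{N}$ set $f_{n}:=\mathbf{T}_{n}(f)=\min\{f,n\}$ and $g_{n}:=\mathbf{T}_{n}(g)$, and consider
\[
-\Delta_{p}u_{n}+(-\Delta)^{s}_{q}u_{n}=\frac{f_{n}(x)}{(u_{n}^{+}+\tfrac{1}{n})^{\alpha}}+g_{n}(x)(u_{n}^{+})^{\beta}\quad\text{in }\Omega,\qquad u_{n}=0\quad\text{in }\mathbb{R}^{N}\setminus\Omega .
\]
For fixed $n$ the right-hand side is bounded once $u_{n}$ is, and since $\beta<q-1<p-1$ the map $t\mapsto g_{n}t^{\beta}$ is strictly subcritical; a Schauder fixed-point argument — using the $L^{\infty}$ and Hölder regularity for the mixed operator from \cite{antonini2023global} and the a priori bound coming from $\beta+1<p$ — yields $u_{n}\in W^{1,p}_{0}(\Omega)\cap L^{\infty}(\Omega)$ with $u_{n}\ge 0$, and the strong maximum principle gives $u_{n}>0$ in $\Omega$ (the right-hand side is nonnegative and not identically zero). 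Being an energy solution, $u_{n}$ can be tested against any function in $W^{1,p}_{0}(\Omega)\cap L^{\infty}(\Omega)$; in particular $u_{n}^{\sigma}$ is admissible for $\sigma>0$.

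Next I would prove a uniform-in-$n$ lower bound away from $\partial\Omega$ and the key a priori estimate. Let $w_{\ast}\in W^{1,p}_{0}(\Omega)\cap L^{\infty}(\Omega)$ solve $-\Delta_{p}w_{\ast}+(-\Delta)^{s}_{q}w_{\ast}=\mathbf{T}_{1}(f)\,(w_{\ast}+1)^{-\alpha}$; this has a unique solution by monotone methods since its right-hand side is bounded by $1$ and nonincreasing in $w_{\ast}$, and $w_{\ast}>0$ in $\Omega$ with $w_{\ast}\ge c(\omega)>0$ on every $\omega\Subset\Omega$ by the strong maximum principle/Hopf-type lemma of \cite{antonini2023global}. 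Since $f_{n}\ge f_{1}$ and $u_{n}+\tfrac1n\le u_{n}+1$, one has $\frac{f_{n}}{(u_{n}+1/n)^{\alpha}}+g_{n}u_{n}^{\beta}\ge\frac{f_{1}}{(u_{n}+1)^{\alpha}}$, so $u_{n}$ is a weak supersolution of the problem solved by $w_{\ast}$, whose right-hand side is nonincreasing; testing the difference with $(w_{\ast}-u_{n})^{+}\in W^{1,p}_{0}(\Omega)$ and using the monotonicity inequalities of Lemma \ref{lemma1} (this is the content of the weak comparison principle, cf.\ Lemma \ref{Lemma4}) gives $w_{\ast}\le u_{n}$ in $\Omega$ for every $n$, hence $u_{n}\ge c(\omega)$ on $\omega\Subset\Omega$ uniformly in $n$, which yields \textbf{(3)} in the limit. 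For the a priori bound I would test the $n$-th equation with $u_{n}^{\sigma}$, where $\sigma:=p\vartheta_{r}-(p-1)$ (so $\vartheta_{r}=\tfrac{\sigma+p-1}{p}$; note $\vartheta_{r}\ge\tfrac{\alpha+p-1}{p}>1$ since $\alpha>1$, and $\sigma-\alpha=\tfrac{N(r-1)(\alpha+q-1)}{N-rsq}\ge 0$). Using $[\nabla u_{n}]^{p-1}\cdot\nabla(u_{n}^{\sigma})=\sigma u_{n}^{\sigma-1}|\nabla u_{n}|^{p}=c\,|\nabla(u_{n}^{\vartheta_{r}})|^{p}$ for the local term and the elementary inequality $[a-b]^{q-1}(a^{\sigma}-b^{\sigma})\ge c\,|a^{\frac{\sigma+q-1}{q}}-b^{\frac{\sigma+q-1}{q}}|^{q}$ for the nonlocal kernel,
\[
c_{1}\,\|u_{n}^{\vartheta_{r}}\|_{W^{1,p}_{0}(\Omega)}^{p}+c_{2}\,[\,u_{n}^{\frac{\sigma+q-1}{q}}\,]_{s,q}^{q}\le\int_{\Omega}f_{n}\,u_{n}^{\sigma-\alpha}\,dx+\int_{\Omega}g_{n}\,u_{n}^{\sigma+\beta}\,dx .
\]
The exponent $\sigma$ is chosen so that $(\sigma-\alpha)r'=q^{*}_{s}\cdot\frac{\sigma+q-1}{q}=\frac{N(\sigma+q-1)}{N-sq}=\frac{Nr(\alpha+q-1)}{N-rsq}$; then Hölder's inequality ($f\in L^{r}$) and the fractional Poincaré inequality (Theorem \ref{thm3}) applied to $u_{n}^{(\sigma+q-1)/q}$ bound the first integral by $C\|f\|_{L^{r}}\,[\,u_{n}^{(\sigma+q-1)/q}\,]_{s,q}^{q\frac{\sigma-\alpha}{\sigma+q-1}}$, and since $\alpha>1-q$ the exponent is $<q$, so the term is absorbed by Young's inequality. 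The second integral is handled using $\beta<q-1$, Hölder's inequality with $g\in L^{(p^{*}/(\beta+1))'}(\Omega)$, the Sobolev inequality (Theorem \ref{thm0}) for $u_{n}^{\vartheta_{r}}$, the fractional embedding for $u_{n}^{(\sigma+q-1)/q}$, and a splitting of $\{u_{n}\le 1\}$ and $\{u_{n}>1\}$; because $\beta<p-1$ the powers produced are strictly subcritical and are absorbed as well. This gives, uniformly in $n$, $\|u_{n}^{\vartheta_{r}}\|_{W^{1,p}_{0}(\Omega)}+[\,u_{n}^{(\sigma+q-1)/q}\,]_{s,q}\le C$; by Theorem \ref{thm3} this yields $\|u_{n}\|_{L^{\frac{Nr(\alpha+q-1)}{N-rsq}}(\Omega)}\le C$, and on every $\omega\Subset\Omega$, writing $\nabla u_{n}=\tfrac1{\vartheta_{r}}u_{n}^{1-\vartheta_{r}}\nabla(u_{n}^{\vartheta_{r}})$ and using $u_{n}\ge c(\omega)$ with $\vartheta_{r}>1$, also $\|u_{n}\|_{W^{1,p}(\omega)}\le C(\omega)$, which gives \textbf{(1)}.

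Finally I would pass to the limit. Along a subsequence, $u_{n}\rightharpoonup u$ in $W^{1,p}(\omega)$ for every $\omega\Subset\Omega$, $u_{n}^{\vartheta_{r}}\rightharpoonup u^{\vartheta_{r}}$ in $W^{1,p}_{0}(\Omega)$, $u_{n}\to u$ in $L^{p}_{\mathrm{loc}}(\Omega)$ and a.e., and $u\ge w_{\ast}>0$; hence \textbf{(2)} and \textbf{(3)} hold and Fatou's lemma gives $u\in L^{\frac{Nr(\alpha+q-1)}{N-rsq}}(\Omega)$. To pass to the limit in the $p$-Laplacian term one proves $\nabla u_{n}\to\nabla u$ a.e.\ in $\Omega$ by a standard localized monotonicity argument (test with $\psi_{k}(u_{n}-u)\eta^{p}$, $\eta$ a cut-off and $\psi_{k}$ a smooth truncation, using that the right-hand side is bounded in $L^{1}_{\mathrm{loc}}$ — the singular term is bounded on compacts by the lower bound, the source term is equi-integrable by the $L^{\frac{Nr(\alpha+q-1)}{N-rsq}}$-bound). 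Fix $\varphi\in C^{\infty}_{c}(\Omega)$ with $K:=\mathrm{supp}\,\varphi$: the local term passes to the limit by a.e.\ gradient convergence and Vitali's theorem; for the nonlocal term one splits $\mathbb{R}^{2N}$ into a neighbourhood of the diagonal over a compact $K'\Supset K$ — where $u_{n}$ is bounded in $W^{s,q}(K')$ by the local form of Lemma \ref{Lemma1} and converges a.e. — and its complement — where $|x-y|$ is bounded below and $\varphi(y)=0$ off $K$, so the integrand is dominated by the uniform $L^{q-1}(\mathbb{R}^{N})$-bound on $u_{n}$ — and then applies dominated convergence. The singular term converges by dominated convergence since $f_{n}u_{n}^{-\alpha}\le f\,c(K)^{-\alpha}\in L^{1}(K)$ and $f_{n}u_{n}^{-\alpha}\to f u^{-\alpha}$ a.e., and the source term converges by Vitali's theorem, $\{g_{n}u_{n}^{\beta}\}$ being equi-integrable on $K$ (Hölder, $g\in L^{(p^{*}/(\beta+1))'}(\Omega)$, and the higher summability of $u_{n}$). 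This establishes the weak formulation \textbf{(4)} and completes the argument.

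The main obstacle is the a priori estimate above: one must pick the power in the test function so that \emph{both} the singular integral $\int f_{n}u_{n}^{\sigma-\alpha}$ and the source integral $\int g_{n}u_{n}^{\sigma+\beta}$ can be absorbed into the left-hand side. This is exactly where the mixed structure is essential — the local term controls $u_{n}^{\vartheta_{r}}$ in $W^{1,p}_{0}$ while the nonlocal term supplies the extra integrability $L^{\frac{Nr(\alpha+q-1)}{N-rsq}}$ — and it forces the precise value of $\vartheta_{r}$ in \eqref{equ79} and the restriction $r<\frac{N}{sq}$; the conditions $0<\beta<q-1$ and $\alpha>1$ enter to keep the absorbed powers strictly subcritical and to guarantee $\sigma\ge\alpha$. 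A secondary difficulty is the passage to the limit in the nonlocal term: since $u_{n}$ is only \emph{locally} bounded in $W^{1,p}$ (hence in $W^{s,q}$), the near/far-diagonal splitting above, combined with the uniform global integrability of $u_{n}$, is unavoidable.
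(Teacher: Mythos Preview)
Your proposal is correct and follows essentially the same route as the paper: regularize, test the approximating equation with $u_{n}^{p(\vartheta_{r}-1)+1}$, use the algebraic inequality $[a-b]^{q-1}(a^{\sigma}-b^{\sigma})\ge c\,|a^{(\sigma+q-1)/q}-b^{(\sigma+q-1)/q}|^{q}$ on the nonlocal kernel to obtain the uniform bound on $u_{n}^{\vartheta_{r}}$ in $W^{1,p}_{0}(\Omega)$ (hence on $u_{n}$ in $W^{1,p}_{\mathrm{loc}}(\Omega)$ via the interior lower bound), and pass to the limit term by term. The only differences are cosmetic: the paper gets the uniform lower bound from the monotonicity $u_{n}\ge u_{1}$ (Lemma~\ref{lemma2}\,(2)) combined with Hopf's lemma for $u_{1}$ rather than from comparison with an auxiliary $w_{\ast}$, and it cites \cite{dal1998almost} for the a.e.\ gradient convergence instead of sketching a localized truncation argument.
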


\noindent $ \bullet $ \textbf{For the class of weight functions satisfying}  \textbf{(F2)}:
\begin{theorem}\label{theorem1}
	Let \( \delta + \alpha < 1 + \frac{1}{p'} \) and \( g \in L^{\left(\frac{p^{*}}{\beta + 1}\right)'}(\Omega) \). Then, there exists a positive weak solution \( u \) of the problem \eqref{P2} in the following sense:
	\begin{itemize}
		\item[\textbf{(1)}] \( u \in W^{1, p}_{0}(\Omega) \).
		\item[\textbf{(2)}] For every \( \omega \Subset \Omega \), there exists a constant \( C = C(\omega) > 0 \) such that \( u \geq C \) in \( \omega \).
		\item[\textbf{(3)}] For all \( \varphi \in C_{c}^{\infty}(\Omega) \), the solution satisfies the following equation:
		\begin{equation*}
		\begin{aligned}
		&\int_{\Omega} \left[ \nabla u\right] ^{p-1} \nabla \varphi \, dx + \iint_{\mathbb{R}^{2N}} \frac{\left[ u(x) - u(y) \right] ^{q-1} \left( \varphi(x) - \varphi(y) \right)}{|x - y|^{N + sq}} \, dx \, dy  = \int_{\Omega} f(x) u^{-\alpha} \varphi \, dx + \int_{\Omega} g(x) u^{\beta} \varphi \, dx.
		\end{aligned}
		\end{equation*}
	\end{itemize}
\end{theorem}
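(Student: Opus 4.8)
The plan is to obtain $u$ as the limit of solutions of a family of regularized, non-singular problems, to propagate an $n$-uniform energy estimate by combining Hardy's inequality with the boundary barrier of Theorem \ref{theorem2}, and then to pass to the limit using the interior positivity bound and the monotonicity of the two principal operators.

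\textbf{Step 1: the approximating problems.} For $n\in\mathbb N$ set $f_n:=\min\{f,n\}$ and consider
\[
-\Delta_p u+(-\Delta)^s_q u=\frac{f_n(x)}{(u^++\frac1n)^{\alpha}}+g(x)(u^+)^{\beta}\quad\text{in }\Omega,\qquad u=0\ \text{in }\mathbb R^N\setminus\Omega .
\]
For fixed $n$ the right-hand side is bounded below by $0$, and the associated energy functional on $W^{1,p}_0(\Omega)$ — whose principal parts are the coercive, weakly lower semicontinuous forms $\frac1p\|\nabla\cdot\|_{L^p(\Omega)}^p$ and $\frac1q[\,\cdot\,]_{s,q}^q$, the latter controlled by the former through Lemma \ref{Lemma1} — is coercive and weakly lower semicontinuous because the primitive of the source grows like $\|g\|_{(p^*/(\beta+1))'}\|\cdot\|_{L^{p^*}}^{\beta+1}$ with $\beta+1<q\le p$. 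Hence it has a minimizer $u_n$, which is a (hence the, since the quotient $h_n(x,t)/t^{q-1}$ is nonincreasing — cf. the uniqueness principle of Section 2) weak solution. Testing with $u_n^-$ gives $u_n\ge 0$; since $f_n\not\equiv0$ is nonnegative, the strong minimum principle and the Hopf-type lemma of \cite{antonini2023global} yield $u_n>0$ in $\Omega$ and, for every $\omega\Subset\Omega$, a constant $c(\omega)>0$ with $u_n\ge c(\omega)$ on $\omega$.

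\textbf{Step 2: the uniform energy estimate (the crux).} Taking $u_n$ as test function and discarding the nonnegative nonlocal term,
\[
\|\nabla u_n\|_{L^p(\Omega)}^p\le\int_\Omega f_n\,(u_n+\tfrac1n)^{-\alpha}u_n\,dx+\int_\Omega g\,u_n^{\beta+1}\,dx .
\]
By Hölder and the Sobolev embedding the source term is at most $C\|\nabla u_n\|_{L^p(\Omega)}^{\beta+1}$, which is absorbable since $\beta+1<p$. For the singular term we use $(u_n+\frac1n)^{-\alpha}u_n\le u_n^{1-\alpha}$ and $f_n\le \textbf{C}_2\,d^{-\delta}$. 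When $\alpha<1$, writing $u_n^{1-\alpha}=(u_n/d)^{1-\alpha}d^{1-\alpha}$, Hölder with exponents $\bigl(\tfrac p{1-\alpha},\tfrac p{p-1+\alpha}\bigr)$ and Hardy's inequality $\|u_n/d\|_{L^p(\Omega)}\le C\|\nabla u_n\|_{L^p(\Omega)}$ give
\[
\int_\Omega f_n u_n^{1-\alpha}\,dx\le C\,\|\nabla u_n\|_{L^p(\Omega)}^{1-\alpha}\Bigl(\int_\Omega d^{\,(1-\alpha-\delta)p/(p-1+\alpha)}\,dx\Bigr)^{(p-1+\alpha)/p},
\]
and the last integral is finite precisely because $\delta+\alpha<1+\frac1{p'}$; since $1-\alpha<p$ this term is again absorbable. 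When $\alpha\ge 1$ one instead uses the lower barrier $u_n\gtrsim \underline w_\rho\gtrsim d^{\gamma}$ of Theorem \ref{theorem2} with $\gamma\in(0,s)$ chosen small, so that $f_n u_n^{1-\alpha}\lesssim d^{-\delta-\gamma(\alpha-1)}\in L^1(\Omega)$ because $\delta<1$ (which follows from $\delta+\alpha<1+\frac1{p'}$ when $\alpha\ge1$). In either case $\|\nabla u_n\|_{L^p(\Omega)}$, and hence by Lemma \ref{Lemma1} also $[u_n]_{s,q}$, is bounded uniformly in $n$.

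\textbf{Step 3: passage to the limit and conclusion.} By the uniform bound and Rellich's theorem, up to a subsequence $u_n\rightharpoonup u$ in $W^{1,p}_0(\Omega)$ (so $u\in W^{1,p}_0(\Omega)$, which is (1)) with $u_n\to u$ in every $L^r(\Omega)$, $r<p^*$, and a.e.; the interior bound passes to the limit, giving (2). Fix $\varphi\in C^\infty_c(\Omega)$ and let $K:=\supp\varphi$; on $K$ one has $u_n\ge c(K)>0$ and $f_n\le f\in L^\infty(K)$, so $f_n u_n^{-\alpha}\varphi\to f u^{-\alpha}\varphi$ in $L^1(\Omega)$ by dominated convergence, while $g\,u_n^{\beta}\varphi\to g\,u^{\beta}\varphi$ follows from the uniform $L^{p^*}$ bound on $u_n$ and $g\in L^{(p^*/(\beta+1))'}\subset L^{(p^*/\beta)'}$. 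For the two principal operators one obtains $\nabla u_n\to\nabla u$ a.e. and a.e. convergence of the difference quotients of $u_n$ — e.g. by testing the equations for $u_n$ and $u_m$ against suitable truncations of $u_n-u_m$ and invoking the monotonicity inequalities of Lemma \ref{lemma1} — whence, with the uniform bounds and Vitali's theorem, $\int_\Omega[\nabla u_n]^{p-1}\nabla\varphi\to\int_\Omega[\nabla u]^{p-1}\nabla\varphi$ and the analogous convergence of the nonlocal bilinear form. Passing to the limit in the weak formulation of the approximating problem yields (3), and a density argument upgrades (3) to test functions in $W^{1,p}_0(\Omega)\cap L^\infty_c(\Omega)$, so that $u$ is a weak solution in the sense of Definition \ref{definition1} (with $\theta=1$ in part (i)). Finally, $\delta+\alpha<1+\frac1{p'}$ forces condition (H4), so Corollary \ref{corollary} applies and this solution is the unique one.

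\textbf{Main obstacle.} The heart of the argument is Step 2: closing the $n$-uniform $W^{1,p}_0$ bound when $f$ is a distance-singular weight, which amounts to controlling $\int_\Omega d^{-\delta}u_n^{1-\alpha}\,dx$ by the Dirichlet energy — via a Hardy-type inequality when $\alpha<1$, and via two-sided comparison with powers of $d$ built from the barrier $\underline w_\rho$ when $\alpha\ge1$. In the latter case the delicate point is that the barrier estimates must be admissible \emph{simultaneously} for the $p$-Laplacian and the fractional $q$-Laplacian, which is exactly the content of Theorem \ref{theorem2}. A secondary technical difficulty is the a.e. convergence of gradients and of nonlocal difference quotients required to pass to the limit in the two nonlinear principal operators.
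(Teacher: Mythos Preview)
Your overall strategy---approximation, uniform $W^{1,p}_0$ bound via Hardy, passage to the limit---matches the paper's. The paper, however, avoids your case split in Step 2 by a single argument valid for all $\alpha>0$: using the Hopf bound $u_n\ge c_1 d$ (your Step 1) gives $(u_n+\tfrac1n)^{-\alpha}\le C d^{-\alpha}$, hence
\[
\int_\Omega f_n(u_n+\tfrac1n)^{-\alpha}u_n\,dx\le C\int_\Omega d^{1-\alpha-\delta}\,\frac{u_n}{d}\,dx
\le C\Bigl(\int_\Omega d^{p'(1-\alpha-\delta)}\,dx\Bigr)^{1/p'}\|u_n\|_{W^{1,p}_0(\Omega)},
\]
and $p'(1-\alpha-\delta)>-1$ is exactly $\delta+\alpha<1+\tfrac1{p'}$.

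Your argument for $\alpha\ge1$ has a genuine misstep. You invoke Theorem \ref{theorem2} to claim $u_n\gtrsim \underline{w}_\rho\gtrsim d^\gamma$ with $\gamma\in(0,s)$ small, but Theorem \ref{theorem2} only bounds $(-\Delta)^s_q\underline{w}_\rho$ from above; it does not give $u_n\ge c\,\underline{w}_\rho$. That comparison is established in Lemma \ref{lemma2} only for the specific exponent $\gamma=\tfrac{p-\delta}{\alpha+p-1}$ and only when $\delta>p(1-s)+s(1-\alpha)$, a condition that need not hold under $\delta+\alpha<1+\tfrac1{p'}$. Moreover, the Hopf bound $u_n\ge c_1 d$ does \emph{not} imply $u_n\ge c\,d^\gamma$ near $\partial\Omega$ when $\gamma<1$ (there $d^\gamma>d$). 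The fix is immediate: just use $\gamma=1$, i.e., Hopf itself, which you already have. Then $f\,u_n^{1-\alpha}\lesssim d^{1-\alpha-\delta}\in L^1(\Omega)$ because $\delta+\alpha<1+\tfrac1{p'}<2$. With this correction your proof goes through and coincides with the paper's.
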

\begin{theorem}\label{theorem3}
	Assume that \( \theta_{0} \) satisfies
\begin{equation}\label{equ121}
	\theta_{0} > \max\left\lbrace 1, \frac{(\alpha + p - 1)(p - 1)}{p(p - \delta)}, \frac{\alpha + p - 1}{p}\right\rbrace.
\end{equation}
	If \( \delta + \alpha \geq 1 + \frac{1}{p'} \) and \( g \in L^{\left(\frac{\theta_{0} p^{*}}{p(\theta_{0} - 1) + \beta + 1}\right) '}(\Omega) \), then there exists a positive weak solution \( u \) of the problem \eqref{P2} in the following sense:
	\begin{itemize}
		\item[\textbf{(1)}] \( u  \in W^{1, p}_{\text{loc}}(\Omega) \).
		\item[\textbf{(2)}] \( u^{\theta_{0}} \in W^{1, p}_{0}(\Omega) \).
		\item[\textbf{(3)}] For every \( \omega \Subset \Omega \), there exists a constant \( C = C(\omega) > 0 \) such that \( u \geq C \) in \( \omega \).
		\item[\textbf{(4)}] For all \( \varphi \in C_{c}^{\infty}(\Omega) \), the solution satisfies the following equation:
		\begin{equation*}
		\begin{aligned}
		&\int_{\Omega} \left[ \nabla u \right]^{p-1} \nabla \varphi \, dx + \iint_{\mathbb{R}^{2N}} \frac{\left[ u(x) - u(y)\right]^{q-1} \left( \varphi(x) - \varphi(y) \right)}{\vert x - y \vert^{N + sq}} \, dx \, dy  = \int_{\Omega} f(x) u^{-\alpha} \varphi \, dx + \int_{\Omega} g(x) u^{\beta} \varphi \, dx.
		\end{aligned}
		\end{equation*}
	\end{itemize}
Moreover, if \( \delta > p(1 - s) + s(1 - \alpha) \), the solution achieves optimal Sobolev regularity, specifically:
\[
u \in W^{1, p}_{0}(\Omega) \text{ if and only if } \delta < 1 + \dfrac{1-\alpha}{p'}.
\]
\end{theorem}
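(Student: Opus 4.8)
The plan is to follow the classical regularization scheme for singular problems, adapted to the mixed operator. For $n\in\mathbb{N}$ put $f_n:=\mathbf{T}_n(f)$, $g_n:=\mathbf{T}_n(g)$ and let $u_n\in W^{1,p}_0(\Omega)$ solve the non-singular problem
\[
-\Delta_{p} u_n+(-\Delta)^{s}_{q} u_n=\frac{f_n(x)}{\bigl(u_n+\tfrac1n\bigr)^{\alpha}}+g_n(x)\bigl(u_n+\tfrac1n\bigr)^{\beta}\quad\text{in }\Omega,\qquad u_n=0\ \text{in }\mathbb{R}^N\setminus\Omega.
\]
Existence of a non-negative $u_n$ follows by minimizing the associated energy, which is coercive since $\beta<q-1\le p-1$ and weakly lower semicontinuous (using Lemma~\ref{Lemma1} for the nonlocal part); that $u_n>0$ in $\Omega$ and $u_n\in L^{\infty}(\Omega)$ follows from the Hopf-type lemma of \cite{antonini2023global} and from Lemma~\ref{Lemma5}; and $0<u_1\le u_n\le u_{n+1}$ follows from the comparison principle (Lemma~\ref{Lemma4}) applied to consecutive data. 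In particular $u_n\ge u_1\ge c_\omega>0$ on each $\omega\Subset\Omega$, uniformly in $n$, which will furnish conclusions \textbf{(1)} and \textbf{(3)}.

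The heart of the argument is a bound on $\|u_n^{\theta_0}\|_{W^{1,p}_0(\Omega)}$ uniform in $n$. I would first obtain, by comparison (Lemma~\ref{Lemma4}) with a barrier of the form $C(d+\varepsilon)^{\sigma}$, $\sigma:=\frac{p-\delta}{p+\alpha-1}$, whose fractional part is controlled through Theorem~\ref{theorem2}, the uniform pointwise estimate $u_n(x)\le C\,d(x)^{\sigma}$ near $\partial\Omega$. Then test the $n$-th equation with $\varphi=\mathbf{T}_k(u_n)^{p(\theta_0-1)}u_n$ and let $k\to\infty$: the local term produces $\theta_0^{-p}\bigl(p(\theta_0-1)+1\bigr)\|u_n^{\theta_0}\|_{W^{1,p}_0(\Omega)}^{p}$, while the nonlocal term is non-negative because $t\mapsto[t]^{q-1}$ and $t\mapsto t^{\,p(\theta_0-1)+1}$ are increasing, so it may be dropped. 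On the right, by \textbf{(F2)} and the third lower bound in \eqref{equ121} (which makes $p(\theta_0-1)+1-\alpha>0$), one must estimate $\int_\Omega d(x)^{-\delta}u_n^{\,p(\theta_0-1)+1-\alpha}\,dx$: away from $\partial\Omega$ this is handled by Theorem~\ref{thm0} since $\bigl(p(\theta_0-1)+1-\alpha\bigr)/\theta_0<p$, and on a strip $\{d<\varrho\}$ one inserts the pointwise barrier and interpolates against the Sobolev bound for $u_n^{\theta_0}$, the resulting weighted integral being finite \emph{exactly} because $\delta<p$ and $\theta_0>\frac{(\alpha+p-1)(p-1)}{p(p-\delta)}$. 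The source term $\int_\Omega g_n u_n^{\,p(\theta_0-1)+1+\beta}\,dx$ is controlled by Hölder, with $g\in L^{(\theta_0p^*/(p(\theta_0-1)+\beta+1))'}(\Omega)$, and Theorem~\ref{thm0} by $C\|u_n^{\theta_0}\|_{W^{1,p}_0(\Omega)}^{(p(\theta_0-1)+\beta+1)/\theta_0}$, the exponent being $<p$ thanks to $\beta<p-1$. Young's inequality then absorbs both contributions and yields $\sup_n\|u_n^{\theta_0}\|_{W^{1,p}_0(\Omega)}<\infty$.

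Passing to the limit: by monotonicity $u_n\uparrow u$ a.e., and the bound above gives $u^{\theta_0}\in W^{1,p}_0(\Omega)$ with $u_n^{\theta_0}\rightharpoonup u^{\theta_0}$, which is \textbf{(2)}; the uniform interior lower bounds give local Caccioppoli estimates, hence $u\in W^{1,p}_{\text{loc}}(\Omega)$, i.e.\ \textbf{(1)}. A Boccardo--Murat / Leray--Lions almost-everywhere gradient convergence argument, together with the usual tail control for $(-\Delta)^{s}_{q}$, lets both diffusion operators pass to the limit tested against $\varphi\in C^\infty_c(\Omega)$; on $\supp\varphi$ one has $\frac{f_n}{(u_n+1/n)^{\alpha}}|\varphi|\le c_\omega^{-\alpha}f|\varphi|\in L^1(\Omega)$ and $g_n(u_n+\tfrac1n)^{\beta}|\varphi|\le g\,u^{\beta}|\varphi|\in L^1(\Omega)$, so dominated convergence gives the equation \textbf{(4)}, and conclusion \textbf{(3)} is inherited from $u\ge u_1$.

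For the \emph{Moreover} part, observe that $\delta>p(1-s)+s(1-\alpha)$ is equivalent to $\sigma<s$, the precise range in which Theorem~\ref{theorem2} applies with $\gamma=\sigma$: then $(-\Delta)^{s}_{q}$ of the barrier $(d+\varepsilon)^{\sigma}$ is of order $d^{\sigma(q-1)-sq}$, and since $q\le p$ and $\sigma<s$ one checks $\sigma(q-1)-sq>\sigma(p-1)-p$, so this is negligible against its $p$-Laplacian, which is $\sim d^{-\delta-\alpha\sigma}$ and matches the singular datum. Building matched sub- and super-solutions of this type and invoking Lemma~\ref{Lemma4} gives $c_1 d(x)^{\sigma}\le u(x)\le c_2 d(x)^{\sigma}$ near $\partial\Omega$; if $\delta<1+\tfrac{1-\alpha}{p'}$ (i.e.\ $\sigma>\tfrac1{p'}$) the energy estimate, re-run with the test function $u_n$ itself and the two-sided barrier, gives $\sup_n\|\nabla u_n\|_{L^p(\Omega)}<\infty$, hence $u\in W^{1,p}_0(\Omega)$, whereas if $\delta\ge1+\tfrac{1-\alpha}{p'}$ the two-sided barrier forces $\int_\Omega f\,u^{1-\alpha}\,dx=\infty$, incompatible with $u\in W^{1,p}_0(\Omega)$ (test the equation for $u$ with $u$). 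The main obstacle throughout is the boundary part of the second step: securing the uniform pointwise barrier $u_n\le C\,d^{\sigma}$ and extracting from it a finite weighted bound on $\int_{\{d<\varrho\}}d^{-\delta}u_n^{\,p(\theta_0-1)+1-\alpha}\,dx$ --- this is where the three lower bounds on $\theta_0$ in \eqref{equ121} and the constraint $\delta<p$ are genuinely consumed, and where the interplay with the fractional operator (which in the sharp statement forces $\sigma<s$) must be resolved via Theorem~\ref{theorem2}; a secondary point is that the a.e.\ gradient convergence in the limit must be carried out locally, as $u\notin W^{1,p}_0(\Omega)$ in general.
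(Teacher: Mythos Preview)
Your overall scheme (approximate, test with a power of $u_n$, pass to the limit) matches the paper, but the execution diverges at the key estimate and carries a real gap.

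\textbf{The energy bound.} To control $\int_\Omega d^{-\delta}u_n^{\,p(\theta_0-1)+1-\alpha}\,dx$ you propose first to prove an \emph{upper} barrier $u_n\le C\,d^{\sigma}$ near $\partial\Omega$ and then integrate. This is where your argument is exposed: Theorem~\ref{theorem2} bounds $(-\Delta)^s_q$ of $(d+\varepsilon)^{\gamma}$ from \emph{above}, which is exactly what is needed for a \emph{sub}-solution barrier; for the super-solution you want, one would need the opposite inequality, which the paper does not supply, and with the source term $g_n u_n^{\beta}$ there is no reason the barrier should hold anyway. The paper bypasses this entirely: it writes
\[
\int_{\Omega} d^{-\delta}\,u_n^{\,p(\theta_0-1)+1-\alpha}\,dx
=\int_{\Omega} d^{\,-\delta+\frac{p(\theta_0-1)+1-\alpha}{\theta_0}}\Bigl(\tfrac{u_n^{\theta_0}}{d}\Bigr)^{\frac{p(\theta_0-1)+1-\alpha}{\theta_0}}dx,
\]
applies H\"older with exponents $\tfrac{p\theta_0}{\alpha+p-1}$ and $\tfrac{p\theta_0}{p(\theta_0-1)+1-\alpha}$, and then Hardy on the second factor. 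The first factor is finite exactly when $\theta_0>\tfrac{(\alpha+p-1)(p-1)}{p(p-\delta)}$, and the second is $\le C\|u_n^{\theta_0}\|_{W^{1,p}_0}^{(p(\theta_0-1)+1-\alpha)/\theta_0}$, which closes the estimate with no pointwise upper bound on $u_n$ whatsoever.

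\textbf{The approximation and the ``Moreover'' part.} You take $f_n=\mathbf T_n(f)$; the paper instead uses $f_n(x)=\bigl(f^{-1/\delta}(x)+n^{-(\alpha+p-1)/(p-\delta)}\bigr)^{-\delta}$ (see \eqref{equ49}), tailored so that the \emph{lower} barrier \eqref{equ90} holds uniformly in $n$ when $\delta>p(1-s)+s(1-\alpha)$. That lower bound is the only pointwise information used in the sharp regularity statement: testing with $u_n$ and using $(u_n+\tfrac1n)^{-\alpha}\le c\,(d+n^{-\cdots})^{-\alpha\sigma}$, the same H\"older--Hardy manipulation gives $\sup_n\|u_n\|_{W^{1,p}_0}<\infty$ iff $\delta<1+\tfrac{1-\alpha}{p'}$; the converse comes from Hardy in the form $\|u\|_{W^{1,p}_0}^p\ge c\int_\Omega(u/d)^p\,dx$ combined with \eqref{equ90}. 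Your two-sided barrier route would need both the unjustified upper barrier and, with the plain truncation $\mathbf T_n(f)$, a new proof of the matched lower bound. Replace the upper-barrier step by the H\"older--Hardy identity above and adopt the paper's $f_n$, and your outline becomes essentially the paper's proof.
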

\begin{theorem}\label{theorem4}
	Let \( \delta \geq p \). Then, no weak solution exists for the problem \eqref{P2} in the sense described in Theorems \ref{theorem1} and \ref{theorem3}.
\end{theorem}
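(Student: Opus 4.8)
The strategy is a boundary barrier argument combined with the comparison philosophy behind Theorem \ref{Theorem1}, reaching a contradiction with the Dirichlet datum. Suppose, for contradiction, that $u$ is a weak solution of \eqref{P2} in the sense of Theorem \ref{theorem1} or \ref{theorem3}. Then there is $\theta_{0}\geq1$ with $u^{\theta_{0}}\in W^{1,p}_{0}(\Omega)$ — hence, by Remark \ref{remark2}, $(u-\epsilon)^{+}\in W^{1,p}_{0}(\Omega)$ for every $\epsilon>0$ — moreover $u\geq C(K)>0$ on each $K\Subset\Omega$, and $u$ is in particular a weak super-solution of \eqref{P2}. Since $g\geq0$ and $f\geq \textbf{C}_{1}d^{-\delta}$, for all $0\leq\varphi\in W^{1,p}_{0}(\Omega)\cap L^{\infty}_{c}(\Omega)$ one has
\begin{equation*}
\int_{\Omega}\left[\nabla u\right]^{p-1}\nabla\varphi\,dx+\iint_{\mathbb{R}^{2N}}\frac{\left[u(x)-u(y)\right]^{q-1}\big(\varphi(x)-\varphi(y)\big)}{|x-y|^{N+sq}}\,dx\,dy\geq \textbf{C}_{1}\int_{\Omega}d(x)^{-\delta}u^{-\alpha}\varphi\,dx .
\end{equation*}
I will produce $c>0$ with $u\geq c$ a.e. on a one-sided neighbourhood of $\partial\Omega$, which is incompatible with $(u-\epsilon)^{+}\in W^{1,p}_{0}(\Omega)$ for $\epsilon<c$: truncating $(u-\epsilon)^{+}$ by the constant $c-\epsilon$ and subtracting would put the nonzero constant $c-\epsilon$ in $W^{1,p}_{0}(\Omega)$, contradicting the Poincaré inequality (Theorem \ref{thm0}). (Equivalently, one may use Hardy's inequality applied to $u^{\theta_{0}}$.)

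The barrier. For $\gamma\in(0,\min\{s,1/2\})$ and $c>0$ small, set $v:=c\,d^{\gamma}$ in $\Omega$ and $v:=0$ in $\mathbb{R}^{N}\setminus\Omega$; this is $c\,\underline{w}_{\rho}$ from Theorem \ref{theorem2} with $\kappa=0$. As $\partial\Omega\in C^{2}$, $d\in C^{2}(\Omega_{\rho_{0}})$ for $\rho_{0}$ small, and a direct computation gives $-\Delta_{p}v\leq C(p)(c\gamma)^{p-1}d^{\gamma(p-1)-p}$ in $\Omega_{\rho_{0}}$ (the $\Delta d$-contribution, of order $d^{\gamma(p-1)-(p-1)}$, is absorbed by shrinking $\rho_{0}$), while Theorem \ref{theorem2} and the $(q-1)$-homogeneity of $(-\Delta)^{s}_{q}$ give $(-\Delta)^{s}_{q}v\leq M c^{q-1}d^{\gamma(q-1)-qs}$ weakly in $\Omega_{\rho_{0}}$. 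Because $\delta\geq p>qs$ and $0<d<\rho_{0}<1$, both exponents satisfy $\gamma(p-1)-p\geq-\delta-\gamma\alpha$ and $\gamma(q-1)-qs\geq-\delta-\gamma\alpha$, so $d^{\gamma(p-1)-p}+d^{\gamma(q-1)-qs}\leq 2\,d^{-\delta-\gamma\alpha}$ on $\Omega_{\rho_{0}}$. Choosing $c$ small enough that $C(p)\gamma^{p-1}c^{p-1+\alpha}+Mc^{q-1+\alpha}\leq \textbf{C}_{1}$ (a bound independent of $\gamma\leq 1/2$) yields $-\Delta_{p}v+(-\Delta)^{s}_{q}v\leq \textbf{C}_{1}d^{-\delta}v^{-\alpha}\leq f v^{-\alpha}\leq fv^{-\alpha}+gv^{\beta}$ weakly in $\Omega_{\rho_{0}}$, i.e. $v$ is a weak sub-solution of \eqref{P2} in $\Omega_{\rho_{0}}$. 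A final shrinking of $c$ (using $u\geq C(\{d\geq\rho_{0}\})>0$ on the compact set $\{d\geq\rho_{0}\}$ and $v=0=u$ on $\Omega^{c}$) also gives $v\leq u$ in $\mathbb{R}^{N}\setminus\Omega_{\rho_{0}}$; crucially $c$ and $\rho_{0}$ can be fixed uniformly in $\gamma$.

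The comparison — this is the main obstacle. From $v$ a sub-solution and $u$ a super-solution of the same equation on $\Omega_{\rho_{0}}$, together with $v\leq u$ outside $\Omega_{\rho_{0}}$ and the monotonicity of $s\mapsto\big(f(x)s^{-\alpha}+g(x)s^{\beta}\big)/s^{q-1}$ on $(0,\infty)$ (here $\beta<q-1$ enters), I want $v\leq u$ in $\Omega_{\rho_{0}}$, i.e. $u\geq c\,d^{\gamma}$ a.e. there. The mechanism is to subtract the two weak formulations tested against the Picone pair $(v^{q}-u^{q})^{+}v^{1-q}$ and $(v^{q}-u^{q})^{+}u^{1-q}$: by the discrete Picone inequality of Lemma \ref{Lemma2} and its local analogue Lemma \ref{Lemma3} (with $r=q\leq p$) the operator contributions combine to a nonnegative quantity, while the monotonicity above makes the right-hand sides combine nonpositively, forcing $(v^{q}-u^{q})^{+}\equiv0$. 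Since $u$ lies only in $W^{1,p}_{\mathrm{loc}}(\Omega)$ and may degenerate at $\partial\Omega$, these test functions are not a priori admissible, so the computation must be carried out on truncations $T_{k}$ together with cut-offs compactly supported in $\Omega_{\rho_{0}}$, followed by a passage to the limit, exactly as in the proof of Theorem \ref{Theorem1}; the explicit smooth profile $v=c\,d^{\gamma}$ (with $v^{-\alpha}\in L^{1}_{\mathrm{loc}}(\Omega_{\rho_{0}})$ for $\gamma$ small) keeps these limits under control, but this is where the real work lies.

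Conclusion. Letting $\gamma\downarrow0$ along a sequence in $(0,\min\{s,1/2\})$: for a.e. $x\in\Omega_{\rho_{0}}$ we have $0<d(x)<1$, so $d(x)^{\gamma}\uparrow1$, whence $u(x)\geq c$; thus $u\geq c$ a.e. in $\Omega_{\rho_{0}}$, a neighbourhood of $\partial\Omega$ in $\Omega$. As noted in the first paragraph this contradicts $(u-\epsilon)^{+}\in W^{1,p}_{0}(\Omega)$ for $0<\epsilon<c$ (alternatively, Hardy's inequality applied to $u^{\theta_{0}}\in W^{1,p}_{0}(\Omega)$ gives $\int_{\Omega}d^{-p}u^{p\theta_{0}}<\infty$, while $u\geq c$ on $\Omega_{\rho_{0}}$ forces $\int_{\Omega_{\rho_{0}}}d^{-p}u^{p\theta_{0}}=\infty$). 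Therefore no weak solution of \eqref{P2} exists when $\delta\geq p$.
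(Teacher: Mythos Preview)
Your overall plan---barrier, comparison, Hardy---is exactly the skeleton of the paper's argument, but there are two concrete problems with the execution.

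\textbf{The comparison step.} You flag it as ``where the real work lies'' and propose ``cut-offs compactly supported in $\Omega_{\rho_{0}}$, exactly as in the proof of Theorem~\ref{Theorem1}''. But the proof of Theorem~\ref{Theorem1} uses no cut-offs; introducing a cut-off $\psi$ multiplies the gradient terms and generates uncontrolled contributions from $\nabla\psi$ near $\partial\Omega$, precisely where you have no information on $\nabla u$. The paper's device is different and cleaner: it does not compare the barrier with $u$ but with $u+\xi$ for $\xi>0$. Since your barrier $v=c\,d^{\gamma}$ (like the paper's $u_{n}\in C^{1,\zeta}(\overline{\Omega})$) is continuous and vanishes on $\partial\Omega$, there is a collar on which $v<\xi/2\leq u+\xi$; together with $v\leq u$ on $\Omega\setminus\Omega_{\rho_{0}}$ this forces the Picone test functions to have \emph{compact support in $\Omega$}, so Definition~\ref{definition1} applies without any cut-off. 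One then lets $\xi\to0$. This $\xi$-shift is the missing ingredient; with it your barrier works just as well as the paper's $u_{n}$.

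\textbf{The limit $\gamma\to0$ and the constant $M$.} You claim $c$ can be fixed uniformly in $\gamma\in(0,\min\{s,1/2\})$ because the smallness condition reads $C(p)\gamma^{p-1}c^{p-1+\alpha}+Mc^{q-1+\alpha}\leq\textbf{C}_{1}$. But the statement of Theorem~\ref{theorem2} gives $M=M(\gamma)$, with no uniformity as $\gamma\to0$ (indeed, as $\gamma\downarrow0$ the barrier tends to $c\,\chi_{\Omega}$, a function with a jump across $\partial\Omega$). The paper avoids this entirely: it fixes a single exponent $\delta_{0}<p$ close to $p$, obtains $u(x)\geq c_{2}\,d(x)^{(p-\delta_{0})/(\alpha+p-1)}$ from Lemma~\ref{lemma2}-\textbf{(3)}, and observes that this already makes $\int_{\Omega}d^{-p}u^{p\theta_{0}}=\infty$. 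In your language, a \emph{single} small $\gamma$ (say $\gamma<\min\{s,(p-1)/(p\theta_{0})\}$) gives $u\geq c\,d^{\gamma}$ and hence $\int_{\Omega}d^{-p}(u^{\theta_{0}})^{p}\geq c^{p\theta_{0}}\int_{\Omega}d^{p\gamma\theta_{0}-p}=\infty$, so the $\gamma\to0$ limit and the uniformity claim are unnecessary.

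In short: replace ``cut-offs'' by the $\xi$-shift trick, drop the $\gamma\to0$ limit in favour of a single well-chosen $\gamma$, and your argument goes through. The paper's version is slightly more economical because the barrier (the approximating solution $u_{n}$) and its lower bound are already available from Lemma~\ref{lemma2}.
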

\noindent \textbf{This article is organized as follows:} In \textbf{Section 2}, we establish the uniqueness result derived from the comparison principle, specifically proving Theorem \ref{Theorem1} along with additional related results. \textbf{Section 3} presents preliminary results for the approximated problem, which will be utilized throughout the remainder of the paper. Finally, we provide proofs of our second results concerning existence, non-existence, and other qualitative properties.
\section{Uniqueness results}
Our goal in this section is to establish the uniqueness of weak solutions to problem \eqref{P2} (if solutions exist). In fact, this result will follow from a more general weak comparison principle for weak sub and super-solutions of problem \eqref{P2}, taking into account its source term, in accordance with Definition \ref{definition1},  namely Theorem \ref{Theorem1}. The aim now is to prove this theorem by employing a variational approach as outlined in \cite{ref51, ref57}.
\begin{proof}[\textbf{Proof of Theorem} \ref{Theorem1}] 
	First, we define the following convex and closed set:
	$$ \mathcal{K} :=\left\lbrace  \phi \in W^{1, p}_{0}(\Omega)\, : \, 0 \leq \phi \leq \overline{u}\, \text{ almost everywhere in } \Omega \right\rbrace.$$
	Next, for each $ \epsilon \in (0, 1) $ fixed,  we define the functional $ \mathcal{J}_{\epsilon} :  \mathcal{K}  \to \mathbb{R} ,$ as follows 
	\begin{align*}
		\mathcal{J}_{\epsilon}(w) & := \dfrac{1}{p} \int_{\Omega} \left| \nabla w \right| ^{p}dx +\dfrac{1}{q}  \iint_{\mathbb{R}^{2N}}\dfrac{\left| w(x) - w(y)\right| ^{q}}{\left| x - y\right| ^{N + s q}} dxdy  - \int_{\Omega} G_{\epsilon}(x, w) dx,
	\end{align*}
	where 
	\begin{equation}\label{comequ13}
		G_{\epsilon}(x, w) :=
		\displaystyle\int_{0}^{w}\left(f(x) (s + \epsilon)^{- \alpha} + g(x) (s + \epsilon)^{\beta}\right)  ds.
	\end{equation}
At this stage, we need to establish three claims. Firstly, we have
\begin{claim}
For any \( \epsilon > 0 \), there exists a minimizer \( w_{0} \) of \( \mathcal{J}_{\epsilon} \) in \( \mathcal{K} \) such that, for every \( \psi \in w_{0} + (W^{1, p}_{0}(\Omega) \cap L_{c}^{\infty}(\Omega)) \) with \( \psi \in \mathcal{K} \), we have
\begin{equation} \label{comequ8}
			\begin{gathered}
				\begin{aligned}
					\int_{\Omega} &\left[ \nabla w_{0}\right] ^{p-1}  \nabla (\psi - w_{0})\, dx +  \displaystyle\iint_{\mathbb{R}^{2 N}} \dfrac{\left[w_{0}(x) - w_{0}(y) \right] ^{q-1}\left((\psi - w_{0}) (x)- (\psi - w_{0})(y)\right)}{\vert x-y\vert ^{N+ s q}}dx dy \\[4pt]
					& \geq {\displaystyle\int_{\Omega}}\left(f(x)\,(w_{0} + \epsilon)^{-\alpha} + g(x) (w_{0} + \epsilon)^{\beta} \right) \,(\psi - w_{0})\,dx.
				\end{aligned}
			\end{gathered}
		\end{equation}
	\end{claim}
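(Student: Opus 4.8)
The plan is to establish the Claim by the direct method of the calculus of variations applied to $\mathcal{J}_{\epsilon}$ on the convex set $\mathcal{K}$, followed by an inner-variation (Euler--Lagrange) argument to extract \eqref{comequ8}. First I would note that $\mathcal{K}$ is nonempty (it contains $0$), convex, and closed in $W^{1, p}_{0}(\Omega)$: strong $W^{1,p}_0$-convergence gives a.e.\ convergence along a subsequence, which preserves the pointwise bound $0 \le \phi \le \overline{u}$, so $\mathcal{K}$ is weakly closed by Mazur's theorem. Moreover, by Lemma \ref{Lemma1} every $w \in W^{1,p}_0(\Omega)$ belongs to $W^{s,q}_0(\Omega)$, hence the Gagliardo term in $\mathcal{J}_{\epsilon}$ is finite and $\mathcal{J}_{\epsilon}$ is well defined on $\mathcal{K}$.

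Next I would show that $\mathcal{J}_{\epsilon}$ is coercive and sequentially weakly lower semicontinuous on $\mathcal{K}$. The Dirichlet terms $\frac{1}{p}\|\nabla w\|_{L^p(\Omega)}^{p}$ and $\frac{1}{q}[w]_{s,q}^{q}$ are convex and strongly continuous, hence weakly lower semicontinuous. For $w \mapsto \int_\Omega G_\epsilon(x,w)\,dx$, one uses the pointwise bounds (valid since $\alpha > 0$) $0 \le G_\epsilon(x,w) \le \epsilon^{-\alpha} f(x)\, w + \frac{1}{\beta+1}g(x)(w+\epsilon)^{\beta+1}$, together with the sharper alternative $0 \le G_\epsilon(x,w) \le \frac{1}{1-\alpha}f(x)(w+\epsilon)^{1-\alpha} + \frac{1}{\beta+1}g(x)(w+\epsilon)^{\beta+1}$ when $\alpha < 1$ (and the logarithmic analogue when $\alpha = 1$). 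Combining these with Hölder's inequality, the Sobolev embedding $W^{1,p}_0(\Omega) \hookrightarrow L^{p^*}(\Omega)$ (Theorem \ref{thm0}), the hypothesis $g \in L^{(p^*/(\beta+1))'}(\Omega)$, the appropriate case among (H1)--(H3) for the class \textbf{(F1)}, and --- for the class \textbf{(F2)}, where $\delta < 1 + \frac{1}{p'}$ --- the Hardy inequality $\|w/d\|_{L^p(\Omega)} \le C\|\nabla w\|_{L^p(\Omega)}$ (which, with $d^{1-\delta} \in L^{p'}$ near $\partial\Omega$, controls $\int_\Omega f w$), one obtains $\int_\Omega G_\epsilon(x,w)\,dx \le C_\epsilon\big(1 + \|\nabla w\|_{L^p(\Omega)}^{\sigma}\big)$ for some $\sigma < p$; since $\beta + 1 < q \le p$, this power is strictly subcritical and is absorbed into $\frac{1}{p}\|\nabla w\|_{L^p(\Omega)}^{p}$, giving coercivity on $\mathcal{K}$. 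For weak lower semicontinuity it suffices to show that $w \mapsto \int_\Omega G_\epsilon(x,w)\,dx$ is weakly continuous: if $w_n \rightharpoonup w$ in $W^{1,p}_0(\Omega)$, then by the compact embedding of Remark \ref{remark0} we get $w_n \to w$ in $L^m(\Omega)$ for all $m < p^*$ and a.e.\ along a subsequence, and the passage to the limit $\int_\Omega G_\epsilon(x,w_n)\,dx \to \int_\Omega G_\epsilon(x,w)\,dx$ follows either from weak convergence of $(w_n+\epsilon)^{1-\alpha}$, $(w_n+\epsilon)^{\beta+1}$ in the reflexive spaces $L^{p^*/(1-\alpha)}(\Omega)$, $L^{p^*/(\beta+1)}(\Omega)$ tested against $f$, $g$ in the dual spaces (class \textbf{(F1)}), or from Vitali's theorem, using the uniform integrability of $\{f w_n\}$ (which rests on $\|d^{1-\delta}\chi_E\|_{L^{p'}(\Omega)} \to 0$ as $|E| \to 0$ and the boundedness of $\{w_n/d\}$ in $L^p$) together with the a.e.\ convergence of $G_\epsilon(\cdot,w_n)$ (class \textbf{(F2)}).

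Granted these properties, the direct method yields a minimizer: a minimizing sequence in $\mathcal{K}$ is bounded, a weakly convergent subsequence has its limit $w_0$ in $\mathcal{K}$ (weakly closed), and $\mathcal{J}_\epsilon(w_0) = \min_{\mathcal{K}} \mathcal{J}_\epsilon$. To obtain \eqref{comequ8}, fix $\psi \in \mathcal{K}$ with $\psi - w_0 \in W^{1,p}_0(\Omega) \cap L^\infty_c(\Omega)$ and, for $t \in (0,1]$, put $w_t := (1-t)w_0 + t\psi = w_0 + t(\psi - w_0)$, which lies in $\mathcal{K}$ by convexity. Minimality gives $\frac{1}{t}\big(\mathcal{J}_\epsilon(w_t) - \mathcal{J}_\epsilon(w_0)\big) \ge 0$. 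The two Dirichlet terms are differentiable along $t \mapsto w_t$ (the nonlocal one because $\psi - w_0 \in W^{s,q}_0(\Omega)$ by Lemma \ref{Lemma1}), with derivatives at $t = 0^+$ equal to $\int_\Omega [\nabla w_0]^{p-1}\nabla(\psi - w_0)\,dx$ and the corresponding nonlocal bilinear form tested against $\psi - w_0$. For the remaining term, $G_\epsilon(x,w_t) - G_\epsilon(x,w_0) = \int_{w_0}^{w_t}\big(f(x)(\tau+\epsilon)^{-\alpha} + g(x)(\tau+\epsilon)^{\beta}\big)\,d\tau$, and since $\psi - w_0$ is supported in a compact set $K \Subset \Omega$ and $0 \le w_t \le \overline{u}$, the difference quotient is bounded, uniformly in $t$, by $\|\psi - w_0\|_{L^\infty(\Omega)}\big(\epsilon^{-\alpha} f(x) + g(x)(\overline{u}+\epsilon)^{\beta}\big)\mathbf{1}_{K}$, which is integrable since $f \in L^1(\Omega)$ and, using $\overline{u} \in W^{1,p}_{\text{loc}}(\Omega) \subset L^{p^*}_{\text{loc}}(\Omega)$, $g \in L^{(p^*/(\beta+1))'}(\Omega)$ and Hölder, $g(\overline{u}+\epsilon)^{\beta} \in L^1(K)$. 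Dominated convergence then gives $\frac{d}{dt}\int_\Omega G_\epsilon(x,w_t)\,dx\big|_{t=0^+} = \int_\Omega \big(f(x)(w_0+\epsilon)^{-\alpha} + g(x)(w_0+\epsilon)^{\beta}\big)(\psi - w_0)\,dx$, and letting $t \to 0^+$ in the difference-quotient inequality yields exactly \eqref{comequ8}.

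The main obstacle I anticipate is the second step --- establishing coercivity and weak continuity of $w \mapsto \int_\Omega G_\epsilon(x,w)\,dx$ uniformly across the regimes (H1)--(H4). The delicate points are the borderline case (H2), where the singular primitive is only logarithmic and one must first replace the logarithm by an arbitrarily small power of $w$ before applying Hölder, and the class \textbf{(F2)} with $1 \le \delta < 1 + \frac{1}{p'}$, in which $f \sim d^{-\delta} \notin L^1(\Omega)$, so that the relevant integrals are finite --- and controlled by the Dirichlet energy --- only because the competitors vanish on $\partial\Omega$, through the Hardy inequality combined with $d^{1-\delta} \in L^{p'}$ near the boundary. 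Once these estimates are secured, the rest --- closedness of $\mathcal{K}$, the direct method, and the differentiation under the integral sign justified by the compact support of $\psi - w_0$ and the bound $w_t \le \overline{u}$ --- is routine.
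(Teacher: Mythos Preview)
Your proposal is correct and follows essentially the same route as the paper: both establish that $\mathcal{J}_{\epsilon}$ is well defined, coercive and weakly lower semicontinuous on the convex closed set $\mathcal{K}$ (treating the regimes (H1)--(H3) and (H4) separately, with Hardy's inequality and $d^{1-\delta}\in L^{p'}(\Omega)$ in the latter), apply the direct method to produce a minimizer $w_0$, and then derive \eqref{comequ8} by differentiating $t\mapsto \mathcal{J}_{\epsilon}\big((1-t)w_0+t\psi\big)$ at $t=0^+$, the passage to the limit in the $G_\epsilon$ term being justified by the compact support of $\psi-w_0$. One minor slip: in the dominated-convergence step you invoke ``$f\in L^1(\Omega)$'', which fails for \textbf{(F2)} when $\delta\ge 1$; the correct (and sufficient) observation is that $f\mathbf{1}_K\in L^1(K)$ because $d$ is bounded away from zero on the compact set $K=\operatorname{supp}(\psi-w_0)$.
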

	\noindent In fact, we consider two distinct cases:\\[4pt]
If \textbf{(F1)} holds, it follows from Lemma \ref{Lemma1} and the Sobolev embedding theorem (see Theorem \ref{thm0}) that \( \mathcal{J}_{\epsilon} \) is well-defined on \( \mathcal{K} \). Moreover, \( \mathcal{J}_{\epsilon} \) is coercive on \( \mathcal{K} \). Specifically, let \( w \in \mathcal{K} \) and fix \( r > 1 \). Then, there exists a constant \( C(\epsilon) > 0 \) such that
\begin{equation} \label{alpha1}
\ln(w + \epsilon) \leq C (\epsilon) (w + \epsilon)^{\min\left\lbrace \frac{p^{*}}{r'}, \, q-1\right\rbrace }.
		\end{equation}
Taking into account this fact, in conjunction with H\"{o}lder's inequality and Sobolev embeddings (Theorem \ref{thm0}), enables us to derive the following based on the cases of \( \alpha \): \\[4pt]
$ \bullet $ When $0 <  \alpha < 1,$ we have
\begin{equation*} 
\begin{gathered}
\begin{aligned}
\mathcal{J}_{\epsilon}(w)  &\geq \dfrac{1}{p} \left\| w\right\| ^{p}_{W^{1, p}_{0}(\Omega)} - C\left(\left\| f\right\| _{L^{\left(\frac{p^{*}}{1 - \alpha}\right)'}(\Omega)} \left\| w \right\| ^{1 - \alpha}_{L^{p^{*}}(\Omega)}+ \left\| g\right\| _{L^{\left(\frac{p^{*}}{1+ \beta }\right)'}(\Omega)}\left\| w\right\| ^{1 + \beta}_{L^{p^{*}}(\Omega)} + 1 \right)\\[4pt]
& \geq \left\| w\right\| ^{p}_{W^{1, p}_{0}(\Omega)}  \left( \dfrac{1}{p} - C\left(\left\| f\right\| _{L^{\left(\frac{p^{*}}{1 - \alpha}\right)'}(\Omega)} \left\| w\right\| ^{1- \alpha - p}_{W^{1, p}_{0}(\Omega)}  + \left\| g\right\| _{L^{\left(\frac{p^{*}}{1+ \beta }\right)'}(\Omega)}\left\| w\right\| ^{1+ \beta - p}_{W^{1, p}_{0}(\Omega)}  +  \left\| w\right\| ^{-p}_{W^{1, p}_{0}(\Omega)}  \right)\right)  .
\end{aligned}
\end{gathered}
\end{equation*}
$\bullet$ For \( \alpha = 1 \), it follows from \eqref{alpha1} that
\begin{equation*} 
	\begin{gathered}
	\begin{aligned}
	\mathcal{J}_{\epsilon}(w)  &\geq \dfrac{1}{p} \left\| w\right\| ^{p}_{W^{1, p}_{0}(\Omega)} - C\left(\left\| f\right\| _{L^{r}(\Omega)} \left\| w \right\| ^{\min\left\lbrace \frac{p^{*}}{r'}, q-1\right\rbrace }_{L^{p^{*}}(\Omega)}+ \left\| g\right\| _{L^{\left(\frac{p^{*}}{1+ \beta }\right)'}(\Omega)}\left\| w\right\| ^{1 + \beta}_{L^{p^{*}}(\Omega)} + 1 \right)\\[4pt]
	& \geq \left\| w\right\| ^{p}_{W^{1, p}_{0}(\Omega)}  \left( \dfrac{1}{p} - C\left(\left\| f\right\| _{L^{r}(\Omega)} \left\| w\right\| ^{\min\left\lbrace \frac{p^{*}}{r'}, q-1\right\rbrace  - p}_{W^{1, p}_{0}(\Omega)}  + \left\| g\right\| _{L^{\left(\frac{p^{*}}{1+ \beta }\right)'}(\Omega)}\left\| w\right\| ^{1+ \beta - p}_{W^{1, p}_{0}(\Omega)}  +  \left\| w\right\| ^{-p}_{W^{1, p}_{0}(\Omega)}  \right)\right) .
	\end{aligned}
	\end{gathered}
	\end{equation*}
$ \bullet $ When $ \alpha > 1, $ we have
\begin{equation*} 
	\begin{gathered}
	\begin{aligned}
	\mathcal{J}_{\epsilon}(w)  &\geq \dfrac{1}{p} \left\| w\right\| ^{p}_{W^{1, p}_{0}(\Omega)} - C\left(\left\| f\right\| _{L^{1}(\Omega)}  + \left\| g\right\| _{L^{\left(\frac{p^{*}}{1+ \beta }\right)'}(\Omega)}\left\| w\right\| ^{1 + \beta}_{L^{p^{*}}(\Omega)} + 1 \right)\\[4pt]
	& \geq \left\| w\right\| ^{p}_{W^{1, p}_{0}(\Omega)}  \left( \dfrac{1}{p} - C\left(\left\| f\right\| _{L^{1}(\Omega)} \left\| w\right\| ^{- p}_{W^{1, p}_{0}(\Omega)}   + \left\| g\right\| _{L^{\left(\frac{p^{*}}{1+ \beta }\right)'}(\Omega)}\left\| w\right\| ^{1+ \beta - p}_{W^{1, p}_{0}(\Omega)}  +  \left\| w\right\| ^{-p}_{W^{1, p}_{0}(\Omega)}  \right)\right)  .
	\end{aligned}
	\end{gathered}
	\end{equation*}
Thus, we conclude that $\mathcal{J}_{\epsilon}(w) \to \infty$ as $\left\| w\right\|_{W^{1, p}_{0}(\Omega)} \to \infty$ in all the cases discussed above. Moreover, the energy functional \( \mathcal{J}_{\epsilon} \) is weakly lower semi-continuous on \( \mathcal{K} \). In particular, consider a sequence \( (w_{n}) \subset \mathcal{K} \) that converges weakly to some \( w \in \mathcal{K} \) as \( n \longrightarrow \infty \). Hence, 
\begin{equation} \label{equ1}
\begin{gathered}
\begin{aligned}
\int_{\Omega} \left| \nabla w \right| ^{p}dx  &\leq \liminf_{n \to \infty} \int_{\Omega} \left| \nabla w_{n} \right| ^{p}dx, \\[4pt]
\iint_{\mathbb{R}^{2N}}\dfrac{\left| w(x) - w(y)\right| ^{q}}{\left| x - y\right| ^{N + sq}} dxdy &\leq \liminf_{n \to \infty} \iint_{\mathbb{R}^{2N}}\dfrac{\left| w_{n}(x) - w_{n}(y)\right| ^{q}}{\left| x - y\right| ^{N + s q}} dx dy.
	\end{aligned}
\end{gathered}
\end{equation}
We also have, based on the different cases of \( \alpha \), that
\begin{equation*} 
\begin{aligned}
&\int_{\Omega} f(x) (w_{n} + \epsilon)^{1 - \alpha} \, dx \leq C(\epsilon) \left\| f \right\|_{L^{\left(\frac{p^{*}}{1 - \alpha}\right)'}(\Omega)} \left( \left\| w_{n} \right\|_{W^{1, p}_{0}(\Omega)}^{1 - \alpha} + 1 \right) \leq C(\epsilon), \quad &\text{if } \alpha < 1, \\[4pt]
&\int_{\Omega} f(x) \ln(w_{n} + \epsilon) \, dx \leq C(\epsilon) \left\| f \right\|_{L^{r}(\Omega)} \left( \left\| w_{n} \right\|_{W^{1, p}_{0}(\Omega)}^{\min\left\{ \frac{p^{*}}{r'}, q-1 \right\}} + 1 \right) \leq C(\epsilon), \quad &\text{if } \alpha = 1, \\[4pt]
&\int_{\Omega} f(x) (w_{n} + \epsilon)^{1 - \alpha} \, dx \leq C(\epsilon) \left\| f \right\|_{L^{1}(\Omega)} \leq C(\epsilon), \quad &\text{if } \alpha > 1.
\end{aligned}
\end{equation*}
Additionally, we find
\begin{equation} \label{equ2}
\begin{aligned}
\int_{\Omega} g(x) (w_{n} + \epsilon)^{\beta+1} \, dx &\leq C(\epsilon) \left\| g \right\|_{L^{\left(\frac{p^{*}}{\beta + 1}\right)'}(\Omega)} \left( \left\| w_{n} \right\|_{W^{1, p}_{0}(\Omega)}^{\beta + 1} + 1 \right) \\
&\leq C(\epsilon),
\end{aligned}
\end{equation}
where \( C(\epsilon) > 0 \) is independent of \( n \). Then, by applying Vitali's Convergence Theorem, we obtain
$$ \int_{\Omega}\int_{0}^{w_{n}} f(x) (s + \epsilon)^{- \alpha} ds dx \to \int_{\Omega}\int_{0}^{w} f(x) (s + \epsilon)^{- \alpha} ds dx \text{ as } n \to +\infty, $$
and
$$ \int_{\Omega}\int_{0}^{w_{n}} g(x) (s + \epsilon)^{\beta} ds dx \to \int_{\Omega}\int_{0}^{w} g(x) (s + \epsilon)^{\beta} ds dx \text{ as } n \to +\infty. $$
Combining all the aforementioned results, we conclude that \( \mathcal{J}_{\epsilon} \) is weakly lower semi-continuous in class \textbf{(F1)}.\\[4pt]
If \textbf{(F2)} holds, we first observe that if \(0 \leq \delta < 1 + \frac{1}{p'}\), then \(d^{-\delta} \in W^{-1, p'}(\Omega)\). More precisely, by applying H\"{o}lder's inequality and Hardy's inequality, for any \( u \in W^{1, p}_{0}(\Omega) \), we obtain
\begin{equation}\label{equ3}
\begin{aligned}
\int_{\Omega} d^{- \delta}(x)  u(x)  \,dx &\leq  \left( \int_{\Omega} d^{p' (1 - \delta)}(x) \, dx \right) ^{\frac{1}{p'}} \left( \int_{\Omega}\dfrac{\left| u(x)\right|^{p} }{d^{p}(x)}  \, dx\right) ^{\frac{1}{p}} \\[4pt]
&\leq C \left( \int_{\Omega} d^{p' (1 - \delta)}(x) \, dx \right) ^{\frac{1}{p'}} \left\| u\right\| _{W^{1, p}_{0}(\Omega)} < \infty,
\end{aligned}
\end{equation}
since 
$$ \delta < 1 + \frac{1}{p'} \Longrightarrow p' (1 - \delta) > -1  \Longrightarrow  \int_{\Omega} d^{p' (1 - \delta)}(x) \, dx  < \infty.$$ 
	\noindent From this fact, we also observe that \( \mathcal{J}_{\epsilon} \) is well-defined on \( \mathcal{K} \). Furthermore, by using \eqref{equ3}, we can infer that
\begin{equation*} 
\begin{gathered}
\begin{aligned}
 \mathcal{J}_{\epsilon}(w) &\geq \left\| w\right\|^{p}_{W^{1, p}_{0}(\Omega)}\left( \dfrac{1}{p} - C(\epsilon) \left( \left\| w\right\|^{1 - p}_{W^{1, p}_{0}(\Omega)} +  \left\| g\right\| _{L^{\left(\frac{p^{*}}{1+ \beta }\right)'}(\Omega)}\left\| w\right\| ^{1+ \beta - p}_{W^{1, p}_{0}(\Omega)} + \left\| w\right\| ^{-p}_{W^{1, p}_{0}(\Omega)} \right) \right) \\[4pt]
 & \quad \to + \infty \text{ as } \left\| w\right\|_{W^{1, p}_{0}(\Omega)} \to + \infty. 
 \end{aligned}
 \end{gathered}
 \end{equation*}
	This implies that \( \mathcal{J}_{\epsilon} \) is coercive on \( \mathcal{K} \). On the other hand, we have that \( \int_{0}^{w_{n}} f(x) (s + \epsilon)^{-\alpha} \, ds \) is uniformly integrable in \( L^{1}(\Omega) \). Indeed, using \eqref{equ3}, we have that
	\begin{equation*}
	\int_{\Omega} \int_{0}^{w_{n}} f(x) (s + \epsilon)^{-\alpha} \, ds \, dx \leq C(\epsilon) \left\| w_{n} \right\|_{W^{1, p}_{0}(\Omega)} < C(\epsilon),
	\end{equation*}
	where \( C(\epsilon) > 0 \) is independent of \( n \). Then, by using Vitali Convergence Theorem, we obtain
	\begin{equation} \label{equ4}
	\int_{\Omega} \int_{0}^{w_{n}} f(x) (s + \epsilon)^{-\alpha} \, ds \, dx \to \int_{\Omega} \int_{0}^{w} f(x) (s + \epsilon)^{-\alpha} \, ds \, dx, \quad \text{as } n \to +\infty.
	\end{equation}
By combining \eqref{equ1}--\eqref{equ4}, we deduce that \( \mathcal{J}_{\epsilon} \) is weakly lower semi-continuous on \( \mathcal{K} \). Hence, based on the aforementioned properties, \( \mathcal{J}_{\epsilon} \) possesses a minimizer \( w_{0} \) within the convex and closed set \( \mathcal{K} \). Now, let us consider \( \psi \in w_{0} + (W^{1, p}_{0}(\Omega) \cap L_{c}^{\infty}(\Omega)) \) with \( \psi \in \mathcal{K} \). Setting
	\begin{center}
		$ \xi : \left[0, 1\right] \to \mathbb{R} $\quad by\quad $ \xi(s) = \mathcal{J}_{\epsilon}(s \psi + (1- s) w_{0}). $
	\end{center}
	Hence, we have
	\begin{align*}
		& 0 \leq \xi(s) - \xi(0) = \mathcal{J}_{\epsilon}(w_{0} + s (\psi - w_{0})) -\mathcal{J}_{\epsilon}(w_{0})\\[4pt]
		& = \dfrac{1}{p} \int_{\Omega} \left(\left| \nabla(w_{0} + s (\psi  - w_{0}))\right| ^{p} - \left| \nabla w_{0}\right| ^{p}\right) dx\\[4pt]
		&\quad + \dfrac{1}{q}\iint_{\mathbb{R}^{2 N}} \dfrac{\left|(w_{0} + s (\psi - w_{0}))(x) - (w_{0} + s (\psi - w_{0}))(y)\right| ^{q} - \left|w_{0}(x) - w_{0}(y)\right| ^{q}}{\left| x-y\right| ^{N +sq}}  dx dy \\[4pt]
		& 
		\quad- \int_{\Omega} \left( G_{\epsilon}(x, w_{0} + s (\psi - w_{0})) - G_{\epsilon}(x, w_{0})\right)dx. 
	\end{align*}
Then, dividing by \( s \) and passing to the limit as \( s \to 0^{+} \), we obtain that
\begin{align*}
		0 \leq & \int_{\Omega} \left[ \nabla w_{0}\right] ^{p-1}  \nabla (\psi - w_{0})\, dx  +\displaystyle\iint_{\mathbb{R}^{2 N}}  \dfrac{\left[w_{0}(x) - w_{0}(y)\right] ^{q- 1}\left((\psi - w_{0}) (x)- (\psi - w_{0})(y)\right)}{\vert x-y\vert ^{N+ s q}}dx dy \\[4pt]
		& - \int_{\Omega} \left( f(x)(w_{0} + \epsilon)^{- \alpha} + g(x)\,(w_{0} + \epsilon)^{\beta}\right) \,(\psi - w_{0})\,dx.
	\end{align*}
	\begin{claim}
		For every $ \psi \in W^{1, p}_{0}(\Omega)\cap L^{\infty}(\Omega),$ with $  \psi \geq 0 ,$ a.e. in \( \Omega \), it holds that:
		\begin{equation}\label{comequ10}
			\begin{gathered}
				\begin{aligned}
					&\int_{\Omega} \left[ \nabla w_{0}\right] ^{p-1}  \nabla \psi\, dx + \displaystyle\iint_{\mathbb{R}^{2 N}} \dfrac{\left[w_{0}(x) - w_{0}(y)\right] ^{q-1}\left(\psi(x)- \psi(y)\right)}{\vert x-y\vert ^{N+ s q}}dx dy \\[4pt]
					&\quad \geq \int_{\Omega} \left( f(x) (w_{0} + \epsilon)^{- \alpha} + g(x)\,(w_{0} + \epsilon)^{\beta}\right) \,\psi \,dx.
				\end{aligned}
			\end{gathered}
		\end{equation}
	\end{claim}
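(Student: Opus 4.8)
The plan is to bootstrap the constrained variational inequality of the first claim, valid only for competitors in $\mathcal{K}$ close to $w_0$, into the one–sided inequality \eqref{comequ10} for arbitrary $\psi\in W^{1,p}_0(\Omega)\cap L^\infty(\Omega)$ with $\psi\ge 0$, by truncating a perturbed competitor against the obstacle $\overline u$ and exploiting that $\overline u$ is itself a weak super–solution of \eqref{P2}. Throughout write, for $v\in W^{1,p}_0(\Omega)\cap\mathcal W^{1,p}(\Omega)$ and $\varphi\in W^{1,p}_0(\Omega)$,
\[
\langle\mathcal A v,\varphi\rangle:=\int_\Omega[\nabla v]^{p-1}\nabla\varphi\,dx+\iint_{\mathbb R^{2N}}\frac{[v(x)-v(y)]^{q-1}(\varphi(x)-\varphi(y))}{|x-y|^{N+sq}}\,dx\,dy,
\]
and set $h_\epsilon(x,s):=f(x)(s+\epsilon)^{-\alpha}+g(x)(s+\epsilon)^{\beta}$. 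By Lemma~\ref{Lemma1}, Theorem~\ref{thm0} and the integrability hypotheses on $f$ and $g$ (using \eqref{equ3} when $f$ is of class \textbf{(F2)}), $\langle\mathcal A w_0,\cdot\rangle$ is continuous on $W^{1,p}_0(\Omega)$ and $\psi\mapsto\int_\Omega h_\epsilon(x,w_0)\psi\,dx$ is continuous on $W^{1,p}_0(\Omega)\cap L^\infty(\Omega)$; hence it suffices to prove \eqref{comequ10} for $\psi\in W^{1,p}_0(\Omega)\cap L^\infty_c(\Omega)$ with $\psi\ge 0$, and then pass to the limit along cut–offs $\psi\chi_n$ ($\chi_n$ a standard boundary cut–off), using Hardy's inequality to get $\psi\chi_n\to\psi$ in $W^{1,p}_0(\Omega)$ and dominated convergence on the right–hand side.

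Fix such a $\psi$ with compact support $K\Subset\Omega$, and for $t\in(0,1)$ small set $\eta_t:=(w_0+t\psi-\overline u)^+\ge 0$ and $\psi_t:=\min\{w_0+t\psi,\overline u\}=w_0+t\psi-\eta_t$. Since $w_0\le\overline u$ a.e. and $\psi$ vanishes off $K$, we have $\psi_t=w_0$ off $K$, so $\psi_t-w_0=t\psi-\eta_t$ and $\eta_t$ both lie in $W^{1,p}_0(\Omega)\cap L^\infty_c(\Omega)$, with $0\le\eta_t\le t\psi$ and $\mathrm{supp}\,\eta_t\subseteq K$; and $0\le\psi_t\le\overline u$ shows $\psi_t\in\mathcal K$. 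Inserting $\psi_t$ into \eqref{comequ8}, replacing $\psi_t-w_0$ by $t\psi-\eta_t$ and dividing by $t$, we arrive at
\[
\langle\mathcal A w_0,\psi\rangle\ \ge\ \int_\Omega h_\epsilon(x,w_0)\psi\,dx\ +\ \frac1t\Big(\langle\mathcal A w_0,\eta_t\rangle-\int_\Omega h_\epsilon(x,w_0)\eta_t\,dx\Big),
\]
so that \eqref{comequ10} follows once the last bracket divided by $t$ has nonnegative $\liminf$ as $t\to 0^+$.

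To handle this we test the super–solution inequality for $\overline u$ with $\eta_t\ge 0$, giving $\langle\mathcal A\overline u,\eta_t\rangle\ge\int_\Omega(f\overline u^{-\alpha}+g\overline u^{\beta})\eta_t\,dx$; hence the bracket is bounded below by $\langle\mathcal A w_0-\mathcal A\overline u,\eta_t\rangle+\int_\Omega\big(f(\overline u^{-\alpha}-(w_0+\epsilon)^{-\alpha})+g(\overline u^{\beta}-(w_0+\epsilon)^{\beta})\big)\eta_t\,dx$. On $\mathrm{supp}\,\eta_t=\{w_0+t\psi>\overline u\}$ one has $w_0\le\overline u\le w_0+t\psi$, so this set collapses as $t\downarrow 0$, $\eta_t\to 0$ in $W^{1,p}_0(\Omega)$ and in $W^{s,q}_0(\Omega)$, and $0\le\overline u-w_0\le t\psi$ there. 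For the first term: writing $[w_0(x)-w_0(y)]^{q-1}=[(w_0+t\psi)(x)-(w_0+t\psi)(y)]^{q-1}+O(t)$ via the second inequality of Lemma~\ref{lemma1}, then using the sign property $([a]^{q-1}-[b]^{q-1})(G(a)-G(b))\ge 0$ for nondecreasing $G$ (with $G=(\cdot)^+$ applied to $w_0+t\psi-\overline u$) for the leading part, and the first inequality of Lemma~\ref{lemma1} for the local part, one obtains $\langle\mathcal A w_0-\mathcal A\overline u,\eta_t\rangle\ge -o(t)$. For the second term, $\overline u\le w_0+t\psi\le w_0+\epsilon$ for $t$ small makes the $f$–contribution $\ge 0$, while the $g$–contribution is bounded below by $-t\int_{\{w_0+t\psi>\overline u\}}g\,\big((w_0+\epsilon)^{\beta}-w_0^{\beta}\big)\psi\,dx$; since $0<\beta<q-1$, $g\in L^{(p^\ast/(\beta+1))'}(\Omega)$ and the integration set collapses, dominated convergence renders this $o(t)$. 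Altogether the bracket is $o(t)$, which yields the desired $\liminf$ and hence \eqref{comequ10}; removing the compact–support restriction as above completes the proof.

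\textbf{Main obstacle.} The crux is the last paragraph: showing that the ``obstacle–interaction'' quantity $t^{-1}\big(\langle\mathcal A w_0,\eta_t\rangle-\int_\Omega h_\epsilon(x,w_0)\eta_t\,dx\big)$ is asymptotically nonnegative. Two points are delicate. First, for the nonlocal operator the difference $[w_0(x)-w_0(y)]^{q-1}-[\overline u(x)-\overline u(y)]^{q-1}$ is \emph{not} by itself sign–compatible with $\eta_t(x)-\eta_t(y)$, because $\eta_t$ is built from $w_0+t\psi$ rather than $w_0$; the mismatch must be absorbed into an $o(t)$ error through Lemma~\ref{lemma1} and the convergence $\eta_t\to 0$ in $W^{s,q}_0(\Omega)$, which in turn uses the comparability of the $W^{s,q}_0(\Omega)$– and $W^{1,p}_0(\Omega)$–norms on $\mathcal W^{1,p}(\Omega)$ from Lemma~\ref{Lemma1}. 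Second, the source term $g(x)u^{\beta}$, whose dependence on $u$ is increasing (unlike the singular term $f(x)u^{-\alpha}$), obstructs a naive comparison with $\overline u$; it is precisely the restriction $0<\beta<q-1$, the summability of $g$, and the collapse of the contact–type set $\{w_0+t\psi>\overline u\}$ as $t\downarrow 0$ that keep its contribution of order $o(t)$.
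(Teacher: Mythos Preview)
Your overall strategy coincides with the paper's: fix a nonnegative $\psi$ with compact support, set $\psi_t=\min\{w_0+t\psi,\overline u\}$ and $\eta_t=(w_0+t\psi-\overline u)^+$ (the paper's $w_t$), feed $\psi_t$ into the variational inequality \eqref{comequ8}, test the super-solution inequality for $\overline u$ with $\eta_t$, and analyse $t^{-1}\langle\mathcal A w_0-\mathcal A\overline u,\eta_t\rangle$ together with the zero-order remainder; finally extend by density. Your treatment of the local part (via the first inequality of Lemma~\ref{lemma1} and the relation $\nabla\eta_t=(\nabla w_0+t\nabla\psi-\nabla\overline u)\chi_{\{w_0+t\psi>\overline u\}}$), of the right-hand side, and of the density step matches the paper's. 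The real divergence is in the nonlocal piece.

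For the nonlocal term the paper does \emph{not} perturb $[w_0]^{q-1}$ to $[w_0+t\psi]^{q-1}$. Instead it first replaces $[\overline u(x)-\overline u(y)]^{q-1}$ by $[\psi_t(x)-\psi_t(y)]^{q-1}$ through a pointwise monotonicity argument (the case split $\omega_1,\omega_2$), then uses the identity $\eta_t=(w_0-\psi_t)+t\psi$ together with the \emph{first} inequality of Lemma~\ref{lemma1} to obtain
\[
([w_0(x)-w_0(y)]^{q-1}-[\psi_t(x)-\psi_t(y)]^{q-1})(\eta_t(x)-\eta_t(y))\ \ge\ t\,([w_0]^{q-1}-[\psi_t]^{q-1})(\psi(x)-\psi(y)),
\]
which extracts the factor $t$ cleanly and leaves an integral over a region tied to $\mathrm{supp}(\eta_t)$ that is shown to vanish by dominated convergence (estimates \eqref{equ36}--\eqref{equ37}). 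Only monotonicity is used; no rate is needed.

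Your route---write $[w_0(x)-w_0(y)]^{q-1}=[(w_0+t\psi)(x)-(w_0+t\psi)(y)]^{q-1}+E_t$ via the second inequality of Lemma~\ref{lemma1}, keep the leading part nonnegative by the sign property, and absorb $E_t$ into an $o(t)$ error using $\|\eta_t\|_{W^{s,q}_0}\to 0$---works when $q\ge 2$: there $|E_t|\le C(|\cdot|+|\cdot|)^{q-2}\,t\,|\psi(x)-\psi(y)|$ and H\"older with exponents $\tfrac{q}{q-2},q,q$ gives $\mathrm{error}/t=O(\|\eta_t\|_{W^{s,q}_0})\to 0$. But for $1<q<2$ the best pointwise bound is $|E_t|\le C\,t^{\,q-1}|\psi(x)-\psi(y)|^{q-1}$, and H\"older then yields only $\mathrm{error}/t=O\big(t^{\,q-2}\|\eta_t\|_{W^{s,q}_0}\big)$; since $t^{\,q-2}\to\infty$ and you have only the qualitative convergence $\|\eta_t\|_{W^{s,q}_0}\to 0$ (no rate), this does not close. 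The paper's monotonicity-based argument avoids this entirely and is valid for all $q>1$. As a minor aside, your ``sign property $([a]^{q-1}-[b]^{q-1})(G(a)-G(b))\ge 0$'' is not literally applicable, since $\eta_t(x)-\eta_t(y)$ is not a function of $V(x)-V(y)$ and $\overline u(x)-\overline u(y)$ alone; the correct pointwise inequality $([V(x)-V(y)]^{q-1}-[\overline u(x)-\overline u(y)]^{q-1})(\eta_t(x)-\eta_t(y))\ge 0$ does hold, however, by the standard case analysis.
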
 
	\noindent In fact, initially, for any non-negative \( \psi \in C^{\infty}_{c}(\Omega) \), and for \( t \in (0, 1) \), we define	
	\begin{center}
		$ \psi_{t} := \min \left\lbrace w_{0} + t \psi, \overline{u}\right\rbrace$ \, and \, $ w_{t} =  (w_{0} + t \psi - \overline{u})^{+}. $
	\end{center}
First, considering the expression for \( w_{t} \), we observe that \( w_{t} \leq t \psi \). Moreover, \( w_{t} \in W^{1, p}_{0}(\Omega) \) and \( w_{t} \in \mathcal{K} \) for sufficiently small \( t \). In fact, it is evident that \( w_{t} = 0 \) in \( \mathbb{R}^{N} \setminus \text{supp}(\psi) \), since \( w_{0} \in \mathcal{K} \). Furthermore, we have
\[
\int_{\Omega} \left| \nabla w_{t} \right| ^{p} \, dx = \int_{\text{supp} (\psi)} \left| \nabla \left(w_{0} + t \psi - \overline{u}\right) \right| ^{p} \, dx < \infty, \quad \text{since } \overline{u} \in W^{1, p}_{\text{loc}}(\Omega).
\]
Additionally, we note that
\begin{equation}\label{equ32}
\psi_{t} - w_{0} = t \psi - w_{t}.
\end{equation}
Hence, we conclude that \( \psi_{t} \in w_{0} + \left( W^{1, p}_{0}(\Omega) \cap L_{c}^{\infty}(\Omega) \right) \), with \( \psi_{t} \in \mathcal{K} \). Therefore, from \eqref{comequ8}, we deduce that:
\begin{equation*}
\begin{gathered}
\begin{aligned}
\int_{\Omega} &\left[ \nabla w_{0}\right] ^{p-1}  \nabla (\psi_{t} - w_{0})\, dx  +  \displaystyle\iint_{\mathbb{R}^{2 N}} \dfrac{\left[w_{0}(x) - w_{0}(y) \right] ^{q-1}\left((\psi_{t} - w_{0})(x)- (\psi_{t} - w_{0})(y)\right)}{\vert x-y\vert ^{N+ s q}}dx dy \\[4pt]
& \geq {\displaystyle\int_{\Omega}}\left(f(x)\,(w_{0} + \epsilon)^{-\alpha} + g(x) (w_{0} + \epsilon)^{\beta} \right) \,(\psi_{t} - w_{0})\,dx,
\end{aligned}
\end{gathered}
\end{equation*}
which along with \eqref{equ32} implies
	\begin{equation*}
		\begin{gathered}
			\begin{aligned}
				\int_{\Omega} &\left[ \nabla w_{0}\right] ^{p-1}  \nabla \psi\, dx  +  \displaystyle\iint_{\mathbb{R}^{2 N}} \dfrac{\left[w_{0}(x) - w_{0}(y) \right] ^{q-1}\left(\psi(x)- \psi(y)\right)}{\vert x-y\vert ^{N+ s q}}dx dy \\[4pt]
				& - {\displaystyle\int_{\Omega}}\left(f(x)\,(w_{0} + \epsilon)^{-\alpha} + g(x) (w_{0} + \epsilon)^{\beta} \right) \,\psi\,dx\\[4pt]
		       &\geq \dfrac{1}{t}\left( \int_{\Omega} \left[ \nabla w_{0}\right] ^{p-1}  \nabla w_{t}\, dx +  \displaystyle\iint_{\mathbb{R}^{2 N}} \dfrac{\left[w_{0}(x) - w_{0}(y) \right] ^{q-1}\left(w_{t}(x)- w_{t}(y)\right)}{\vert x-y\vert ^{N+ s q}}dx dy \right. \\[4pt]
		       &\left.  - {\displaystyle\int_{\Omega}}\left(f(x)\,(w_{0} + \epsilon)^{-\alpha} + g(x) (w_{0} + \epsilon)^{\beta} \right) \,w_{t}\,dx \right) .
			\end{aligned}
		\end{gathered}
	\end{equation*}
Since $ \overline{u} $ is a weak super-solution to problem \eqref{P2}, we can now write
	\begin{equation}\label{equ5} 
	\begin{gathered}
	\begin{aligned}
	\int_{\Omega} &\left[ \nabla w_{0}\right] ^{p-1}  \nabla \psi\, dx  +  \displaystyle\iint_{\mathbb{R}^{2 N}} \dfrac{\left[w_{0}(x) - w_{0}(y) \right] ^{q-1}\left(\psi(x)- \psi(y)\right)}{\vert x-y\vert ^{N+ s q}}dx dy \\[4pt]
	& - {\displaystyle\int_{\Omega}}\left(f(x)\,(w_{0} + \epsilon)^{-\alpha} + g(x) (w_{0} + \epsilon)^{\beta} \right) \,\psi\,dx\\[4pt]
	&\geq  \underbrace{ \dfrac{1}{t}\left( \int_{\Omega} \left( \left[ \nabla w_{0}\right] ^{p-1} -\left[ \nabla \overline{u}\right] ^{p-1} \right)  \nabla w_{t}\, dx\right) }_{\textbf{I}_{1}}  \\[4pt]
	& +  \underbrace{ \dfrac{1}{t}\displaystyle\iint_{\mathbb{R}^{2 N}} \dfrac{\left( \left[w_{0}(x) - w_{0}(y) \right] ^{q-1} - \left[ \overline{u}(x) - \overline{u}(y) \right] ^{q-1} \right) \left(w_{t}(x)- w_{t}(y)\right)}{\vert x-y\vert ^{N+ s q}}dx dy }_{\textbf{I}_{2}} \\[4pt]
	& + \underbrace{ \dfrac{1}{t} \left( {\displaystyle\int_{\Omega}}\left( f(x)\,\left(\overline{u}^{-\alpha} - (w_{0} + \epsilon)^{-\alpha} \right) + g(x) \left(\overline{u}^{\beta} - (w_{0} + \epsilon)^{\beta} \right) \right) \,w_{t}\,dx\right) }_{\textbf{I}_{3}} .
	\end{aligned}
	\end{gathered}
	\end{equation}
	\textbf{Estimate of $ \textbf{I}_{1} .$}  By applying Lemma \ref{lemma1} and \eqref{equ32}, we obtain
\begin{equation}\label{equ21}
\begin{gathered}
\begin{aligned}
 \textbf{I}_{1}  =  \dfrac{1}{t}\left[\int_{\text{supp}(w_{t})} \left( \left[ \nabla w_{0}\right] ^{p-1} -\left[ \nabla \psi_{t} \right] ^{p-1} \right)  \nabla w_{t}\, dx\right] &\geq  \underbrace{ \int_{\text{supp}(w_{t})} \left( \left[ \nabla w_{0}\right] ^{p-1} -\left[ \nabla \overline{u}\right] ^{p-1} \right)  \nabla \psi\, dx }_{\longrightarrow  0 \text{ as } t \to 0}.
 	\end{aligned}
 \end{gathered}
 \end{equation}
\textbf{Estimate of $ \textbf{I}_{2} .$}  First, we set
	\begin{center}
		$ \omega_{1} = \left\lbrace x \in \Omega,\, \overline{u}(x) \leq  (w_{0} + t \psi)(x) \right\rbrace $ and $ \omega_{2} = \left\lbrace x \in \Omega,\, \overline{u}(x) \geq (w_{0} + t \psi)(x) \right\rbrace .$ 
	\end{center} 
	Then by using the monotonicity of the map 
	\begin{center}
		$ \tau \mapsto \left[ \tau - \tau_{0}\right] ^{q - 1} , $
	\end{center} 
	we obtain that	
\begin{align*}
		&\displaystyle\iint_{\mathbb{R}^{2 N}} \dfrac{\left[ \psi_{t}(x) - \psi_{t}(y)\right] ^{q-1}\left( w_{t}(x) - w_{t}(y)\right) }{\vert x-y\vert ^{N+ sq}} dx dy =  \displaystyle\iint_{\omega_{1}\times \omega_{1}}  \dfrac{\left[ \psi_{t}(x) - \psi_{t}(y)\right] ^{q-1}\left( w_{t}(x) - w_{t}(y)\right) }{\vert x-y\vert ^{N+ s q}} dx dy \\[4pt]
		&+ \displaystyle\iint_{\omega_{1} \times \omega_{2}}  \dfrac{\left[ \psi_{t}(x) - \psi_{t}(y)\right] ^{q-1}\left( w_{t}(x) - w_{t}(y)\right) }{\vert x-y\vert ^{N+ s q}} dx dy +  \displaystyle\iint_{\omega_{2} \times \omega_{1}}  \dfrac{\left[ \psi_{t}(x) - \psi_{t}(y)\right] ^{q-1}\left( w_{t}(x) - w_{t}(y)\right) }{\vert x-y\vert ^{N+ s q}} dx dy\\[4pt]
		&\geq \displaystyle\iint_{\omega_{1} \times \omega_{1}}  \dfrac{\left[ \overline{u}(x) - \overline{u}(y)\right] ^{q-1}\left( w_{t}(x) - w_{t}(y)\right) }{\vert x-y\vert ^{N+ s q}} dx dy +  \displaystyle\iint_{\omega_{1} \times \omega_{2}}   \dfrac{\left[ \overline{u}(x) - \overline{u}(y)\right] ^{q-1}\left( w_{t}(x) - w_{t}(y)\right) }{\vert x-y\vert ^{N+ s q}} dx dy \\[4pt]
		&+ \displaystyle\iint_{\omega_{2} \times \omega_{1}}  \dfrac{\left[ \overline{u}(x) - \overline{u}(y)\right] ^{q-1}\left( w_{t}(x) - w_{t}(y)\right) }{\vert x-y\vert ^{N+ s q}} dx dy = \displaystyle\iint_{\mathbb{R}^{2 N}}   \dfrac{\left[ \overline{u}(x) -\overline{u}(y)\right] ^{q-1}\left( w_{t}(x) - w_{t}(y)\right) }{\vert x-y\vert ^{N+ s q}} dx dy.
	\end{align*}
Once more, by invoking Lemma \ref{lemma1}, we get 
\begin{align*}
		\vspace{-3cm}&\displaystyle\iint_{\mathbb{R}^{2 N}\setminus (\text{supp}(w_{t}))^{c} \times(\text{supp}(w_{t}))^{c}}\dfrac{\left(\left[ w_{0}(x) - w_{0}(y)\right] ^{q-1} - \left[ \psi_{t}(x) - \psi_{t}(y)\right] ^{q-1}\right) ( w_{0}(x) - w_{0}(y))}{\vert x-y\vert ^{N+ s q}}dx dy \\[4pt]
		&\quad\geq \displaystyle\iint_{\mathbb{R}^{2 N}\setminus (\text{supp}(w_{t}))^{c} \times(\text{supp}(w_{t}))^{c}} \dfrac{\left(\left[ w_{0}(x) - w_{0}(y)\right] ^{q-1} - \left[ \psi_{t}(x) - \psi_{t}(y)\right] ^{q-1}\right)  (\psi_{t}(x) -  \psi_{t}(y))}{\vert x-y\vert ^{N+ s q}}dx dy.
	\end{align*}
By combining these observations with \eqref{equ32}, we arrive at
\begin{equation}\label{equ33}
		\begin{gathered}
			\begin{aligned}
				\textbf{I}_{2} &\geq \displaystyle\iint_{\mathbb{R}^{2 N}\setminus (\text{supp}(w_{t}))^{c} \times(\text{supp}(w_{t}))^{c}} \dfrac{\left(\left[ w_{0}(x) - w_{0}(y)\right] ^{q-1} -  \left[\psi_{t}(x) - \psi_{t}(y)\right] ^{q-1}\right)\left(\psi(x)- \psi(y)\right)}{\vert x-y\vert ^{N+ s q}}dx dy\\[4pt]
				&=\displaystyle\iint_{\text{supp}(w_{t}) \times \text{supp}(w_{t}) } \dfrac{\left(\left[ w_{0}(x) - w_{0}(y)\right] ^{q-1} -  \left[ \overline{u}(x) - \overline{u}(y)\right] ^{q-1}\right)\left(\psi(x)- \psi(y)\right)}{\vert x-y\vert ^{N+ s q}}dx dy\\[4pt]
				&\quad + 2\displaystyle\iint_{\text{supp}(w_{t}) \times (\text{supp}(w_{t}))^{c} } \dfrac{\left( \left[ w_{0}(x) - w_{0}(y)\right] ^{q-1} -  \left[\psi_{t}(x) - \psi_{t}(y)\right] ^{q-1} \right)\left(\psi(x)- \psi(y)\right)}{\vert x-y\vert ^{N+ s q}}dx dy\\[4pt]
				&\qquad \longrightarrow  0  \text{ as } t \to 0^{+}.
			\end{aligned}
		\end{gathered}
	\end{equation}
	Indeed, by noting that $ \text{meas} (\text{supp} (w_{t})) \to 0 $  as $ t \to 0^{+}, $  it is not hard to show that  
\begin{equation*}
\begin{gathered}
\begin{aligned}
		\displaystyle\iint_{\text{supp}(w_{t}) \times \text{supp}(w_{t}) }& \dfrac{\left(\left[w_{0}(x) -w_{0}(y)\right] ^{q-1} -  \left[ \overline{u}(x) - \overline{u}(y)\right] ^{q-1}\right)\left(\psi(x)- \psi(y)\right)}{\vert x-y\vert ^{N+ s q}}dx dy\\[4pt]
		&\quad 
		 \longrightarrow 0\, \text{ as }\, t \to 0^{+}.
			\end{aligned}
\end{gathered}
\end{equation*}
	On the other hand, utilizing  Lemma \ref{lemma1}, together with H\"{o}lder's inequality, we obtain
\begin{equation}\label{equ36}
		\begin{gathered}
			\begin{aligned}
				&\left| \displaystyle\iint_{\text{supp}(w_{t}) \times (\text{supp}(w_{t}))^{c}} \dfrac{\left(\left[ w_{0}(x) - w_{0}(y)\right] ^{q-1} -  \left[ \psi_{t}(x) - \psi_{t}(y)\right] ^{q-1}\right)\left(\psi(x)- \psi(y)\right)}{\vert x-y\vert ^{N+ s q}}dx dy \right| \\[4pt]
				&\leq C \displaystyle\iint_{\text{supp}(w_{t}) \times (\text{supp}(w_{t}))^{c}} \dfrac{\left( \left|w_{0}(x) - w_{0}(y)\right| + \left|\psi_{t}(x) - \psi_{t}(y)\right|\right)^{q-1}  \left| \psi(x)- \psi(y)\right| }{\vert x-y\vert ^{N+ s q}}dx dy \\[4pt]
				&\leq C \left( \displaystyle\iint_{\text{supp}(w_{t}) \times (\text{supp}(w_{t}))^{c}} \dfrac{\left( \left|w_{0}(x) - w_{0}(y)\right| + \left|\psi_{t}(x) - \psi_{t}(y)\right|\right)^{q}}{\vert x-y\vert ^{N+ s q}}dx dy\right) ^{\frac{q-1}{q}}\\[4pt]
				&\qquad  \times\left(\displaystyle\iint_{\text{supp}(w_{t}) \times (\text{supp}(w_{t}))^{c}}\dfrac{\left| \psi(x)- \psi(y)\right|^{q}}{\vert x-y\vert ^{N+ s q}} dx dy\right) ^{\frac{1}{q}}.
			\end{aligned}
		\end{gathered}
	\end{equation}
Given that \( \text{supp}(w_{t}) \Subset \text{supp}(\psi) \) and according to Definition \ref{definition1}, there exists a constant \( C_{\text{supp}(\psi)} > 0 \), independent of \( t \), such that \( \overline{u} > C_{\text{supp}(\psi)} \) within \( \text{supp}(w_{t}) \). Hence, from \cite[Lemma 3.5]{ref01}, we obtain
	\begin{equation}\label{equ37}
		\begin{gathered}
			\begin{aligned}
				&\displaystyle\iint_{\text{supp}(w_{t}) \times (\text{supp}(w_{t}))^{c}} \dfrac{\left( \left|w_{0}(x) - w_{0}(y)\right| + \left|\psi_{t}(x) - \psi_{t}(y)\right|\right)^{q}}{\vert x-y\vert ^{N+ s q}}dx dy\\[4pt]
				& \leq C \left( \displaystyle\iint_{\mathbb{R}^{2N}}\dfrac{\left| w_{0}(x)- w_{0}(y)\right|^{q}}{\vert x-y\vert ^{N+ s q}} dx dy  + \displaystyle\iint_{\mathbb{R}^{2N}}\dfrac{\left| \psi(x)- \psi(y)\right|^{q}}{\vert x-y\vert ^{N+ s q}} dx dy \right. \\[4pt]
				&\left.  +  \displaystyle\iint_{\mathbb{R}^{2N}}\dfrac{\left| \overline{u}^{\theta} (x)- \overline{u} ^{\theta}(y)\right|^{q}}{\vert x-y\vert ^{N+ s q}} dx dy \right) < \infty,
			\end{aligned}
		\end{gathered}
	\end{equation}
	where $ C $ depends on $ q, \theta, C_{\text{supp} (\psi)}. $ By applying the result from \eqref{equ37} to \eqref{equ36}, we obtain	
	
\begin{equation*}
\begin{gathered}
\begin{aligned}
		\displaystyle\iint_{\text{supp}(w_{t}) \times (\text{supp}(w_{t}))^{c}}& \dfrac{\left(\left[w_{0}(x) - w_{0}(y)\right] ^{q-1} -  \left[\psi_{t}(x) - \psi_{t}(y)\right] ^{q-1}\right)\left(\psi(x)- \psi(y)\right)}{\vert x-y\vert ^{N+ s q}}dx dy\\[4pt]
		& \longrightarrow 0\, \text{ as }\, t \to 0^{+}.
		\end{aligned}
\end{gathered}
\end{equation*}
	\textbf{Estimate of $ \textbf{I}_{3}. $} Given that $ w_{t} \leq t \psi $, we can observe that
	\begin{equation}\label{equ30}
		\begin{gathered}
			\begin{aligned}
				\textbf{I}_{3} &\geq -  {\displaystyle\int_{\text{supp}(w_{t})}}\left( f(x)\,\left| \overline{u}^{-\alpha} - (w_{0} + \epsilon)^{-\alpha} \right| + g(x) \left| \overline{u}^{\beta} - (w_{0} + \epsilon)^{\beta} \right| \right) \,\psi\,dx\\[4pt]
				&\quad \longrightarrow 0\, \text{ as }\, t \to 0^{+}.
			\end{aligned}
		\end{gathered}
	\end{equation}  
By gathering \eqref{equ21}, \eqref{equ33}, and \eqref{equ30} and taking the limit in \eqref{equ5} as $ t \to 0^{+} $, we obtain
\begin{equation}\label{equ6} 
\begin{gathered}
\begin{aligned}
&\int_{\Omega} \left[ \nabla w_{0}\right] ^{p-1}  \nabla \psi\, dx + \displaystyle\iint_{\mathbb{R}^{2 N}} \dfrac{\left[w_{0}(x) - w_{0}(y)\right] ^{q-1}\left(\psi(x)- \psi(y)\right)}{\vert x-y\vert ^{N+ s q}}dx dy \\[4pt]
&\quad \geq \int_{\Omega} \left( f(x) (w_{0} + \epsilon)^{- \alpha} + g(x)\,(w_{0} + \epsilon)^{\beta}\right) \,\psi \,dx,
\end{aligned}
\end{gathered}
\end{equation}
for all \( \psi \in C^{\infty}_{c}(\Omega) \) with \( \psi \geq 0 \) almost everywhere in \( \Omega \), we now conclude, using classical density arguments, that \eqref{equ6} holds for any \( \psi \in W^{1, p}_{0}(\Omega) \cap L^{\infty}(\Omega) \), with \( \psi \geq 0 \) almost everywhere in \( \Omega \). Indeed, consider the sequence \( (\psi_{\tau, n}) \subset C^{\infty}_{c}(\Omega) \) such that
\[
\psi_{\tau, n} \xrightarrow{\tau \to 0} \psi_{n} \xrightarrow{n \to \infty} \psi \text{ in } W^{1, p}_{0}(\Omega),
\]
where \( \text{supp}(\psi_{n}) \Subset \Omega \) and \( 0 \leq \psi_{n} \leq \psi \) for all \( n \in \mathbb{N} \). By first passing \( \tau \to 0 \) and then \( n \to \infty \), we obtain the required result.
	\begin{claim}
		$ \underline{u} \leq w_{0}  +  \epsilon, $ for each $ \epsilon > 0.$
	\end{claim}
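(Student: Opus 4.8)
\emph{Proof proposal for the Claim $\underline u\le w_{0}+\epsilon$.}

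The plan is to show that the set $\{\underline u>w_{0}+\epsilon\}$ has Lebesgue measure zero; combined with $w_{0}\leq\overline u$ (since $w_{0}\in\mathcal K$) and letting $\epsilon\to 0^{+}$, this will also complete the proof of Theorem \ref{Theorem1}. Put $v:=w_{0}+\epsilon$. Since $\nabla v=\nabla w_{0}$ and $v(x)-v(y)=w_{0}(x)-w_{0}(y)$, inequality \eqref{comequ10} reads, for every nonnegative $\psi\in W^{1,p}_{0}(\Omega)\cap L^{\infty}(\Omega)$,
\[
\int_{\Omega}[\nabla v]^{p-1}\nabla\psi\,dx+\iint_{\mathbb R^{2N}}\frac{[v(x)-v(y)]^{q-1}(\psi(x)-\psi(y))}{|x-y|^{N+sq}}\,dx\,dy\;\geq\;\int_{\Omega}\bigl(f v^{-\alpha}+g v^{\beta}\bigr)\psi\,dx,
\]
and — the whole point of the $\epsilon$-regularization — $v\geq\epsilon>0$, so the term $f v^{-\alpha}\leq\epsilon^{-\alpha}f$ is no longer singular. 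Against this I will play off the weak sub-solution inequality for $\underline u$ from Definition \ref{definition1}.

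The key choice is the Picone exponent $r:=\beta+1$. Because $0<\beta<q-1$ and $q\leq p$, one has $1<r<q\leq p$, so $r$ is admissible both in the local Picone inequality (Lemma \ref{Lemma3}) and the nonlocal one (Lemma \ref{Lemma2}). For $k>\epsilon$ set $\underline u_{k}:=\min\{\underline u,k\}$, $v_{k}:=\min\{v,k\}$ and put
\[
\varphi_{k}:=\frac{(\underline u_{k}^{\,r}-v_{k}^{\,r})^{+}}{\underline u_{k}^{\,r-1}},\qquad
\psi_{k}:=\frac{(\underline u_{k}^{\,r}-v_{k}^{\,r})^{+}}{v_{k}^{\,r-1}}.
\]
Both are supported in $\{\underline u_{k}>v_{k}\}\subseteq\{\underline u>\epsilon\}$ (hence vanish near $\partial\Omega$) and bounded (on their support $v_{k}\geq\epsilon$ and $\underline u_{k}\leq k$); using Definition \ref{definition1}(i)--(ii) one checks $\varphi_{k},\psi_{k}\in W^{1,p}_{0}(\Omega)\cap L^{\infty}(\Omega)$, so $\psi_{k}$ is directly admissible in \eqref{comequ10}, while $\varphi_{k}$ becomes admissible for the sub-solution $\underline u$ after multiplying by a cut-off $\zeta_{n}\uparrow 1$; the terms involving $\nabla\zeta_{n}$ vanish as $n\to\infty$ by a routine cut-off argument (as in the treatment of $\mathbf I_{1}$--$\mathbf I_{2}$ above and in \cite[Lemma 3.5]{ref01}).

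Now test the sub-solution inequality for $\underline u$ with $\zeta_{n}\varphi_{k}$, the super-solution inequality \eqref{comequ10} for $v$ with $\psi_{k}$, pass $n\to\infty$, and subtract. On the right-hand side the two source contributions cancel \emph{identically}: since $r=\beta+1$ one has $g\,s^{\beta}/s^{r-1}=g$, so what remains is
\[
\int_{\{\underline u>v\}} f\,(\underline u_{k}^{\,r}-v_{k}^{\,r})\bigl(\underline u_{k}^{\,-\alpha-\beta}-v_{k}^{\,-\alpha-\beta}\bigr)\,dx\;\leq\;0,
\]
because $-\alpha-\beta<0$, $f\geq 0$ and $\underline u_{k}>v_{k}>0$ on $\{\underline u>v\}$; finiteness of this integral is exactly where the hypotheses (H1)--(H4) enter (for the class (F2), via Hölder and Hardy as in \eqref{equ3}, using $\delta<1+\tfrac{1}{p'}$). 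On the left-hand side, the local part equals $\int_{\{\underline u>v\}}\bigl([\nabla\underline u]^{p-1}\nabla(\tfrac{\underline u_{k}^{r}-v_{k}^{r}}{\underline u_{k}^{r-1}})+[\nabla v]^{p-1}\nabla(\tfrac{v_{k}^{r}-\underline u_{k}^{r}}{v_{k}^{r-1}})\bigr)\,dx\geq 0$ by Lemma \ref{Lemma3}, and the nonlocal part is $\geq 0$ by Lemma \ref{Lemma2} after splitting $\mathbb R^{2N}$ into the four products of $\{\underline u>v\}$ with its complement and invoking the monotonicity of $\tau\mapsto[\tau-\tau_{0}]^{q-1}$, exactly as in the estimate of $\mathbf I_{2}$ carried out above. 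Hence the left-hand side lies in $[0,0]$, so it vanishes together with each of the nonnegative pieces it is built from.

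Finally, from the vanishing of the local Picone integral, equality in Young's inequality and in \cite[Proposition 2.9]{brasco2014convexity} (the rigidity case of Lemma \ref{Lemma3}) forces $\nabla(\underline u/v)=0$ a.e. on each connected component of $\{\underline u>v\}$ after letting $k\to\infty$, i.e. $\underline u=c\,v$ there for a constant $c>1$. But $(\underline u-v)^{+}$ lies between $0$ and $(\underline u-\epsilon)^{+}\in W^{1,p}_{0}(\Omega)$ (Remark \ref{remark2}), while $v=w_{0}+\epsilon\geq\epsilon>0$ and $\underline u=0=w_{0}$ on $\partial\Omega$ in the generalized sense; the identity $\underline u=c\,v$ near $\partial\Omega$ is incompatible with this unless $\{\underline u>v\}$ is null. (When $f>0$ a.e. — in particular under the class (F2) — the rigidity step can be skipped: the displayed $f$-integral is then strictly negative on any positive-measure portion of $\{\underline u>v\}$, contradicting at once that the left-hand side is $\geq 0$.) Therefore $\underline u\leq w_{0}+\epsilon$ a.e. in $\Omega$. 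I expect the two genuinely delicate points to be (i) the admissibility bookkeeping for the truncated-and-cut-off Picone test functions against a sub-solution that is only $W^{1,p}_{\mathrm{loc}}(\Omega)$, and (ii) extracting the non-negativity of the nonlocal contribution from Lemma \ref{Lemma2} in the presence of the positive-part truncation.
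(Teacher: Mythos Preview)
Your route is genuinely different from the paper's. The paper does \emph{not} take $r=\beta+1$; it takes $r=q$ and works with the test functions
\[
\Psi_{m}=\frac{\mathbf T_{k}\bigl(((\underline u+m)^{q}-(w_{0}+m+\epsilon)^{q})^{+}\bigr)}{(\underline u+m)^{q-1}},\qquad
\Phi_{m}=\frac{\mathbf T_{k}\bigl(((w_{0}+m+\epsilon)^{q}-(\underline u+m)^{q})^{-}\bigr)}{(w_{0}+m+\epsilon)^{q-1}},
\]
together with an extra regularisation parameter $m\to 0$ and a three–region splitting $\Omega_{1}\cup\Omega_{2}\cup\Omega_{3}$ according to the value of $(\underline u+m)^{q}-(w_{0}+m+\epsilon)^{q}$ relative to $0$ and $k$. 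With $r=q$, neither the $f$–term nor the $g$–term cancels; instead, on $\{\underline u>w_{0}+\epsilon\}$ both carry the ``good'' sign because $s\mapsto s^{-\alpha}/s^{q-1}$ and $s\mapsto s^{\beta}/s^{q-1}$ are decreasing (here is where $\beta<q-1$ is used). After $m\to 0$ and $k\to\infty$ the paper obtains
\[
0\;\le\;\int_{\Omega}g\,(\underline u^{\,\beta-q+1}-(w_{0}+\epsilon)^{\beta-q+1})\,(\underline u^{q}-(w_{0}+\epsilon)^{q})^{+}\,dx\;\le\;0,
\]
and concludes directly, with no rigidity step.

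What your choice $r=\beta+1$ buys is the exact cancellation of the $g$–contribution; what it costs is that the conclusion must now come either from the $f$–integral (fine when $f>0$ a.e., e.g.\ class \textbf{(F2)}) or from a Picone \emph{equality} argument. This second branch is the weak point of your proposal. For $r<p$, equality in Lemma~\ref{Lemma3} does not merely force $\nabla(\underline u/v)=0$: tracing the proof, one needs equality both in \eqref{pic} \emph{and} in the Young step, the latter giving $|\nabla\underline u|=|\nabla v|$ a.e.\ on $\{\underline u>v\}$. Extracting from this that each connected component of $\{\underline u>v\}$ is null --- for functions that are only in $W^{1,p}_{\mathrm{loc}}$, with the boundary condition read through Remark~\ref{remark2} --- is not the routine step you present it as; the ``$\underline u=c\,v$ near $\partial\Omega$'' contradiction is heuristic for Sobolev functions. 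The paper avoids this entirely by keeping the $g$–term. A second, smaller issue: with your $\min(\cdot,k)$ truncation the operator acts on $\underline u$ while the Picone quotient is built from $\underline u_{k}$, so Lemmas~\ref{Lemma2}--\ref{Lemma3} do not apply verbatim on $\{\underline u>k\}$; you must show this region contributes $o(1)$ as $k\to\infty$ --- which is exactly what the paper's $\Omega_{3}$–estimates do, but it is not ``routine''.
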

	\noindent To this end, we adapt the proof methods from \cite{durastanti2022comparison}. Let \( \epsilon > 0 \), \( m > 0 \), and \( k > 0 \) be fixed. We start by observing that
	\[
	\Psi_{m}  = \frac{\textbf{T}_{k}\left( \left(\left(\underline{u} + m\right) ^{q} - \left( w_{0} + m + \epsilon\right) ^{q}\right)^{+} \right) }{(\underline{u} + m)^{q - 1}}, \quad \Phi_{m}  = \frac{\textbf{T}_{k}\left( \left( \left( w_{0} + m + \epsilon\right) ^{q} - \left(\underline{u} + m\right) ^{q}\right)^{-} \right) }{\left( w_{0} + m + \epsilon\right) ^{q-1}},
	\]
	are well-defined and belong to \( W^{1, p}_{0}(\Omega) \cap L^{\infty}(\Omega) \). We provide the explicit proof only for \( \Psi_{m} \), as the treatment of \( \Phi_{m} \) follows similarly. Given that \( w_{0} \geq 0 \) almost everywhere in \( \Omega \), we have
	\[
	\text{supp}(\Psi_{m}) \subset \mathcal{S}_{\epsilon},
	\]
where $ \mathcal{S}_{\epsilon}  : =\text{supp}((\underline{u} - \epsilon)^{+}).$ We now define the following subdomains:
\begin{equation*}
\begin{gathered}
\begin{aligned}
& \Omega_{1} = \left\lbrace x \in \Omega, \quad \left(\underline{u} + m\right) ^{q}(x) - \left( w_{0} + m + \epsilon\right) ^{q}(x) \leq 0\right\rbrace, \\[4pt]
&\Omega_{2} = \left\lbrace x \in \Omega,\quad  0 < \left(\underline{u} + m\right) ^{q}(x) - \left( w_{0} + m + \epsilon\right) ^{q}(x)  < k\right\rbrace, \\[4pt]
&\Omega_{3} = \left\lbrace x \in \Omega, \quad \left(\underline{u} + m\right) ^{q}(x) - \left( w_{0} + m + \epsilon\right) ^{q}(x)  \geq k\right\rbrace.
\end{aligned}
\end{gathered}
\end{equation*}
A direct computation reveals that
\begin{equation*}
\begin{gathered}
\begin{aligned}
\left| \nabla \Psi_{m} \right| &= \left| \left( \nabla \underline{u} - q \left(\dfrac{w_{0} + m + \epsilon}{ \underline{u}  + m}\right)^{q-1} \nabla w_{0} + (q-1) \left( \dfrac{w_{0} + m + \epsilon}{\underline{u} + m}\right) ^{q} \nabla \underline{u}\right) \chi_{\Omega_{2}} - \dfrac{k(q-1)}{(\underline{u} + m)^{q}}\nabla \underline{u} \, \chi_{\Omega_{3}}\right| \\[4pt]
& \leq C(q) \left( \left|  \nabla \underline{u} \right|  + \left|\nabla w_{0} \right| \right).
\end{aligned}
\end{gathered}
\end{equation*}
On the other hand, since \( \underline{u} \) is a weak sub-solution to problem \eqref{P2}, it follows that
\begin{equation*}
\begin{gathered}
\begin{aligned}
\int_{\Omega}\left| \nabla \Psi_{m}  \right| ^{p} dx &\leq C(p, q) \left[\int_{\mathcal{S}_{\epsilon}}\left| \nabla \underline{u}\right| ^{p} dx + \int_{\mathcal{S}_{\epsilon}}\left| \nabla w_{0} \right| ^{p} dx\right] \\[4pt]
&\leq C(p, q, \theta, \epsilon) \left[\int_{\Omega}\left| \nabla (\underline{u}^{\theta})\right| ^{p} dx + \int_{\Omega}\left| \nabla w_{0} \right| ^{p} dx\right] < \infty.
\end{aligned}
\end{gathered}
\end{equation*}
Now, by testing \eqref{comequ10} with \( \Phi_{m} \), we derive that
		\begin{equation}\label{equ7}
	\begin{gathered}
	\begin{aligned}
	&\int_{\Omega} \left[ \nabla w_{0}\right] ^{p-1}  \nabla \Phi_{m} \, dx + \displaystyle\iint_{\mathbb{R}^{2 N}} \dfrac{\left[w_{0}(x) - w_{0}(y)\right] ^{q-1}\left(\Phi_{m} (x)- \Phi_{m} (y)\right)}{\vert x-y\vert ^{N+ s q}}dx dy \\[4pt]
	&\quad \geq \int_{\Omega} \left( f(x) (w_{0} + \epsilon)^{- \alpha} + g(x)\,(w_{0} + \epsilon)^{\beta}\right) \,\Phi_{m} \,dx.
	\end{aligned}
	\end{gathered}
	\end{equation}
Next, we consider a sequence of functions \( (\varphi_{n}) \in C^{\infty}_{c}(\Omega) \) such that \( \varphi_{n} \to \Psi_{m} \) in \( W^{1, p}_{0}(\Omega) \), and define \( \tilde{\varphi}_{n} := \min\left\lbrace \Psi_{m}, \varphi_{n}^{+} \right\rbrace \). Then, noting that \( \tilde{\varphi}_{n} \in W^{1, p}_{0}(\Omega) \cap L_{c}^{\infty}(\Omega) \), we obtain
\begin{equation*}
\begin{gathered}
\begin{aligned}
\int_{\Omega} \left[ \nabla \underline{u}\right] ^{p-1} \nabla \tilde{\varphi}_{n}  dx &+  \displaystyle \displaystyle\iint_{\mathbb{R}^{2N}} \dfrac{\left[ \underline{u}(x) - \underline{u}(y) \right] ^{q-1} \left(\tilde{\varphi}_{n} (x)- \tilde{\varphi}_{n} (y)\right)}{\vert x-y\vert ^{N+ s q}}dxdy \\[4pt]
&\leq {\displaystyle\int_{\Omega}} \left( f(x) \underline{u}^{-\alpha} + g(x) \underline{u}^{\beta}\right) \tilde{\varphi}_{n}  dx.
\end{aligned}
\end{gathered}
\end{equation*}
From Definition \ref{definition1}, we can pass to the limit as \( n \to \infty \) to obtain
\begin{equation}\label{equ8}
\begin{gathered}
\begin{aligned}
\int_{\Omega} \left[ \nabla \underline{u}\right] ^{p-1} \nabla \Psi_{m} dx &+  \displaystyle \displaystyle\iint_{\mathbb{R}^{2N}} \dfrac{\left[ \underline{u}(x) - \underline{u}(y) \right] ^{q-1} \left(\Psi_{m} (x)-\Psi_{m} (y)\right)}{\vert x-y\vert ^{N+ s q}}dxdy \\[4pt]
&\leq {\displaystyle\int_{\Omega}} \left( f(x) \underline{u}^{-\alpha} + g(x) \underline{u}^{\beta}\right) \Psi_{m} dx.
\end{aligned}
\end{gathered}
\end{equation}
By subtracting \eqref{equ8} from \eqref{equ7}, we obtain
\begin{equation}\label{comequ1}
		\begin{gathered}
			\begin{aligned}
				&\underbrace{ \int_{\Omega} \left( \left[ \nabla \underline{u}\right] ^{p-1}\nabla\Psi_{m} -  \left[ \nabla w_{0}\right] ^{p-1}\nabla\Phi_{m}  \right) dx}_{\textbf{E}_{1}} +  \underbrace{ \displaystyle \displaystyle\iint_{\mathbb{R}^{2N}}  \dfrac{\textbf{F}_{\underline{u}, w_{0}, \Psi_{m}, \Phi_{m}}(x, y) }{\vert x-y\vert ^{N+ s q}} dxdy}_{\textbf{E}_{2}}  \\[4pt]
				&\leq \underbrace{  {\displaystyle\int_{\Omega}} \textbf{G}_{\underline{u}, w_{0}, f, g}(x) \, \textbf{T}_{k} \left( \left(\left(\underline{u} + m\right) ^{q} - \left( w_{0} + m + \epsilon\right) ^{q}\right)^{+} \right)  dx}_{\textbf{E}_{3}},
			\end{aligned}
		\end{gathered}
	\end{equation}
where 
$$\textbf{F}_{\underline{u}, w_{0}, \Psi_{m}, \Phi_{m}}(x, y) := \left[ \underline{u}(x)  - \underline{u}(y) \right] ^{q-1} \left(\Psi_{m} (x)-\Psi_{m} (y)\right) - \left[ w_{0}(x) - w_{0}(y) \right] ^{q-1} \left(\Phi_{m}  (x)-\Phi_{m} (y)\right),$$
and
$$ \textbf{G}_{\underline{u}, w_{0}, f, g}(x) := f(x)\left(\dfrac{\underline{u}^{-\alpha}}{\left(\underline{u} + m\right) ^{q-1}} - \dfrac{(w_{0} + \epsilon)^{-\alpha}}{\left(w_{0} + m + \epsilon\right) ^{q-1}} \right) + g(x)\left(\dfrac{\underline{u}^{\beta}}{\left(\underline{u} + m\right) ^{q-1}} - \dfrac{(w_{0} + \epsilon)^{\beta}}{\left(w_{0} + m + \epsilon\right) ^{q-1}} \right). $$

\noindent  \textbf{Estimate of $ \textbf{E}_{1} .$}  By applying Lemma \ref{Lemma3} and since $ \text{meas} (\Omega_{3}) \to 0 $  as $ k \to \infty, $ we obtain
 \begin{equation}\label{equ9}
\begin{gathered}
\begin{aligned}
\textbf{E}_{1} &\geq k(q-1)  \int_{\Omega_{3}} \left(\dfrac{\left| \nabla w_{0}\right| ^{p}}{\left(w_{0} + m + \epsilon \right) ^{q}} - \dfrac{\left| \nabla \underline{u} \right| ^{p}}{\left(\underline{u}  + m  \right) ^{q}} \right) dx  \\[4pt]
&\geq  -(q-1)  \int_{\Omega_{3}}  \left| \nabla \underline{u} \right| ^{p}dx  \longrightarrow  0  \text{ as } k \to \infty,
\end{aligned}
\end{gathered}
\end{equation}
where we have utilized the fact that $ \left| \nabla \underline{u} \right|^{p} $ is integrable over the domain $ \mathcal{S}_{\epsilon} $.\\[4pt]
\noindent  \textbf{Estimate of $\mathbf{E}_{2}$.}  In this case, we can express it as follows:
\begin{equation*}
	\begin{gathered}
	\begin{aligned}
	\textbf{E}_{2}   = \left(\iint_{\Omega_{1}\times\Omega_{1}} + \iint_{\Omega_{2}\times\Omega_{2}}  + \iint_{\Omega_{3}\times\Omega_{3}} + 2\iint_{\Omega_{2}\times \Omega_{1}} + 2\iint_{\Omega_{3}\times \Omega_{1}} + 2\iint_{\Omega_{3}\times \Omega_{2}}\right) \dfrac{\textbf{F}_{\underline{u}, w_{0}, \Psi_{m}, \Phi_{m}}(x, y)}{\vert x-y\vert^{N+ s q}} dx dy.
	\end{aligned}
	\end{gathered}
	\end{equation*}
We shall now estimate each integral individually, starting with the following observation:
\begin{equation*}
\begin{gathered}
\Psi_{m} (x) - \Psi_{m} (y) = 0 \text{ and } \Phi_{m} (x) - \Phi_{m} (y) = 0 \text{ for all }  (x, y) \in \Omega_{1} \times \Omega_{1},
\end{gathered}
\end{equation*}
which implies that the first integral vanishes over $\Omega_{1} \times \Omega_{1}$, i.e.,
\begin{equation}\label{equ11}
\begin{gathered}
\iint_{\Omega_{1}\times \Omega_{1}} \dfrac{\textbf{F}_{\underline{u}, w_{0}, \Psi_{m}, \Phi_{m}}(x, y)}{\vert x-y\vert^{N+ s q}} dx dy = 0.
\end{gathered}
\end{equation}
Next, by applying Lemma \ref{Lemma2}, we derive the following result:

{\small 	\begin{equation}\label{equ12}
	\begin{gathered}
	\begin{aligned}
	&\iint_{\Omega_{2} \times \Omega_{2}} \dfrac{\left[ \underline{u}(x) - \underline{u}(y) \right]^{q-1}}{\vert x-y\vert^{N+ s q}} \left( \dfrac{ (\underline{u}(x) + m)^{q} - (w_{0}(x) + m + \epsilon)^{q}}{(\underline{u}(x) + m)^{q-1}} - \dfrac{ (\underline{u}(y) + m)^{q} - (w_{0}(y) + m + \epsilon)^{q}}{(\underline{u}(y) + m)^{q-1}} \right) \\[4pt]
	& + \dfrac{\left[ w_{0}(x) - w_{0}(y) \right]^{q-1}}{\vert x-y\vert^{N+ s q}} \left( \dfrac{(w_{0}(x) + m + \epsilon)^{q} - (\underline{u}(x) + m)^{q}}{(w_{0}(x) + m + \epsilon)^{q-1}} - \dfrac{(w_{0}(y) + m + \epsilon)^{q} - (\underline{u}(y) + m)^{q}}{(w_{0}(y) + m + \epsilon)^{q-1}} \right)  dx dy \geq 0.
	\end{aligned}
	\end{gathered}
\end{equation}}

\noindent  Moreover, by applying Lagrange's Theorem, we obtain:
	\begin{equation*} 
	\begin{aligned}
	\iint_{\Omega_{3} \times \Omega_{3}} &\dfrac{\textbf{F}_{\underline{u}, w_{0}, \Psi_{m}, \Phi_{m}}(x, y)}{\vert x - y \vert^{N + s q}} \, dx \, dy\\[4pt]
	& \geq -k(q-1)\iint_{\Omega_{3} \times \Omega_{3}} \dfrac{\left| \underline{u}(x) - \underline{u}(y) \right|^{q}}{\vert x - y \vert^{N + s q}} \left( \dfrac{\max\left\lbrace (\underline{u}(y) + m)^{q-2},  (\underline{u}(x) + m)^{q-2}\right\rbrace }{(\underline{u}(y) + m)^{q-1} (\underline{u}(x) + m)^{q-1}}\right)  dx dy.
	\end{aligned}
	\end{equation*}

\noindent  Now, define
$$ \omega_{\underline{u}} : = \left\lbrace (x, y) \in  \Omega_{3} \times \Omega_{3} \mid \underline{u}(x) > \underline{u}(y) \right\rbrace,$$
then, noting that $\text{meas}(\Omega_{3}) \to 0$ as $k \to \infty$, we have
\begin{equation} \label{equ13}
	\begin{aligned}
	\iint_{\Omega_{3} \times \Omega_{3}} &\dfrac{\textbf{F}_{\underline{u}, w_{0}, \Psi_{m}, \Phi_{m}}(x, y)}{\vert x - y \vert^{N + s q}} \, dx \, dy\\[4pt]
	& \geq - (q-1)\iint_{(\Omega_{3} \times \Omega_{3}) \cap \omega_{\underline{u}}} \dfrac{\left| \underline{u}(x) - \underline{u}(y) \right|^{q}}{\vert x - y \vert^{N + s q}} \left( \dfrac{\underline{u}(y) + m }{\underline{u}(x) + m}\right)  dx dy\\[4pt]
	&  - (q-1)\iint_{(\Omega_{3} \times \Omega_{3}) \cap \omega^{c}_{\underline{u}}} \dfrac{\left| \underline{u}(x) - \underline{u}(y) \right|^{q}}{\vert x - y \vert^{N + s q}} \left( \dfrac{\underline{u}(x) + m }{\underline{u}(y) + m}\right)  dx dy\\[4pt]
	& \geq   -(q-1)\iint_{\Omega_{3} \times \Omega_{3}} \dfrac{\left| \underline{u}(x) - \underline{u}(y) \right|^{q}}{\vert x - y \vert^{N + s q}} dx dy \to 0 \, \text{ as } \, k \to \infty,
	\end{aligned}
	\end{equation}

\noindent  where we use \cite[Lemma 3.5]{ref01} and Definition \ref{definition1} to establish that the last integral is finite. Furthermore, by leveraging the monotonicity of the following functions
\[
\tau \mapsto \frac{\left[ \tau - \tau_{0} \right]^{q-1}}{\tau^{q-1}}  \text{ and } \tau \mapsto \left[ \tau_{0} - \tau \right]^{q-1},
\]
we obtain
{\small \begin{equation}\label{equ10}
	\begin{gathered}
	\begin{aligned}
	&\iint_{\Omega_{2} \times \Omega_{1}} \frac{\textbf{F}_{\underline{u}, w_{0}, \Psi_{m}, \Phi_{m}}(x, y)}{\vert x-y \vert^{N+ s q}} \, dx \, dy  \\[4pt]
	&= \iint_{\Omega_{2} \times \Omega_{1}} \frac{(\underline{u}(x) + m)^{q} - (w_{0}(x) + m + \epsilon)^{q}}{\vert x-y \vert^{N+ s q}} \left( \frac{\left[\underline{u}(x) - \underline{u}(y)\right]^{q-1}}{(\underline{u}(x) + m)^{q-1}} - \frac{\left[w_{0}(x) - w_{0}(y)\right]^{q-1}}{(w_{0}(x) + m + \epsilon)^{q-1}} \right) dx dy \geq 0.
	\end{aligned}
	\end{gathered}
	\end{equation}}

\noindent  Similarly, we have
\begin{equation}\label{equ14}
	\begin{gathered}
	\begin{aligned}
	&\iint_{\Omega_{3} \times \Omega_{1}} \frac{\textbf{F}_{\underline{u}, w_{0}, \Psi_{m}, \Phi_{m}}(x, y)}{\vert x-y \vert^{N+ s q}} \, dx \, dy  \\[4pt]
	&= \iint_{\Omega_{3} \times \Omega_{1}} \frac{k}{\vert x-y \vert^{N+ s q}} \left( \frac{\left[\underline{u}(x) - \underline{u}(y)\right]^{q-1}}{(\underline{u}(x) + m)^{q-1}} - \frac{\left[w_{0}(x) - w_{0}(y)\right]^{q-1}}{(w_{0}(x) + m + \epsilon)^{q-1}} \right) dx dy \geq 0.
	\end{aligned}
	\end{gathered}
	\end{equation}

\noindent  For the last integral, we have
\begin{equation*}
		\begin{gathered}
			\begin{aligned}
				&\iint_{\Omega_{3} \times \Omega_{2}} \frac{\textbf{F}_{\underline{u}, w_{0}, \Psi_{m}, \Phi_{m}}(x, y)}{\vert x-y \vert^{N+ s q}} \, dx dy  \\[4pt]
				&= \underbrace{ \iint_{\Omega_{3} \times \Omega_{2}} \dfrac{\left[ \underline{u}(x) - \underline{u}(y)\right] ^{q-1}}{\left| x -y\right| ^{N+sq}} \left(\dfrac{k}{(\underline{u}(x) + m)^{q-1}} - \dfrac{(\underline{u}(y) + m)^{q} - (w_{0}(y) + m + \epsilon)^{q}}{(\underline{u}(y) + m)^{q-1}}\right) dx dy }_{\textbf{(*)}} \\[4pt]
				&\underbrace{ -  \iint_{\Omega_{3} \times \Omega_{2}} \dfrac{\left[ w_{0}(x) - w_{0}(y)\right] ^{q-1}}{\left| x -y\right| ^{N+sq}} \left(\dfrac{k}{(w_{0}(x) + m + \epsilon)^{q-1}} - \dfrac{(\underline{u}(y) + m)^{q} - (w_{0}(y) + m + \epsilon)^{q}}{(w_{0}(y) + m + \epsilon)^{q-1}}\right) dx dy}_{\textbf{(**)}} .
			\end{aligned}
		\end{gathered}
\end{equation*}
\noindent  Now we set 
	$$ \textbf{S}_{\underline{u}} : = \left\lbrace (x, y) \in  \Omega_{3} \times \Omega_{2}, \, \underline{u}(x) > \underline{u}(y) \right\rbrace \text{ and } \textbf{S}_{w_{0}} : = \left\lbrace (x, y) \in  \Omega_{3} \times \Omega_{2},\, w_{0}(x) > w_{0}(y) \right\rbrace,$$
	and by Lagrange's Theorem and farctional Picone inequality \cite[Proposition 4.2]{brasco2014convexity}, we obtain
    \begin{equation*}
	\begin{gathered}
	\begin{aligned}
	\textbf{(*)} &\geq  \iint_{\textbf{S}_{\underline{u}} } \dfrac{\left[ \underline{u}(x) - \underline{u}(y)\right] ^{q-1}}{\left| x -y\right| ^{N+sq}}\left(\dfrac{(\underline{u}(y) + m)^{q} - (w_{0}(y) + m + \epsilon)^{q}}{(\underline{u}(x) + m)^{q-1}} - \dfrac{(\underline{u}(y) + m)^{q} - (w_{0}(y) + m + \epsilon)^{q}}{(\underline{u}(y) + m)^{q-1}}\right) dx dy\\[4pt]
	& + \iint_{\textbf{S}^{\text{c}}_{\underline{u}} } \dfrac{\left[ \underline{u}(x) - \underline{u}(y)\right] ^{q-1}}{\left| x -y\right| ^{N+sq}} \left(\dfrac{(\underline{u}(x) + m)^{q} - (w_{0}(x) + m + \epsilon)^{q}}{(\underline{u}(x) + m)^{q-1}} - \dfrac{(\underline{u}(y) + m)^{q} - (w_{0}(y) + m + \epsilon)^{q}}{(\underline{u}(y) + m)^{q-1}}\right) dx dy\\[4pt]
	&\geq - (q-1) \iint_{\textbf{S}_{\underline{u}} } \dfrac{\left| \underline{u}(x) - \underline{u}(y)\right| ^{q}}{\left| x -y\right| ^{N+sq}} \left( \dfrac{\underline{u}(y) + m}{\underline{u}(x) + m}\right) dx dy + \iint_{\textbf{S}^{\text{c}}_{\underline{u}} } \dfrac{\left| \underline{u}(x) - \underline{u}(y)\right| ^{q}}{\left| x -y\right| ^{N+sq}} dx dy\\[4pt]
	& - \iint_{\textbf{S}^{\text{c}}_{\underline{u}} } \dfrac{\left[ \underline{u}(x) - \underline{u}(y)\right] ^{q-1}}{\left| x -y\right| ^{N+sq}} \left(\dfrac{(w_{0}(x) + m + \epsilon)^{q}}{(\underline{u}(x) + m)^{q-1}} - \dfrac{(w_{0}(y) + m + \epsilon)^{q}}{(\underline{u}(y) + m)^{q-1}}\right) dx dy\\[4pt]
	&\geq - (q-1) \iint_{\textbf{S}_{\underline{u}} } \dfrac{\left| \underline{u}(x) - \underline{u}(y)\right| ^{q}}{\left| x -y\right| ^{N+sq}} dx dy+ \iint_{\textbf{S}^{\text{c}}_{\underline{u}} } \dfrac{\left| \underline{u}(x) - \underline{u}(y)\right| ^{q}}{\left| x -y\right| ^{N+sq}} dx dy  - \iint_{\textbf{S}^{\text{c}}_{\underline{u}} } \dfrac{\left| w_{0}(x) - w_{0}(y)\right| ^{q}}{\left| x -y\right| ^{N+sq}} dx dy,
	\end{aligned}
	\end{gathered}
	\end{equation*}

\noindent and 
{\small 	\begin{equation*}
	\begin{gathered}
	\begin{aligned}
	\textbf{(**)} &\geq  \iint_{\textbf{S}_{w_{0}} } \dfrac{\left[ w_{0}(x) - w_{0}(y)\right] ^{q-1}}{\left| x -y\right| ^{N+sq}} \left(\dfrac{(\underline{u}(y) + m)^{q} - (w_{0}(y) + m + \epsilon)^{q}}{(w_{0}(y) + m + \epsilon)^{q-1}} - \dfrac{(\underline{u}(x) + m)^{q} - (w_{0}(x) + m + \epsilon)^{q}}{(w_{0}(x) + m + \epsilon)^{q-1}} \right) dx dy\\[4pt]
	& -  \iint_{\textbf{S}^{\text{c}}_{w_{0}} } \dfrac{\left[ w_{0}(x) - w_{0}(y)\right] ^{q-1}}{\left| x -y\right| ^{N+sq}} \left(\dfrac{k}{(w_{0}(x) + m + \epsilon)^{q-1}} - \dfrac{k}{(w_{0}(y) + m + \epsilon)^{q-1}}\right) dx dy\\[4pt]
	& \geq \iint_{\textbf{S}_{w_{0}}} \dfrac{\left| \underline{u}(x) - \underline{u}(y)\right| ^{q}}{\left| x -y\right| ^{N+sq}} dx dy + \iint_{\textbf{S}_{w_{0}}} \dfrac{\left| w_{0}(x) - w_{0}(y)\right| ^{q}}{\left| x -y\right| ^{N+sq}} dx dy.
	\end{aligned}
	\end{gathered}
	\end{equation*}}

\noindent Then, we have 
\begin{equation}\label{equ15}
\begin{gathered}
\begin{aligned}
\iint_{\Omega_{3} \times \Omega_{2}} &\frac{\textbf{F}_{\underline{u}, w_{0}, \Psi_{m}, \Phi_{m}}(x, y)}{\vert x-y \vert^{N+ s q}} \, dx dy \\[4pt]
&\geq - (q-1) \iint_{\textbf{S}_{\underline{u}} } \dfrac{\left| \underline{u}(x) - \underline{u}(y)\right| ^{q}}{\left| x -y\right| ^{N+sq}} dx dy+ \iint_{\textbf{S}^{\text{c}}_{\underline{u}} } \dfrac{\left| \underline{u}(x) - \underline{u}(y)\right| ^{q}}{\left| x -y\right| ^{N+sq}} dx dy \\[4pt]
&  - \iint_{\textbf{S}^{\text{c}}_{\underline{u}} } \dfrac{\left| w_{0}(x) - w_{0}(y)\right| ^{q}}{\left| x -y\right| ^{N+sq}} dx dy +\iint_{\textbf{S}_{w_{0}}} \dfrac{\left| \underline{u}(x) - \underline{u}(y)\right|  ^{q}}{\left| x -y\right| ^{N+sq}} dx dy \\[4pt]
& + \iint_{\textbf{S}_{w_{0}}} \dfrac{\left| w_{0}(x) - w_{0}(y)\right| ^{q}}{\left| x -y\right| ^{N+sq}} dx dy \quad \to 0 \, \text{ as } \, k \to \infty.
\end{aligned}
\end{gathered}
\end{equation}

\noindent Hence, using \eqref{equ11}-\eqref{equ15} we obtain
 \begin{equation}\label{equ16}
\begin{gathered}
\begin{aligned}
\textbf{E}_{2} \geq 0 \quad \text{ as } \, k \to \infty.
\end{aligned}
\end{gathered}
\end{equation}
\textbf{Estimate of $\mathbf{E}_{3}$.}  Given the monotonicity, it is apparent that
\begin{equation}\label{equ17}
\begin{gathered}
\begin{aligned}
\textbf{E} _{3} &\leq  {\displaystyle\int_{\mathcal{S}_{\epsilon}}} g(x)\left(\dfrac{\underline{u}^{\beta}}{\left(\underline{u} + m\right) ^{q-1}} - \dfrac{(w_{0} + \epsilon)^{\beta}}{\left(w_{0} + m + \epsilon\right) ^{q-1}} \right) \textbf{T}_{k} \left( \left(\left(\underline{u} + m\right) ^{q} - \left( w_{0} + m + \epsilon\right) ^{q}\right)^{+}\right)  dx.
\end{aligned}
\end{gathered}
\end{equation}

\noindent To pass to the limit on the right-hand side of the inequality, we first note that, on one hand, by using the fact that \( \textbf{T}_{k}(s) \leq s \) for \( s \geq 0 \) and applying Lagrange's Theorem, we deduce that
\begin{equation*}
\begin{gathered}
\begin{aligned}
g(x)&\left(\dfrac{\underline{u}^{\beta}}{\left(\underline{u} + m\right) ^{q-1}} - \dfrac{(w_{0} + \epsilon)^{\beta}}{\left(w_{0} + m + \epsilon\right) ^{q-1}} \right)^{+} \textbf{T}_{k} \left( \left(\left(\underline{u} + m\right) ^{q} - \left( w_{0} + m + \epsilon\right) ^{q}\right) ^{+}\right) \\[4pt]
& \leq g(x)\, \underline{u}^{\beta} \left( \dfrac{\left(\underline{u} + m\right) ^{q} - \left( w_{0} + m + \epsilon\right) ^{q}}{\left(\underline{u} + m\right) ^{q-1}}\right)   \leq g(x)\, \underline{u}^{\beta} \,\left( \dfrac{\left(\underline{u} + m\right) ^{q} - m  ^{q}}{\left(\underline{u} + m\right) ^{q-1}}\right) \\[4pt]
& \leq g(x) \underline{u}^{\beta +1} \in L^{1}(\mathcal{S}_{\epsilon}).
\end{aligned}
\end{gathered}
\end{equation*}
By applying the Dominated Convergence Theorem, we obtain that
\begin{equation}\label{equ18}
\begin{gathered}
\begin{aligned}
\lim_{m \to 0}  &{\displaystyle\int_{\mathcal{S}_{\epsilon}}} g(x) \left(\dfrac{\underline{u}^{\beta}}{\left(\underline{u} + m\right) ^{q-1}} - \dfrac{(w_{0} + \epsilon)^{\beta}}{\left(w_{0} + m + \epsilon\right) ^{q-1}} \right)^{+} \textbf{T}_{k} \left( \left(\left(\underline{u} + m\right) ^{q} - \left( w_{0} + m + \epsilon\right) ^{q}\right) ^{+}\right) dx \\[4pt]
& = {\displaystyle\int_{\mathcal{S}_{\epsilon}}} g(x) \left(\underline{u}^{\beta - q+ 1}- (w_{0} + \epsilon)^{\beta - q + 1}\right)^{+} \textbf{T}_{k} \left( \left(\underline{u} ^{q} - \left( w_{0} +  \epsilon\right) ^{q}\right) ^{+}\right) dx .
\end{aligned}
\end{gathered}
\end{equation} 
On the othere hand,  thanks to  Fatou’s lemma, we conclude
\begin{equation}\label{equ19}
\begin{gathered}
\begin{aligned}
\liminf_{m \to0}  &{\displaystyle\int_{\mathcal{S}_{\epsilon}}} g(x) \left(\dfrac{\underline{u}^{\beta}}{\left(\underline{u} + m\right) ^{q-1}} - \dfrac{(w_{0} + \epsilon)^{\beta}}{\left(w_{0} + m + \epsilon\right) ^{q-1}} \right)^{-} \textbf{T}_{k} \left( \left(\left(\underline{u} + m\right) ^{q} - \left( w_{0} + m + \epsilon\right) ^{q}\right) ^{+}\right) dx \\[4pt]
&\geq {\displaystyle\int_{\mathcal{S}_{\epsilon}}} g(x) \left(\underline{u}^{\beta - q+ 1}- (w_{0} + \epsilon)^{\beta - q + 1}\right)^{-} \textbf{T}_{k} \left( \left(\underline{u}^{q} - \left( w_{0} + \epsilon\right) ^{q}\right) ^{+}\right) dx .
\end{aligned}
\end{gathered}
\end{equation}

\noindent   Taking $ \liminf $ in \eqref{equ17} as $ m \to 0 $ and by \eqref{equ18} and \eqref{equ19}, we obtain
\begin{equation*}
	\begin{gathered}
	\begin{aligned}
	\liminf _{m \to 0}\textbf{E} _{3} &\leq  {\displaystyle\int_{\Omega}} g(x) \left(\underline{u}^{\beta - q+ 1}- (w_{0} + \epsilon)^{\beta - q + 1}\right)\textbf{T}_{k} \left( \left(\underline{u} ^{q} - \left( w_{0} + \epsilon\right) ^{q}\right) ^{+}\right) dx.
	\end{aligned}
	\end{gathered}
	\end{equation*}
Furthermore, due to the monotonicity, we have
\[
U_{k} := - g(x) \left(\underline{u}^{\beta - q + 1} - (w_{0} + \epsilon)^{\beta - q + 1}\right) \textbf{T}_{k} \left( \left(\underline{u}^{q} - \left(w_{0} +  \epsilon\right)^{q}\right)^{+} \right) \geq 0.
\]
Since \((U_{k})\) is an increasing sequence, the Monotone Convergence Theorem implies that
\[
\lim_{k \to \infty} \int_\Omega U_k \, dx  = - \int_\Omega g(x) \left(\underline{u}^{\beta - q+ 1}- (w_{0} + \epsilon)^{\beta - q + 1}\right) \left(\underline{u}^{q} - \left( w_{0}  + \epsilon\right) ^{q}\right) ^{+} dx.
\]
Hence, we conclude 
\begin{equation}\label{equ20}
	\begin{gathered}
	\begin{aligned}
	\lim_{k \to \infty}\liminf_{m \to 0} \textbf{E}_{3} &\leq  \int_\Omega g(x) \left(\underline{u}^{\beta - q+ 1}- (w_{0} + \epsilon)^{\beta - q + 1}\right) \left(\underline{u}^{q} - \left( w_{0}  + \epsilon\right) ^{q}\right) ^{+} dx \leq 0.
	\end{aligned}
	\end{gathered}
	\end{equation}

\noindent    Finally, gathering \eqref{equ9}, \eqref{equ16} and \eqref{equ20}, we have
\begin{equation*}
	\begin{gathered}
	\begin{aligned}
	0 \leq  \int_\Omega g(x) \left(\underline{u}^{\beta - q+ 1}- (w_{0} + \epsilon)^{\beta - q + 1}\right) \left(\underline{u}^{q} - \left( w_{0}  + \epsilon\right) ^{q}\right) ^{+} dx \leq 0.
	\end{aligned}
	\end{gathered}
	\end{equation*}
	Consequently,  $ \underline{u} \leq w_{0} + \epsilon \leq \overline{u} + \epsilon.  $ By taking the limit  $ \epsilon \to 0 $ it follows that $ \underline{u} \leq \overline{u}. $ 
\end{proof}
It is worth noting that when considering \( \underline{u}, \overline{u} \in W^{1, p}_{0}(\Omega) \), the proof of Theorem \ref{Theorem1} can be significantly simplified. Specifically, we present the following problem within a fairly general framework, which implicitly addresses our problem in \eqref{P2}:
\begin{equation}\label{equ23}\tag{GP}
-\Delta_{p} u + (-\Delta)^{s}_{q} u = h(x, u), \quad u > 0 \quad \text{in } \Omega; \quad u = 0 \quad \text{in } \mathbb{R}^{N} \setminus \Omega.
\end{equation}
We assume the following hypotheses:
\begin{enumerate}
	\item[\textbf{(h1)}] \( h : \Omega \times \mathbb{R}^{+} \to \mathbb{R}^{+} \) is a Carathéodory function, i.e., \( h(x, \cdot) \) is continuous on \( \mathbb{R}^{+} \) for a.e. \( x \in \Omega \), and \( h(\cdot, s) \) is measurable for all \( s > 0 \).
	\item[\textbf{(h2)}] For a.e. \( x \in \Omega \), the map \( s \mapsto \dfrac{h(x, s)}{s^{q-1}} \) is non-increasing on \( \mathbb{R}^{+} \setminus \{0\} \).
\end{enumerate}
\begin{definition}\label{definition2}
We define a nonnegative function \( u \in W^{1, p}_{0}(\Omega) \) as a weak super-solution to \eqref{equ23} if it satisfies the following conditions:
\begin{enumerate}
	\item[(i)] \( h(\cdot, u(\cdot)) \varphi(\cdot) \in L^{1}(\Omega) \) for all $ \varphi  \in W^{1, p}_{0}(\Omega) $.
	\item[(ii)] For all \( \varphi \in W^{1, p}_{0}(\Omega) \) with \( \varphi \geq 0 \), we have
	{\small
		\begin{equation*}
		\begin{aligned}
		\int_{\Omega} \left[ \nabla u\right]^{p-1} \nabla \varphi \, dx 
		&+ \iint_{\mathbb{R}^{2N}} \dfrac{\left[ u(x) - u(y) \right]^{q-1} \left(\varphi (x)- \varphi(y)\right)}{\vert x-y\vert^{N+ s q}} \, dx \, dy \geq \int_{\Omega} h(x, u) \varphi \, dx.
		\end{aligned}
		\end{equation*}}
\end{enumerate}
If \( u \) satisfies the reverse inequality, it is called a weak sub-solution of \eqref{equ23}. A function \( u \) that satisfies both the weak sub and super-solution conditions is termed a weak solution of \eqref{equ23}.
\end{definition}
\begin{theorem}\label{Theorem2}
Assume \( h \) satisfies \textbf{(h1)} and \textbf{(h2)}. Let \( \underline{u}, \overline{u} \in W^{1, p}_{0}(\Omega) \) be, respectively, a weak sub-solution and super-solution to the problem \eqref{equ23} in the sense of Definition \ref{definition2}. Then, \( \underline{u} \leq \overline{u} \) a.e. in \( \Omega \).
\end{theorem}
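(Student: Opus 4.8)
The plan is to run the Díaz--Saá / Brezis--Oswald comparison device, carried out simultaneously for the local $p$-Laplacian and the nonlocal $q$-Laplacian, with the Picone inequalities Lemma~\ref{Lemma2} and Lemma~\ref{Lemma3} supplying the sign of the operator terms and hypothesis \textbf{(h2)} supplying the sign of the reaction term. Set $A:=\{\underline{u}>\overline{u}\}\cap\Omega$; since $\underline{u}=\overline{u}=0$ outside $\Omega$ we have $A\subset\Omega$, and we may assume $\underline{u},\overline{u}>0$ a.e.\ in $\Omega$ (nonnegative weak solutions of \eqref{equ23} with $h\ge0$ are positive in $\Omega$). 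Recalling that $1<q\le p$ in our setting, put $r:=q$ and take the test functions
\[
\varphi_{1}:=\frac{\big(\underline{u}^{\,q}-\overline{u}^{\,q}\big)^{+}}{\underline{u}^{\,q-1}},\qquad
\varphi_{2}:=\frac{\big(\underline{u}^{\,q}-\overline{u}^{\,q}\big)^{+}}{\overline{u}^{\,q-1}}.
\]
Both are nonnegative and vanish a.e.\ on $\mathbb{R}^{N}\setminus A$, while on $A$ they equal $\tfrac{\underline{u}^{\,r}-\overline{u}^{\,r}}{\underline{u}^{\,r-1}}$ and $-\tfrac{\overline{u}^{\,r}-\underline{u}^{\,r}}{\overline{u}^{\,r-1}}$, respectively; their membership in $W^{1,p}_{0}(\Omega)$ (which is delicate near $\partial\Omega$ since $\overline{u}$ may vanish there) is obtained, as in the proof of Theorem~\ref{Theorem1} and in the local finite-energy references \cite{brezis1986remarks,durastanti2022comparison}, by truncating $(\underline{u}^{\,q}-\overline{u}^{\,q})^{+}$ with $\mathbf{T}_{k}$ and passing to the limit.

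Next I would insert $\varphi_{1}$ into the sub-solution inequality for $\underline{u}$ and $\varphi_{2}$ into the super-solution inequality for $\overline{u}$ and subtract, obtaining $L+\mathcal{N}\le\int_{\Omega}\big(h(x,\underline{u})\varphi_{1}-h(x,\overline{u})\varphi_{2}\big)\,dx$, where $L$ is the difference of the two local terms and $\mathcal{N}$ that of the two nonlocal terms. For $L$: on $A$ its integrand is $[\nabla\underline{u}]^{p-1}\nabla\big(\tfrac{\underline{u}^{\,r}-\overline{u}^{\,r}}{\underline{u}^{\,r-1}}\big)+[\nabla\overline{u}]^{p-1}\nabla\big(\tfrac{\overline{u}^{\,r}-\underline{u}^{\,r}}{\overline{u}^{\,r-1}}\big)\ge0$ by Lemma~\ref{Lemma3} (applicable since $1<r=q\le p$), and off $A$ the gradients of $\varphi_{1},\varphi_{2}$ vanish a.e., so $L\ge0$. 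For $\mathcal{N}$ I would show that its kernel numerator $G(x,y)$ is nonnegative a.e.: on $A\times A$ it is exactly the left-hand side of Lemma~\ref{Lemma2} with $r=q$, hence $\ge0$; on $(\mathbb{R}^{N}\setminus A)\times(\mathbb{R}^{N}\setminus A)$ it vanishes; and on $A\times(\mathbb{R}^{N}\setminus A)$ a direct computation gives
\[
G(x,y)=\big(\underline{u}(x)^{q}-\overline{u}(x)^{q}\big)\left(\frac{[\underline{u}(x)-\underline{u}(y)]^{q-1}}{\underline{u}(x)^{q-1}}-\frac{[\overline{u}(x)-\overline{u}(y)]^{q-1}}{\overline{u}(x)^{q-1}}\right)\ \ge\ 0,
\]
because $\underline{u}(x)>\overline{u}(x)>0$, $\underline{u}(y)\le\overline{u}(y)$, and the maps $\tau\mapsto[\tau-\tau_{0}]^{q-1}/\tau^{q-1}$ and $\tau_{0}\mapsto[\tau-\tau_{0}]^{q-1}$ are, respectively, non-decreasing and non-increasing; the region $(\mathbb{R}^{N}\setminus A)\times A$ is handled identically by relabeling. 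Hence $\mathcal{N}\ge0$.

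On the right-hand side, $\varphi_{1}=\varphi_{2}=0$ a.e.\ off $A$, while on $A$ one has $h(x,\underline{u})\varphi_{1}-h(x,\overline{u})\varphi_{2}=\big(\underline{u}^{\,q}-\overline{u}^{\,q}\big)\big(\tfrac{h(x,\underline{u})}{\underline{u}^{\,q-1}}-\tfrac{h(x,\overline{u})}{\overline{u}^{\,q-1}}\big)\le0$ by \textbf{(h2)} (the ratio $s\mapsto h(x,s)/s^{q-1}$ is non-increasing and $\underline{u}>\overline{u}$ there). Combining, $0\le L+\mathcal{N}\le\int_{\Omega}(\cdots)\,dx\le0$, so $L=\mathcal{N}=0$ and the last integral vanishes with nonpositive integrand. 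If the monotonicity in \textbf{(h2)} is strict this already forces $|A|=0$; in general I would use $\mathcal{N}=0$, i.e.\ $G\equiv0$ a.e.: on $A\times(\Omega\setminus A)$ one has $\overline{u}(y)>0$, so the monotonicity steps above are \emph{strict}, whence $G(x,y)=0$ forces $\underline{u}(x)=\overline{u}(x)$, contradicting $x\in A$ unless $|A|=0$ or $|\Omega\setminus A|=0$; the remaining possibility $\underline{u}>\overline{u}$ a.e.\ in $\Omega$ is excluded via the equality cases of Lemmas~\ref{Lemma2}--\ref{Lemma3}, which force $\underline{u}=c\,\overline{u}$ and then, reinserting into \eqref{equ23}, contradict \textbf{(h2)}. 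In every case $|A|=0$, i.e.\ $\underline{u}\le\overline{u}$ a.e.\ in $\Omega$.

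The main obstacle is the nonlocal term $\mathcal{N}$: unlike the local term, where the Picone inequality is pointwise and the positive-part cut-off merely removes the contribution off $A$, the nonlocal operator mixes the values at $x$ and $y$, so the cut-off inside $\varphi_{1},\varphi_{2}$ (unavoidable if they are to remain nonnegative and admissible) produces the genuinely non-diagonal integrals over $A\times(\mathbb{R}^{N}\setminus A)$, whose kernel must be shown to be signed by hand through the monotonicity lemmas; a secondary technical point is the careful justification of the test functions near $\partial\Omega$ and of the final equality-case step when \textbf{(h2)} is only non-strict.
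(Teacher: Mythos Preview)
Your approach is essentially the same as the paper's: subtract the sub/super-solution inequalities with D\'iaz--Sa\'a type test functions, use the Picone inequalities (Lemmas~\ref{Lemma2} and~\ref{Lemma3}) to make the operator side nonnegative, and use \textbf{(h2)} to make the reaction side nonpositive. The paper's proof is very short precisely because it recycles the machinery of Claim~3 in the proof of Theorem~\ref{Theorem1}.

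There is one technical point you gloss over that the paper handles explicitly. Your test functions have $\underline{u}^{\,q-1}$ and $\overline{u}^{\,q-1}$ in the denominator; truncating the \emph{numerator} by $\mathbf{T}_{k}$ does not by itself make $\varphi_{2}=\mathbf{T}_{k}\big((\underline{u}^{\,q}-\overline{u}^{\,q})^{+}\big)/\overline{u}^{\,q-1}$ belong to $W^{1,p}_{0}(\Omega)\cap L^{\infty}(\Omega)$, since $\overline{u}$ is only known to be positive a.e.\ and may vanish at $\partial\Omega$ with no quantitative lower bound on the support of the numerator. The paper therefore uses a \emph{double} regularisation,
\[
\Psi=\frac{\mathbf{T}_{k}\big(((\underline{u}+m)^{q}-(\overline{u}+m)^{q})^{+}\big)}{(\underline{u}+m)^{q-1}},\qquad
\Phi=\frac{\mathbf{T}_{k}\big(((\overline{u}+m)^{q}-(\underline{u}+m)^{q})^{-}\big)}{(\overline{u}+m)^{q-1}},
\]
and passes first $m\to 0$, then $k\to\infty$. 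Your reference to ``as in the proof of Theorem~\ref{Theorem1}'' implicitly invokes this $m$-shift, but you should say so; the $\mathbf{T}_{k}$ alone is not enough.

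Your final paragraph (the equality-case analysis via $\mathcal{N}=0$ on the cross region, proportionality from the Picone equality case, etc.) is not in the paper at all: the paper simply arrives at
\[
0\ \le\ \int_{\{\underline{u}>\overline{u}\}}\Big(\tfrac{h(x,\underline{u})}{\underline{u}^{\,q-1}}-\tfrac{h(x,\overline{u})}{\overline{u}^{\,q-1}}\Big)(\underline{u}^{\,q}-\overline{u}^{\,q})\,dx\ \le\ 0
\]
and declares $\underline{u}\le\overline{u}$. Your concern about non-strict \textbf{(h2)} is legitimate, but the extra argument you give is sketchy (the step ``$G(x,y)=0$ on $A\times(\Omega\setminus A)$ forces $\underline{u}(x)=\overline{u}(x)$'' needs the strict monotonicity of $\tau\mapsto[\tau-\tau_{0}]^{q-1}/\tau^{q-1}$ together with $\underline{u}(y)\le\overline{u}(y)$ \emph{strictly} on a set of positive measure, and the last alternative $\underline{u}=c\,\overline{u}$ with $c>1$ contradicting \eqref{equ23} requires more than just \textbf{(h2)}). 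For the purposes of matching the paper you can drop this discussion; if you want a fully rigorous general statement under merely non-increasing \textbf{(h2)}, that is a refinement the paper itself does not carry out.
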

\begin{proof}
For any nonnegative pair  $ \Psi,  \Phi  \in W^{1, p}_{0}(\Omega) $ we have 
\begin{equation*}
\begin{gathered}
\begin{aligned}
\int_{\Omega} \left[ \nabla \underline{u}\right] ^{p-1} \nabla \Psi dx &+  \displaystyle \displaystyle\iint_{\mathbb{R}^{2N}} \dfrac{\left[ \underline{u}(x) - \underline{u}(y) \right] ^{q-1} \left(\Psi (x)-\Psi  (y)\right)}{\vert x-y\vert ^{N+ s q}}dxdy \leq {\displaystyle\int_{\Omega}} h(x, \underline{u}) \Psi dx,
\end{aligned}
\end{gathered}
\end{equation*}
and
\begin{equation*}
\begin{gathered}
\begin{aligned}
\int_{\Omega} \left[ \nabla \overline{u}\right] ^{p-1} \nabla \Phi dx &+  \displaystyle \displaystyle\iint_{\mathbb{R}^{2N}} \dfrac{\left[ \overline{u}(x) - \overline{u}(y) \right] ^{q-1} \left(\Phi (x)-\Phi  (y)\right)}{\vert x-y\vert ^{N+ s q}}dxdy \geq {\displaystyle\int_{\Omega}} h(x, \overline{u})  \Phi dx.
\end{aligned}
\end{gathered}
\end{equation*}
Subtracting the above inequalities with test functions
	\[
	\Psi = \frac{\textbf{T}_{k}\left( \left(\left(\underline{u} + m\right)^{q} - \left( \overline{u}+ m\right)^{q}\right)^{+} \right)}{(\underline{u} + m)^{q - 1}}, \quad \Phi = \frac{\textbf{T}_{k}\left( \left( \left( \overline{u}+ m \right)^{q} - \left(\underline{u} + m\right)^{q}\right)^{-} \right)}{\left( \overline{u}+ m \right)^{q - 1}} \in W^{1,p}_{0}(\Omega),
	\]
with $ m > 0, $ we obtain
\begin{equation}\label{equ22}
\begin{gathered}
\begin{aligned}
&\int_{\Omega} \left( \left[ \nabla \underline{u}\right] ^{p-1}\nabla\Psi -  \left[ \nabla \overline{u} \right] ^{p-1}\nabla\Phi \right) dx\\[4pt]
& + \displaystyle \displaystyle\iint_{\mathbb{R}^{2N}}  \dfrac{\left[ \underline{u}(x)  - \underline{u}(y) \right] ^{q-1} \left(\Psi (x)-\Psi (y)\right) - \left[ \overline{u}(x) - \overline{u}(y) \right] ^{q-1} \left(\Phi (x)-\Phi (y)\right)}{\vert x-y\vert ^{N+ s q}} dxdy\\[4pt]
&\leq {\displaystyle\int_{\left\lbrace\underline{u} > \overline{u}\right\rbrace }}  \left( \dfrac{h(x, \underline{u})}{(\underline{u} + m)^{q-1}} - \dfrac{h(x, \overline{u})}{(\overline{u} + m)^{q-1}}\right) \textbf{T}_{k} \left(\left(\underline{u} + m\right) ^{q} - \left(\overline{u} + m\right) ^{q} \right)  dx.
\end{aligned}
\end{gathered}
\end{equation}
By following the same methods used in the proof of Claim 3 within the proof of Theorem \ref{Theorem1}, we find that the left-hand side of \eqref{equ22} is nonnegative. Using Lagrange's Theorem, we obtain
\begin{equation*}
\begin{aligned}
&\left( \dfrac{h(x, \underline{u})}{(\underline{u} + m)^{q-1}} - \dfrac{h(x, \overline{u})}{(\overline{u} + m)^{q-1}} \right)^{+} \textbf{T}_{k} \left( (\underline{u} + m)^{q} - (\overline{u} + m)^{q} \right) \chi_{\left\lbrace \underline{u} > \overline{u} \right\rbrace } \\[4pt]
&  \leq h(x, \underline{u}) \left( \dfrac{(\underline{u} + m)^{q} - (\overline{u} + m)^{q}}{(\underline{u} + m)^{q-1}} \right) \chi_{\left\lbrace \underline{u} > \overline{u} \right\rbrace } \\[4pt]
&  \leq h(x, \underline{u}) \left( \dfrac{(\underline{u} + m)^{q} - m^{q}}{(\underline{u} + m)^{q-1}} \right) \chi_{\left\lbrace \underline{u} > \overline{u} \right\rbrace } \\[4pt]
& \leq h(x, \underline{u}) \, \underline{u} \, \chi_{\left\lbrace \underline{u} > \overline{u} \right\rbrace } \in L^{1}(\Omega),
\end{aligned}
\end{equation*}
where we use Definition \ref{definition2} to ensure the last integral is well-defined. By combining this fact with the proof of Theorem \ref{Theorem1}, we infer that
\begin{equation*}
\begin{aligned}
0 \leq  \int_{\left\lbrace \underline{u} > \overline{u} \right\rbrace} \left( \dfrac{h(x, \underline{u})}{\underline{u}^{q-1}} - \dfrac{h(x, \overline{u})}{\overline{u}^{q-1}} \right)(\underline{u}^{q} - \overline{u}^{q})\, dx \leq 0.
\end{aligned}
\end{equation*}
Hence, we deduce that \( \underline{u} \leq \overline{u} \).
\end{proof}
\begin{remark}
It is important to highlight that the function $h$ may potentially be unbounded both at the origin, with a weak singularity, and at infinity. Additionally, both the sub-solution and super-solution to \eqref{equ23}, in the sense of Definition \ref{definition2}, could also be unbounded.
\end{remark}
Based on these findings, the uniqueness of solutions to problem \eqref{P2} follows in a straightforward manner, as outlined below:
\begin{proof}[\textbf{Proof of Corollary} \ref{corollary}]
	Assume that there exist two weak solutions, \( u \) and \( v \), of \eqref{P2} in \( W^{1, p}_{\text{loc}}(\Omega) \). By considering \( u \) as a sub-solution and \( v \) as a super-solution to \eqref{P2}, the weak comparison principle (Theorem \ref{Theorem1}, and see also Theorem \ref{Theorem2} for the case where \( u \) and \( v \) are in \( W^{1, p}_{0}(\Omega) \)) yields \( u \leq v \) a.e. in \( \Omega \). By interchanging the roles of \( u \) and \( v \), we conclude that \( u = v \) a.e. in \( \Omega \).
\end{proof}
\begin{proof}[\textbf{Proof of Corollary} \ref{corollary2}]
	Without loss of generality, and owing to the properties of rotation and translation invariance, we assume that \( \Omega \) is symmetric with respect to the \( x_{1} \) direction. This symmetry implies that \( f(x_{1}, x') = f(-x_{1}, x') \) and \( g(x_{1}, x') = g(-x_{1}, x') \) for all \( x' \in \mathbb{R}^{N-1} \). We then define \( v(x_{1}, x') = u(-x_{1}, x') \). It can be readily verified that \( v \) serves as a weak solution to the problem \eqref{P2}. Consequently, by utilizing the uniqueness results presented in Corollary \ref{corollary}, we conclude that \( u(x_{1}, x') = v(x_{1}, x') = u(-x_{1}, x') \).
\end{proof}
\section{Study of approximating problems}\label{section3}
We note that the energy functional associated with the problem \eqref{P2} is not \( C^{1} \) due to the presence of a singular term. Consequently, the minimax results of critical point theory cannot be applied directly. Therefore, we need to find methods to address this challenge and work with \( C^{1} \)-functionals. To this end, we introduce the following approximation problems for $ n \in \mathbb{N} $
\begin{equation}\label{P}\tag{En}
-\Delta_{p} u + (-\Delta)^{s}_{q} u = f_{n}(x) \left( u + n^{-1}\right)  ^{-\alpha}+ g_{n}(x) u^{\beta},\, u > 0\quad \text{in }\, \Omega;\quad u = 0\, \text{ in }\,\mathbb{R}^{N}\setminus \Omega, 
\end{equation}
where $ g_{n}(x) = \textbf{T}_{n}(g(x)) $ and $ f_{n} $ represents an approximation of $ f ,$ defined as follows: \\[4pt]
$ \bullet $ In the case of the class \textbf{(F1)}, we define 
\begin{equation}\label{equ39} 
f_{n}(x) = 
\begin{cases}
 \textbf{T}_{n}(f(x)), \quad &\text{if} \quad r \in \left[ 1, \infty\right( ,\\[4pt]
f(x), \quad &\text{if} \quad r = \infty.
\end{cases}
\end{equation}
$ \bullet $ For the class \textbf{(F2)}, we define 
\begin{equation}\label{equ49} 
f_{n}(x) = 
\begin{cases}
\left(f^{-\frac{1}{\delta}}(x) + n^{-\frac{\alpha + p -1}{p - \delta}}\right) ^{-\delta}, \quad &\text{if} \quad x \in \Omega,\\[4pt]
0, \quad &\text{otherwise}.
\end{cases}
\end{equation}
From \eqref{equ24} there exist positive constants $ \mathcal{C}_{1} $ and $ \mathcal{C}_{2}  $ such that: for any $ x\in \Omega $
\begin{equation}\label{equ44}	
\mathcal{C}_{1}  \left(d(x) + n^{-\frac{\alpha + p -1}{p - \delta}}\right) ^{-\delta} \leq f_{n}(x) \leq \mathcal{C}_{2} \left(d(x) + n^{-\frac{\alpha + p -1}{p - \delta}}\right) ^{-\delta}.
\end{equation}
It is straightforward to observe that, since \( \delta < p \), the function \( f_{n} \) is increasing sequence. Now, the existence, uniqueness, and other qualitative properties of weak solutions to the auxiliary problem \eqref{P} are confirmed, as outlined in the following Lemma.
\begin{lemma} \label{lemma2}
	For each $n \in \mathbb{N}$, there exists a unique non-negative weak solution $u_n$ to \eqref{P} in the following sense:
	\begin{itemize}
		\item[$ \bullet $] $ u_{n} \in W^{1, p}_{0}(\Omega).$
		\item[$ \bullet $] For all $ \varphi \in W^{1, p}_{0}(\Omega): $
		\begin{equation}\label{equ45}
		\begin{gathered}
		\begin{aligned}
		&\int_{\Omega} \left[ \nabla u_{n}\right] ^{p-1}  \nabla \varphi\, dx + \displaystyle\iint_{\mathbb{R}^{2 N}} \dfrac{\left[u_{n}(x) - u_{n}(y)\right] ^{q-1}\left(\varphi(x)- \varphi(y)\right)}{\vert x-y\vert ^{N+ s q}}dx dy \\[4pt]
		&\quad = \int_{\Omega} f_{n}(x) \left(u_{n} + n^{-1}\right) ^{- \alpha} \varphi dx + \int_{\Omega}  g_{n}(x)\,u_{n} ^{\beta} \,\varphi \,dx.
		\end{aligned}
		\end{gathered}
		\end{equation}
	\end{itemize}
	Furthermore, we have
	\begin{itemize}
		\item[\textbf{(1)}]  For every $ n \in \mathbb{N}, $ $ u_{n} > 0 $ in $ \Omega, $ and $ u_{n} \in C^{1, \zeta}(\overline{\Omega}) \cap L^{\infty}(\Omega), $ for some $ \zeta \in \left( 0, 1\right) . $
		\item[\textbf{(2)}] The sequence $ (u_{n}) $ is increasing with respect to $ n. $
		\item[\textbf{(3)}] There exist positive constants $ c_{1} $ and $ c_{2} $ independent on $ n $, such that 
		\begin{equation}\label{equ47}
		u_{n}(x) \geq c_{1}\, d(x) \text{ in } \Omega, \, \text{ for every }  n \in \mathbb{N}.
		\end{equation}
In addition, within the class \textbf{(F2)}, assuming $ \delta  > p(1-s) + s(1-\alpha) $, we can obtain a better lower bound for $ u_n $ as follows:
		\begin{equation}\label{equ90}
		u_{n}(x) \geq c_{2}\left( \left( d(x) + n^{- \frac{\alpha + p -1}{p - \delta}}\right) ^{\frac{p - \delta}{\alpha +p -1} } - n^{-1}\right)  \text{ in } \Omega, \, \text{ for every }  n \in \mathbb{N}.
		\end{equation}
	\end{itemize}
\end{lemma}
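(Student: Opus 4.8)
The plan is to regard \eqref{P} as a \emph{non-singular} problem. Because the singularity is frozen by the shift $n^{-1}$, the right-hand side $h_{n}(x,s):=f_{n}(x)(s^{+}+n^{-1})^{-\alpha}+g_{n}(x)(s^{+})^{\beta}$ is a bounded nonnegative Carath\'eodory function with at most $\beta$-growth in $s$, and $\beta+1<q\le p$. Hence the functional $\mathcal{J}_{n}(w):=\frac1p\int_{\Omega}|\nabla w|^{p}\,dx+\frac1q\iint_{\mathbb{R}^{2N}}\frac{|w(x)-w(y)|^{q}}{|x-y|^{N+sq}}\,dx\,dy-\int_{\Omega}\int_{0}^{w}h_{n}(x,t)\,dt\,dx$ is well defined on $W^{1,p}_{0}(\Omega)$ (Lemma \ref{Lemma1} makes the Gagliardo term finite), of class $C^{1}$, coercive and weakly lower semicontinuous; a minimiser exists, can be taken nonnegative by replacing $w$ with $|w|$, and its Euler--Lagrange equation is precisely \eqref{equ45}. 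This yields a nonnegative weak solution $u_{n}\in W^{1,p}_{0}(\Omega)$.

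For item (1), I would first obtain $u_{n}\in L^{\infty}(\Omega)$ by testing \eqref{equ45} with $(u_{n}-k)^{+}$, discarding the nonnegative nonlocal term (monotonicity of $\tau\mapsto[\tau]^{q-1}$), using $h_{n}(\cdot,u_{n})\le C_{n}(1+u_{n}^{\beta})$ with $\beta<p-1$, and invoking the Stampacchia-type Lemma \ref{Lemma5}. Since then $F_{n}:=h_{n}(\cdot,u_{n})\in L^{\infty}(\Omega)$, the global regularity theory for $-\Delta_{p}+(-\Delta)^{s}_{q}$ on the $C^{2}$ domain $\Omega$ from \cite{antonini2023global} gives $u_{n}\in C^{1,\zeta}(\overline{\Omega})$ for some $\zeta\in(0,1)$. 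Moreover, testing \eqref{equ45} against a positive function and using $f_{n}\not\equiv0$, $(n^{-1})^{-\alpha}>0$ forces $u_{n}\not\equiv0$, so $u_{n}$ is a nontrivial nonnegative supersolution of $-\Delta_{p}v+(-\Delta)^{s}_{q}v\ge0$, and the strong maximum principle of \cite{antonini2023global} gives $u_{n}>0$ in $\Omega$.

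Uniqueness and item (2) should then follow from Section 2. The key point is that $h_{n}$ satisfies \textbf{(h1)}--\textbf{(h2)}: indeed $s\mapsto h_{n}(x,s)/s^{q-1}=f_{n}(x)(s+n^{-1})^{-\alpha}s^{1-q}+g_{n}(x)s^{\beta-q+1}$ is non-increasing on $(0,\infty)$ because $\alpha>0$, $q>1$, and $\beta<q-1$. So Theorem \ref{Theorem2} applies to \eqref{P} read as \eqref{equ23} with $h=h_{n}$, and comparing any two solutions in both directions yields uniqueness of $u_{n}$. For monotonicity, $f_{n}$, $g_{n}$, and $(s+n^{-1})^{-\alpha}$ are all non-decreasing in $n$ (for $f_{n}$ this uses $\delta<p$ in class \textbf{(F2)}), so $h_{n}\le h_{n+1}$ pointwise; hence $u_{n}$ is a weak subsolution of \eqref{equ23} with $h=h_{n+1}$ while $u_{n+1}$ is a solution, and Theorem \ref{Theorem2} gives $u_{n}\le u_{n+1}$.

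Finally, item (3). The bound $u_{n}\ge c_{1}d$ is immediate: $u_{1}\in C^{1}(\overline{\Omega})$ is positive in $\Omega$ and vanishes on $\partial\Omega$, so Hopf's lemma of \cite{antonini2023global} gives $u_{1}\ge c_{1}d$, whence $u_{n}\ge u_{1}\ge c_{1}d$ by item (2). For the refined estimate \eqref{equ90} under \textbf{(F2)} with $\delta>p(1-s)+s(1-\alpha)$, I would set $\gamma:=\frac{p-\delta}{\alpha+p-1}$ — the hypothesis on $\delta$ is exactly $\gamma\in(0,s)$ — and apply Theorem \ref{theorem2} with $\kappa:=n^{-1}$, so $\kappa^{1/\gamma}=n^{-\frac{\alpha+p-1}{p-\delta}}$ and the barrier satisfies $\underline{w}_{\rho}+n^{-1}=(d+\kappa^{1/\gamma})^{\gamma}$ in $\Omega_{\varrho}$. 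A direct computation using $|\nabla d|=1$ and the boundedness of $\Delta d$ near the $C^{2}$ boundary gives $-\Delta_{p}\underline{w}_{\rho}\le C(d+\kappa^{1/\gamma})^{(\gamma-1)(p-1)-1}$ weakly in $\Omega_{\varrho}$; one checks $(\gamma-1)(p-1)-1=-\delta-\gamma\alpha$ by the choice of $\gamma$, and that the exponent $\gamma(q-1)-qs$ furnished by Theorem \ref{theorem2} is strictly larger than $-\delta-\gamma\alpha$ (using $\gamma<s$ and $q\le p$), so that near the boundary $(-\Delta_{p}+(-\Delta)^{s}_{q})\underline{w}_{\rho}\le C(d+\kappa^{1/\gamma})^{-\delta-\gamma\alpha}$. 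Combined with $f_{n}\ge\mathcal{C}_{1}(d+\kappa^{1/\gamma})^{-\delta}$ from \eqref{equ44}, this makes $v_{n}:=\lambda\,\underline{w}_{\rho}$ a weak subsolution of \eqref{P} in $\Omega_{\varrho}$ for $\lambda\in(0,1)$ small, uniformly in $n$; since $v_{n}\le0$ in $\mathbb{R}^{N}\setminus\Omega$ while $u_{n}\ge u_{1}\ge c(\varrho)>0$ on $\{d\ge\varrho\}$ (with $v_{n}$ bounded there), shrinking $\lambda$ gives $v_{n}\le u_{n}$ on $\mathbb{R}^{N}\setminus\Omega_{\varrho}$, and a comparison argument on $\Omega_{\varrho}$ analogous to the proof of Theorem \ref{Theorem2} yields $v_{n}\le u_{n}$ in $\Omega_{\varrho}$; together with $u_{n}\ge c_{1}\varrho$ on $\{d\ge\varrho\}$ this is \eqref{equ90}. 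The principal obstacle is exactly this barrier: choosing $\kappa=n^{-1}$ so that the approximating shift matches $\kappa^{1/\gamma}$, verifying that the local and nonlocal parts of $(-\Delta_{p}+(-\Delta)^{s}_{q})\underline{w}_{\rho}$ carry the precise homogeneity $(d+\kappa^{1/\gamma})^{-\delta-\gamma\alpha}$ dictated by the datum (which is what pins down $\delta>p(1-s)+s(1-\alpha)$ via $\gamma<s$), and performing the comparison on $\Omega_{\varrho}$ rather than on $\Omega$ while controlling the nonlocal interaction with the exterior.
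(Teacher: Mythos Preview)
Your proposal is correct and the approach to items (1)--(3) matches the paper's closely, including the barrier construction for \eqref{equ90} (the paper performs exactly the case distinction you flag as the principal obstacle, splitting according to whether $\eta(d+\kappa^{1/\gamma})^{\gamma}$ dominates $(1-\eta)n^{-1}$ or not, and then invokes Lemma~\ref{Lemma4} on $\Omega_{\varrho}$). The one genuine difference is the existence step: you minimise the $C^{1}$ functional $\mathcal{J}_{n}$ directly on $W^{1,p}_{0}(\Omega)$, exploiting that the shift $n^{-1}$ removes the singularity and $\beta+1<p$ gives coercivity, whereas the paper proceeds by first solving, for each fixed $u$, the auxiliary problem
\[
-\Delta_{p}w+(-\Delta)^{s}_{q}w=f_{n}(x)(u^{+}+n^{-1})^{-\alpha}+g_{n}(x)\,w^{\beta}
\]
(this is Lemma~\ref{lemma3} with $k=f_{n}(u^{+}+n^{-1})^{-\alpha}$), and then applies Schauder's fixed point theorem to the solution map $\mathcal{S}(u)=w$. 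Your route is shorter and avoids having to verify compactness and continuity of $\mathcal{S}$; the paper's route, on the other hand, isolates the $L^{\infty}$ bound and $C^{1,\zeta}$ regularity already at the level of Lemma~\ref{lemma3} via a De~Giorgi-type iteration (rather than Stampacchia), which it can then reuse verbatim for the fixed point $u_{n}$. Both are entirely valid here.
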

\noindent To prove this lemma, we analyze the following problem for a fixed $n \in \mathbb{N}$:
\begin{equation}\label{P3}
-\Delta_{p} u + (-\Delta)^{s}_{q} u = k + g_{n}(x) u^{\beta},\quad u > 0 \quad \text{in } \Omega,\quad u = 0 \quad \text{in } \mathbb{R}^{N} \setminus \Omega,
\end{equation}
where $ k \in L^{\infty}(\Omega),$ $ k \geq 0 $ and is not identically zero. More precisely, we have the following:
\begin{lemma}\label{lemma3}
The problem \eqref{P3} possesses a unique positive weak solution $u \in W^{1, p}_{0}(\Omega).$ Moreover, $u$ belongs to $L^{\infty}(\Omega) \cap C^{1, \zeta}(\overline{\Omega})$ for some $\zeta \in (0, 1).$
\end{lemma}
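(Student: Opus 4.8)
The plan is to obtain existence by direct minimization, positivity from the strong maximum principle for mixed operators, regularity by a Stampacchia-type truncation argument followed by the available global regularity theory, and uniqueness from the comparison principle of Theorem \ref{Theorem2}. For existence I would minimize on $W^{1,p}_{0}(\Omega)$ the functional
\[
\mathcal{I}(u) := \frac{1}{p}\int_{\Omega}|\nabla u|^{p}\,dx + \frac{1}{q}\iint_{\mathbb{R}^{2N}}\frac{|u(x)-u(y)|^{q}}{|x-y|^{N+sq}}\,dx\,dy - \int_{\Omega} k\,u\,dx - \frac{1}{\beta+1}\int_{\Omega} g_{n}(x)\,(u^{+})^{\beta+1}\,dx .
\]
By Lemma \ref{Lemma1} the Gagliardo term is finite on $W^{1,p}_{0}(\Omega)$; since $k,g_{n}\in L^{\infty}(\Omega)$ and $1<\beta+1<q\le p<p^{*}$, Hölder's inequality and Theorem \ref{thm0} show that $\mathcal{I}$ is well defined and coercive (the term $\frac{1}{p}\|\nabla u\|_{L^{p}}^{p}$ dominates the linear term and the $(\beta+1)$-homogeneous term), while convexity of the two leading terms together with the compact embedding $W^{1,p}_{0}(\Omega)\hookrightarrow L^{r}(\Omega)$ for $r<p^{*}$ gives weak lower semicontinuity. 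Hence a minimizer exists, and replacing it by $u^{+}$ does not increase $\mathcal{I}$ (one uses $\big||u|(x)-|u|(y)\big|\le|u(x)-u(y)|$ and $k,g_{n}\ge 0$), so we may take $u\ge 0$; the Euler--Lagrange inequality then shows $u$ solves \eqref{P3}. Since $k\ge 0$, $k\not\equiv 0$, we have $u\not\equiv 0$, and as $-\Delta_{p}u+(-\Delta)^{s}_{q}u=k+g_{n}u^{\beta}\ge 0$ in $\Omega$, the strong maximum principle and Hopf lemma for mixed operators (see \cite{antonini2023global}) yield $u>0$ in $\Omega$.

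For the regularity I would first establish $u\in L^{\infty}(\Omega)$: testing \eqref{P3} with $(u-\ell)^{+}$ for levels $\ell>0$, discarding the nonlocal bilinear term (which is nonnegative by monotonicity of $v\mapsto(v-\ell)^{+}$, exactly as in the estimates of Section 2), and using $k+g_{n}u^{\beta}$ with $\beta<p-1<p^{*}-1$, one derives a recursive inequality for $\Psi(\ell):=|\{u>\ell\}|$ of the form required by Lemma \ref{Lemma5}, whence $u\in L^{\infty}(\Omega)$ (alternatively one may quote the boundedness results of \cite{antonini2023global}). Once the right-hand side $k+g_{n}u^{\beta}$ is bounded, the interior and up-to-the-boundary regularity theory for $-\Delta_{p}+(-\Delta)^{s}_{q}$ with bounded datum and $C^{2}$ boundary (see \cite{antonini2023global}, cf.\ also \cite{garain2022mixed}) gives $u\in C^{1,\zeta}(\overline{\Omega})$ for some $\zeta\in(0,1)$.

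Uniqueness follows from Theorem \ref{Theorem2} applied with $h(x,s):=k(x)+g_{n}(x)s^{\beta}$, which is a nonnegative Carathéodory function such that $s\mapsto h(x,s)s^{-(q-1)}=k(x)s^{-(q-1)}+g_{n}(x)s^{\beta-(q-1)}$ is non-increasing on $(0,\infty)$, since $q-1>0$ and $\beta<q-1$; thus \textbf{(h1)}--\textbf{(h2)} hold. Any two positive weak solutions $u_{1},u_{2}\in W^{1,p}_{0}(\Omega)$ of \eqref{P3} are then weak sub- and super-solutions of \eqref{equ23} in the sense of Definition \ref{definition2} (condition (i) there is immediate because $u_{i}\in L^{\infty}(\Omega)$), so Theorem \ref{Theorem2} gives $u_{1}\le u_{2}$, and exchanging the roles of $u_1,u_2$ gives $u_{1}=u_{2}$. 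The main obstacle is the regularity step: the $L^{\infty}$ bound requires the nonlocal term to be handled carefully (by discarding it with the correct sign) in the Stampacchia iteration, and the $C^{1,\zeta}(\overline{\Omega})$ conclusion relies on the global regularity theory for mixed local--nonlocal operators; both become essentially quotable once the right-hand side is known to be bounded, so the genuinely new content is modest.
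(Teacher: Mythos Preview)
Your proposal is correct and follows essentially the same strategy as the paper: existence by direct minimization of the natural energy, non-negativity from the structure of the functional, positivity and $C^{1,\zeta}$ regularity quoted from \cite{antonini2023global}, and uniqueness from Theorem~\ref{Theorem2} applied to $h(x,s)=k(x)+g_{n}(x)s^{\beta}$.

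The only substantive difference is the $L^{\infty}$ bound. You propose a Stampacchia iteration via Lemma~\ref{Lemma5} after testing with $(u-\ell)^{+}$; the paper instead runs a De Giorgi--type argument, normalizing $v_{0}=u/(\rho\|u\|_{L^{q}})$ and showing that the sequence $V_{k}=\|(v_{0}-1+2^{-k})^{+}\|_{L^{q}}^{q}$ satisfies a recursion $V_{k+1}\le C^{k}V_{k}^{1+sq/N}$, which forces $V_{k}\to 0$. Both routes are valid here because $k,g_{n}\in L^{\infty}$ and $\beta<q-1$: in your Stampacchia version the term $\int_{\Omega_{\ell}}g_{n}u^{\beta}(u-\ell)^{+}$ is controlled by treating $g_{n}u^{\beta}$ as a datum (using the a~priori bound on $\|u\|_{W^{1,p}_{0}}$, exactly as the paper does later in the proof of Theorem~\ref{Theorem4}); in the paper's De Giorgi version the sublinear exponent is absorbed directly inside the iteration without appealing to an outside a~priori bound. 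Two minor points: your claim ``$k\not\equiv 0 \Rightarrow u\not\equiv 0$'' should be justified by noting that $\mathcal{I}(t\phi)<0$ for small $t$ and suitable $\phi$ (as the paper does), and for non-negativity the paper tests the Euler--Lagrange equation with $-u_{0}^{-}$ rather than replacing the minimizer by $u^{+}$, but your variant works equally well.
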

\begin{proof}
	To establish the existence result, we employ a variational approach. Specifically, we consider the energy functional $\mathfrak{J}$ associated with \eqref{P3}, which is defined on $W^{1, p}_{0}(\Omega)$ as follows:
$$ \mathfrak{J}(u) =\dfrac{1}{p} \int_{\Omega} \left| \nabla u\right| ^{p} dx +  \dfrac{1}{q} \iint_{\mathbb{R}^{2N}}\dfrac{\left| u(x) - u(y)\right| ^{q}}{\left| x - y\right| ^{N + sq }} dx dy  - \int_{\Omega} k u dx - \dfrac{1}{\beta+ 1} \int_{\Omega} g_{n}(x) (u^{+})^{\beta + 1} dx . $$
Therefore, we conclude the following:\\[4pt]
$ \bullet $ The functional $\mathfrak{J}$ is well-defined and weakly lower semi-continuous on $W^{1, p}_{0}(\Omega).$\\[4pt]
$ \bullet $ By applying H\"{o}lder's inequality, Theorem \ref{thm3}, and Lemma \ref{Lemma1}, we obtain
$$ \mathfrak{J}(u)  \geq \left\|u\right\| ^{p}_{W^{1, p}_{0}(\Omega)} \left(\dfrac{1}{p} - c_{1} \left\|u\right\| ^{1 -p}_{W^{1, p}_{0}(\Omega)} - c_{2} \left\|g\right\| _{L^{\left( \frac{q^{*}_{s}}{\beta + 1}\right)'}(\Omega)} \left\|u\right\| ^{\beta + 1- p}_{W^{1, p}_{0}(\Omega)}\right), $$
where the constants $ c_{1} ,$ $ c_{2} $ are independent of $ u $. Consequently, since $ \beta < p - 1 $, we deduce that $ \mathfrak{J}(u) $ is coercive on $W^{1, p}_{0}(\Omega)$. Therefore, $ \mathfrak{J} $ admits a global minimizer in $W^{1, p}_{0}(\Omega)$, denoted by $ u_{0}, $ which satisfies the following weak formulation:
\begin{equation*}
\begin{gathered}
\begin{aligned}
&\int_{\Omega} \left[ \nabla u_{0}\right] ^{p-1}  \nabla \varphi\, dx + \displaystyle\iint_{\mathbb{R}^{2N}} \dfrac{\left[u_{0}(x) - u_{0}(y)\right] ^{q-1}\left(\varphi(x)- \varphi(y)\right)}{\vert x-y\vert ^{N+ sq}}\,dx\,dy \\[4pt]
&\quad = \int_{\Omega} k(x) \varphi \,dx + \int_{\Omega}  g_{n}(x)\,\left(u_{0}^{+}\right)^{\beta} \,\varphi \,dx.
\end{aligned}
\end{gathered}
\end{equation*}
By testing the weak formulation with the function $ - u_{0}^{-} $ and applying the following inequality
$$ - \left|x^{-} -y^{-}\right| ^{q}  \geq \left[ x -y\right] ^{q-1} (x^{-} - y^{-}) \text{ for any }  x, y \in \mathbb{R}, $$
we obtain
$$ \int_{\Omega} \left|  \nabla (-u_{0}^{-})\right|  ^{p} dx = 0. $$
As a result, we conclude that $ u_{0} \geq 0 $ a.e. in $ \Omega. $ To demonstrate that $u_0$ is not identically zero, we consider a nonnegative, nontrivial function $\phi \in C^{1}_{c}(\Omega)$. For any $t > 0$, it holds that
\[
\mathfrak{J}(t \phi) \leq t \left( c_1 t^{p  - 1} + c_2 t^{q  - 1} - c_3 \right),
\]
where the constants $c_1$, $c_2$, and $c_3$ are independent of $t$, with $c_3 > 0$ due to $ k \not\equiv 0$. Hence, for sufficiently small $t > 0$, we have $\mathfrak{J}(t \phi) < 0$. Since $\mathfrak{J}(0) = 0$, we conclude that $u_0 \not\equiv 0.$ Next, since $\beta < q - 1$, we observe that the function $s \mapsto \dfrac{k + g_n s^{\beta}}{s^{q-1}}$ is non-increasing. This allows us to apply Theorem \ref{Theorem2}, leading to the conclusion that problem \eqref{P3} has a unique solution. On the other hand, we claim that all weak solutions to the problem \eqref{P3} belongs to $ L^{\infty}(\Omega)$. To this aim, we follow the approach of \cite[Theorem 3.2]{ref05}. Precisely, let $u_0 \in W^{1, p}_{0}(\Omega) $ be a
weak solution to  \eqref{P3}. 
Setting
$$ v_{0} = \dfrac{u_{0}}{\rho \left\| u_{0}\right\| _{L^{q}(\Omega)} }\quad \text{where} \quad \rho = \max \left\lbrace 1, \left\| u_{0} \right\| ^{-1}_{L^{q}(\Omega)}  \right\rbrace.  $$
Noting that $ v_{0} \in W^{1, p}_{0}(\Omega) $ and $ \left\| v_{0} \right\|_{L^{q}(\Omega)} = \rho^{-1},$ we define the function $ w_k $ as:
\begin{equation*}
\begin{cases}
w_k(x) &:= (v_0(x) - 1 + 2^{-k})^{+}\quad \text{ for }\, k \in \mathbb{N},\\[4pt]
w_0(x) &= (v_0(x))^{+}.
\end{cases}
\end{equation*}
We first state the following straightforward observations about $ w_k(x)$:
$$
w_k = 0
\text{ a.e. in } \, \mathbb{R}^{N}\setminus \Omega \quad \text{and} \quad w_k \in W^{1, p}_{0}(\Omega),
$$
and
\begin{equation}\label{equ53}
\begin{cases}
0 \leq 	w_{k+1}(x) \leq w_k(x)\quad\text{a.e. in } \mathbb{R}^N,\\[4pt]
v_0(x) < (2^{k+1} - 1) w_k (x )\quad\text{for } x \in \{ w_{k+1} > 0\}.
\end{cases}
\end{equation}
Also the inclusion
\begin{equation}\label{equ54}
\{ w_{k+1} > 0\}\subseteq \{ w_k > 2^{-(k+1)}\}\quad \text{ holds for all } \, k \in \mathbb{N}.
\end{equation}
Now, we define $ V_k := \| w_k\|_{L^{q}(\Omega)}^{q}$. Next, we present the following claim:
\begin{claim}
	$  V_k  \to 0 $ as $  k \to \infty. $
\end{claim}
\noindent Indeed, since $ 1 < q \leq p,$ $ \beta < q -1, $  $ \rho \, \left\| u_{0} \right\| _{L^{q}(\Omega)} \geq 1 $ and by using the following inequality
\begin{equation}\label{equ63}
\left| 	x ^{+} - y ^{+} \right| ^{q} \leq \left[ x - y \right] ^{q-1} (x ^{+} - y ^{+}), \quad \text{ for any } \, x, y \in \mathbb{R},
\end{equation}
we obtain
\begin{align*}
\| w_{k+1}\| ^{q}_{W^{s, q}_0(\Omega)}  & =  \iint _{\mathbb{R}^{2N}}
\frac{| w_{k+1}(x)-w_{k+1}(y)| ^{q}}{| x-y|^{N+s q} }\,dx dy \\[4pt]
& \leq  \int _{\Omega} \left| \nabla w_{k+1}\right| ^{p} dx + \iint _{\mathbb{R}^{2N}}
\frac{| w_{k+1}(x)-w_{k+1}(y)| ^{q}}{| x-y|^{N+s q} }\,dx dy \\[4pt]
&\leq  (\rho \, \left\| u_{0} \right\| _{L^{q}(\Omega)}) ^{1-p}  \int _{\Omega} \left[ \nabla u_{0}\right] ^{p - 1} \nabla w_{k+1} dx \\[4pt]
&+  (\rho \, \left\| u_{0} \right\| _{L^{q}(\Omega)})^{1-q} \iint _{\mathbb{R}^{2N}}\frac{\left[ u_0(x) - u_0(y)\right] ^{q-1}(w_{k+1}(x) -w_{k+1}(y))}{| x-y|^{N+s q} }\,dx\,dy \\[4pt]
&\leq \int_{\Omega} k(x) w_{k+1} dx  + (\rho \, \left\| u_{0} \right\| _{L^{q}(\Omega)})^{1-q + \beta}  \int_{\Omega} g_{n}(x) v_{0}^{\beta} w_{k+1} dx\\[4pt]
&\leq C \left( \left|\left\lbrace w_{k+1} > 0\right\rbrace  \right| ^{1- \frac{1}{q}} V_{k}^{\frac{1}{q}} + \left|\left\lbrace w_{k+1} > 0\right\rbrace  \right| ^{1- \frac{\beta + 1}{q}} V_{k}^{\frac{\beta + 1}{q}}\right)  .
\end{align*}
Now, from \eqref{equ54}, we have
\begin{align*}
\left|\left\lbrace w_{k+1} > 0 \right\rbrace \right| \leq 2^{q(k+1)} V_{k}.
\end{align*}
Combining this with \eqref{equ53}, we obtain
\begin{equation}\label{equ27}
\| w_{k+1} \|^{q}_{W^{s, q}_0(\Omega)} \leq C \left( 2^{k+1} + 1 \right)^{q-1} V_{k}.
\end{equation}
Moreover, by H\"{o}lder's inequality and Theorem \ref{thm3}, we have
\begin{equation}\label{equ25}
V_{k+1} = \int_{\{ w_{k+1} > 0 \}} w_{k+1}^{q} \, dx
\leq C \| w_{k+1} \|^{q}_{W_0^{s, q}(\Omega)} \left| \left\lbrace w_{k+1} > 0 \right\rbrace \right|^{1 - \frac{q}{q_{s}^{*}}}.
\end{equation}
Thus, we can rewrite inequality \eqref{equ27} as follows:
\begin{equation}\label{equ26}
V_{k+1} \leq C^k V_k^{1 + \vartheta}, \quad \text{for all } k \in \mathbb{N},
\end{equation}
for a suitable constant $C > 1$ and $\vartheta = \dfrac{s q}{N}$. This implies that
\begin{equation}\label{equ28}
V_k \leq \dfrac{\eta^k}{\rho^q}, \quad \text{for all } k \in \mathbb{N},
\end{equation}
where $\eta = C^{-\frac{1}{\vartheta}}$ and $\rho = \max \left\lbrace 1, \left\| u_0 \right\|^{-1}_{L^q(\Omega)}, C^{\frac{1}{q \Gamma^2}} \right\rbrace$. Indeed, by induction, we have: \\[4pt]
$ \bullet $ Clearly $ V_{0}  = \left\| v_{0}^{+} \right\|^{q} _{L^{q}(\Omega)} \leq \left\| v_{0} \right\|^{q} _{L^{q}(\Omega)}  = \dfrac{1}{\rho ^{q}}.$\\[4pt]
$ \bullet $  Assume now that \eqref{equ28} holds for some $k \in \mathbb{N}$. By applying \eqref{equ26}, we obtain
\[
V_{k+1} \leq C^{k} V_k^{1+ \vartheta} \leq \dfrac{\eta^{k+1}}{\rho^q}.
\]
Since $\eta \in (0, 1)$, it follows that
\begin{equation}\label{equ56}
\lim_{k \to \infty} V_k = 0.
\end{equation}
Moreover, since $w_k$ converges to $(v_0 - 1)^{+}$ a.e.\ in $\mathbb{R}^N$, from \eqref{equ56} we conclude that $w_k \to 0$ a.e.\ in $\Omega$. Hence, $v_0 \leq 1$ a.e.\ in $\Omega$, which implies
\[
\left\| u_0 \right\|_{L^{\infty}(\Omega)} \leq \rho \left\| u_0 \right\|_{L^q(\Omega)}.
\]
Thus, we deduce that $u_0 \in L^{\infty}(\Omega)$, and \cite[Theorem 1.1]{antonini2023global} ensures the $C^{1, \zeta}(\overline{\Omega})$-regularity of $u_0$ for some $ \zeta \in (0, 1)$. Moreover, by \cite[Proposition 6.1]{antonini2023global}, we also obtain that $u_0 > 0$ in $\Omega$.
\end{proof}
\begin{proof}[\textbf{Proof of Lemma} \ref{lemma2}] 
For a given \( n \in \mathbb{N} \) and each \( u \in W^{1, p}_{0}(\Omega) \), Lemma \ref{lemma3} ensures the existence of a unique solution \( w \in W^{1, p}_{0}(\Omega) \) to the following problem:
\begin{equation*}\label{P4}
-\Delta_{p} w + (-\Delta)^{s}_{q} w = f_{n}(x) \left(u^{+} + n^{-1}\right)  ^{-\alpha}+ g_{n}(x) w^{\beta},\, w > 0\quad \text{ in }\, \Omega;\quad w = 0\, \text{ in }\,\mathbb{R}^{N}\setminus \Omega.
\end{equation*}
	Thus, the operator \( \mathcal{S} : W^{1, p}_{0}(\Omega) \to W^{1, p}_{0}(\Omega) \), defined by \( \mathcal{S}(u) = w \) is well-defined. Moreover, \( w \) satisfies the following weak formulation:
	\begin{equation*}
	\begin{gathered}
	\begin{aligned}
	&\int_{\Omega} \left[ \nabla w\right] ^{p-1}  \nabla \varphi\, dx + \displaystyle\iint_{\mathbb{R}^{2 N}} \dfrac{\left[w(x) - w(y)\right] ^{q-1}\left(\varphi(x)- \varphi(y)\right)}{\vert x-y\vert ^{N+ s q}}dx dy \\[4pt]
	&\quad = \int_{\Omega} f_{n}(x) \left(u^{+} + n^{-1}\right) ^{- \alpha} \varphi\, dx + \int_{\Omega}  g_{n}(x)\,w^{\beta} \,\varphi \,dx, \quad \forall \varphi \in W^{1, p}_{0}(\Omega).
	\end{aligned}
	\end{gathered}
	\end{equation*}
    By selecting \( w \) as a test function, we establish the following result:
	\begin{equation*}
	\begin{gathered}
	\begin{aligned}
	&\int_{\Omega} \left| \nabla w\right| ^{p} dx \leq \int_{\Omega} f_{n}(x) \left(u^{+} + n^{-1}\right) ^{- \alpha} w\, dx + \int_{\Omega}  g_{n}(x)\,w^{\beta + 1}\,dx.
	\end{aligned}
	\end{gathered}
	\end{equation*}
	Utilizing H\"{o}lder's inequality, Theorem \ref{thm3} and Lemma \ref{Lemma1}, we derive the following:
{\footnotesize \begin{equation}\label{equ31}
	\left\| w\right\| ^{p}_{W^{1, p}_{0}(\Omega)} \leq C \max\left\lbrace\left\| f_{n}\right\| _{L^{\infty}(\Omega)} n^{\alpha} \left| \Omega\right| ^{1-\frac{1}{q^{*}_{s}}},  \left\| g\right\| _{L^{\left(\frac{q^{*}_{s}}{1+ \beta }\right)'}(\Omega)}\right\rbrace \left( \left\| w\right\|_{W^{1, p}_{0}(\Omega)}  +  \left\| w\right\| ^{\beta +1}_{W^{1, p}_{0}(\Omega)}  \right) . 
	\end{equation}}
  
  \noindent Consequently, there exists \( \textbf{R} > 0 \), independent of \( w \), such that the ball \( B(0, \textbf{R}) \) in \( W^{1, p}_{0}(\Omega) \) remains invariant under the operator \( \mathcal{S} \). Moreover, it can be rigorously demonstrated that \( \mathcal{S} \) is both compact and continuous. Specifically, let \( (u_{k}) \) be a bounded sequence in \( W^{1, p}_{0}(\Omega) \). Then, there exists a function \( u \in W^{1, p}_{0}(\Omega) \) such that \( u_{k} \rightharpoonup u \) weakly in \( W^{1, p}_{0}(\Omega) \), \( u_{k} \to u \) strongly in \( L^{p}(\Omega) \), and \( u_{k}(x) \to u(x) \) a.e. in \( \Omega \). Now, let \( w_{k} = \mathcal{S}(u_{k}) \). Our aim is to prove the existence of a subsequence, still denoted by \( (w_{k}) \), that converges strongly in \( W^{1, p}_{0}(\Omega) \) to some limit function \(  w \). To this end, observe that \( w_{k} \) satisfies the following equation:
   \begin{equation}\label{equ29}
   \begin{gathered}
   \begin{aligned}
   &\int_{\Omega} \left[ \nabla w_{k} \right]^{p-1} \nabla \varphi \, dx + \iint_{\mathbb{R}^{2N}} \dfrac{\left[w_{k}(x) - w_{k}(y)\right]^{q-1} \left( \varphi(x) - \varphi(y) \right)}{\vert x - y \vert^{N + sq}} \, dx \, dy \\[4pt]
   &\quad = \int_{\Omega} f_{n}(x) \left(u_{k}^{+} + n^{-1}\right)^{-\alpha} \varphi \, dx + \int_{\Omega} g_{n}(x) w_{k}^{\beta} \varphi \, dx, \quad \forall \varphi \in W^{1, p}_{0}(\Omega).
   \end{aligned}
   \end{gathered}
   \end{equation}
It is straightforward to deduce from \eqref{equ31} that  \( (w_{k}) \) is uniformly bounded in \( W^{1, p}_{0}(\Omega) \). Then, by extracting a subsequence, we have \( w_{k} \rightharpoonup w \) weakly in \( W^{1, p}_{0}(\Omega) \), \( w_{k} \to w \) strongly in \( L^{p}(\Omega) \), and \( w_{k}(x) \to w(x) \) a.e. in \( \Omega \). By exploiting these facts, together with the pointwise convergence of \( u_{k} \) to \( u \), and applying the Dominated Convergence Theorem, we conclude that
\begin{equation}\label{equ34}
\begin{gathered}
\begin{aligned}
&\int_{\Omega} \left[ \nabla w \right]^{p-1} \nabla \varphi \, dx + \iint_{\mathbb{R}^{2N}} \dfrac{\left[w(x) - w(y)\right]^{q-1} \left( \varphi(x) - \varphi(y) \right)}{\vert x - y \vert^{N + sq}} \, dx \, dy \\[4pt]
&\quad = \int_{\Omega} f_{n}(x) \left(u^{+} + n^{-1}\right)^{-\alpha} \varphi \, dx + \int_{\Omega} g_{n}(x) w^{\beta} \varphi \, dx, \quad \forall \varphi \in W^{1, p}_{0}(\Omega).
\end{aligned}
\end{gathered}
\end{equation}
Moreover, by subtracting equations \eqref{equ29} and \eqref{equ34}, and using the test function \( \varphi = w_{k} - w \), we obtain
\begin{equation*}
\begin{gathered}
\begin{aligned}
&\int_{\Omega} \left( \left[ \nabla w_{k} \right]^{p-1} - \left[ \nabla w \right]^{p-1} \right) \nabla (w_{k} - w) \, dx \\[4pt]
&\quad + \iint_{\mathbb{R}^{2N}} \frac{\left( \left[w_{k}(x) - w_{k}(y)\right]^{q-1} - \left[w(x) - w(y)\right]^{q-1} \right) \left( (w_{k} - w)(x) - (w_{k} - w)(y) \right)}{\vert x - y \vert^{N + sq}} \, dx \, dy \\[4pt]
&\quad = \int_{\Omega} f_{n}(x) \left( \left(u_{k}^{+} + n^{-1}\right)^{-\alpha} - \left(u^{+} + n^{-1}\right)^{-\alpha} \right) (w_{k} - w) \, dx  + \int_{\Omega} g_{n}(x) \left( w_{k}^{\beta} - w^{\beta} \right) (w_{k} - w) \, dx.
\end{aligned}
\end{gathered}
\end{equation*}
By applying H\"{o}lder's inequality and Lemma \ref{lemma1} (for \( p \geq 2 \)), we obtain
\begin{equation*}
\begin{gathered}
\begin{aligned}
\int_{\Omega} \left|\nabla (w_{k} - w)\right| ^{p}\, dx  &\leq \left\| f_{n}\right\| _{L^{\infty}(\Omega)} \left\| \left(u_{k}^{+} + n^{-1}\right)^{-\alpha} - \left(u^{+} + n^{-1}\right)^{-\alpha} \right\| _{L^{p'}(\Omega)}\left\|w_{k} - w \right\| _{L^{p}(\Omega)}\\[4pt]
& + n \left\|  w_{k}^{\beta} - w^{\beta} \right\| _{L^{p'}(\Omega)}  \left\| w_{k} - w\right\| _{L^{p}(\Omega)} \\[4pt]
&\leq \left( 2 n^{\alpha} \left\| f_{n}\right\| _{L^{\infty}(\Omega)}  + n \left\|  w_{k}^{\beta} - w^{\beta} \right\| _{L^{p'}(\Omega)} \right)  \left\| w_{k} - w\right\| _{L^{p}(\Omega)}  \to 0 \quad \text{as} \quad k \to \infty,
\end{aligned}
\end{gathered}
\end{equation*}
where $ \left\|  w_{k}^{\beta} - w^{\beta} \right\| _{L^{p'}(\Omega)}  $ is uniformly bounded in $ k, $ since $ \beta p' < p. $ Consequently, the compactness of \( \mathcal{S} \) from \( W^{1, p}_{0}(\Omega) \) to \( W^{1, p}_{0}(\Omega) \) is established. A similar argument demonstrates the compactness of \( \mathcal{S} \) from \( W^{1, p}_{0}(\Omega) \) to \( W^{1, p}_{0}(\Omega) \) for \( 1 < p < 2 \).  Regarding \textbf{continuity}, let \( (u_{k}) \) be an arbitrary sequence such that \( u_{k} \to u_{0} \) in \( W^{1, p}_{0}(\Omega) \). It follows that, up to a subsequence, \( u_{k}(x) \to u_{0}(x) \) a.e. in \( \Omega \). Furthermore, we know that \( w_{k} = \mathcal{S}(u_{k}) \) satisfies

   \begin{equation}\label{equ35}
\begin{gathered}
\begin{aligned}
&\int_{\Omega} \left[ \nabla w_{k} \right]^{p-1} \nabla \varphi \, dx + \iint_{\mathbb{R}^{2N}} \dfrac{\left[w_{k}(x) - w_{k}(y)\right]^{q-1} \left( \varphi(x) - \varphi(y) \right)}{\vert x - y \vert^{N + sq}} \, dx \, dy \\[4pt]
&\quad = \int_{\Omega} f_{n}(x) \left(u_{k}^{+} + n^{-1}\right)^{-\alpha} \varphi \, dx + \int_{\Omega} g_{n}(x) w_{k}^{\beta} \varphi \, dx, \quad \forall \varphi \in W^{1, p}_{0}(\Omega).
\end{aligned}
\end{gathered}
\end{equation}
On the other hand, since \( \mathcal{S} \) is compact, there exists a subsequence, again denoted by \( (w_{k}) \), such that \( w_{k} \rightharpoonup \tilde{w} \) weakly in \( W^{1, p}_{0}(\Omega) \), \( w_{k} \to \tilde{w} \) strongly in \( L^{p}(\Omega) \), and \( w_{k}(x) \to \tilde{w}(x) \) a.e. in \( \Omega \). Combining these facts with the argument employed in the aforementioned proof of the compactness of \( \mathcal{S} \), we can pass to the limit in \eqref{equ35} to obtain
 \begin{equation*}
\begin{gathered}
\begin{aligned}
&\int_{\Omega} \left[ \nabla \tilde{w}  \right]^{p-1} \nabla \varphi \, dx + \iint_{\mathbb{R}^{2N}} \dfrac{\left[\tilde{w} (x) - \tilde{w} (y)\right]^{q-1} \left( \varphi(x) - \varphi(y) \right)}{\vert x - y \vert^{N + sq}} \, dx \, dy \\[4pt]
&\quad = \int_{\Omega} f_{n}(x) \left(u_{0}^{+} + n^{-1}\right)^{-\alpha} \varphi \, dx + \int_{\Omega} g_{n}(x) \tilde{w} ^{\beta} \varphi \, dx, \quad \forall \varphi \in W^{1, p}_{0}(\Omega).
\end{aligned}
\end{gathered}
\end{equation*}
Therefore, applying the uniqueness results, we deduce that \( \tilde{w} = \mathcal{S}(u_{0}) \), which implies the continuity of \( \mathcal{S} \). Moreover, by Schauder’s Fixed Point Theorem, there exists \( u_{n} \in W^{1, p}_{0}(\Omega) \) such that \( \mathcal{S}(u_{n}) = u_{n} \), meaning \( u_{n} \) is a \textbf{solution} to problem \eqref{P}. By using Lemma \ref{lemma3}, we infer that \( u_{n} \in L^{\infty}(\Omega) \cap C^{1, \zeta}(\overline{\Omega}) \) for some \( \zeta \in (0, 1) \).  Next, by defining  $h_{n}(x, s) = \left(s + \frac{1}{n}\right)^{-\alpha} + g_{n}(x) s^{\beta} , $ we observe that for every \( n \in \mathbb{N} \), \( h_{n}(x, s)   \not\equiv 0 \) implies \( u_{n} \not\equiv 0 \).
Moreover, by applying the strong maximum principle from \cite[Proposition 6.1]{antonini2023global}, we deduce that \( u_{n} > 0 \) in \( \Omega \).
Additionally, we have  $h_{n}(x, s) \leq h_{n+1}(x, s), $ for all $ s > 0 \text{ and a.e. in } \Omega.  $ Consequently, \( u_{n} \), the solution of \textbf{(En)}, serves as a sub-solution to the problem \textbf{(En+1)}.  Applying Theorem \ref{Theorem2}, we conclude that \( (u_{n}) \) is an increasing sequence with respect to \( n \). More precisely, we deduce that \( u_{n} \geq u_{1} \) in \( \Omega \) for every \( n \in \mathbb{N} \). Hence, the Hopf's lemma (see \cite[Theorem 1.2]{antonini2023global}) implies that for every \( n \in \mathbb{N} \),
\begin{equation}\label{equ38}
u_{n}(x) \geq c_{1} \, d(x) \text{ in } \Omega,
\end{equation}
where \( c_{1} > 0 \) is a constant independent of \( n \). We point out that, as a result of the monotonicity of \( (u_{n}) \), we can establish the uniqueness of the weak solution to problem \eqref{P}.  Finally, we adapt the proof techniques from the local case in \cite[Lemma 2.2]{giacomoni2021sobolev}. Given that the boundary \( \partial\Omega \) is of class \( C^2 \), it follows from \cite[Lemma 14.6]{gilbarg2015elliptic} that there exists \( \mu > 0 \) such that \( d \in L^{\infty}(\Omega_{\mu}) \), where \( \Omega_{\mu} = \{ x \in \Omega : \text{dist}(x, \partial\Omega) < \mu \} \). Without loss of generality, in Theorem \ref{theorem2}, we assume \( \varrho \leq \min\left\lbrace\frac{\mu}{2}, 1 \right\rbrace \), which guarantees that \( \Delta d \in L^{\infty}(\Omega_{\varrho}) \). Consequently, there exists a constant \( M > 0 \) such that \( |\Delta d| \leq M \) in \( \Omega_{\varrho} \).  Now, let \( \eta \in (0, 1) \) be a constant to be specified later. We define \( \underline{u}_{n}(x) = \eta\, \underline{w}_{\rho}(x) \), where \( \rho > 0 \) is such that
\begin{align*}
\underline{w}_{\rho}(x) =
\begin{cases}
\left( d_{e}(x) + n^{-\frac{\alpha + p - 1}{p - \delta}} \right)^{\frac{p - \delta}{\alpha + p - 1}} - n^{-1} & \text{if } x \in \Omega \cup (\Omega^{\text{c}})_{\rho}, \\
-  n^{-1}& \text{otherwise},
\end{cases}
\end{align*}
where $ (\Omega^{\text{c}})_{\rho} = \left\lbrace x \in \Omega^{\text{c}}\, : \, \text{dist}(x, \partial\Omega) < \rho   \right\rbrace .$ Next, for \( \varphi \in C^{\infty}_{c}(\Omega_{\varrho}) \) with \( \varphi \geq 0 \), and applying Theorem \ref{theorem2} with \( \gamma = \frac{p - \delta}{\alpha + p - 1} \in (0, s) \) (since \( \delta > s(1-\alpha) + p(1 - s) \)), and \( \kappa = \frac{1}{n} \), we obtain
{\footnotesize \begin{equation*}
\begin{gathered}
\begin{aligned}
\iint_{\mathbb{R}^{2N}} \dfrac{\left[\underline{u}_{n}(x) - \underline{u}_{n}(y)\right]^{q-1} \left( \varphi(x) - \varphi(y) \right)}{\vert x - y \vert^{N + sq}} \, dx  dy &\leq C \eta^{q-1} \int_{\Omega_{\varrho}} \left( d(x) + n^{-\frac{\alpha + p -1}{p - \delta}}\right)^{\frac{(p - \delta)(q-1)}{\alpha + p - 1} - qs} \varphi \, dx\\[4pt]
& = C \eta^{q-1} \int_{\Omega_{\varrho}} \left( d(x) + n^{-\frac{\alpha + p -1}{p - \delta}}\right)^{\left( \frac{p - \delta}{\alpha + p - 1} - s\right) (q-1) - s} \varphi \, dx\\[4pt]
&\leq C \eta^{q-1}  \int_{\Omega_{\varrho}} \left( d(x) + n^{-\frac{\alpha + p -1}{p - \delta}}\right)^{\left( \frac{p - \delta}{\alpha + p - 1} - 1\right) (p-1) - 1} \varphi \, dx,
\end{aligned}
\end{gathered}
\end{equation*}}

\noindent where \( C > 0 \) is independent of \( n \).  On the other hand, we have
$$ \nabla\underline{u}_{n} = \eta \left(\frac{p - \delta}{\alpha + p - 1}\right) \left( d(x) + n^{-\frac{\alpha + p - 1}{p - \delta}} \right)^{\frac{p - \delta}{\alpha + p - 1} - 1}\nabla d \text{ in } \Omega_{\varrho}. $$
Moreover, noting that $ |\nabla d| = 1 $, it follows that
{\footnotesize \begin{equation*}
\begin{gathered}
\begin{aligned}
- \int_{\Omega_{\varrho}} \Delta_{p} (\underline{u}_{n})\,  \varphi \, dx &= -C \eta^{p-1} \int_{\Omega_{\varrho}}  \left( d(x) + n^{-\frac{\alpha + p - 1}{p - \delta}} \right)^{\left( \frac{p - \delta}{\alpha + p - 1} - 1\right) (p-1)} \nabla d  \nabla \varphi dx\\[4pt]
& = - C \eta^{p-1} \int_{\Omega_{\varrho}}   \left(\dfrac{p - \delta}{\alpha + p -1} - 1\right)(p-1)  \left( d(x) + n^{-\frac{\alpha + p - 1}{p - \delta}} \right)^{\left( \frac{p - \delta}{\alpha + p - 1} - 1\right) (p-1) - 1} \varphi dx
\\[4pt]
& - C \eta^{p-1} \int_{\Omega_{\varrho}}  \Delta d \left( d(x) + n^{-\frac{\alpha + p - 1}{p - \delta}} \right)^{\left( \frac{p - \delta}{\alpha + p - 1} - 1\right) (p-1)} \varphi dx\\[4pt]
&\leq C \eta^{p-1} \int_{\Omega_{\varrho}} \left( d(x) + n^{-\frac{\alpha + p - 1}{p - \delta}} \right)^{\left( \frac{p - \delta}{\alpha + p - 1} - 1\right) (p-1) - 1} \varphi dx.
\end{aligned}
\end{gathered}
\end{equation*}}

\noindent A some computation shows that
\begin{equation*}
\begin{gathered}
\begin{aligned}
\int_{\Omega_{\varrho}} \left(-\Delta_{p} \underline{u}_{n}  + (-\Delta)^{s}_{q} \underline{u}_{n} \right) \varphi dx \leq C(\eta ^{p-1} + \eta^{q-1})\int_{\Omega_{\varrho}} \left( d(x) + n^{-\frac{\alpha + p - 1}{p - \delta}} \right)^{-\frac{(p - \delta)\alpha}{\alpha + p - 1} - \delta} \varphi dx.
\end{aligned}
\end{gathered}
\end{equation*}
Therefore, using \eqref{equ44}, we deduce that
{\small \begin{equation}\label{equ46}
\begin{gathered}
\begin{aligned}
-\Delta_{p} \underline{u}_{n}  + (-\Delta)^{s}_{q} \underline{u}_{n} \leq C(\eta ^{p-1} + \eta^{q-1}) \left( d(x) + n^{-\frac{\alpha + p - 1}{p - \delta}} \right)^{-\frac{\alpha(p - \delta)}{\alpha + p - 1} } f_{n}(x) + g_{n}(x) \underline{u}_{n} ^{\beta} \text{ in } \Omega_{\varrho}.
\end{aligned}
\end{gathered}
\end{equation}}

\noindent Next, we observe that 
\[
\left( \underline{u}_{n}(x) + n^{-1}\right)^{-\alpha} = \left( \eta \left( d(x) + n^{-\frac{\alpha + p - 1}{p - \delta}} \right)^{\frac{p - \delta}{\alpha + p - 1}} + (1 - \eta) n^{-1} \right)^{-\alpha}, \quad \text{for} \ x \in \Omega_{\varrho}.
\]
We now proceed by distinguishing the following cases:\\[4pt]
\textbf{(1)} If $x \in \Omega_{\varrho}$ such that 
$\eta \left(d(x) + n^{-\frac{\alpha + p - 1}{p - \delta}}\right)^{\frac{p - \delta}{\alpha + p - 1}} \geq (1 - \eta) n^{-1},$
then the following holds:
$$  \left( d(x) + n^{-\frac{\alpha + p -1}{p- \delta}}\right) ^{- \frac{\alpha (p - \delta)}{\alpha + p -1}} \leq  (2 \eta) ^{\alpha} (\underline{u}_{n}(x) + n^{-1})^{ - \alpha}. $$ 
Thus, based on \eqref{equ46} and for sufficiently small \( \eta > 0 \) (independent of \( n \)), we can infer that:
\begin{equation*}
\begin{gathered}
\begin{aligned}
-\Delta_{p} \underline{u}_{n}  + (-\Delta)^{s}_{q} \underline{u}_{n} &\leq  C(\eta ^{p-1} + \eta^{q-1}) (2 \eta) ^{\alpha} \left( \underline{u}_{n}(x) + n^{-1}\right) ^{ - \alpha} f_{n}(x) + g_{n}(x) \underline{u}_{n} ^{\beta}\\[4pt]
&\leq   f_{n}(x) \left( \underline{u}_{n}(x) + n^{-1}\right) ^{ - \alpha} + g_{n}(x) \underline{u}_{n} ^{\beta}.
\end{aligned}
\end{gathered}
\end{equation*}
	\textbf{(2)} If $x \in \Omega_{\varrho}$ such that 
	$\eta \left(d(x) + n^{-\frac{\alpha + p - 1}{p - \delta}}\right)^{\frac{p - \delta}{\alpha + p - 1}} \leq (1 - \eta) n^{-1}. $ In this case, we have 
	$$ n^{\alpha} \leq  \left( 2 (1- \eta)\right)^{\alpha} (\underline{u}_{n}(x) + n^{-1})^{ - \alpha}. $$ 
	Similarly, we can choose \( \eta > 0 \) to be sufficiently small and independent of \( n \) such that
\begin{equation*}
\begin{gathered}
\begin{aligned}
-\Delta_{p} \underline{u}_{n}  + (-\Delta)^{s}_{q} \underline{u}_{n} &\leq  C(\eta ^{p-1} + \eta^{q-1}) n^{\alpha} f_{n}(x) + g_{n}(x) \underline{u}_{n} ^{\beta}\\[4pt]
&\leq C(\eta ^{p-1} + \eta^{q-1})  \left( \underline{u}_{n}(x) + n^{-1}\right) ^{ - \alpha}  f_{n}(x) + g_{n}(x) \underline{u}_{n} ^{\beta}
\\[4pt]
&\leq  f_{n}(x) \left( \underline{u}_{n}(x) + n^{-1}\right) ^{ - \alpha} + g_{n}(x) \underline{u}_{n} ^{\beta}.
\end{aligned}
\end{gathered}
\end{equation*}
In both cases, we choose \( \eta > 0 \) to be sufficiently small and independent of \( n \), such that
\begin{equation*}
-\Delta_{p} \underline{u}_{n} + (-\Delta)^{s}_{q} \underline{u}_{n} \leq f_{n}(x) \left( \underline{u}_{n}(x) + n^{-1}\right) ^{ - \alpha}  + g_{n}(x) \underline{u}_{n}^{\beta}
\quad \text{in } \Omega_{\varrho}.
\end{equation*}
Now, in light of \eqref{equ38}, we choose \( \eta > 0 \) to be sufficiently small and independent of \( n \) such that the following inequality holds:
\begin{equation*}
\underline{u}_{n}(x) \leq \eta\, \text{diam}(\Omega)^{\frac{p - \delta}{\alpha + p - 1}} \leq C_{\varrho} \leq u_{n}(x) \quad \text{in } \Omega \setminus \Omega_{\varrho}.
\end{equation*}
Since \( u_{1} \) is positive and continuous in \( \Omega \), the weak comparison principle (see Lemma \ref{Lemma4}) allows us to conclude that \( \underline{u}_{n}(x) \leq u_{n}(x) \) in \( \Omega_{\varrho} \). Thus, it follows that
\begin{equation*}
\eta \left( \left(d(x) + n^{-\frac{\alpha + p - 1}{p - \delta}}\right)^{\frac{p - \delta}{\alpha + p - 1}} - n^{-1} \right) \leq u_{n}(x) \quad \text{in } \Omega.
\end{equation*}
\end{proof}
\section{Existence, non-existence, and regularity results}
\noindent In this section, we leverage the results from Section \ref{section3} for the regularized problem to establish the existence, non-existence, and regularity of solutions for problem \eqref{P2}, focusing on the class of weight functions \( f \) that satisfy the assumptions \textbf{(F1)} and \textbf{(F2)}. In particular, we present the following:
\subsection{Proofs of the results in the context of Lebesgue weights}
\begin{proof}[\textbf{Proof of Theorem} \ref{Theorem3}] 
Let us assume that \( 0 < \alpha < 1 \), and that \( f \in L^{r}(\Omega) \) such that \( 1 \leq r < \left( \frac{p^{*}}{1 - \alpha}\right) ' \). Let \( u_n \) represent the weak solution to problem \eqref{P}, as established in Lemma \ref{lemma2}. The proof is organized into two distinct parts:\\[4pt]
\textbf{\underline{Part (1)}: A priori estimates.} Our objective is to establish an a priori estimate for \( (u_{n}) \) in \( W^{1, \varrho_{s, p, q, \alpha, r}}_{0}(\Omega) \), where \( \varrho_{s, p, q, \alpha, r} \) is defined as in \eqref{equ50}. For a fixed \( n \in \mathbb{N} \), let \( \epsilon \in \left( 0, \frac{1}{n} \right) \) and \( \frac{\alpha + p - 1}{p} \leq \vartheta < 1 \). By applying Lemma \ref{lemma2} - \textbf{(1)}, it can be readily verified that \( (u_{n} + \epsilon)^{p(\vartheta - 1) +1} - \epsilon^{p(\vartheta - 1) +1} \in W^{1, p}_{0}(\Omega) \), and therefore, qualifies as an admissible test function in \eqref{equ45}, as follows:
\begin{equation*}
\begin{gathered}
\begin{aligned}
&\int_{\Omega} \left[ \nabla u_{n}\right] ^{p-1}  \nabla  (u_{n} + \epsilon)^{p(\vartheta - 1) +1} \, dx \\[4pt]
& + \displaystyle\iint_{\mathbb{R}^{2 N}} \dfrac{\left[u_{n}(x) - u_{n}(y)\right] ^{q-1}\left( (u_{n} + \epsilon)^{p(\vartheta - 1) +1} (x)-  (u_{n} + \epsilon)^{p(\vartheta - 1) +1} (y)\right)}{\vert x-y\vert ^{N+ s q}}dx dy \\[4pt]
&\quad\leq\int_{\Omega} f_{n}(x) \left(u_{n} + n^{-1}\right) ^{- \alpha} (u_{n} + \epsilon)^{p(\vartheta - 1) +1} dx + \int_{\Omega}  g_{n}(x)\,u_{n} ^{\beta}  (u_{n} + \epsilon)^{p(\vartheta - 1) +1} \,dx \\[4pt]
&\quad \leq \int_{\Omega} f_{n}(x)  (u_{n} + \epsilon)^{p(\vartheta - 1) + 1- \alpha}  dx + \int_{\Omega}  g_{n}(x)\,(u_{n} + \epsilon)^{\beta + p(\vartheta - 1) + 1} \,dx.
\end{aligned}
\end{gathered}
\end{equation*}
By applying Fatou’s Lemma and letting \( \epsilon \to 0 \) in the above estimate, we obtain:
\begin{equation*}
\begin{gathered}
\begin{aligned}
&\int_{\Omega} \left[ \nabla u_{n}\right] ^{p-1}  \nabla  u_{n}  ^{p(\vartheta - 1) +1} \, dx + \displaystyle\iint_{\mathbb{R}^{2 N}} \dfrac{\left[u_{n}(x) - u_{n}(y)\right] ^{q-1}\left( u_{n}  ^{p(\vartheta - 1) +1} (x)- u_{n} ^{p(\vartheta - 1) +1} (y)\right)}{\vert x-y\vert ^{N+ s q}}dx dy \\[4pt]
&\qquad \leq \int_{\Omega} f_{n}(x)  u_{n}  ^{p(\vartheta - 1) + 1- \alpha}  dx + \int_{\Omega}  g_{n}(x)\,u_{n} ^{\beta + p(\vartheta - 1) + 1} \,dx.
\end{aligned}
\end{gathered}
\end{equation*}
By performing some computations and applying the inequality from \cite[Lemma A.2]{ref07}, we obtain:
\begin{equation}\label{equ40}
\begin{aligned}
&  \dfrac{p(\vartheta - 1) + 1}{\vartheta^{p}}\int_{\Omega} \left| \nabla u^{\vartheta}_{n}\right| ^{p} \, dx + \dfrac{(p(\vartheta - 1) + 1) q^{q}}{(p(\vartheta - 1) + q)^{q}} \iint_{\mathbb{R}^{2 N}} \dfrac{\left| u^{\frac{p(\vartheta - 1) + q}{q}}_{n}(x) - u^{\frac{p(\vartheta - 1) + q}{q}}_{n}(y)\right| ^{q}}{\vert x-y\vert ^{N+ s q}} \, dx \, dy \\[4pt]
&\qquad \leq \int_{\Omega} f_{n}(x)  u_{n} ^{p(\vartheta - 1) + 1 - \alpha} \, dx + \int_{\Omega} g_{n}(x)\, u_{n} ^{\beta + p(\vartheta - 1) + 1} \, dx,
\end{aligned}
\end{equation}
and noting that
\begin{equation}\label{equ51}
\int_{\Omega} \left| \nabla u^{\vartheta}_{n}\right| ^{p} \, dx  < \infty,
\end{equation}
and
\[
\iint_{\mathbb{R}^{2 N}} \dfrac{\left| u^{\frac{p(\vartheta - 1) + q}{q}}_{n}(x) - u^{\frac{p(\vartheta - 1) + q}{q}}_{n}(y)\right| ^{q}}{\vert x-y\vert ^{N+ s q}} \, dx \, dy < \infty,
\]
the latter fact, together with fractional Sobolev embeddings (see Theorem \ref{thm3}), implies that
\begin{equation*}
\begin{aligned}
\left( \int_{\Omega} u_{n}^{\frac{q^{*}_{s} (p(\vartheta - 1) + q)}{q}} dx\right) ^{\frac{q}{q^{*}_{s}}} \leq C \left( \int_{\Omega} f_{n}(x)  u_{n}  ^{p(\vartheta - 1) + 1- \alpha}  dx + \int_{\Omega}  g_{n}(x)\,u_{n} ^{\beta + p(\vartheta - 1) + 1} \,dx\right) .
\end{aligned}
\end{equation*}
At this point, we distinguish between two cases. When \( r = 1 \), we set \( \vartheta = \vartheta_{1} = \frac{\alpha + p - 1}{p} \) and observe that \( \beta + \alpha < q - 1 + \alpha < \frac{q^{*}_{s} (p(\vartheta_{1} - 1) + q)}{q} \), which leads to the following estimate:
\begin{equation}\label{equ42}
\begin{aligned}
&\left( \int_{\Omega} u_{n}^{\frac{q^{*}_{s} (p(\vartheta_{1} - 1) + q)}{q}} \, dx \right)^{\frac{q}{q^{*}_{s}}} \\[4pt]
&\leq C \max \left\lbrace \| f \|_{L^{1}(\Omega)}, \| g \|_{L^{m_{s, p, q, \alpha, \beta, 1}}(\Omega)} \right\rbrace \left( 1 + \left( \int_{\Omega} u_{n}^{\frac{q^{*}_{s} (p(\vartheta_{1} - 1) + q)}{q}}  \, dx \right)^{\frac{q (\alpha + \beta)}{q^{*}_{s} (p(\vartheta_{1} - 1) + q)}} \right),
\end{aligned}
\end{equation}
where \( m_{s, p, q, \alpha, \beta, 1} \) is defined by \eqref{equ41}, and \( C > 0 \) is a constant independent of \( n \). Consequently,
\begin{center}
	\( (u_{n}) \) is uniformly bounded in \( L^{\frac{N(\alpha + q - 1)}{N - sq}}(\Omega) \).
\end{center}
Moreover, for \( 1 < r < r_{p, \alpha} \), applying H\"{o}lder's inequality yields:
\begin{equation*}
	\left( \int_{\Omega}u_{n}^{\frac{q^{*}_{s} (p(\vartheta - 1) + q)}{q}} \, dx \right)^{\frac{q}{q^{*}_{s}}} \leq C \left( \| f \|_{L^{r}(\Omega)} \left( \int_{\Omega} u_{n}^{(p(\vartheta - 1) + 1 - \alpha) r'} \, dx \right)^{\frac{1}{r'}} + \int_{\Omega} g_{n}(x) \, u_{n}^{\beta + p (\vartheta - 1) + 1} \, dx \right).
	\end{equation*}
Next, we choose \( \vartheta \) such that
\begin{equation*}
\frac{q^{*}_{s} (p(\vartheta - 1) + q)}{q}  = (p(\vartheta - 1) + 1 - \alpha) r',
\end{equation*}
and noting that \( r_{p, \alpha} < \frac{N}{p} < \frac{N}{s q} \), this simplifies to
\begin{equation}\label{equ68}
\vartheta = \vartheta_{r} =  \underbrace{ \frac{r(N - sq)( \alpha + p - 1) - N(r-1)(p-q)}{p(N -rsq)} < 1}_{r <  r_{p, \alpha} < \left( \frac{q^{*}_{s}}{1- \alpha}\right) '}.
\end{equation}
Thus, we obtain the following estimate:
{\footnotesize \begin{equation}\label{equ43}
\begin{aligned}
& C \left( \int_{\Omega} u_{n}^{\frac{q^{*}_{s} (p(\vartheta_{r} - 1) + q)}{q}} \, dx \right)^{\frac{q}{q^{*}_{s}}} \\[4pt]
&\leq \max \left\lbrace \| f \|_{L^{r}(\Omega)}, \| g \|_{L^{m_{s, p, q, \alpha, \beta, r}}(\Omega)} \right\rbrace \left( \left( \int_{\Omega} u_{n}^{\frac{q^{*}_{s} (p(\vartheta_{r} - 1) + q)}{q}} \, dx \right)^{\frac{1}{r'}} + \left( \int_{\Omega} u_{n}^{\frac{q^{*}_{s} (p(\vartheta_{r} - 1) + q)}{q}}  \, dx \right)^{\frac{q(\beta + p(\vartheta_{r} - 1) + 1)}{q^{*}_{s} (p(\vartheta_{r} - 1) +q)}} \right),
\end{aligned}
\end{equation}}

\noindent where \( m_{s, p, q, \alpha, \beta, r} \) is defined by \eqref{equ41}. We previously utilized the following condition 	\( \beta + p(\vartheta - 1) + 1 < p (\vartheta - 1) + q \). By applying this condition again, together with the inequality \( \frac{q}{q^{*}_{s}} > \frac{1}{r'} \), it follows that 
\begin{equation}\label{equ64}
 (u_{n})  \text{ remains uniformly bounded in }  L^{\frac{N r (\alpha + q - 1)}{N - sqr}}(\Omega).
\end{equation}
Referring back to \eqref{equ40}, and utilizing the estimates \eqref{equ42} and \eqref{equ43}, along with the appropriate choices of \( \vartheta_{r} \) and Sobolev’s embeddings (noting \eqref{equ51}), we obtain
\begin{equation*}
\left( \int_{\Omega}  u^{\vartheta_{r} p^{*}}_{n} \, dx\right) ^{\frac{p}{p^{*}}}  \leq C \int_{\Omega} \left| \nabla u^{\vartheta_{r}}_{n}\right| ^{p}  \, dx  \leq C,
\end{equation*}
where \( C \) is a constant independent of \( n \). This result, in combination with H\"{o}lder's inequality (noting that \( \varrho_{s, p, q, \alpha, r} < p \)), leads to:
\begin{equation*}
\begin{aligned}
\int_{\Omega}\left|\nabla u_{n} \right|^{\varrho_{s, p, q, \alpha, r} } dx &= \int_{\Omega}\dfrac{\left|\nabla u_{n} \right|^{\varrho_{s, p, q, \alpha, r} } }{u_{n}^{(1- \vartheta_{r}) \varrho_{s, p, q, \alpha, r} }} \, u_{n}^{(1- \vartheta_{r}) \varrho_{s, p, q, \alpha, r} }dx\\[4pt]
& \leq \left(\int_{\Omega}\dfrac{\left|\nabla u_{n} \right|^{p} }{u_{n}^{(1- \vartheta_{r}) p}} dx\right) ^{\frac{\varrho_{s, p, q, \alpha, r} }{p}} \left(\int_{\Omega}  u_{n}^{\frac{p (1- \vartheta_{r}) \varrho_{s, p, q, \alpha, r}  }{p-  \varrho_{s, p, q, \alpha, r} }} dx\right) ^{\frac{p- \varrho_{s, p, q, \alpha, r}  }{p}}\\[4pt]
& = \vartheta_{r}^{\varrho_{s, p, q, \alpha, r} } \left(\int_{\Omega} \left|\nabla u_{n}^{\vartheta_{r}}\right| ^{p} dx\right) ^{\frac{\varrho_{s, p, q, \alpha, r} }{p}}  \left(\int_{\Omega}   u^{\vartheta_{r} p^{*}}_{n}  dx\right) ^{\frac{p- \varrho_{s, p, q, \alpha, r} }{p}} \\[4pt]
&  < C \text{  (independent of $ n $) }.
\end{aligned}
\end{equation*}
It follows, up to a subsequence, that
\begin{equation}\label{equ52}
u_{n} \rightharpoonup u  \text{ in } W^{1, \varrho_{s, p, q, \alpha, r} }_{0}(\Omega),\,  u_{n} \to u  \text{ in } L^{\varrho_{s, p, q, \alpha, r} }(\Omega)  \text{ and }   u_{n}(x) \to u(x)  \text{ a.e. in }  \Omega. 
\end{equation} 
\textbf{\underline{Part (2)} : Passing to the limit in the weak formulation \eqref{equ45} and the regularity of weak solutions.} First, we observe that the convergence properties in \eqref{equ52} do not allow for an immediate passage to the limit in \eqref{equ45}. To address this, we note that the properties of $ \vartheta_{r} $ ensures $ \varrho_{s, p, q, \alpha, r} > p - 1 $. Now, let $ \varphi \in C^{\infty}_{c}(\Omega)$.\\[4pt]
$ \bullet $ For the local term on the left-hand side of \eqref{equ45}, we obtain the following:
\begin{equation*} 
\begin{aligned}
\left| \int_{\Omega} \left( \left[ \nabla u_{n} \right]^{p-1} - \left[ \nabla u \right]^{p-1} \right) \nabla \varphi \, dx \right| 
& \leq \left\| \nabla \varphi \right\|_{L^{\infty}(\Omega)} \left( \int_{\Omega} \left| \nabla u_{n} \right|^{p-1} dx + \int_{\Omega} \left| \nabla u \right|^{p-1} dx \right) \\[4pt]
& \leq C \left\| \nabla \varphi \right\|_{L^{\infty}(\Omega)} \left( \left\| u_{n} \right\|_{W^{1, \varrho_{s, p, q, \alpha, r}}_{0}(\Omega)}^{p-1} + \left\| u \right\|_{W^{1, \varrho_{s, p, q, \alpha, r}}_{0}(\Omega)}^{p-1} \right) \leq C,
\end{aligned}
\end{equation*}
for some constant $ C > 0 $ independent of $ n $.  On the other hand, from \eqref{equ47}, it is evident that \( (u_{n}) \) remains bounded away from zero in the interior of the domain $ \Omega $. Thus, we can apply the results from \cite[Theorem 2.1 \& Remark 2.2]{dal1998almost}  to conclude that \( \nabla u_{n}(x) \to \nabla u(x) \) a.e. in \( \Omega \) as \( n \to \infty \). Consequently, we have
$$  \left( \left[ \nabla u_{n} \right]^{p-1} - \left[ \nabla u \right]^{p-1} \right) \nabla \varphi  \to 0 \quad \text{a.e. in } \Omega \quad \text{as} \quad n \to \infty. $$
By Vitali's Convergence Theorem, we obtain
\begin{equation}\label{equ58}
\int_{\Omega} \left[ \nabla u_{n} \right]^{p-1} \nabla \varphi \, dx \to \int_{\Omega}  \left[ \nabla u \right]^{p-1} \nabla \varphi \, dx \quad \text{as} \quad n \to \infty.
\end{equation}
$ \bullet $ For the non-local term, we adapt the proof techniques from \cite[Theorem 3.6]{ref01}. First, we have the following:
{\small \begin{equation} \label{equ55}
\begin{aligned}
&\left|\displaystyle\iint_{\mathbb{R}^{2 N}} \dfrac{\left( \left[u_{n}(x) - u_{n}(y)\right] ^{q-1} - \left[u(x) - u(y)\right] ^{q-1}\right) \left(\varphi(x)- \varphi(y)\right)}{\vert x-y\vert ^{N+ s q}}dx dy \right| \\[4pt]
&\leq \displaystyle\iint_{\mathbb{R}^{2 N}} \dfrac{\left( \left| u_{n}(x) - u_{n}(y)\right| + \left| u(x) - u(y)\right|\right) ^{q-2} \left| (u_{n}(x) - u(x)) -  (u_{n}(y) - u(y))\right| \left| \varphi(x)- \varphi(y)\right| }{\vert x-y\vert ^{N+ s q}}dx dy,
\end{aligned}
\end{equation}}
where we applied Lemma \ref{lemma1}. At this point, we establish the following claim:
\begin{claim}
Let $ \epsilon > 0 $. There exists a compact set $ \mathcal{K} = \mathcal{K}(\epsilon) \subset \mathbb{R}^{2N} $ such that, for all $ n \in \mathbb{N} $:
{\small 	\begin{equation} \label{equ48}
	\iint_{\mathbb{R}^{2 N} \setminus \mathcal{K}} \dfrac{\left( \left| u_{n}(x) - u_{n}(y)\right| + \left| u(x) - u(y)\right|\right) ^{q-2} \left|  (u_{n}(x) - u(x)) -  (u_{n}(y) - u(y))\right|  \left| \varphi(x)- \varphi(y)\right| }{\vert x-y\vert ^{N+ s q}}dx dy \leq \dfrac{\epsilon}{2}.
	\end{equation}}
\end{claim}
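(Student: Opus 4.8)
The plan is to establish the claim by a truncation-at-infinity argument, exploiting the uniform bounds already obtained together with the compact support of $\varphi$. First I would fix $\varphi \in C^\infty_c(\Omega)$ and set $D := \supp(\varphi) \Subset \Omega$. The key observation is that the integrand on $\mathbb{R}^{2N}$ is supported, up to symmetry, on the set where at least one of $x,y$ lies in $D$: indeed $\varphi(x) - \varphi(y) = 0$ whenever $x \notin D$ and $y \notin D$. So the integral over $\mathbb{R}^{2N}\setminus\mathcal{K}$ splits (using the factor $2$ from symmetry) into the ``diagonal'' piece over $(D \times D)\setminus\mathcal{K}$ and the ``off-diagonal'' piece over $D \times (\mathbb{R}^N \setminus D)$ with the complement of $\mathcal{K}$ imposed. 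I would choose $\mathcal{K}$ of the form $\mathcal{K} = \overline{D_R \times D_R}$ where $D_R = \{x : \dist(x, D) \le R\}$, so that $\mathbb{R}^{2N}\setminus\mathcal{K}$ forces $|x - y|$ to be large whenever one variable stays near $D$.

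Next I would estimate the off-diagonal piece. On $D \times (\mathbb{R}^N \setminus D_R)$ we have $|x - y| \ge R$ for $x \in D$, and moreover $\varphi(y) = 0$, so $|\varphi(x) - \varphi(y)| = |\varphi(x)| \le \|\varphi\|_{L^\infty}$. Applying Hölder's inequality in the exponents $\frac{q}{q-1}$ and $q$ exactly as in \eqref{equ36}, the $(q-2)$-power factor is controlled by
\[
\left(\iint \frac{(|u_n(x)-u_n(y)| + |u(x)-u(y)|)^q}{|x-y|^{N+sq}}\,dx\,dy\right)^{\frac{q-1}{q}},
\]
which by \cite[Lemma 3.5]{ref01} and the uniform bound $u_n^{\vartheta_r} \in W^{1,p}_0(\Omega)$ (hence $u_n \in W^{s,q}_0(\Omega)$ uniformly, via Lemma \ref{Lemma1} applied to $u_n^{\vartheta_r}$ together with the Gagliardo–Nirenberg-type interpolation) is bounded uniformly in $n$; the remaining factor $\left(\iint_{D \times (\mathbb{R}^N\setminus D_R)} \frac{|\varphi(x)|^q}{|x-y|^{N+sq}}\,dx\,dy\right)^{1/q}$ is a tail of a convergent integral, hence $\le \varepsilon/4$ for $R$ large (independently of $n$). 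For the diagonal piece over $(D\times D)\setminus \mathcal{K}$, note that if $R$ is chosen so that $D \times D \subset \mathcal{K}$ this term is simply zero; in fact taking $R = \diam(D)$ already achieves $D \times D \subseteq D_R \times D_R = \mathcal{K}$, so only the off-diagonal tail genuinely contributes.

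I would conclude by combining the two pieces: with $\mathcal{K} = \overline{D_R \times D_R}$ and $R$ chosen large enough that the off-diagonal tail is at most $\varepsilon/4$ (and the symmetric contribution another $\varepsilon/4$), the full integral over $\mathbb{R}^{2N}\setminus\mathcal{K}$ is at most $\varepsilon/2$, uniformly in $n$, which is precisely \eqref{equ48}. The main obstacle, and the step requiring the most care, is verifying that the $(q-1)$-power factor $\iint \frac{(|u_n(x)-u_n(y)| + |u(x)-u(y)|)^q}{|x-y|^{N+sq}}\,dx\,dy$ is bounded \emph{uniformly} in $n$: this does not follow from the weak convergence in $W^{1,\varrho_{s,p,q,\alpha,r}}_0(\Omega)$ alone, but must be extracted from the a priori estimate \eqref{equ40}–\eqref{equ51} on $u_n^{\vartheta_r}$ in $W^{1,p}_0(\Omega)$, passing through the boundedness of $u_n^{(p(\vartheta_r-1)+q)/q}$ in $W^{s,q}_0(\Omega)$ and then interpolating (or using the convexity inequality \cite[Lemma A.2]{ref07} in reverse together with $u_n \geq c_1 d$ from \eqref{equ47}) to recover a uniform $W^{s,q}$-type bound on $u_n$ itself on the relevant region. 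Once this uniform Gagliardo-seminorm bound is in hand, the tail estimate and the diagonal vanishing are routine, and the claim follows.
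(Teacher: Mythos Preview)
Your overall strategy—place the smallness on the $\varphi$ factor and use a uniform Gagliardo-type bound on $u_n$—matches the paper's. The difference, and the reason your argument has a genuine gap, is in the H\"older exponents you choose.

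You apply H\"older with exponents $\tfrac{q}{q-1}$ and $q$, which forces you to control
\[
\iint_{\mathbb{R}^{2N}}\frac{(|u_n(x)-u_n(y)|+|u(x)-u(y)|)^{q}}{|x-y|^{N+sq}}\,dx\,dy
\]
uniformly in $n$, i.e.\ a uniform $W^{s,q}_0$ bound on $u_n$. None of the routes you propose actually deliver this. Lemma~\ref{Lemma1} applied to $u_n^{\vartheta_r}\in W^{1,p}_0$ bounds $u_n^{\vartheta_r}$ in $W^{s,q}_0$, not $u_n$; the a~priori estimate \eqref{equ40} bounds $u_n^{(p(\vartheta_r-1)+q)/q}$ in $W^{s,q}_0$, again a power strictly less than $1$; and ``reversing'' via $u_n\ge c_1 d$ fails because the lower bound degenerates at $\partial\Omega$, while the Gagliardo integral sees the whole of $\Omega$. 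In fact when $q=p$ (the homogeneous case, which is allowed here) you would need $u_n$ uniformly bounded in $W^{s,p}_0$, yet we only have $u_n$ bounded in $W^{1,\varrho_{s,p,q,\alpha,r}}_0$ with $\varrho_{s,p,q,\alpha,r}<p$, so Lemma~\ref{Lemma1} does not apply and your uniform bound is simply unavailable.

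The paper sidesteps this by running H\"older with exponents $\tfrac{\varrho}{q-1}$ and $\tfrac{\varrho}{\varrho-q+1}$, where $\varrho=\varrho_{s,p,q,\alpha,r}$. Since $\varrho>p-1\ge q-1$ (as noted in the text), these are legitimate conjugate exponents; the first factor becomes the $W^{s,\varrho}_0$ seminorm of $u_n$, which \emph{is} uniformly bounded directly from Lemma~\ref{Lemma1} and \eqref{equ52}, and the second factor is a fractional seminorm of $\varphi$ over $\mathbb{R}^{2N}\setminus\mathcal{K}$, which tends to zero as $\mathcal{K}$ exhausts $\mathbb{R}^{2N}$. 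The fix to your argument is therefore a one-line change of exponents—use $\varrho$ in place of $q$ in the H\"older step—after which your choice of $\mathcal{K}$ and the tail estimate go through exactly as you describe.
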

\noindent Indeed, by applying H\"{o}lder's inequality, we obtain
\begin{equation*}
\begin{aligned}
&\iint_{\mathbb{R}^{2 N} \setminus \mathcal{K}} \dfrac{\left( \left| u_{n}(x) - u_{n}(y)\right| + \left| u(x) - u(y)\right|\right) ^{q-2} \left| (u_{n}(x) - u(x)) - (u_{n}(y) - u(y))\right|  \left| \varphi(x)- \varphi(y)\right| }{\vert x-y\vert ^{N+ s q}}dx dy\\[4pt]
& \leq \iint_{\mathbb{R}^{2 N} \setminus \mathcal{K}} \dfrac{\left( \left| u_{n}(x) - u_{n}(y)\right| + \left| u(x) - u(y)\right|\right) ^{q-1} \left| \varphi(x)- \varphi(y)\right| }{\vert x-y\vert ^{N+ s q}}dx dy\\[4pt]
&\leq C \left( \displaystyle\iint_{\mathbb{R}^{2 N}} \dfrac{\left| u_{n}(x) - u_{n}(y)\right|^{\varrho_{s, p, q, \alpha, r}}}{\vert x - y\vert ^{N + s \varrho_{s, p, q, \alpha, r}}} dx \, dy + \displaystyle\iint_{\mathbb{R}^{2 N}} \dfrac{\left| u(x) - u(y)\right|^{\varrho_{s, p, q, \alpha, r}}}{\vert x - y\vert ^{N + s \varrho_{s, p, q, \alpha, r}}} dx \, dy \right)^{\frac{q-1}{\varrho_{s, p, q, \alpha, r}}}  \\[4pt]
&\quad\times \left( \displaystyle\iint_{\mathbb{R}^{2 N} \setminus \mathcal{K}} \dfrac{\left| \varphi(x) - \varphi(y)\right|^{\frac{\varrho_{s, p, q, \alpha, r}}{\varrho_{s, p, q, \alpha, r} - q + 1}}}{\vert x - y\vert ^{N + s \frac{\varrho_{s, p, q, \alpha, r}}{\varrho_{s, p, q, \alpha, r} - q + 1}}} dx \, dy \right)^{\frac{\varrho_{s, p, q, \alpha, r} - q + 1}{\varrho_{s, p, q, \alpha, r}}}.
\end{aligned}
\end{equation*}
Using Lemma \ref{Lemma1} and \eqref{equ52}, it follows that
\begin{equation*}
\begin{aligned}
&\iint_{\mathbb{R}^{2 N} \setminus \mathcal{K}} \dfrac{\left( \left| u_{n}(x) - u_{n}(y)\right| + \left| u(x) - u(y)\right|\right) ^{q-2} \left| (u_{n}(x) - u(x)) - (u_{n}(y) - u(y))\right|  \left| \varphi(x)- \varphi(y)\right| }{\vert x-y\vert ^{N+ s q}}dx dy\\[4pt]
&\quad\leq C \left( \displaystyle\iint_{\mathbb{R}^{2 N} \setminus \mathcal{K}} \dfrac{\left| \varphi(x) - \varphi(y)\right|^{\frac{\varrho_{s, p, q, \alpha, r}}{\varrho_{s, p, q, \alpha, r} - q + 1}}}{\vert x - y\vert ^{N + s \frac{\varrho_{s, p, q, \alpha, r}}{\varrho_{s, p, q, \alpha, r} - q + 1}}} dx \, dy \right)^{\frac{\varrho_{s, p, q, \alpha, r} - q + 1}{\varrho_{s, p, q, \alpha, r}}},
\end{aligned}
\end{equation*}
for some constant $ C > 0 $ independent of $ \mathcal{K} $. Hence, since $ \varphi \in C^{\infty}_{c}(\Omega), $ there exists a compact set $ \mathcal{K} $, dependent on $ \epsilon $, such that \eqref{equ48} holds. Similarly, using the same arguments, we obtain for an arbitrary measurable subset $ \mathbf{E} \subset \mathcal{K} $ the following limits, uniformly in $ n $:
\begin{equation}
\begin{aligned}
&\displaystyle\iint_{\mathbf{E}} \dfrac{\left( \left| u_{n}(x) - u_{n}(y)\right| + \left| u(x) - u(y)\right|\right) ^{q-2} \left| (u_{n}(x) - u(x)) - (u_{n}(y) - u(y))\right|  \left| \varphi(x)- \varphi(y)\right| }{\vert x-y\vert ^{N+ s q}}dx dy  \\[4pt]
&\quad\leq C \left( \displaystyle\iint_{\mathbf{E}} \dfrac{\left| \varphi(x) - \varphi(y)\right|^{\frac{\varrho_{s, p, q, \alpha, r}}{\varrho_{s, p, q, \alpha, r} - q + 1}}}{\vert x - y\vert ^{N + s \frac{\varrho_{s, p, q, \alpha, r}}{\varrho_{s, p, q, \alpha, r} - q + 1}}} dx \, dy \right)^{\frac{\varrho_{s, p, q, \alpha, r} - q + 1}{\varrho_{s, p, q, \alpha, r}}} \to 0 \quad \text{as } \text{meas}(\mathbf{E}) \to 0.
\end{aligned}
\end{equation}
Moreover, as $n \to \infty$,
\[
\dfrac{\left( \left| u_{n}(x) - u_{n}(y)\right| + \left| u(x) - u(y)\right|\right) ^{q-2} \left| (u_{n}(x) - u(x)) - (u_{n}(y) - u(y))\right|  \left| \varphi(x)- \varphi(y)\right| }{\vert x-y\vert ^{N+ s q}} \to 0 \quad \text{a.e. in } \mathbb{R}^{2 N}.
\]
Thus, by Vitali’s Theorem, for all $ \epsilon > 0 $, there exists $ n_{0} > 0 $ such that, if $ n \geq n_{0} $, we have
{\small \begin{equation} \label{equ57}
\displaystyle\iint_{\mathbf{E}} \dfrac{\left( \left| u_{n}(x) - u_{n}(y)\right| + \left| u(x) - u(y)\right|\right) ^{q-2}\left| (u_{n}(x) - u(x)) - (u_{n}(y) - u(y))\right| \left| \varphi(x)- \varphi(y)\right| }{\vert x-y\vert ^{N+ s q}}dx dy  \leq \dfrac{\epsilon}{2}.
\end{equation}}

\noindent  Gathering \eqref{equ55}, \eqref{equ48}, and \eqref{equ57}, we obtain as $ n \to \infty $:
\begin{equation}\label{equ59}
\displaystyle\iint_{\mathbb{R}^{2 N}} \dfrac{\left[ u_{n}(x) - u_{n}(y)\right]^{q-1}\left(\varphi(x) - \varphi(y)\right)}{\vert x-y\vert ^{N+ s q}} dx \, dy \to \displaystyle\iint_{\mathbb{R}^{2 N}} \dfrac{\left[ u(x) - u(y)\right]^{q-1} \left(\varphi(x) - \varphi(y)\right)}{\vert x - y\vert ^{N + s q}} dx \, dy.
\end{equation}
Finally, for the right-hand side of \eqref{equ45}, by using the facts that \( f_{n} \leq f \) and \eqref{equ47}, we have
\[
\left| f_{n} \left( u_{n} + n^{-1} \right)^{-\alpha} \varphi \right| \leq c_{\text{supp}(\varphi)}  \left\| \varphi \right\|_{L^{\infty}(\Omega)} f \in L^{1}(\Omega),
\]
which allows us, by applying the Lebesgue Dominated Convergence Theorem, to obtain
\begin{equation}\label{equ60}
\int_{\Omega} f_{n}(x) \left( u_{n} + n^{-1} \right)^{-\alpha} \varphi \, dx  \to  \int_{\Omega} f(x) u^{-\alpha} \varphi \, dx  \quad \text{as} \quad n \to \infty.
\end{equation}
Next, by employing \( g_{n} \leq g \), \( \beta < p(\vartheta_{r} - 1) + q \),   \eqref{equ64} and the embedding \( L^{m_{s, p, q, \alpha, \beta, r}}(\Omega) \hookrightarrow L^{\left( \frac{q^{*}_{s}(p(\vartheta_{r} - 1) + q)}{\beta q} \right)'}(\Omega), \) we obtain
\begin{equation*}
\int_{\Omega} g_{n}(x) u_{n}^{\beta} \varphi \, dx \leq \left\| \varphi \right\|_{L^{\infty}(\Omega)} \left\| g \right\|_{L^{\left( \frac{q^{*}_{s}(p(\vartheta_{r} - 1) + q)}{\beta q} \right)'}(\Omega)} \left\| u_{n} \right\|^{\beta}_{L^{\frac{q^{*}_{s}(p(\vartheta_{r} -1) + q)}{q}}(\Omega)} < C.
\end{equation*}
Thus, by Vitali's Convergence Theorem, we obtain
\begin{equation}\label{equ61}
\int_{\Omega} g_{n}(x)\,u_{n} ^{\beta} \,\varphi \,dx  \to  \int_{\Omega} g(x)\,u^{\beta} \,\varphi \,dx  \quad \text{as} \quad n \to \infty.
\end{equation}
Finally, gathering \eqref{equ58}, \eqref{equ59}, \eqref{equ60}, and \eqref{equ61}, and passing to the limit in \eqref{equ45}, as $ n \to \infty $, we infer that
\begin{equation*}
\begin{aligned}
&\int_{\Omega} \left[ \nabla u \right]^{p-1} \nabla \varphi \, dx + \iint_{\mathbb{R}^{2N}} \frac{\left[ u(x) - u(y)\right]^{q-1} \left( \varphi(x) - \varphi(y) \right)}{\vert x - y \vert^{N + sq}} \, dx \, dy  = \int_{\Omega} f(x) u^{-\alpha} \varphi \, dx + \int_{\Omega} g(x) u^{\beta} \varphi \, dx.
\end{aligned}
\end{equation*}
On the other hand, from \textbf{Part (1)}, we know that the sequence \((u^{\vartheta_{r}}_{n})\) is uniformly bounded in \(W^{1, p}_{0}(\Omega)\). Combining this fact with \eqref{equ52}, we deduce, up to a subsequence, that
\begin{equation*}
u^{\vartheta_{r}}_{n} \rightharpoonup u^{\vartheta_{r}} \quad \text{in } W^{1, p}_{0}(\Omega), \quad u^{\vartheta_{r}}_{n} \to u^{\vartheta_{r}} \quad \text{in } L^{l}(\Omega) \text{ for } 1 \leq l < p^{*}, \quad \text{and} \quad u^{\vartheta_{r}}_{n}(x) \to u^{\vartheta_{r}}(x) \text{ a.e. in } \Omega.
\end{equation*}
Thus, by the reflexivity of \(W^{1, p}_{0}(\Omega)\), it follows that \( u^{\vartheta_{r}} \in W^{1, p}_{0}(\Omega) \). Furthermore, since $ (u_{n}) $ is uniformly bounded in \( L^{\frac{N r (\alpha + q - 1)}{N - sqr}}(\Omega) \), using a similar argument, we conclude that \( u \in L^{\frac{N r (\alpha + q - 1)}{N - sqr}}(\Omega) \).
\end{proof}
\begin{proof}[\textbf{Proof of Theorem} \ref{Theorem6}] In this proof, we adapt the techniques employed in \cite[Proof of Theorem 2.5]{arora2023combined}. Indeed, let \( u_{1} \) and \( u_{2} \) be two weak solutions to problem \eqref{P2} with \( f_{1}, f_{2} \in L^{r}(\Omega) \) for \( 1 \leq r < r_{p, \alpha} \), where \( r_{p, \alpha} \) is defined as in \eqref{equ77} (as established in Theorem \ref{Theorem3}). We now consider the sequences \( (u_{1,n}) \) and \( (u_{2,n}) \) of solutions to the approximating problem \eqref{P}, associated with \( f_{1,n} \) and \( f_{2,n} \) as defined in \eqref{equ39}, respectively. Namely, for any \( \Phi,  \Psi \in W^{1, p}_{0}(\Omega) \), we have
	\begin{equation*}
	\begin{gathered}
	\begin{aligned}
	&\int_{\Omega} \left[ \nabla u_{1, n}\right] ^{p-1}  \nabla \Phi\, dx + \iint_{\mathbb{R}^{2N}} \dfrac{\left[ u_{1, n}(x) - u_{1, n}(y)\right] ^{q-1} \left(\Phi(x)- \Phi(y)\right)}{\vert x-y\vert ^{N+ s q}} \, dx \, dy \\[4pt]
	&\quad = \int_{\Omega} f_{1, n}(x) \left( u_{1, n} + n^{-1} \right) ^{-\alpha} \Phi\, dx,
	\end{aligned}
	\end{gathered}
	\end{equation*}
	and
	\begin{equation*}
\begin{gathered}
\begin{aligned}
&\int_{\Omega} \left[ \nabla u_{2, n}\right] ^{p-1}  \nabla\Psi  \, dx + \iint_{\mathbb{R}^{2N}} \dfrac{\left[ u_{2, n}(x) - u_{2, n}(y)\right] ^{q-1} \left( \Psi (x)- \Psi(y)\right)}{\vert x-y\vert ^{N+ s q}} \, dx \, dy \\[4pt]
&\quad = \int_{\Omega} f_{2, n}(x) \left( u_{2, n} + n^{-1} \right) ^{-\alpha} \Psi \, dx.
\end{aligned}
\end{gathered}
\end{equation*}	
Additionally, we have
	\begin{equation}\label{equ66}
	u_{1,n} \rightharpoonup u_{1} \text{ in } W^{1, \varrho_{s, p, q, \alpha, r}}_{0}(\Omega),\, u_{1,n} \to u_{1} \text{ in } L^{\varrho_{s, p, q, \alpha, r}}(\Omega), \text{ and } u_{1,n}(x) \to u_{1}(x) \text{ a.e. in } \Omega,
	\end{equation} 
	\begin{equation}\label{equ67}
	u_{2,n} \rightharpoonup u_{2} \text{ in } W^{1, \varrho_{s, p, q, \alpha, r}}_{0}(\Omega),\, u_{2,n} \to u_{2} \text{ in } L^{\varrho_{s, p, q, \alpha, r}}(\Omega), \text{ and } u_{2,n}(x) \to u_{2}(x) \text{ a.e. in } \Omega. 
	\end{equation}
Now, for a fixed \( n \in \mathbb{N} \), let \( \epsilon \in \left( 0, \frac{1}{n} \right) \) and \( \vartheta_{r} \) be defined as in \eqref{equ68}. By applying Lemma \ref{lemma2} - \textbf{(1)}, it can be readily verified that \( ((u_{1, n} - u_{2, n})^{+} + \epsilon)^{p(\vartheta_{r} - 1) + 1} - \epsilon^{p(\vartheta_{r} - 1) + 1} \in W^{1, p}_{0}(\Omega) \), thus qualifying as an admissible test function in the two expressions above. Then, after subtracting, we obtain
{\small \begin{equation}\label{equ78}
	\begin{gathered}
	\begin{aligned}
	&  \int_{\Omega} \left( \left[ \nabla u_{1, n}\right]^{p-1} - \left[ \nabla u_{2, n}\right]^{p-1} \right) \nabla \Xi_{u_{1, n}, u_{2, n}, \epsilon}\, dx \\[4pt]
	&+   \iint_{\mathbb{R}^{2N}} \dfrac{\left( \left[ u_{1, n}(x) - u_{1, n}(y) \right]^{q-1} - \left[ u_{2, n}(x) - u_{2, n}(y) \right]^{q-1}\right)\left( \Xi_{u_{1, n}, u_{2, n}, \epsilon}(x) - \Xi_{u_{1, n}, u_{2, n}, \epsilon} (y) \right)}{\vert x - y \vert^{N + s q}} dx dy \\[4pt]
	& \leq   \int_{\Omega} f_{1, n}(x) \left( u_{1, n} + n^{-1} \right) ^{-\alpha}\Xi_{u_{1, n}, u_{2, n}, \epsilon}  dx -    \int_{\Omega} f_{2, n}(x) \left( u_{2, n} + n^{-1}\right) ^{-\alpha}  \left( \Xi_{u_{1, n}, u_{2, n}, \epsilon} - \epsilon^{p(\vartheta_{r} - 1) + 1}\right)  dx,
	\end{aligned}
	\end{gathered}
	\end{equation}}

\noindent where \( \Xi_{u_{1, n}, u_{2, n}, \epsilon} := ((u_{1, n} - u_{2, n})^{+}  + \epsilon)^{p(\vartheta_{r} - 1) + 1} \). Now, by applying Fatou’s Lemma, we can pass to the limit on the left-hand side of the previous inequality. Moreover, we have
\begin{equation*} 
\begin{gathered}
\begin{aligned}
&- \liminf_{\epsilon \to 0} \int_{\Omega}  f_{2, n}(x) \left( u_{2, n} + n^{-1} \right)^{-\alpha}\left( \Xi_{u_{1, n}, u_{2, n}, \epsilon} - \epsilon^{p(\vartheta_{r} - 1) + 1} \right)  dx \\[4pt]
&\leq - \int_{\Omega} f_{2, n}(x) \left( u_{2, n} + n^{-1} \right)^{-\alpha} \left( (u_{1, n} - u_{2, n})^{+} \right)^{p(\vartheta_{r} - 1) + 1} \, dx.
\end{aligned}
\end{gathered}
\end{equation*}
On the other hand, from \eqref{equ64}, we have
\begin{equation*} 
\begin{gathered}
\begin{aligned}
\int_{\Omega} f_{1, n}(x) \left( u_{1, n} + n^{-1} \right) ^{-\alpha} \Xi_{u_{1, n}, u_{2, n}, \epsilon}  dx &\leq \int_{\Omega} f_{1, n}(x) \left( u_{1, n} + n^{-1}\right)^{p(\vartheta_{r} - 1) + 1 - \alpha} dx \\[4pt]
&\leq \left\| f_{1} \right\|_{L^{r}(\Omega)} \left\| u_{1, n} + 1 \right\|_{L^{\frac{N r (\alpha + q - 1)}{N - sqr}}(\Omega)} < C.
\end{aligned}
\end{gathered}
\end{equation*}
Hence, after applying these results and taking the limit in \eqref{equ78} as $ \epsilon \to 0 $, we obtain
{\scriptsize \begin{equation}\label{equ76}
\begin{gathered}
\begin{aligned}
& \underbrace{ \int_{\Omega} \left( \left[ \nabla u_{1, n}\right]^{p-1} - \left[ \nabla u_{2, n}\right]^{p-1} \right) \nabla ((u_{1, n} - u_{2, n})^{+})^{p(\vartheta_{r} - 1) + 1} \, dx}_{\textbf{I}_{1}}  \\[4pt]
&+  \underbrace{ \iint_{\mathbb{R}^{2N}} \dfrac{\left( \left[ u_{1, n}(x) - u_{1, n}(y) \right]^{q-1} - \left[ u_{2, n}(x) - u_{2, n}(y) \right]^{q-1}\right)\left( ((u_{1, n} - u_{2, n})^{+})^{p(\vartheta_{r} - 1) + 1} (x) - ((u_{1, n} - u_{2, n})^{+} )^{p(\vartheta_{r} - 1) + 1} (y) \right)}{\vert x - y \vert^{N + s q}} dx dy }_{\textbf{I}_{2}} \\[4pt]
& \leq   \underbrace{ \int_{\Omega} f_{1, n}(x) \left( u_{1, n} + n^{-1} \right) ^{-\alpha}\left( (u_{1, n} - u_{2, n})^{+}\right)^{p(\vartheta_{r} - 1) + 1}  dx}_{\textbf{I}_{3}}   -   \underbrace{ \int_{\Omega} f_{2, n}(x) \left( u_{2, n} + n^{-1} \right)^{-\alpha} \left( (u_{1, n} - u_{2, n})^{+} \right)^{p(\vartheta_{r} - 1) + 1} \, dx}_{\textbf{I}_{4}} .
\end{aligned}
\end{gathered}
\end{equation}}

\noindent \textbf{Estimate of \( \textbf{I}_{1} \).} Using Lemma \ref{lemma1}, for \( p \geq 2 \) (similar result holds in the case \( 1 < p < 2 \), see \cite[Lemma 4.2]{arora2022multiplicity}), and for all \( \textbf{c} > 0 \), we obtain
\begin{equation}\label{equ69}
\begin{gathered}
\begin{aligned}
\textbf{I}_{1} &\geq C(p, \vartheta_{r}) \int_{\Omega} \dfrac{\left| \nabla (u_{1, n} - u_{2, n})^{+}\right| ^{p}}{\left((u_{1, n} - u_{2, n})^{+}\right) ^{p(1 - \vartheta_{r})}} dx \\[4pt]
&\geq C(p, \vartheta_{r}) \int_{\Omega} \dfrac{\left| \nabla (u_{1, n} - u_{2, n})^{+}\right| ^{p}}{\left((u_{1, n} - u_{2, n})^{+} + \textbf{c}\right) ^{p(1 - \vartheta_{r})}} dx  \\[4pt]
&\geq C(p, \vartheta_{r}) \int_{\Omega} \left| \nabla \left( (u_{1, n} - u_{2, n})^{+} + \textbf{c}\right) ^{\vartheta_{r}}\right| ^{p}dx.
\end{aligned}
\end{gathered}
\end{equation}

\noindent \textbf{Estimate of \( \textbf{I}_{2} \).} After straightforward calculations, we arrive at the following expression for \( (x, y) \in \mathbb{R}^{2N} \):
{\small
	\begin{equation*}
	\begin{gathered}
	\begin{aligned}
	&\left( \left[ u_{1, n}(x) - u_{1, n}(y) \right]^{q-1} - \left[ u_{2, n}(x) - u_{2, n}(y) \right]^{q-1} \right) \left( \left( (u_{1, n} - u_{2, n})^{+} \right)^{p(\vartheta_{r} - 1) + 1}(x) - \left( (u_{1, n} - u_{2, n})^{+} \right)^{p(\vartheta_{r} - 1) + 1}(y) \right) \\[4pt]
	& = \left( \left[ u_{1, n}(x) - u_{1, n}(y) \right]^{q-1} - \left[ u_{2, n}(x) - u_{2, n}(y) \right]^{q-1} \right) \left( (u_{1, n} - u_{2, n})(x) -(u_{1, n} - u_{2, n})(y) \right) \mathcal{H}(x, y) \geq 0,
	\end{aligned}
	\end{gathered}
	\end{equation*}
}

\noindent where
$$ \mathcal{H}(x, y) := \dfrac{ \left( (u_{1, n} - u_{2, n})^{+} \right)^{p(\vartheta_{r} - 1) + 1}(x) - \left( (u_{1, n} - u_{2, n})^{+} \right)^{p(\vartheta_{r} - 1) + 1}(y)}{ (u_{1, n} - u_{2, n})(x) - (u_{1, n} - u_{2, n})(y)},$$
and we have used the fact that \( \mathcal{H}(x, y) \) is non-negative and Lemma \ref{lemma1}, which leads to
\begin{equation}\label{equ70}
\begin{gathered}
\begin{aligned}
\textbf{I}_{2} \geq 0.
\end{aligned}
\end{gathered}
\end{equation}
\noindent \textbf{Estimate of \( \textbf{I}_{3} \).} From \eqref{equ64}, we can obtain the following:
\begin{equation} \label{equ73}
\begin{gathered}
\begin{aligned}
\int_{\Omega} f_{1, n}(x) \left( u_{1, n} + n^{-1} \right) ^{-\alpha}\left( (u_{1, n} - u_{2, n})^{+}\right)^{p(\vartheta_{r} - 1) + 1} dx &\leq \int_{\Omega} f_{1, n}(x)  u_{1, n}^{p(\vartheta_{r} - 1) + 1 -\alpha} dx\\[4pt]
&\leq \left\| f_{1} \right\|_{L^{r}(\Omega)} \left\| u_{1, n}  \right\|_{L^{\frac{N r (\alpha + q - 1)}{N - sqr}}(\Omega)}  < C.
\end{aligned}
\end{gathered}
\end{equation}
Then, by applying Vitali's Convergence Theorem in conjunction with \eqref{equ66} and \eqref{equ67}, we obtain:
\begin{equation} \label{equ71}
\begin{gathered}
\begin{aligned}
\textbf{I}_{3} \to  \int_{\Omega} f_{1}(x) u_{1}^{-\alpha} \left( (u_{1} - u_{2})^{+}\right)^{p(\vartheta_{r} - 1) + 1}  \, dx \quad \text{as } n \to \infty.
\end{aligned}
\end{gathered}
\end{equation}
\noindent \textbf{Estimate of \( \textbf{I}_{4} \).} By the pointwise convergence of \( u_{1, n} \) and \( u_{2, n} \) to \( u_{1} \) and \( u_{2} \) (see \eqref{equ66} and \eqref{equ67}), respectively, and by applying Fatou’s Lemma, we have
\begin{equation}\label{equ75}
\begin{gathered}
\begin{aligned}
- \liminf_{n \to \infty} \textbf{I}_{4} \leq - \int_{\Omega} f_{2}(x) u_{2} ^{-\alpha} \left( (u_{1} - u_{2})^{+} \right)^{p(\vartheta_{r} - 1) + 1} \, dx \leq 0.
\end{aligned}
\end{gathered}
\end{equation}
On the one hand, from \eqref{equ69}, \eqref{equ70}, and \eqref{equ73}, we get that \( \left( \left( (u_{1, n} - u_{2, n})^{+} + \mathbf{c} \right)^{\vartheta_{r}} \right) \) is uniformly bounded in \( W^{1, p}_{0}(\Omega) \) with respect to \( n \). Combining this with \eqref{equ66} and \eqref{equ67}, we deduce, up to a subsequence, that
$$ \left( (u_{1, n} - u_{2, n})^{+} + \mathbf{c} \right)^{\vartheta_{r}} \rightharpoonup \left( (u_{1} - u_{2})^{+} + \mathbf{c} \right)^{\vartheta_{r}} \text{ in } W^{1, p}_{0}(\Omega)\quad \text{as } n \to \infty.$$
Hence, 
\begin{equation*}
\begin{gathered}
\begin{aligned}
\int_{\Omega} \left| \nabla \left( (u_{1} - u_{2})^{+} + \mathbf{c} \right)^{\vartheta_{r}} \right| ^{p} \, dx &\leq  \liminf_{n \to \infty}  \int_{\Omega} \left| \nabla \left( (u_{1, n} - u_{2, n})^{+} + \mathbf{c} \right)^{\vartheta_{r}} \right| ^{p} \, dx.
\end{aligned}
\end{gathered}
\end{equation*}
On the other hand, from the monotonicity of the sequence \( (\nabla \left( (u_{1} - u_{2})^{+} + \mathbf{c} \right)^{\vartheta_{r}}) \) with respect to \( \mathbf{c} \), we get, by the Monotone Convergence Theorem, as \( \mathbf{c} \to 0 \):
  \begin{equation}\label{equ72}
  \begin{gathered}
  \begin{aligned}
   \int_{\Omega} \left| \nabla \left( (u_{1} - u_{2})^{+}\right) ^{\vartheta_{r}}\right| ^{p}dx \leq  \liminf_{\textbf{c} \to 0}  \int_{\Omega} \left| \nabla \left( (u_{1, n} - u_{2, n})^{+} + \textbf{c}\right) ^{\vartheta_{r}}\right| ^{p}dx.
  \end{aligned}
  \end{gathered}
  \end{equation}
Gathering \eqref{equ69}, \eqref{equ70}, \eqref{equ71},  \eqref{equ75}, and \eqref{equ72},   and passing to the limit in \eqref{equ76} as $ n \to \infty $ we obtain
 \begin{equation*}
\begin{gathered}
\begin{aligned}
  C(p, \vartheta_{r}) \int_{\Omega} \left| \nabla \left( (u_{1} - u_{2})^{+}\right) ^{\vartheta_{r}}\right| ^{p}dx \leq  \int_{\Omega} \left( f_{1}(x) u_{1} ^{-\alpha} -  f_{2}(x) u_{2} ^{-\alpha}\right) \left( (u_{1} - u_{2})^{+}\right)^{p(\vartheta_{r} - 1) + 1}  dx.
  \end{aligned}
\end{gathered}
\end{equation*}
Since \( 1 < q \leq p \), by H\"{o}lder's inequality and Sobolev embeddings (see Theorem \ref{thm0}), we get
 \begin{equation*}
\begin{gathered}
\begin{aligned}
C(p, \vartheta_{r}) \int_{\Omega} \left| \nabla \left( (u_{1} - u_{2})^{+}\right) ^{\vartheta_{r}}\right| ^{p}dx &\leq  \int_{\Omega} \left( f_{1}(x) -  f_{2}(x) \right) \left( (u_{1} - u_{2})^{+}\right)^{p(\vartheta_{r} - 1) + 1 - \alpha}  dx\\[4pt]
&\leq  \left\| (f_{1}- f_{2} )^{+} \right\| _{L^{r}(\Omega)} \left(  \int_{\Omega}\left( (u_{1} - u_{2})^{+}\right)^{\frac{q^{*}_{s} (p(\vartheta_{r} - 1) + q)}{q}} \, dx\right) ^{\frac{1}{r'}}\\[4pt]
& \leq C \left\| (f_{1}- f_{2} )^{+} \right\| _{L^{r}(\Omega)} \left(  \int_{\Omega}\left( \left( (u_{1} - u_{2})^{+}\right)^{\vartheta_{r}}\right) ^{p^{*} } \, dx\right) ^{\frac{q^{*}_{s} (p(\vartheta_{r} - 1) + q)}{q p^{*} \vartheta_{r} r'}}\\[4pt]
& \leq C  \left\| (f_{1}- f_{2} )^{+} \right\| _{L^{r}(\Omega)} \left(  \int_{\Omega} \left| \nabla \left( (u_{1} - u_{2})^{+}\right) ^{\vartheta_{r}}\right| ^{p}dx \right) ^{\frac{q^{*}_{s} (p(\vartheta_{r} - 1) + q)}{q p \vartheta_{r} r'}},
\end{aligned}
\end{gathered}
\end{equation*}
where \( C \) is independent of \( u_{1} \) and \( u_{2} \). Finally, we conclude that
 \begin{equation*}
\begin{gathered}
\begin{aligned}
\left\|  \left( (u_{1} - u_{2})^{+}\right) ^{\vartheta_{r}}\right\|  _{W^{1, p}_{0}(\Omega)} \leq C \left\| (f_{1} - f_{2} )^{+} \right\|^{\frac{\vartheta_{r}}{p +\alpha -1}} _{L^{r}(\Omega)} .
\end{aligned}
\end{gathered}
\end{equation*}
\end{proof}
\begin{proof}[\textbf{Proof of Theorem} \ref{Theorem4}]
\textbf{(i)} Consider \( 0 < \alpha < 1 \) and assume that \( f \in L^{r}(\Omega) \) with \( r_{p, \alpha} \leq r \leq \infty \), where \( r_{p, \alpha} \) is defined as in \eqref{equ77}. Let \( u_n \) denote the weak solution to \eqref{P}, as established in Lemma \ref{lemma2}. By choosing \( \varphi = u_n \) in the weak formulation \eqref{equ45} and applying H\"{o}lder inequality along with Sobolev embeddings, we get
	\begin{equation*}
	\left\| u_{n} \right\|^{p}_{W^{1, p}_{0}(\Omega)} \leq C \left( \left\| f \right\|_{L^{r}(\Omega)} \left\| u_{n} \right\|_{W^{1, p}_{0}(\Omega)}^{1 - \alpha} + \left\| g \right\|_{L^{\left( \frac{p^{*}}{\beta + 1}\right)'}(\Omega)} \left\| u_{n} \right\|_{W^{1, p}_{0}(\Omega)}^{\beta + 1} \right).
	\end{equation*}
Thus, from the above estimate, and since \( \beta < q - 1 \), the sequence \( (u_{n}) \) is uniformly bounded in \( W^{1, p}_{0}(\Omega) \). Consequently, there exists \( u \in W^{1, p}_{0}(\Omega) \) such that
	\begin{equation}\label{equ62}
	u_n \rightharpoonup u \quad \text{in } W^{1, p}_{0}(\Omega), \quad u_n \to u \quad \text{in } L^{l}(\Omega) \text{ for } 1 \leq l < p^{*}, \quad \text{and} \quad u_n(x) \to u(x) \text{ a.e. in } \Omega.
	\end{equation}
Now, from \eqref{equ47}, the sequence \( (u_n) \) remains uniformly bounded away from zero in the interior of \( \Omega \). This allows us to apply the results from \cite[Theorem 2.1 \& Remark 2.2]{dal1998almost}, leading to the conclusion that \( \nabla u_n(x) \to \nabla u(x) \) a.e. in \( \Omega \) as \( n \to \infty \). Consequently, by utilizing this fact in conjunction with Vitali's Theorem on one hand and the weak convergence property \eqref{equ62} on the other, together with \eqref{equ60} and \eqref{equ61}, we can pass to the limit in \eqref{equ45}, thereby establishing our claim. 

We assert that all weak solutions belong to \( L^{\infty}(\Omega) \) for \( \frac{N}{p} < r < \frac{p^{*}}{\beta} \). To this end, we employ classical arguments from the seminal work of Stampacchia \cite{Stampacchia1965}. Specifically, let \( k \geq 1 \) and define the function \( \mathbf{G}_{k}(u_{n}) = (u_{n} - k)^{+} \) as a test function in the weak formulation of \eqref{equ45}. This yields the following equation:
\begin{equation*}
\begin{gathered}
\begin{aligned}
&\int_{\Omega} \left[ \nabla u_{n}\right]^{p-1} \nabla \mathbf{G}_{k}(u_{n}) \, dx + \iint_{\mathbb{R}^{2N}} \frac{\left[u_{n}(x) - u_{n}(y)\right]^{q-1} \left(\mathbf{G}_{k}(u_{n})(x) - \mathbf{G}_{k}(u_{n})(y)\right)}{|x-y|^{N+s q}} \, dx \, dy \\[4pt]
&\quad = \int_{\Omega} f_{n}(x) \left(u_{n} + n^{-1}\right)^{-\alpha} \mathbf{G}_{k}(u_{n}) \, dx + \int_{\Omega} g_{n}(x) \, u_{n}^{\beta} \, \mathbf{G}_{k}(u_{n}) \, dx.
\end{aligned}
\end{gathered}
\end{equation*}
Consider the set 
\[
\Omega_{k} := \{ x \in \Omega \mid u_{n}(x) \geq k \}.
\]
Since \(   r < \frac{p^{*}}{\beta}\), by using the Sobolev embedding theorem, inequality \eqref{equ63}, and H\"{o}lder's inequality, we obtain
{\small \begin{equation*}
\begin{aligned}
&\left( \int_{\Omega_{k}} \left| \mathbf{G}_{k}(u_{n}) \right|^{p^{*}} \, dx \right)^{\frac{p}{p^{*}}}\leq C \left( \|f\|_{L^{r}(\Omega)} + \|g\|_{L^{\frac{p^{*} r}{p^{*} - \beta r}}(\Omega)} \left\| u_{n} \right\|_{W^{1, p}_{0}(\Omega)}^{\beta} \right) \times \left( \int_{\Omega_{k}} \left| \mathbf{G}_{k}(u_{n}) \right|^{p^{*}} \, dx \right)^{\frac{1}{p^{*}}} \left| \Omega_{k} \right|^{1 - \frac{1}{r} - \frac{1}{p^{*}}}.
\end{aligned}
\end{equation*}}
Let $ h > 0 $ be such that $ 1 \leq k < h $. Then, it follows that $ \Omega_{h} \subset \Omega_{k} $, and for every $ x \in \Omega_{h} $, we have $ u_{n}(x) \geq h $, implying $ \mathbf{G}_{k}(u_{n}) = u_{n}(x) - k \geq h - k $ in $ \Omega_{h} $. Hence, from \eqref{equ62}, we obtain
\begin{equation*}
\begin{aligned}
&\left( h - k \right)^{p-1} \left| \Omega_{h} \right|^{\frac{p-1}{p^{*}}} \leq  \left( \int_{\Omega_{k}} \left| \mathbf{G}_{k}(u_{n}) \right|^{p^{*}} \, dx \right)^{\frac{p-1}{p^{*}}} \leq C \left( \|f\|_{L^{r}(\Omega)} + \|g\|_{L^{\frac{p^{*} r}{p^{*} - \beta r}}(\Omega)} \right)  \left| \Omega_{k} \right|^{1 - \frac{1}{r} - \frac{1}{p^{*}}},
\end{aligned}
\end{equation*}
which further implies
\begin{equation*}
\begin{aligned}
\left| \Omega_{h} \right| \leq \dfrac{C \left( \|f\|_{L^{r}(\Omega)} + \|g\|_{L^{\frac{p^{*} r}{p^{*} - \beta r}}(\Omega)} \right)^{\frac{p^{*}}{p-1}}}{\left( h - k \right)^{p^{*}}} \left| \Omega_{k} \right|^{\frac{p^{*}}{p-1} \left( 1 - \frac{1}{r} - \frac{1}{p^{*}} \right)}.
\end{aligned}
\end{equation*}
Since \( r > \frac{N}{p} \), it follows that
$$ \frac{p^{*}}{p-1} \left( 1 - \frac{1}{r} - \frac{1}{p^{*}} \right) > 1. $$
Therefore, by applying Lemma \ref{Lemma5}, we deduce that \( u_{n} \in L^{\infty}(\Omega) \), and consequently \( u \in L^{\infty}(\Omega) \). Finally, in case \textbf{(ii)}, by repeating the argument of case \textbf{(i)}, the desired result is established.
\end{proof}
\begin{proof}[\textbf{Proof of Theorem} \ref{Theorem5}]
Let \( \alpha > 1 \) and assume that \( f \in L^{r}(\Omega) \) with \( 1 \leq r < \frac{N}{sq} \). Let \( u_n \) denote the weak solution to \eqref{P}, as established in Lemma \ref{lemma2}. By employing the same arguments as in Theorem \ref{Theorem3} - \textbf{Part(1)}, we deduce that the sequence \((u^{\vartheta_{r}}_{n})\) is uniformly bounded in \( W^{1, p}_{0}(\Omega) \), where \( \vartheta_{r} \) is defined as in \eqref{equ68} and it is clear that \( \vartheta_{r} \geq 1 \). Consequently, we infer that \( (u_{n}) \) is uniformly bounded in \( W^{1, p}_{\text{loc}}(\Omega) \). Indeed, from Lemma \ref{lemma2} - \textbf{(3)}, for every compact subset \( \widetilde{\Omega} \Subset \Omega \), we have
$$ \int_{\widetilde{\Omega}} \left| \nabla u_{n}\right| ^{p} dx\leq C \text{dist}^{p(1 - \vartheta_{r})} (\widetilde{\Omega}, \Omega) \int_{\widetilde{\Omega}} u_{n}^{p(1- \vartheta_{r})} \left| \nabla u_{n}\right| ^{p} dx \leq C \int_{\widetilde{\Omega}} \left| \nabla u_{n}^{\vartheta_{r}}\right| ^{p}dx < C \text{  (independent of $ n $) }. $$
Then, up to a subsequence, we have
\begin{equation*}
u_{n} \rightharpoonup u \text{ in } W^{1, p }_{\text{loc}}(\Omega), \quad u_{n} \to u \text{ in } L^{p}_{\text{loc}}(\Omega), \quad \text{and} \quad u_{n}(x) \to u(x) \text{ a.e. in } \Omega.
\end{equation*}
Now, to pass to the limit in \eqref{equ45}, let \( \varphi \in C^{\infty}_{c}(\Omega) \). \\[4pt]
\noindent $ \bullet $ For the local term on the left-hand side of \eqref{equ45}, we obtain the following:
\begin{equation*} 
\begin{aligned}
\left| \int_{\Omega} \left( \left[ \nabla u_{n} \right]^{p-1} - \left[ \nabla u \right]^{p-1} \right) \nabla \varphi \, dx \right| 
& \leq \left\| \nabla \varphi \right\|_{L^{\infty}(\Omega)} \left( \int_{\text{supp} (\varphi)} \left| \nabla u_{n} \right|^{p-1} dx + \int_{\text{supp} (\varphi)} \left| \nabla u \right|^{p-1} dx \right) \\[4pt]
& \leq C \left\| \nabla \varphi \right\|_{L^{\infty}(\Omega)} \left( \left\| u_{n} \right\|_{W^{1, p}_{\text{loc}}(\Omega)}^{p-1} + \left\| u \right\|_{W^{1, p}_{\text{loc}}(\Omega)}^{p-1} \right) \leq C,
\end{aligned}
\end{equation*}
for some constant $ C > 0 $ independent of $ n $.  On the other hand, from \eqref{equ47}, it is evident that \( (u_{n}) \) remains bounded away from zero in the interior of the domain $ \Omega $. Thus, we can apply the results from \cite[Theorem 2.1 \& Remark 2.2]{dal1998almost} to conclude that \( \nabla u_{n}(x) \to \nabla u(x) \) a.e. in \( \Omega \) as \( n \to \infty \). Consequently, we have
$$  \left( \left[ \nabla u_{n} \right]^{p-1} - \left[ \nabla u \right]^{p-1} \right) \nabla \varphi  \to 0 \quad \text{a.e. in } \Omega  \quad \text{as} \quad n \to \infty. $$
By Vitali's Convergence Theorem, we obtain
\begin{equation}\label{equ80}
\int_{\Omega} \left[ \nabla u_{n} \right]^{p-1} \nabla \varphi \, dx \to \int_{\Omega}  \left[ \nabla u \right]^{p-1} \nabla \varphi \, dx \quad \text{as} \quad n \to \infty.
\end{equation}
\noindent $ \bullet $ For the non-local term, we adapt the proof techniques from \cite[Theorem 3.6]{ref01} (see also the steps in passing to the limits in the proof of Theorem \ref{Theorem3}, \textbf{Part (2)}) to obtain as $ n \to \infty $:
\begin{equation}\label{equ81}
\displaystyle\iint_{\mathbb{R}^{2 N}} \dfrac{\left[ u_{n}(x) - u_{n}(y)\right]^{q-1}\left(\varphi(x) - \varphi(y)\right)}{\vert x-y\vert ^{N+ s q}} dx \, dy \to \displaystyle\iint_{\mathbb{R}^{2 N}} \dfrac{\left[ u(x) - u(y)\right]^{q-1} \left(\varphi(x) - \varphi(y)\right)}{\vert x - y\vert ^{N + s q}} dx \, dy.
\end{equation}
$ \bullet $ For the right-hand side of \eqref{equ45}, using the facts that \( f_{n} \leq f \) and \eqref{equ47}, we have
\[
\left| f_{n} \left( u_{n} + n^{-1} \right)^{-\alpha} \varphi \right| \leq \text{dist}^{-\alpha}(\text{supp}(\varphi), \Omega) \left\| \varphi \right\|_{L^{\infty}(\Omega)} f \in L^{1}(\Omega),
\]
which allows us, by applying the Lebesgue Dominated Convergence Theorem, to obtain
\begin{equation}\label{equ82}
\int_{\Omega} f_{n}(x) \left( u_{n} + n^{-1} \right)^{-\alpha} \varphi \, dx  \to  \int_{\Omega} f(x) u^{-\alpha} \varphi \, dx  \quad \text{as} \quad n \to \infty.
\end{equation}
Next, utilizing \( g_{n} \leq g \), we derive
\begin{equation*}
\int_{\Omega} g_{n}(x) u_{n}^{\beta} \varphi \, dx \leq C \left\| \varphi \right\|_{L^{\infty}(\Omega)} \left\| g \right\|_{L^{\left(\frac{p^{*}}{\beta + 1}\right)'}(\Omega)} \left\| u_{n} \right\|^{\beta}_{W^{1, p}_{\text{loc}}(\Omega)} < C.
\end{equation*}
Thus, by applying Vitali's Convergence Theorem, we obtain
\begin{equation}\label{equ83}
\int_{\Omega} g_{n}(x) u_{n}^{\beta} \varphi \, dx  \to  \int_{\Omega} g(x) u^{\beta} \varphi \, dx  \quad \text{as} \quad n \to \infty.
\end{equation}
Finally, gathering \eqref{equ80}, \eqref{equ81}, \eqref{equ82}, and \eqref{equ83}, and passing to the limit in \eqref{equ45} as \( n \to \infty \), we infer that
\begin{equation*}
\begin{aligned}
&\int_{\Omega} \left[ \nabla u \right]^{p-1} \nabla \varphi \, dx + \iint_{\mathbb{R}^{2N}} \frac{\left[ u(x) - u(y) \right]^{q-1} \left( \varphi(x) - \varphi(y) \right)}{\vert x - y \vert^{N + sq}} \, dx \, dy= \int_{\Omega} f(x) u^{-\alpha} \varphi \, dx + \int_{\Omega} g(x) u^{\beta} \varphi \, dx.
\end{aligned}
\end{equation*}
By utilizing the same arguments as in Theorem \ref{Theorem3} - \textbf{Part (2)}, we deduce that the sequence \( (u_{n}) \) is uniformly bounded in \( L^{\frac{N r (\alpha + q - 1)}{N - sqr}}(\Omega) \). Thus, we conclude that \( u \in L^{\frac{N r (\alpha + q - 1)}{N - sqr}}(\Omega) \).
\end{proof}
\subsection{Proofs of the results in the context of singular weights}
\begin{proof}[\textbf{Proof of Theorem} \ref{theorem1}] 
$ \bullet $	Let \(\delta + \alpha < 1 + \frac{1}{p'}\). By testing the weak formulation \eqref{equ45} with \(u_{n}\), we obtain
	\begin{align*}
	\left\| u_{n} \right\| _{W^{1, p}_{0}(\Omega)}^{p} + \left\| u_{n} \right\| _{W^{s, q}_{0}(\Omega)}^{q} &= \int_{\Omega} f_{n}(x) \left(u_{n} + n^{-1}\right)^{-\alpha} u_{n} \, dx + \int_{\Omega} g_{n}(x) u_{n}^{\beta + 1} \, dx.
	\end{align*}
	By applying \eqref{equ44} and \eqref{equ47}, together with H\"{o}lder's inequality and Hardy's inequality, we obtain
	\begin{align*}
	\left\| u_{n} \right\| _{W^{1, p}_{0}(\Omega)}^{p} &\leq C \left( \int_{\Omega} d^{p'(1 - \alpha - \beta)}(x) \, dx \right)^{\frac{1}{p'}} \left\| u_{n} \right\| _{W^{1, p}_{0}(\Omega)} + \left\| g \right\| _{L^{\left(\frac{p^{*}}{\beta + 1}\right)'}(\Omega)} \left\| u_{n} \right\|^{\beta + 1}_{W^{1, p}_{0}(\Omega)},
	\end{align*}
	where $ C $ is independent of  $ n. $ Thus, from this estimate, and since \( \beta < q - 1 \) and
	$$ \delta + \alpha < 1 + \frac{1}{p'} \Rightarrow p'(1 - \alpha - \delta) > -1 \Rightarrow \int_{\Omega} d^{p'(1 - \alpha - \beta)}(x) \, dx < \infty, $$
	the sequence \( (u_{n}) \) is uniformly bounded in \( W^{1, p}_{0}(\Omega) \). Hence, there exists \( u \in W^{1, p}_{0}(\Omega) \) such that
	\begin{equation}\label{equ84}
	u_n \rightharpoonup u \quad \text{in } W^{1, p}_{0}(\Omega), \quad u_n \to u \quad \text{in } L^{l}(\Omega) \text{ for } 1 \leq l < p^{*}, \quad \text{and} \quad u_n(x) \to u(x) \text{ a.e. in } \Omega.
	\end{equation}
Once again, by using \eqref{equ44} and \eqref{equ47}, in conjunction with H\"{o}lder's and Hardy's inequalities, we obtain
	\[
	f_{n} (u_{n} + n^{-1})^{- \alpha} \varphi \leq C \cdot \frac{1}{d^{\beta + \alpha - 1}} \cdot \left(\frac{\varphi}{d}\right) \in L^{1}(\Omega), \quad \text{for any } \varphi \in C^{\infty}_{c}(\Omega).
	\]
This result, combined with \eqref{equ61} and \eqref{equ84}, allows us to apply the same proof as in Theorem \ref{Theorem4} to pass to the limit in \eqref{equ45}, thereby establishing our claim.
\end{proof}
\begin{proof}[\textbf{Proof of Theorem} \ref{theorem3}]
Let \(\delta + \alpha \geq 1 + \frac{1}{p'}\). First, by using Lemma \ref{lemma2} - \textbf{(1)}, it can be readily verified that 
\[
u_{n}^{p(\theta -1) + 1} \in W^{1, p}_{0}(\Omega) \quad \text{for all} \quad
\theta > \max\left\lbrace 1, \frac{(\alpha + p - 1)(p - 1)}{p(p - \delta)}, \frac{\alpha + p -1}{p}\right\rbrace,
\]
and we will explain the reason for this choice. Thus, it qualifies as an admissible test function in \eqref{equ45}, yielding:
	\begin{equation*}
	\begin{aligned}
	&\int_{\Omega} \left[ \nabla u_{n}\right]^{p-1} \nabla u_{n}^{p(\theta -1) + 1} \, dx + \iint_{\mathbb{R}^{2N}} \dfrac{\left[u_{n}(x) - u_{n}(y)\right]^{q-1} \left(u_{n}^{p(\theta -1) + 1}(x) - u_{n}^{p(\theta -1) + 1}(y)\right)}{\vert x-y\vert^{N + sq}} \, dx \, dy \\[4pt]
	&\quad = \int_{\Omega} f_{n}(x) \left(u_{n} + n^{-1}\right)^{-\alpha} u_{n}^{p(\theta -1) + 1} \, dx + \int_{\Omega} g_{n}(x) \, u_{n}^{p(\theta -1) + 1 + \beta} \, dx.
	\end{aligned}
	\end{equation*}
	By applying \eqref{equ44}, performing some computations, and using the inequality from \cite[Lemma A.2]{ref07}, we obtain:
	\begin{align*}
	&\dfrac{p(\theta - 1) + 1}{\theta^{p}}\left\| u^{\theta}_{n}\right\| _{W^{1, p}_{0}(\Omega)}^{p} \leq C \int_{\Omega} d^{- \delta + \frac{p(\theta - 1) + 1 - \alpha}{\theta} }(x) \left(\dfrac{u_{n}^{\theta}}{d(x)}\right)^{\frac{p(\theta - 1)+ 1 -\alpha}{\theta}} dx \\
	&\qquad + \int_{\Omega} g(x) \, \left( u_{n}^{\theta}\right) ^{\frac{p(\theta -1) + 1 + \beta}{\theta}} \, dx \\[4pt]
	&\leq C \left(\int_{\Omega} d^{\frac{p(\theta(p - \delta) - (\alpha + p - 1))}{\alpha + p -1 }}(x) dx \right) ^{\frac{p + \alpha - 1}{p \theta}} 
	\left( \int_{\Omega}\left(\dfrac{u_{n}^{\theta}}{d(x)}\right)^{p} dx\right) ^{\frac{p (\theta - 1) + 1 - \alpha }{p \theta}}  \\[4pt]
	&\qquad + \left\| g\right\|_{L^{\left(\frac{p^{*} \theta}{p(\theta - 1) + 1 + \beta}\right)'}(\Omega)} \left(\int_{\Omega} (u_{n}^{\theta})^{p^{*}} dx\right) ^{\frac{p(\theta -1) + 1 + \beta}{p^{*} \theta}}.
	\end{align*}
	Since
	\[
	\theta > \frac{(\alpha + p - 1)(p - 1)}{p(p - \delta)} \Longrightarrow \frac{p(\theta(p - \delta) - (\alpha + p - 1))}{\alpha + p -1 } > -1 \Longrightarrow \int_{\Omega} d^{\frac{p(\theta(p - \delta) - (\alpha + p - 1))}{\alpha + p -1 }}(x) dx < \infty,
	\]
	and by Hardy's inequality and the Sobolev embedding theorem (see Theorem \ref{thm0}), we have
	\[
	\left\| u^{\theta}_{n}\right\| _{W^{1, p}_{0}(\Omega)}^{p} \leq C \left(\left\| u^{\theta}_{n}\right\| _{W^{1, p}_{0}(\Omega)}^{\frac{p (\theta - 1) + 1 - \alpha }{\theta}} + \left\| u^{\theta}_{n}\right\| _{W^{1, p}_{0}(\Omega)}^{\frac{p(\theta -1) + 1 + \beta}{\theta}}\right).
	\]
Thus, from the above estimate, and since \( \beta < q - 1 \), the sequence \( (u_{n})^{\theta} \) is uniformly bounded in \( W^{1, p}_{0}(\Omega) \). Consequently, we infer that \( (u_{n}) \) is uniformly bounded in \( W^{1, p}_{\text{loc}}(\Omega) \). Then, up to a subsequence, we have
\[
u_{n} \rightharpoonup u \text{ in } W^{1, p }_{\text{loc}}(\Omega), \quad u_{n} \to u \text{ in } L^{p}_{\text{loc}}(\Omega), \quad \text{and} \quad u_{n}(x) \to u(x) \text{ a.e. in } \Omega.
\]
At this point, the remainder of the proof follows precisely as in the conclusion of the proof of Theorem \ref{Theorem5}.
For \( \delta > p(1 - s) + s(1 - \alpha) \), we use the boundary behavior of the approximating sequence \( (u_{n}) \) from below, as established by Lemma \ref{lemma2} in \eqref{equ90}. By substituting \( \varphi = u_{n} \) into the weak formulation \eqref{equ45}, we obtain:
\begin{align*}
\left\| u_{n} \right\|_{W^{1, p}_{0}(\Omega)}^{p} & \leq C \int_{\Omega} d^{\frac{(1 - p)(\alpha + \delta - 1)}{\alpha + p - 1}}(x) \left( \frac{u_{n}}{d(x)} \right) dx + \int_{\Omega} g(x) u_{n}^{\beta + 1} dx \\[4pt]
& \leq C \left( \left( \int_{\Omega} d^{\frac{p'(1 - p)(\alpha + \delta - 1)}{\alpha + p - 1}}(x) dx \right)^{\frac{1}{p'}} \left\| u_{n} \right\|_{W^{1, p}_{0}(\Omega)} + \left\| g \right\|_{L^{\left(\frac{p^{*} \theta}{p(\theta - 1) + 1 + \beta}\right)'}(\Omega)} \left\| u_{n} \right\|_{W^{1, p}_{0}(\Omega)}^{\beta + 1}\right) \\[4pt]
& \leq C \left(\left\| u_{n} \right\|_{W^{1, p}_{0}(\Omega)} +  \left\| u_{n} \right\|_{W^{1, p}_{0}(\Omega)}^{\beta + 1}\right).
\end{align*}
This follows since
\[
\delta < 1 + \dfrac{1 - \alpha }{p'}\Longrightarrow \frac{p'(1 - p)(\alpha + \delta - 1)}{\alpha + p - 1} > -1 \Longrightarrow \int_{\Omega} d^{\frac{p'(1 - p)(\alpha + \delta - 1)}{\alpha + p - 1}}(x) dx < \infty.
\]
Consequently, the sequence \( (u_{n}) \) is uniformly bounded in \( W^{1, p}_{0}(\Omega) \). Utilizing this fact, we can pass to the limit with respect to \( n \) in \eqref{equ45}, thereby confirming our result. Finally, if \( \delta \geq 1 +  \frac{1 - \alpha}{p'} \), by applying the Hardy inequality and \eqref{equ90}, we obtain
\[
\left\| u \right\|_{W^{1, p}_{0}(\Omega)}^{p} \geq C \int_{\Omega} \left( \frac{u}{d(x)} \right)^{p} \, dx \geq C \int_{\Omega} u^{\frac{p(1 + \delta - \alpha)}{\alpha + p - 1}} \, dx = \infty.
\]
\end{proof}
\begin{proof}[\textbf{Proof of Theorem} \ref{theorem4}] 
Assume that \( \delta \geq p \). We choose \( \delta_0 \) such that \( s(1 - \alpha) + p(1 - s) < \delta_0 < p \) (sufficiently close to \( p \)), and we select \( M \in (0, 1) \), independent of \( \delta_0 \), where \( \delta_0 \geq \delta_0^{*} > 0 \), such that
\begin{center}
	\( M \tilde{f}(x) \leq f(x) \) with \( \textbf{C}_{1} d(x)^{-\delta_{0}} \leq \tilde{f}(x) \leq \textbf{C}_{2} d(x)^{-\delta_{0}} \text{ in } \Omega, \) 
\end{center}
where \( \textbf{C}_{1} \) and \( \textbf{C}_{2} \) are positive constants. By Lemma \ref{lemma2}, for every \( n \in \mathbb{N} \), there exists \( u_{n} \in C^{1, \zeta}(\overline{\Omega}) \cap W^{1, p}_{0}(\Omega) \), where \( \zeta \in (0, 1) \), such that \( u_{n} \) satisfies the following problem:
\begin{equation}\label{3equ1}
\begin{gathered}
\begin{aligned}
&\int_{\Omega} \left[ \nabla u_{n} \right]^{p-1} \nabla \varphi \, dx 
+ \iint_{\mathbb{R}^{2 N}} \dfrac{\left[ u_{n}(x) - u_{n}(y) \right]^{q-1} \left( \varphi(x) - \varphi(y) \right)}{\vert x - y \vert^{N + s q}} \, dx \, dy \\[4pt]
&\quad = M \int_{\Omega} \tilde{f}_{n}(x) \left( u_{n} + n^{-1} \right)^{-\alpha} \varphi \, dx 
+ \int_{\Omega} g_{n}(x) \, u_{n}^{\beta} \varphi \, dx, \quad \forall \varphi \in W^{1, p}_{0}(\Omega),
\end{aligned}
\end{gathered}
\end{equation}
with \( \tilde{f}_{n} \) defined as in \eqref{equ49} and \( g_{n}(x) = \textbf{T}_{n}(g(x)) \). Firstly, assume, by contradiction, that \( u \) is a weak solution of  \eqref{P2} in the sense of Theorem \ref{theorem1}, i.e., \( u \in W^{1, p}_{0}(\Omega) \). Then, by standard density arguments, it follows that
\begin{equation}\label{3equ2}
\begin{aligned}
&\int_{\Omega} \left[ \nabla u \right]^{p-1} \nabla \phi\, dx 
+ \iint_{\mathbb{R}^{2N}} \frac{\left[ u(x) - u(y) \right]^{q-1} \left(\phi(x) - \phi(y) \right)}{|x - y|^{N + sq}} \, dx \, dy \\
&= \int_{\Omega} f(x) u^{-\alpha}\phi\, dx 
+ \int_{\Omega} g(x) u^{\beta} \phi\, dx, \quad \forall \phi \in W^{1, p}_{0}(\Omega) \cap L^{\infty}(\Omega).
\end{aligned}
\end{equation}
By subtracting equations \eqref{3equ1} and \eqref{3equ2}, using the test function
\[
\varphi = \frac{\textbf{T}_{k}\left( \left( (u_{n} + m)^{q} - (u + m)^{q} \right)^{+} \right)}{(u_{n} + m)^{q - 1}}, \quad \phi = \frac{\textbf{T}_{k}\left( \left( (u + m)^{q} - (u_{n} + m)^{q} \right)^{-} \right)}{(u + m)^{q - 1}} \in W^{1, p}_{0}(\Omega) \cap L^{\infty}(\Omega),
\]
with \( m > 0 \), and given the monotonicity, it is apparent that we obtain
	\begin{equation*}
	\begin{gathered}
	\begin{aligned}
	&\int_{\Omega} \left( \left[ \nabla u_{n}\right] ^{p-1}\nabla\varphi -  \left[ \nabla u \right] ^{p-1}\nabla\phi\right) dx\\[4pt]
	& + \displaystyle \displaystyle\iint_{\mathbb{R}^{2N}}  \dfrac{\left[ u_{n}(x)  - u_{n}(y) \right] ^{q-1} \left(\varphi(x)-\varphi(y)\right) - \left[ u(x) - u(y) \right] ^{q-1} \left(\phi(x)-\phi(y)\right)}{\vert x-y\vert ^{N+ s q}} dxdy \\[4pt]
	&\leq M\int_{\left\lbrace u_{n} >u\right\rbrace } \tilde{f}_{n} (x) \left( \dfrac{u_{n}^{-\alpha}}{(u_{n} + m)^{q-1}} - \dfrac{u^{-\alpha}}{(u + m)^{q-1}} \right) \textbf{T}_{k}\left((u_{n} + m)^{q} - (u+m)^{q}\right) dx\\[4pt]
	& + \int_{\left\lbrace u_{n} >u\right\rbrace } g(x) \left( \dfrac{u_{n}^{\beta}}{(u_{n} + m)^{q-1}} - \dfrac{u^{\beta}}{(u + m)^{q-1}} \right) \textbf{T}_{k}\left((u_{n} + m)^{q} - (u+m)^{q}\right)dx \\[4pt]
	& \leq  \int_{\left\lbrace u_{n} >u\right\rbrace } g(x) \left( \dfrac{u_{n}^{\beta}}{(u_{n} + m)^{q-1}} - \dfrac{u^{\beta}}{(u + m)^{q-1}} \right) \textbf{T}_{k}\left((u_{n} + m)^{q} - (u+m)^{q}\right)dx.
	\end{aligned}
	\end{gathered}
	\end{equation*}
Now, by applying the same methods utilized in the proof of Claim 3 within the proof of Theorem \ref{Theorem1} (and also Theorem \ref{Theorem2}), we conclude that \( u_{n} \leq u \) in \( \Omega \). Taking into account the boundary behavior of the approximating sequence \( (u_{n}) \) from below, as provided by Lemma \ref{lemma2} in \eqref{equ90}, we obtain
	\begin{equation*}
	u(x) \geq c_{2}\left( \left( d(x) + n^{-\frac{\alpha + p -1}{p - \delta_{0}}}\right) ^{\frac{p - \delta_{0}}{\alpha +p -1} } - n^{-1}\right)  \text{ in } \Omega, \, \text{ for every }  n \in \mathbb{N}.
	\end{equation*}
Hence, by using Hardy's inequality and considering \( \delta_{0} \) close enough to \( p \), and taking \( n \to \infty \), we obtain
\begin{equation*}
\begin{gathered}
\begin{aligned}
\infty > \left\| u \right\|^{p}_{W^{1, p}_{0}(\Omega)} & \geq C \int_{\Omega} \left( \frac{u}{d(x)} \right)^{p} \, dx  \geq C \int_{\Omega} \left(\left( d(x) + n^{-\frac{\alpha + p - 1}{p - \delta_{0}}} \right)^{\frac{p - \delta_{0}}{\alpha + p - 1}} - n^{-1}\right)^{p}  d(x)^{-p}\, dx = \infty,
\end{aligned}
\end{gathered}
\end{equation*} 
which yields a contradiction. In the remaining case, suppose that \( u \) is a weak solution of the problem \eqref{P2} in the sense of Theorem \ref{theorem3}. Then, by applying standard density arguments, it follows readily that
\begin{equation}\label{3equ3}
\begin{aligned}
&\int_{\Omega} \left[ \nabla u \right]^{p-1} \nabla \phi \, dx 
+ \iint_{\mathbb{R}^{2N}} \frac{\left[ u(x) - u(y) \right]^{q-1} \left( \phi(x) - \phi(y) \right)}{|x - y|^{N + sq}} \, dx \, dy \\
&= \int_{\Omega} f(x) u^{-\alpha} \phi \, dx 
+ \int_{\Omega} g(x) u^{\beta} \phi \, dx, \quad \forall \phi \in W^{1, p}_{0}(\Omega) \cap L_{c}^{\infty}(\Omega).
\end{aligned}
\end{equation}
To select a test function as in the above case, we follow the main ideas outlined in \cite[Proof of Theorem 2.9]{ref48} (see also \cite[Proof of Theorem 1.3]{ref08}). Initially, we observe that due to the continuity of \( u_{n} \), for a given \( \xi > 0 \), there exists \( \eta = \eta(n, \xi) > 0 \) such that \( u_{n} \leq \frac{\xi}{2} \) in \( \Omega_{\eta} \), where \( \Omega_{\eta} := \{ x \in \Omega \mid d(x) < \eta \} \). Furthermore, we note that \( u_{n} - (u + \xi) \leq -\frac{\xi}{2} \) in \( \Omega_{\eta} \), since \( u > 0 \) in \( \Omega \). Consequently, we deduce that
\[
\text{supp}(\varphi) \text{ and } \text{supp}(\phi) \subset \Omega \setminus \Omega_{\eta} \Subset \Omega,
\]
where for \( m > 0 \):
\[
\varphi = \frac{\textbf{T}_{k}\left( \left( (u_{n} + m)^{q} - (u + \xi + m)^{q} \right)^{+} \right)}{(u_{n} + m)^{q - 1}}, \quad \phi = \frac{\textbf{T}_{k}\left( \left( (u + \xi + m)^{q} - (u_{n} + m)^{q} \right)^{-} \right)}{(u + \xi + m)^{q - 1}} \in W^{1, p}_{0}(\Omega) \cap L_{c}^{\infty}(\Omega).
\]
Now, by subtracting equations \eqref{3equ1} and \eqref{3equ3}, and using the test functions \( \varphi \) and \( \phi \), we apply the same methods utilized in the proof of Claim 3 within the proof of Theorem \ref{Theorem1} to infer that \( u_{n} \leq u + \xi \) in \( \Omega \). Then, by letting \( \xi \to 0 \), we obtain \( u_{n} \leq u \) in \( \Omega \). Hence, we deduce that
\begin{equation*}
\begin{gathered}
\begin{aligned}
\left\| u^{\theta_{0}} \right\|^{p}_{W^{1, p}_{0}(\Omega)} & \geq C \int_{\Omega} \left( \dfrac{u^{\theta_{0}}}{d(x)} \right)^{p} dx & \geq C \int_{\Omega} \left( \left( \left( d(x) + n^{-\frac{\alpha + p - 1}{p - \delta_{0}}} \right)^{\frac{p - \delta_{0}}{\alpha + p - 1}} - n^{-1} \right)^{\theta_{0}} d^{-1}(x) \right)^{p} dx,
\end{aligned}
\end{gathered}
\end{equation*}
where \( \theta_{0} \) is defined in \eqref{equ121}, which also leads to a contradiction since the last integral is not finite.
\end{proof}

\bibliographystyle{plain}
\bibliography{interactnlmsample}
\end{document}